\numberwithin{equation}{section}
\renewcommand{\AA}{\mathbb A}
\newcommand{\CC}{\mathbb C}
\newcommand{\FF}{\mathbb F}
\newcommand{\GG}{\mathbb G}
\newcommand{\QQ}{\mathbb Q}
\newcommand{\RR}{\mathbb R}
\newcommand{\ZZ}{\mathbb Z}
\newcommand{\OO}{\mathcal O}
\newcommand{\calG}{\mathcal G}
\def\Aa{\mathcal A}
\newcommand{\F}{\mathscr F}
\newcommand{\Ss}{\mathscr S}
\newcommand{\calS}{\mathcal S}
\def\Zz{\mathcal Z}
\newcommand{\p}{\mathfrak p}
\renewcommand{\P}{\mathfrak P}
\newcommand{\Sym}{\mathfrak S}
\def\power{\mathcal{P}}
\newcommand{\norm}[1]{ \left|\!\left| #1 \right|\!\right|  }
\newcommand{\ang}[1]{ \langle #1 \rangle  }
\def\Spec{\operatorname{Spec}} 
 \def\Gal{\operatorname{Gal}}
\def\End{\operatorname{End}}
\def\Spec{\operatorname{Spec}}
\def\tr{\operatorname{tr}}
\def\Gal{\operatorname{Gal}}
\def \Li {\operatorname{Li}}  
\def \GL {\operatorname{GL}}  
\def \PGL {\operatorname{PGL}}
\def \SL {\operatorname{SL}}
\def \Sp {\operatorname{Sp}}
\def \GSp {\operatorname{GSp}}
\def\Aut{\operatorname{Aut}} 
\def\End{\operatorname{End}}
\def\Frob{\operatorname{Frob}}
\def\lcm{\operatorname{lcm}}
\renewcommand{\Re}{\operatorname{Re}}
\newcommand{\tors}{{\operatorname{tors}}}
 \def\Irr{\operatorname{Irr}}
\def\Prim{\operatorname{Prim}} 
\def\Fr{\operatorname{Frob}}
\def\tr{\operatorname{tr}}
\def\lcm{\operatorname{lcm}} \def\Li{\operatorname{Li}}
 \def \Aut {\operatorname{Aut}}
\def \mult{\operatorname{m}} 
\def\bbar#1{\setbox0=\hbox{$#1$}\dimen0=.2\ht0 \kern\dimen0 
\overline{\kern-\dimen0 #1}}
\newcommand{\Qbar}{{\overline{\mathbb Q}}} 
\newcommand{\kbar}{\bbar{k}}
\newcommand{\defi}[1]{\emph{#1}} 
\newtheorem{thm}{Theorem}[section]
\newtheorem{lemma}[thm]{Lemma}
\newtheorem{prop}[thm]{Proposition}
\theoremstyle{definition}
\newtheorem{defn}[thm]{Definition}
\newtheorem{conj}[thm]{Conjecture}
\theoremstyle{remark}
\newtheorem{remark}[thm]{Remark}
\newtheorem{example}[thm]{Example}
\newenvironment{romanenum}{\hfill \begin{enumerate} }{\end{enumerate}}
\definecolor{webbrown}{rgb}{.6,0,0}
\begin{document}
\title[]{The Large Sieve and Galois Representations}
\subjclass[2000]{Primary 11N35, Secondary 11G05, 11F80}
\keywords{large sieve, Galois representations, elliptic curves}
\tableofcontents 
\author{David Zywina}
\address{Department of Mathematics, University of Pennsylvania, Philadelphia, PA 19104-6395, USA}
\email{zywina@math.upenn.edu}
\urladdr{http://www.math.upenn.edu/\~{}zywina}
\date{December 11, 2008}
  
\begin{abstract}
We describe a generalization of the large sieve to situations where the underlying groups are nonabelian, and give several applications to the arithmetic of abelian varieties.  In our applications, we sieve the set of primes via the system of representations arising from the Galois action on the torsion points of an abelian variety.  The resulting upper bounds require explicit character sum calculations, with stronger results holding if one assumes the Generalized Riemann Hypothesis. 
\end{abstract}

\maketitle

\section{Introduction} \label{S:Introduction}
\subsection{Overview} 
In this paper, we explain how a sieve theoretic method called the \emph{large sieve} can be suitably generalized to study various sequences of primes occurring in arithmetic geometry.

In \S\ref{SS:Koblitz introduction}--\ref{SS:Chavdarov intro}, we shall give some applications of our sieve (proofs can be found in \S\ref{S:Koblitz}--\ref{S:Chavdarov}).   These examples can be read independently of the rest of the paper and make no explicit reference to sieve theory or Galois representations.   Our choice of applications is not meant to be exhaustive but simply demonstrate a few basic problems that can be attacked with sieve theoretic methods.  

The large sieve is an important tool from analytic number theory (see \cite{Bombieri} and \cite{Montgomery} for background on the classical theory); our abstract form of the large sieve will be given in \S\ref{S:LS}.  The proof is similar to the classical version except one needs to be slightly careful since the underlying groups may not be abelian.   E.~Kowalski has independently come up with similar methods and has greatly generalized the large sieve (while adapting several of the ideas from this paper).  The reader is strongly encouraged to look at his recently published book \cite{Kowalski-LS}, which nicely complements the material presented here.

In \S\ref{S:Frobenius Sieve}, we will specialize to the case of sieving primes by conditions indexed by a collection of independent Galois representations.  The proof requires estimating various character sums with stronger results being obtained if one assumes the Generalized Riemann Hypothesis.   One of the merits of the large sieve is that these calculations need only be done once and we hope that Theorem~\ref{T:LS for Frobenius} will be of practical use for others.

Our applications will be proven in a common manner.   We first express the problem in terms of an independent system of strictly compatible Galois representations; these Galois representations give constraints on the set of primes that we are interested in.  The large sieve allows us to combine these constraints intelligently.  The reader interested in applying the sieve can skip directly to \S\ref{S:Frobenius Sieve}.

A quick remark is in order for analytic number theorists.  By \emph{large sieve}, we are referring to the sieve theoretic method of that name and not to the related inequalities.   The large sieve inequalities in this paper are all proved in a very naive manner.   Since the large sieve inequality deals with ``on average'' behaviour, one would hope to be able to prove stronger unconditional versions (due to the nonabelian nature of the our examples, it seems difficult to generalize the usual harmonic analysis arguments).   It would be interesting if someone could make any major improvement on our unconditional bounds.\\

We now introduce some notation that will hold throughout (further notation and conventions can be found in \S\ref{S:notation}).  For a number field $k$, denote its ring of integers by $\OO_k$.  Let $\Sigma_k$ be the set of non-zero prime ideals of $\OO_k$.  For each prime $\p\in \Sigma_k$, we have a residue field $\FF_\p=\OO_k/\p$ whose cardinality we denote by $N(\p)$.    Let $\Sigma_k(x)$  be the set of primes $\p$ in $\Sigma_k$ with $N(\p)\leq x$.

\subsection{Application: The Koblitz conjecture} \label{SS:Koblitz introduction}
Let $E$ be an elliptic curve without complex multiplication defined over a number field $k$.  Let $S_E$ be the set of places of $k$ for which $E$ has bad reduction.  For each $\p\in\Sigma_k-S_E$, denote the reduction of $E$ modulo $\p$ by $E_\p$. 

For all but finitely many $\p\in \Sigma_k-S_E$, reduction induces an injective homomorphism
$E(k)_{\tors} \hookrightarrow E_\p(\FF_\p)$ and in particular $|E(k)_{\tors}|$ divides $|E_\p(\FF_\p)|$.  Define the integer
\[
t_{E,k} = \lcm_{E'}|E'(k)_{\tors}|
\]
where the $E'$ vary over all elliptic curves  over $k$ that are $k$-isogenous to $E$.  The integer $|E_\p(\FF_\p)|$ is a $k$-isogeny invariant of $E$, so $t_{E,k}$ divides $|E_\p(\FF_\p)|$ for almost all $\p\in\Sigma_k-S_E$.  We are led to the following generalization of a conjecture of Koblitz (see \cite{Koblitz, Zywina-Koblitz}).
\begin{conj}
\label{C:Koblitz} 
Let $E$ be an elliptic curve over a number field $k$ without complex multiplication.  There is an explicit constant $C_{E,k}>0$ such that
\[
P_{E,k}(x):=|\{\p \in \Sigma_k(x)-S_E: |E_\p(\FF_\p)|/t_{E,k} \text{ is prime}\}|
 \sim C_{E,k} \frac{x}{(\log x)^2}
\]
as $x\to \infty$.
\end{conj}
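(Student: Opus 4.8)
The plan proceeds in three stages: (i) determine the constant $C_{E,k}$ from a probabilistic model built on the system $\{\rho_{E,\ell^n}\}$ of Galois representations on the torsion of $E$; (ii) prove the upper bound $P_{E,k}(x)\ll_{E,k} x/(\log x)^2$ of the conjectured order of magnitude, unconditionally, by feeding this system into the nonabelian large sieve of \S\ref{S:Frobenius Sieve}; and (iii) match it with a lower bound of the same order and the same constant. Stages (i) and (ii) are within reach of the methods of this paper; stage (iii), I expect, is the main obstacle. Throughout I write $a_\p=\tr\rho_{E,\ell}(\Frob_\p)$ (a value independent of $\ell\nmid N(\p)$) so that $|E_\p(\FF_\p)|=N(\p)+1-a_\p\asymp N(\p)$.

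For stage (i) I would argue as follows. For a prime $\ell$ one has $\ell\mid |E_\p(\FF_\p)|$ exactly when $\rho_{E,\ell}(\Frob_\p)$ has $1$ as an eigenvalue in $\GL_2(\FF_\ell)$, i.e.\ when $\det(\rho_{E,\ell}(\Frob_\p)-I)\equiv 0\pmod{\ell}$, and more generally the exact power of $\ell$ dividing $|E_\p(\FF_\p)|$ is read off from $\rho_{E,\ell^n}(\Frob_\p)$ for $n$ large. By the Chebotarev density theorem such a congruence condition on $\Frob_\p$ holds with natural density equal to the proportion of the corresponding conjugacy-stable subset of the image of $\rho_{E,\ell^n}$, so the event ``$\ell\nmid |E_\p(\FF_\p)|/t_{E,k}$'' has a well-defined density $\delta_\ell$; by Serre's open-image theorem $\delta_\ell$ equals its generic value $\#\{g\in\GL_2(\FF_\ell):\det(g-I)\ne 0\}/\#\GL_2(\FF_\ell)=1-\ell^{-1}+O(\ell^{-2})$ for all but finitely many $\ell$. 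Imposing these conditions jointly over all $\ell$, renormalizing each local factor by the baseline density $1-1/\ell$ with which $\ell$ fails to divide a random integer, and inserting the heuristic ``probability $\sim 1/\log x$'' that an integer of size $\asymp x$ is prime, one is led to
\[
P_{E,k}(x)\sim C_{E,k}\,\frac{x}{(\log x)^2},\qquad C_{E,k}=\mathfrak{C}_{E,k}\prod_{\ell}\frac{\delta_\ell}{1-1/\ell},
\]
where $\mathfrak{C}_{E,k}$ is an elementary factor recording the entanglement among the $\rho_{E,\ell^n}$ (so the product is really taken over the image of the full adelic representation) together with the normalization by $t_{E,k}$; the Euler product converges because $\delta_\ell=1-\ell^{-1}+O(\ell^{-2})$.

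For stage (ii) the plan is to apply Theorem~\ref{T:LS for Frobenius} to the sieve that, for each prime $\ell\le z$, removes from the image of $\rho_{E,\ell}$ (resp.\ of $\rho_{E,\ell^{v_\ell(t_{E,k})+1}}$) the conjugacy-stable subset forcing $\ell\mid |E_\p(\FF_\p)|/t_{E,k}$, using surjectivity of $\rho_{E,\ell}$ for all but finitely many $\ell$. Each removed subset has relative density $\asymp \ell^{-1}$, so taking $z$ to be a suitable power of $\log x$ (a power of $x$ under GRH, the improvement affecting only implied constants) the large sieve gives $P_{E,k}(x)\ll_{E,k} x/(\log x)^2$, which is unconditional and exactly of the order predicted in stage (i).

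Stage (iii) is where I expect the real difficulty. What remains is to prove $P_{E,k}(x)\gg_{E,k} x/(\log x)^2$ with the constant of stage (i), and this amounts to detecting primes in the thin, polynomial-like sequence $\big(|E_\p(\FF_\p)|/t_{E,k}\big)_{\p\in\Sigma_k(x)}$ indexed by the primes $\p$. The parity obstruction of sieve theory forbids any purely sieve-theoretic lower bound of this kind, so the hard part cannot be handled by the large sieve; it would instead follow from a Bateman--Horn / Hardy--Littlewood type input for this sequence --- concretely a level-of-distribution statement of Elliott--Halberstam strength for $a_\p$ as $\p$ ranges over $\Sigma_k(x)$, combined with the equidistribution used in stage (i). I would therefore present (i) and (ii) unconditionally, pinning down $C_{E,k}$ and the correct order of magnitude, and deduce the full asymptotic conditionally on such a hypothesis; removing that hypothesis is the genuine content of the conjecture and lies beyond current technology.
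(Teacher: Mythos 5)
The statement here is a conjecture; the paper does not prove it, and your proposal correctly does not claim to either. Its three-part structure (heuristic derivation of $C_{E,k}$, sieve upper bound, lower bound out of reach because of the parity obstruction) is exactly the posture the paper takes: the constant is extracted in \S\ref{SS:Koblitz L} from the local densities $|C_m|/|G_m|$ renormalized by $1-1/\ell$ (it is the reciprocal of the residue at $s=0$ of the Dirichlet series $h(s)$ built from the sieve weights), and your stage (ii) is Theorem~\ref{T:Koblitz bound}, proved in \S\ref{S:Koblitz} by feeding the system $\{\rho_{E,m}\}_{m\in\{M\}\cup\{\ell:\ell\nmid M\}}$ into Theorem~\ref{T:LS for Frobenius}.

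There is, however, a genuine error in stage (ii): the unconditional large sieve does \emph{not} give $P_{E,k}(x)\ll x/(\log x)^2$, and the GRH improvement is not merely in implied constants. This application is of ``small sieve'' type, $1-\delta_\ell\asymp 1/\ell$, so $L(Q)=\sum_{D\in\Zz(Q)}\prod_{\ell\in D}(1-\delta_\ell)/\delta_\ell\sim C_{E,k}^{-1}\log Q$; the saving over the trivial bound $|\Sigma_k(x)|$ is thus a factor of $\log Q$, and everything depends on how large the sieve level $Q$ may be taken while the error in the large sieve inequality stays under control. Unconditionally, the effective Chebotarev input (Proposition~\ref{P:unconditional character sum}, which needs $\log x\geq 10[L:\QQ](\log d_L)^2$) forces $Q\ll(\log x/(\log\log x)^2)^{1/(6r)}$, hence $\log Q\asymp\log\log x$ and the bound $(24+o(1))C_{E,k}\,x/((\log x)(\log\log x))$ of Theorem~\ref{T:Koblitz bound}(i) --- not $x/(\log x)^2$. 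Only under GRH can one take $Q$ a power of $x$, giving $\log Q\asymp\log x$ and the bound $(22+o(1))C_{E,k}\,x/(\log x)^2$. An unconditional bound of order $x/(\log x)^2$ is open; indeed the paper notes that even the convergence of $\sum_{p,\,|E_p(\FF_p)|\text{ prime}}p^{-1}$ is unknown, whereas your claimed unconditional bound would yield it at once by partial summation. A smaller omission: to convert ``$|E_\p(\FF_\p)|/t_{E,k}$ is prime'' into the conditions $\rho_{E,m}(\Frob_\p)\subseteq C_m$ for all moduli $m\leq Q$, one must first discard the primes with $N(\p)\leq((t_{E,k}Q)^{1/2}+1)^2$ (so that Hasse forces $|E_\p(\FF_\p)|/t_{E,k}>Q$) and choose the exceptional modulus $M$ with $v_\ell(t_{E,k})<v_\ell(M)$; this is routine but necessary.
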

\begin{remark}
There exists an elliptic curve $E'/k$ isogenous to $E$ over $k$ such that $|E'(k)_{\tors}|=t_{E,k}$.  Thus the conjecture can be restated in terms of counting the number of $\p$ such that the group $E'_\p(\FF_\p)/E'(k)_{\tors}$ has prime cardinality.  The motivation for the conjecture comes from elliptic curve cryptography where the discrete logarithm problem for $E(\FF_\p)$ is hardest when the cardinality is divisible by large primes.  There are no examples for which $\lim_{x\to \infty} P_{E,k}(x)=\infty$ has been proved.

Conjecture \ref{C:Koblitz} is given in \cite{Koblitz} under the assumptions that $k=\QQ$ and $t_{E,\QQ}=1$.  For heuristics and a description of the constant $C_{E,k}$ in Conjecture \ref{C:Koblitz}, see \cite{Zywina-Koblitz}.  The constant derived in \cite{Koblitz} is slightly different since it fails to take into account that the $\ell$-adic representations coming from the Galois action on the torsion points of $E$ need not be independent.   
The constant $C_{E,k}$ will be described explicitly in \S\ref{SS:Koblitz L} since it naturally occurs in the proof of Theorem~\ref{T:Koblitz bound}.

\end{remark}
Using the large sieve, we obtain the following upper bounds for $P_{E,k}(x)$.
\begin{thm} \label{T:Koblitz bound}
Let $E$ be an elliptic curve without complex multiplication defined over a number field $k$.
\begin{romanenum}
\item Then
\[
P_{E,k}(x) \leq (24 +o(1)) C_{E,k}\frac{x}{(\log x) (\log\log x)},
\]
where the $o(1)$ term depends on $E/k$.
\item Assuming the Generalized Riemann Hypothesis (GRH),
\[
P_{E,k}(x) \leq (22+o(1)) C_{E,k} \frac{x}{(\log x)^2},
\] 
where the $o(1)$ term depends on $E/k$.
\end{romanenum}
\end{thm}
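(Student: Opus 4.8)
The plan is to encode the condition ``$|E_\p(\FF_\p)|/t_{E,k}$ is prime'' as a sieve problem and apply the large sieve machinery for Frobenius (Theorem~\ref{T:LS for Frobenius}). First I would recall that $|E_\p(\FF_\p)| = N(\p) + 1 - a_\p$, where $a_\p$ is the trace of Frobenius, and that knowing the action of $\Frob_\p$ on $E[\ell]$ (equivalently, the image of $\Frob_\p$ in $\GL_2(\ZZ/\ell\ZZ)$) determines $|E_\p(\FF_\p)| \bmod \ell$. The key observation is that if $|E_\p(\FF_\p)|/t_{E,k}$ is to be prime and larger than any fixed bound, then for every prime $\ell \leq z$ (with $z$ a parameter growing slowly with $x$, and $\ell$ not dividing $t_{E,k}$ or lying below bad reduction) we must have $\ell \nmid |E_\p(\FF_\p)|/t_{E,k}$; this is a congruence condition on $\Frob_\p$ modulo $\ell$. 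So I would sieve $\Sigma_k(x)$ by removing, for each such $\ell$, the Frobenius conjugacy classes in $\GL_2(\ZZ/\ell\ZZ)$ (or in the relevant quotient $G_\ell$) for which $\ell \mid |E_\p(\FF_\p)|/t_{E,k}$. Call the complementary ``admissible'' set of classes $\Omega_\ell \subset G_\ell$; the sieve output bounds $P_{E,k}(x)$ in terms of $\sum_\ell |G_\ell \setminus \Omega_\ell|/|\Omega_\ell|$ (roughly), plus the error terms supplied by Theorem~\ref{T:LS for Frobenius}.

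Next I would carry out the local density computation. Since $E$ has no complex multiplication, by Serre's open image theorem the $\ell$-adic representations are ``almost'' independent with image ``almost'' all of $\GL_2$, so for all but finitely many $\ell$ the group $G_\ell$ equals $\GL_2(\ZZ/\ell\ZZ)$ and the representations are jointly surjective onto the product over distinct $\ell$. One then counts matrices $g \in \GL_2(\FF_\ell)$ with $\det g + 1 - \tr g \equiv 0 \pmod \ell$ (adjusting for $t_{E,k}$): this is a standard count giving $|G_\ell \setminus \Omega_\ell|/|G_\ell| \sim 1/\ell$ up to lower order terms, so that $|G_\ell|/|\Omega_\ell| = 1 + \tfrac{1}{\ell-1} + O(\ell^{-2})$ or similar. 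Assembling $\prod_{\ell \leq z}(1 + \tfrac{1}{\ell-1}+\cdots)^{-1}$ via Mertens' theorem produces the main term of size $x/(\log z)$, and the Koblitz constant $C_{E,k}$ emerges precisely as the convergent Euler product of these local factors (this is where one matches the constant introduced in \S\ref{SS:Koblitz L}). The factor-of-$24$ versus factor-of-$22$ discrepancy between the two parts comes from how large one may take the sieve parameter $z$ (equivalently the level $L = \prod_{\ell \leq z}\ell$) before the error term in Theorem~\ref{T:LS for Frobenius} overwhelms the main term: unconditionally the Chebotarev/character-sum error forces $z$ of size roughly $\log\log x$, yielding the weaker saving $1/((\log x)(\log\log x))$, while under GRH one gets $z$ of size a small power of $\log x$, yielding the full $1/(\log x)^2$. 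The numerical constants $24$ and $22$ then fall out of optimizing the implied constants in the character-sum bounds against the Mertens-type product.

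The main obstacle is bookkeeping the error term in the large sieve against the size of $z$, and in particular keeping track of the precise numerical constant. Concretely, one must: (a) verify that the finitely many ``bad'' primes $\ell$ (where the image is not all of $\GL_2$, or where the joint representation fails to surject) contribute only an $O(1)$ distortion absorbed into the $o(1)$; (b) handle the truncation error from primes $\p$ for which $|E_\p(\FF_\p)|/t_{E,k}$ is a prime $\leq z$ or has a prime factor in $(z, \sqrt{x}]$ — this is the usual ``the sieve only detects almost-primes'' issue and accounts for why we get an upper bound of the expected order rather than an asymptotic; and (c) optimize the cutoff $z$ as a function of $x$. I expect step (c), together with tracking constants through Theorem~\ref{T:LS for Frobenius} and the character sum estimates of \S\ref{S:Frobenius Sieve}, to be the delicate part; everything else is a routine but careful application of Serre's theorem, elementary matrix counting over $\FF_\ell$, and Mertens' estimates.
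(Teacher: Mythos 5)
Your overall strategy matches the paper's: encode primality of $|E_\p(\FF_\p)|/t_{E,k}$ modulo each small $\ell$ as a condition on $\rho_{E,\ell}(\Frob_\p)$, feed this into Theorem~\ref{T:LS for Frobenius}, and extract the constant $C_{E,k}$ from the local densities. You also correctly identify that the local density of ``bad'' classes in $\GL_2(\FF_\ell)$ is $\sim 1/\ell$, which is the ``small sieve'' regime. However, there are two genuine gaps that would prevent you from obtaining the stated constants $24$ and $22$.

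First, your choice of the sieve level under GRH is wrong in order of magnitude. You say GRH allows $z$ ``of size a small power of $\log x$.'' But $L(Q)$ here is of size $\sim C_{E,k}^{-1}\log Q$ (logarithmic in $Q$, not polynomial, as is typical for a small sieve), so the bound from the sieve is roughly $\frac{x/\log x}{\log Q}$. If $Q$ is a power of $\log x$ then $\log Q \asymp \log\log x$ and you only get $x/((\log x)(\log\log x))$ --- no improvement over the unconditional bound. To obtain $x/(\log x)^2$ you need $\log Q \asymp \log x$, i.e.\ $Q$ a small power of $x$: the paper takes $Q \approx x^{1/22}$, determined by balancing $\Li x$ against the GRH error term $Q^{11}x^{1/2}\log x$ coming from Theorem~\ref{T:LS for Frobenius}(ii) with $r=4$, and this exponent $1/22$ is exactly where the constant $22$ comes from. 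Similarly, unconditionally the paper takes $Q^{24}\approx \log x/(\log\log x)^2$ (forced by the character-sum estimates of Appendix~\ref{A:Character sums}), giving $\log Q \sim \frac{1}{24}\log\log x$ and hence the constant $24$; it is not $z\approx \log\log x$ itself.

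Second, obtaining $L(Q) = C_{E,k}^{-1}\log Q + o(\log Q)$ is more delicate than an appeal to Mertens, because $L(Q)$ is a \emph{truncated} sum $\sum_{D\in\Zz(Q)}\prod_{m\in D}\frac{1-\delta_m}{\delta_m}$ over subsets $D$ with $\prod_{m\in D}m\leq Q$, not a full Euler product. The paper packages these coefficients into a Dirichlet series $h(s)=\sum b_n n^{-s}$ with $L(Q)=\sum_{n\leq Q}b_n$, relates $h(s)$ to $g(s)\zeta(s+1)^{-1}$ with a convergent Euler product correction, identifies a simple pole at $s=0$ with residue $C_{E,k}^{-1}$, and then applies the Wiener--Ikehara Tauberian theorem and partial summation. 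Your proposal conflates the convergent Euler product $C_{E,k}^{-1}$ with the growth of the partial sum $L(Q)$; the Tauberian step is precisely what makes the constant come out exactly and is not covered by Mertens' estimates. Also, the choice of the auxiliary modulus $M$ with $v_\ell(t_{E,k})<v_\ell(M)$ (and bundling all $\ell\mid M$ into a single node of the sieving set $\Lambda$) is needed both for independence of the representations and for the constant $C_{E,k}$ to be well-defined; ``adjusting for $t_{E,k}$'' glosses over this. The concern you raise about primes $\p$ with a prime factor in $(z,\sqrt{x}]$ is not an issue here: the sieve only ever produces an upper bound for a set $\Ss(x)\supseteq\calS(x)$, and the finitely many $\p$ with $N(\p)$ small enough that $|E_\p(\FF_\p)|/t_{E,k}\leq Q$ are removed by the Hasse bound at the start.
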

\begin{remark}
Suppose $E/\QQ$ is a non-CM elliptic curve with $t_{E,\QQ}=1$.  In \cite{Cojocaru-Koblitz}, Cojocaru proves that $P_{E,\QQ}(x) \ll {x}/{(\log x)^2}$ assuming GRH\footnote{More precisely, Cojocaru needs only the $\theta$-quasi GRH  for some $1/2\leq \theta <1$; i.e., no Dedekind zeta function has a zero with real part greater that $\theta$.   If we assume only the $\theta$-quasi GRH, then our methods yield Theorem~\ref{T:Koblitz bound}(ii) with $22$ replaced by a larger constant depending on $\theta$.}.   The implicit constant depends on the conductor of $E$, but the exact dependency is not worked out.  Cojocaru's bound is proved using the Selberg sieve.  

Unconditionally, Cojocaru proves $P_{E,\QQ}(x) \ll x/((\log x) (\log\log\log x))$.  Though our unconditional bound is stronger, it is still not good enough to prove the analogue of Brun's theorem concerning the convergence of the sum of the reciprocal of twin primes.  More precisely, it is unknown whether the sum
$\sum_{p, |E_p(\FF_p)| \text{ prime}} p^{-1}$
is convergent.  Using our upper bound and partial summation, we are only able to show that the sum has very slow growth:
\[\sum_{p\leq x, |E_p(\FF_p)| \text{ prime}} \frac{1}{p} \ll \log\log\log x.\]
\end{remark}

\subsection{Application: Elliptic curves and thin sets}
\subsubsection{Thin sets}  We recall the notion of a thin set, for more details see \cite{SerreTopics}*{\S3} or \cite{SerreMordellWeil}*{\S 9}.  Let $n$ be a positive integer.
\begin{defn}
A set $\Omega \subseteq \QQ^n=\AA^n(\QQ)$ is \defi{thin} if there exists a  variety $X$ defined over $\QQ$ and a morphism $\pi\colon X \to \AA^n_\QQ$ with the following properties:
\begin{romanenum}
\item $\Omega \subseteq \pi(X(\QQ)),$
\item The fibre of $\pi$ over the generic point of $\AA^n_\QQ$ is finite and $\pi$ has no rational section  defined over $\QQ$.
\end{romanenum}
\end{defn}
\noindent There are two special types of thin sets:
\begin{description}
\item[Type $1$]  $\Omega$ is contained in a proper closed subvariety of $\AA^n_\QQ$.
\item[Type $2$]  $\Omega\subseteq \pi(X(\QQ))$ where $X$ is an irreducible variety over $\QQ$ of dimension $n$ and $\pi\colon X\to \AA^n_\QQ$ is a dominant morphism of degree $d\geq 2$.
\end{description}
Every thin subset of $\QQ^n$ is contained in a finite union of thin sets of Type $1$ and Type $2$.  

\subsubsection{Bounds}
Let $E$ be an elliptic curve defined over a number field $k$.  For each prime $\p\in\Sigma_k$, let $a_\p(E)$ be the corresponding \defi{trace of Frobenius}.  If $E$ has good reduction at $\p$, then $a_\p(E)=N(\p)+1 - |E_\p(\FF_\p)|$.

\begin{thm} \label{T:LS for elliptic curves}
Let $E_1,\ldots, E_n$ be elliptic curves without complex multiplication defined over a number field $k$ and assume that the $E_i$ are pairwise non-isogenous over $\kbar$.  Let $\Omega$ be a thin subset of $\ZZ^{n+1}$.  Then 
\begin{align*}
|\{\p \in \Sigma_k(x) :  (a_\p(E_1),\ldots,a_\p(E_n),N(\p))\in \Omega \}| &\ll \begin{cases}
     \displaystyle\frac{x(\log\log x)^{1+1/(9n+3)}}{(\log x)^{1+1/(18n+6)} } &  \\[1.0em]
     \displaystyle x^{1-1/(14n+8)} (\log x)^{2/(7n+4)} & \text{ assuming GRH.}
\end{cases}\\
\intertext{If $\Omega$ is a thin set of Type $1$, then}
|\{\p \in \Sigma_k(x) :  (a_\p(E_1),\ldots,a_\p(E_n),N(\p))\in \Omega \}| &\ll \begin{cases}
     \displaystyle \frac{x(\log\log x)^{2/(3n+1)}(\log\log\log x)^{1/(3n+1)}}{(\log x)^{1+1/(3n+1)}} &  \\[1.0em]
     \displaystyle \frac{x^{1-1/(6n+2)}}{(\log x)^{1 - 2/(3n+1)}}  \qquad\text{ assuming GRH.} &
     \end{cases}
\end{align*}
The implicit constants depend on the $E_i$, $k$, and $\Omega$.
\end{thm}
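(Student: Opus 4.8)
The plan is to deduce this from the large sieve for Frobenius, Theorem~\ref{T:LS for Frobenius}, applied to the system of mod‑$\ell$ representations attached to $E_1\times\cdots\times E_n$. First I would reduce to the two basic cases: every thin subset of $\ZZ^{n+1}$ is contained in a finite union of thin sets of Type~$1$ and Type~$2$, and (for $x$ large) the stated Type~$1$ bound is majorized by the general bound; so it suffices to prove the Type~$1$ bound when $\Omega$ is of Type~$1$ and the general bound when $\Omega$ is of Type~$2$, the general‑bound case of an arbitrary thin set then following by summing over the pieces.

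For each rational prime $\ell$ let $\rho_\ell=\rho_{E_1,\ell}\times\cdots\times\rho_{E_n,\ell}\colon\Gal(\kbar/k)\to\prod_i\GL_2(\FF_\ell)$ be the action on $\prod_iE_i[\ell]$, with image $G_\ell$. The $\rho_\ell$ form an independent, strictly compatible system in the sense of \S\ref{S:Frobenius Sieve}: strict compatibility because $a_\p(E_i)\in\ZZ$ is independent of $\ell$ and $\det\rho_{E_i,\ell}(\Frob_\p)\equiv N(\p)\pmod\ell$; independence because Serre's open image theorem makes each $\rho_{E_i,\ell}$ surjective for all but finitely many $\ell$, and a Goursat‑lemma argument using the simplicity of $\PSL_2(\FF_\ell)$ together with the fact that an isomorphism between two of the projective mod‑$\ell$ representations forces a $\kbar$‑isogeny once $\ell$ is large shows $G_\ell=\{(g_i):\det g_1=\cdots=\det g_n\}$ for all $\ell$ outside a finite set $T$. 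For a prime $\p$ of good reduction lying over $\ell\notin T$ one has $(a_\p(E_1),\dots,a_\p(E_n),N(\p))\bmod\ell=(\tr g_1,\dots,\tr g_n,\det g_1)$ with $(g_i)=\rho_\ell(\Frob_\p)$, so if $(a_\p(E_1),\dots,a_\p(E_n),N(\p))\in\Omega$ then $\rho_\ell(\Frob_\p)\in\Omega_\ell':=\{(g_i)\in G_\ell:(\tr g_1,\dots,\tr g_n,\det g_1)\in\Omega\bmod\ell\}$. Hence the quantity to bound is at most $|\{\p\in\Sigma_k(x):\rho_\ell(\Frob_\p)\in\Omega_\ell'\text{ for all }\ell\in\Lambda\}|$ for any set $\Lambda$ of rational primes with $\Lambda\cap T=\emptyset$, which is exactly the shape handled by Theorem~\ref{T:LS for Frobenius}.

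The crux is estimating the densities $\delta_\ell:=|\Omega_\ell'|/|G_\ell|$. An elementary count shows the number of elements of $\GL_2(\FF_\ell)$ with prescribed trace and determinant is $\ell^2+O(\ell)$, uniformly; hence the map $(g_i)\mapsto(\tr g_1,\dots,\tr g_n,\det g_1)$ from $G_\ell$ onto $(\ZZ/\ell)^{n+1}$ is equidistributed up to a factor $1+O(1/\ell)$, so $\delta_\ell=|\Omega\bmod\ell|\cdot\ell^{-(n+1)}\cdot(1+O(1/\ell))$. If $\Omega$ is of Type~$1$ then $\Omega\subseteq V(\QQ)$ for a proper closed $V\subsetneq\AA^{n+1}_\QQ$, and reducing a model of $V$ over $\ZZ[1/M]$ gives $\Omega\bmod\ell\subseteq V(\FF_\ell)$, so $|\Omega\bmod\ell|\ll\ell^n$ by the Lang--Weil dimension bound and $\delta_\ell\ll1/\ell$; here one sieves with $\Lambda=\{\ell\notin T\}$ and the weight $(1-\delta_\ell)/\delta_\ell$ grows like $\ell$. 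If $\Omega$ is of Type~$2$, spread $X$ and $\pi$ over $\ZZ[1/M]$ with $\pi\colon X\to\AA^{n+1}$ finite of degree $d\ge2$ and $X$ irreducible; an integrality argument forces $\Omega\bmod\ell\subseteq\pi(X(\FF_\ell))$ for $\ell\nmid M$, and for a positive‑density set $\Lambda_0$ of primes $\ell$ the image $\pi(X(\FF_\ell))$ omits at least a fixed positive proportion $c>0$ of $(\ZZ/\ell)^{n+1}$. I would prove this last point by applying the function‑field Chebotarev/Lang--Weil estimate to the Galois closure $\widetilde{X}$ of $X/\AA^{n+1}$, using Jordan's theorem that the transitive monodromy group of $\pi$ contains a fixed‑point‑free element, and Chebotarev over $\QQ$ to exhibit primes $\ell$ whose Frobenius hits that conjugacy class. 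For $\ell\in\Lambda_0\setminus T$ this yields $\delta_\ell\le1-c+O(1/\ell)$, so $(1-\delta_\ell)/\delta_\ell$ is bounded below by a positive constant.

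Finally I would feed these densities into Theorem~\ref{T:LS for Frobenius}: it bounds the count by, essentially, $\Delta(x)/H(Q,\Lambda)$, where $H(Q,\Lambda)=\sum\prod_{\ell\mid m}(1-\delta_\ell)/\delta_\ell$ over squarefree $m\le Q$ with all prime factors in $\Lambda$, $Q$ is the sieve level, and the large sieve constant $\Delta(x)$ incorporates the (unconditional, or GRH‑conditional) effective Chebotarev error over the splitting fields $k(\prod_iE_i[m])$, whose degrees are $\asymp m^{3n+1}$. In the Type~$1$ case $\prod_{\ell\mid m}(1-\delta_\ell)/\delta_\ell\gg m$ so $H(Q)\gg Q^2$, with $Q$ forced to be a small power of $\log x$ unconditionally (keeping the field degrees $\ll\log x$) and permitted to be a small power of $x$ under GRH; in the Type~$2$ case $H(Q,\Lambda_0)\gg Q(\log Q)^{\kappa}$ for a suitable $\kappa$ by Selberg--Delange. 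Optimizing $Q$ and tracking the resulting powers of $\log x$ and $\log\log x$ produces the four displayed estimates, the constants $3n+1$, $6n+2$, $7n+4$, $9n+3$, $14n+8$, $18n+6$ coming out of this optimization. I expect the main obstacle to be the Type~$2$ density estimate: proving, uniformly in $\ell$ and for a positive density of $\ell$, that the reduction mod $\ell$ of a Type~$2$ thin set misses a positive proportion of residues requires combining the good‑reduction argument, the Lang--Weil estimate on the Galois closure, and the permutation‑group input; the remaining steps are essentially bookkeeping around the black‑box large sieve.
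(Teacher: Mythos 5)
Your treatment of the general (Type~$2$) case follows the paper in outline: reduce to the ``positive-density set of primes $\ell$ at which $\Omega_\ell$ misses a fixed proportion'' statement, turn that into upper bounds $\delta_\ell\le c'<1$ on $|C_\ell|/|\GG(\ZZ/\ell\ZZ)|$ via the count of matrices with prescribed trace and determinant, and feed the densities into Theorem~\ref{T:LS for Frobenius}. The paper gets the density input as a one-line citation to Serre (\cite{SerreMordellWeil}*{\S13~Theorem~5}), packaged as Lemma~\ref{L:reduction of thin sets}(i), whereas you propose to reprove it via a spreading-out, the Lang--Weil estimate on the Galois closure, Jordan's fixed-point-free-element lemma, and Chebotarev; that is workable but is exactly the content of Serre's theorem and would need the finite-intersection/compositum argument to get one common $\Lambda_0$ across all pieces of the decomposition.

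The Type~$1$ case is where the proposal breaks. You run it through the large sieve, asserting $\prod_{\ell\mid m}(1-\delta_\ell)/\delta_\ell\gg m$ and $L(Q)\gg Q^2$ and claiming the optimization then produces the exponents $3n+1$ and $6n+2$. It does not. In Theorem~\ref{T:LS for Frobenius}(ii) the GRH error term is controlled by $\max_{D'}|G_{D'}|\cdot\sum_D|G_D^\sharp||G_D|$, which with $|G_D|\ll Q^{3n+1}$ and $|G_D^\sharp|\ll Q^{n+1}$ is $\gg Q^{7n+4}$; even after dividing by $L(Q)\gg Q^2$ and optimizing $Q$ you obtain an exponent of saving governed by $7n+4$ (or worse), not $3n+1$. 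The obstruction is structural: the large sieve inequality of Proposition~\ref{P:LSI} estimates cross terms $\chi(\rho_D(\p))\overline{\chi'(\rho_{D'}(\p))}$, so the character sum error in Lemma~\ref{L:gory lemma}(ii) carries a factor $|G_D||G_{D'}|$ — quadratic in the group size — whereas effective Chebotarev for a single fixed prime $\ell$ (Proposition~\ref{P:conditional CDT}(i)) gives an error only $|C_\ell|x^{1/2}\log x\ll\ell^{3n}x^{1/2}\log x$. The paper therefore, as its Remark after Theorem~\ref{T:LS for elliptic curves} explicitly notes, does \emph{not} use the large sieve for Type~$1$: it sieves by one well-chosen prime $\ell$, bounds $\pi_{C_\ell}(x,L_\ell/k)\ll\frac{1}{\ell}\frac{x}{\log x}+\ell^{3n}x^{1/2}\log x$ (resp.\ uses Proposition~\ref{P:unconditional CDT}(ii) unconditionally), and optimizes $\ell$ alone. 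That is the source of the exponents $3n+1$, $6n+2$ and of the triple logarithm in the unconditional Type~$1$ bound. To repair your argument for this case you should discard the large sieve machinery entirely and argue directly with a single prime and the effective Chebotarev density theorem of Appendix~\ref{A:Character sums}.
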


\begin{remark}
\begin{romanenum}
\item  
Absorbing the extra factors, our theorem gives bounds of the form $x/(\log x)^{1+\gamma}$ (and $x^{1-\delta}$ under GRH) with explicit values $\gamma,\delta >0$.

Note that while the implicit constant of Theorem~\ref{T:LS for elliptic curves} depends on the thin set $\Omega$, the function of $x$ does not.  It is natural to ask what the optimal function of $x$ could be?  The example of \S\ref{SS:toy example} suggests that the general bound is at best $x^{3/4}/(\log x)$.

\item
Theorem \ref{T:LS for elliptic curves} was inspired by published remarks of Serre.  Remark 2 of \cite{SerreTopics}*{\S3.6} states (but does not prove) the unconditional case of the theorem for a single elliptic curve defined over $\QQ$ (and does not describe the exponent).  A similar remark for the Lang-Trotter conjecture of Example~\ref{R:LT2} is given in \cite{SerreCheb}*{\S8.2 Remark 4}.

\item
Theorem~\ref{T:LS for elliptic curves} in the case of a thin set of Type $1$ is not proven using the large sieve.  In this case, instead of sieving by many primes it is better to sieve by a single well chosen prime.
\end{romanenum}
\end{remark}

Let us consider a few special cases of Theorem~\ref{T:LS for elliptic curves}.
\begin{example} \label{R:LT2}
Let $E/\QQ$ be a non-CM elliptic curve and $K$ an imaginary quadratic extension of $\QQ$.  For each prime $p$ of good reduction of $E$, let $\pi_{p}$ be the Frobenius endomorphism of $E_p$ (it is a root of $t^2-a_p(E)t+p$). Define
\[\Pi_{E,K}(x) = |\{ p \leq x: E \text{ has good reduction at $p$, }  \QQ(\pi_p)\cong K\}|.\]
The \emph{Lang-Trotter conjecture} \cite{Lang-Trotter} predicts that there is a constant $C>0$, depending on $E$ and $K$, such that
\[ 
\Pi_{E,K}(x) \sim C \frac{x^{1/2}}{\log x} 
\]
as $x\to\infty$ (another conjecture of Lang and Trotter will be described in \S\ref{S:Lang-Trotter}).  Let $D_K$ be the discriminant of $K$ and define 
\[
\Omega = \{ (a,b)\in \ZZ^2 : a^2-4b = D_Kc^2 \text{ for some }c\in\QQ^\times\}.
\]
Define $X=\Spec(\QQ[x,y,z]/(x^2-4y - D_Kz^2))$ and the morphism
\begin{align*}
\pi\colon X &\to \Spec \QQ[x,y] = \AA^2_\QQ, \quad (x,y,z) \mapsto (x,y).
\end{align*}
The set $\Omega\subseteq \QQ^2$ is thin of Type 2 since $\Omega\subseteq \pi(X(\QQ))$, $X$ is irreducible of dimension $2$, and $\pi$ is a dominant map of degree $2$.  We have $\Pi_{E,K}(x) = |\{ p\leq x:  (a_p(E),p) \in\Omega \}|+O(1),$ and by Theorem~\ref{T:LS for elliptic curves}
\[ 
\Pi_{E,K}(x) \ll \begin{cases}
     x(\log\log x)^{13/12}/(\log x)^{25/24}  &  \\
     x^{21/22} (\log x)^{2/11} & \text{ assuming GRH. }
\end{cases}
\]
Better bounds for this particular example can be found in \cite{CojocaruDavid-LT2} or  \cite{Zywina-LT2}.
\end{example}

\begin{example}
Let $E$ and $E'$ be non-CM elliptic curves over a number field $k$ which are non-isogenous over $\kbar$.  Define the set $\Omega=\{(a,b,c)\in\ZZ^3: a=b\}$ which is thin of Type 1.  Theorem~\ref{T:LS for elliptic curves} becomes
\[
|\{\p\in\Sigma_k(x):  a_\p(E)=a_\p(E') \}| 
\ll  \begin{cases}
     \displaystyle \frac{x(\log\log x \cdot \log\log\log x)^{1/4}}{(\log x)^{9/8}} &  \\[1.0em]
     \displaystyle \frac{x^{13/14}}{(\log x)^{5/7}} & \text{ assuming GRH.} 
     \end{cases}
\]
This gives an explicit version of a theorem of Faltings which shows that the values $a_\p(E)$ determine the isogeny class of $E$. 
(That such a theorem can be deduced is not surprising given that work of Faltings is needed in the proof to describe the image of the corresponding Galois representations.)
\end{example}

\subsection{Application: Abelian varieties and Galois groups of characteristic polynomials} 
\label{SS:Chavdarov intro}
\begin{defn}
Fix a polynomial $P(T) \in \QQ[T]$.  The \defi{Galois group} of $P$ is defined to be $\Gal(P):=\Gal(L/\QQ)$ where $L$ is the splitting field of $P(T)$ in a fixed algebraic closure $\Qbar$ of $\QQ$.
\end{defn}
\subsubsection{Abelian varieties over finite fields}
\label{SSS:Abelian varieties over finite fields}
Let $A$ be an abelian variety of dimension $g$ defined over a finite field $\FF$ with $q$ elements.  Let $\pi_A$ be the $q$-power Frobenius endomorphism of $A$.  
There is a unique polynomial $P_A(T)\in\ZZ[T]$ of degree $2g$ such that the isogeny $r-\pi_A$ of $A$ has degree $P_A(r)$ for $r\in \ZZ$.  The polynomial $P_A(T)$  satisfies the functional equation,
\[
P_A(q/T)/(q/T)^g = P_A(T)/T^g.
\]
From the functional equation we find that if $\pi$ is a root of $P_A(T)$, then so is $q/\pi$.  Let $\pi_1,\dots,\pi_{2r}$ be the distinct non-rational roots of $P_A(T)$ in $\Qbar$; we may assume that they are numbered so that $\pi_{2i-1}\pi_{2i}=q$ or $\{\pi_{2i-1},\pi_{2i}\}=\{\pm\sqrt{q}\}$ for $1\leq i\leq r$.  The Galois group $\Gal(P_A(T))$ acts on the roots of $P_A(T)$ and induces an action on the $r$ pairs $\{\pi_{1},\pi_2\},\dots, \{\pi_{2r-1},\pi_{2r}\}$.  

\begin{defn}
Let $W_{2r}$ be the group of permutations of $\{1,\dots,2r\}$ which induce a permutation of the set $\big\{\{1,2\},\{3,4\},\dots,\{2r-1,2r\} \big\}$.
\end{defn}
The numbering of the $\pi_i$'s gives an injective homomorphism $\Gal(P_A(T)) \hookrightarrow W_{2r}$.  In particular, we find that $\Gal(P_A(T))$ is isomorphic to a subgroup of $W_{2g}$.  Thus the largest possible Galois group for the polynomial $P_A(T)$ is $W_{2g}$.  The group $W_{2g}$ has order $2^g g!$ and is isomorphic to the Weyl group of $\Sp(2g)$.

\subsubsection{Explicit Chavdarov}
Fix an abelian variety $A$ defined over a number field $k$ and let $S_{A}\subseteq \Sigma_k$ be the set of prime ideals for which $A$ has bad reduction.  For each $\p\in\Sigma_k-S_A$, let $A_\p$ be the abelian variety over $\FF_\p$ obtained by reduction modulo $\p$.  For an integer $n\geq 1$, let $\FF_\p^{(n)}$ be the degree $n$ field extension of $\FF_\p$ and let $A_\p \times \FF_\p^{(n)}$ be the base extension of $A_\p$ by $\FF_\p^{(n)}.$

We define $\Pi_A$ to be the set of $\p\in\Sigma_k-S_A$ such that
\[
\Gal\!\big(P_{A_\p\times {\FF_\p^{(n)}}}(T)\big)\not\cong W_{2g}
\]  
for some $n\geq 1$.  The following result of Chavdarov \cite{Chavdarov}*{Corollary~6.9} shows that $\Pi_A$ has natural density $0$ for certain abelian varieties.  Define $\Pi_A(x)=\Pi_A\cap \Sigma_k(x)$. 
\begin{thm}[Chavdarov]\label{T:Chavdarov} 
Let $A$ be an abelian variety of dimension $g$ defined over a number field $k$.  Suppose that $g$ is either $2$, $6$ or odd,  and $\End_{\kbar}(A)=\ZZ$.  Then 
\[
\lim_{x\to \infty} {|\Pi_A(x)|}/{|\Sigma_k(x)|} = 0.
\]
In other words, the primes $\p\in\Sigma_k-S_A$ for which $\Gal\!\big(P_{A_\p\times {\FF_\p^{(n)}}}(T)\big)\not\cong W_{2g}$ for all $n\geq 1$, have natural density $1$.
\end{thm}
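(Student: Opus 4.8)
The plan is to deduce Theorem~\ref{T:Chavdarov} from the Frobenius large sieve (Theorem~\ref{T:LS for Frobenius}) applied to the independent system of mod-$\ell$ Galois representations attached to $A$. First I would set up the group theory: for each prime $\ell$, the Galois action on the $\ell$-torsion $A[\ell]$, together with the Weil pairing, gives a representation $\rho_\ell\colon \Gal(\kbar/k)\to \GSp(2g,\FF_\ell)$. The key input is that for $g\in\{2,6,\text{odd}\}$ and $\End_{\kbar}(A)=\ZZ$, a theorem of Serre (combined with the work of Faltings and the classification of such images) shows that the image of $\rho_\ell$ contains $\Sp(2g,\FF_\ell)$ for all sufficiently large $\ell$, and moreover the product representation $\prod_\ell \rho_\ell$ over large $\ell$ has image as large as the Weil-pairing constraint allows --- i.e. the system is \emph{independent} in the sense required by the sieve. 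This is the step I expect to be the main obstacle, or at least the step resting on the deepest external machinery.

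Next I would translate the condition ``$\Gal(P_{A_\p\times\FF_\p^{(n)}}(T))\not\cong W_{2g}$ for some $n\geq 1$'' into a condition on conjugacy classes in $\Sp(2g,\FF_\ell)$. The characteristic polynomial of $\rho_\ell(\Frob_\p)$ reduces $P_{A_\p}(T)$ mod $\ell$; a standard argument (going back to Chavdarov, using that $W_{2g}$ is the Weyl group of $\Sp(2g)$ and that a subgroup of $W_{2g}$ is all of $W_{2g}$ iff it is ``big enough'' modulo finitely many primes) shows that if $P_{A_\p}(T)$ has \emph{big} Galois group and its reduction mod some auxiliary prime $\ell$ is separable with appropriate factorization type, then $\Gal(P_{A_\p\times\FF_\p^{(n)}}(T))\cong W_{2g}$ for every $n$. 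Concretely, one identifies a subset $\Omega_\ell\subseteq \Sp(2g,\FF_\ell)$, stable under conjugation, consisting of elements whose characteristic polynomial has Galois group $W_{2g}$ (e.g. elements acting as a single $2g$-cycle-like element of $W_{2g}$, which exist and are regular), with density $|\Omega_\ell|/|\Sp(2g,\FF_\ell)|$ bounded below by a positive constant independent of $\ell$. Then $\p\in\Pi_A$ (for $\p$ outside a fixed finite set) forces $\rho_\ell(\Frob_\p)\notin\Omega_\ell$ for \emph{all} $\ell$ in our sieving range.

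With this in place, the sieve does the rest: I would apply Theorem~\ref{T:LS for Frobenius} with sieving set the large primes $\ell\leq z$ (for a parameter $z=z(x)$ to be optimized), sieving out at each $\ell$ the conjugacy-invariant subset $\Omega_\ell$. Since the density of each $\Omega_\ell$ is bounded below uniformly, the sieve yields $|\Pi_A(x)|\ll |\Sigma_k(x)|/H$ where $H$ grows with the number of sieving primes (plus the usual error term from the character sums / effective Chebotarev, which is where GRH or Weil-type bounds enter). Choosing $z$ a slowly growing function of $x$ makes $H\to\infty$, hence $|\Pi_A(x)|/|\Sigma_k(x)|\to 0$. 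I would remark that unlike the quantitative Theorems~\ref{T:Koblitz bound} and~\ref{T:LS for elliptic curves}, here we do not need a sharp rate, so even the crudest admissible choice of parameters suffices; the qualitative density-zero statement falls out as soon as the image of $\prod_{\ell\leq z}\rho_\ell$ is controlled, which is exactly the independence/surjectivity input cited above.
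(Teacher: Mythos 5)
The paper does not itself prove Theorem~\ref{T:Chavdarov}: it is stated with attribution to Chavdarov and cited from \cite{Chavdarov}. What the paper proves is the explicit refinement Theorem~\ref{T:Effective Chavdarov}, in \S\ref{S:Chavdarov}, which of course implies the qualitative statement. Your overall strategy --- Serre's open-image input for independence, reduction to mod-$\ell$ factorization-type conditions, then the Frobenius large sieve of Theorem~\ref{T:LS for Frobenius} --- matches that section in spirit. But the central reduction step has a genuine gap.

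You propose a single conjugation-stable subset $\Omega_\ell\subseteq\Sp_{2g}(\FF_\ell)$ and claim that $\p\in\Pi_A$ forces $\rho_\ell(\Frob_\p)\notin\Omega_\ell$ for \emph{all} $\ell$. No such set exists. The condition $\Gal(P_{A_\p}(T))\not\cong W_{2g}$ means, by Jordan's lemma as packaged in Lemma~\ref{L:full Galois} and Lemma~\ref{L:W group theory}, that $\Gal(P_{A_\p}(T))$ misses \emph{some} conjugacy class of $W_{2g}$; this is an existential statement over the finitely many classes of $W_{2g}$, and the class that is missed can vary with $\p$. If you take $\Omega_\ell$ to be the elements with irreducible characteristic polynomial (cycle type $(2g)$), then a prime $\p$ for which $\Gal(P_{A_\p}(T))$ is a proper subgroup of $W_{2g}$ containing a $2g$-cycle still lies in $\Pi_A$, yet $\rho_\ell(\Frob_\p)\in\Omega_\ell$ for a positive proportion of $\ell$, so your sieve does not remove it. Likewise, the description of $\Omega_\ell$ as consisting of elements ``whose characteristic polynomial has Galois group $W_{2g}$'' is not meaningful over $\FF_\ell$ (Galois groups of polynomials over a finite field are cyclic); more to the point, there is no local condition at a single prime whose truth certifies the global identity $\Gal(P_{A_\p}(T))=W_{2g}$.

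The repair is precisely the finite decomposition used in the paper's Lemma~\ref{L:broken up bounds}: cover $\Pi_A(x)$ by the finitely many sets $\calS_1(x),\calS_2(x),\calS_3(x)$ and $\calS_\sigma(x)$, each encoding the absence of one specific cycle type in $P_{A_\p}$ or $Q_{A_\p}$, plus the $P^{(s)}$-separability condition (Lemma~\ref{L:geometric criterion}) needed to control $\Gal(P_{A_\p\times\FF_\p^{(n)}})$ for all $n\geq 1$ simultaneously. Each piece carries its own conjugation-stable set $C_\ell^i\subseteq\GSp_{2g}(\FF_\ell)$ of relative density uniformly bounded away from $1$ (Lemma~\ref{L:Chavdarov bounds}), and the sieve is applied separately to each piece; the union being finite, the conclusion follows. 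Also note the paper works with $\GSp_{2g}$ rather than $\Sp_{2g}$, since it is the similitude group that receives $\rho_{A,\ell}$, though this is a minor adjustment compared with the missing decomposition.
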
 

The following theorem, which will be proven with the large sieve, gives an explicit version of Chavdarov's theorem.
\begin{thm} \label{T:Effective Chavdarov}
Let $A$ be an abelian variety of dimension $g$ defined over a number field $k$.  Suppose that $g$ is either $2$, $6$ or odd,  and $\End_{\kbar}(A)=\ZZ$.  Then
\begin{align*}
|\Pi_A(x)| &\ll 
\frac{x(\log\log x)^{1+1/(6g^2+3g+3)}}{(\log x)^{1+1/(12g^2+6g+6)} },
\intertext{and assuming the Generalized Riemann Hypothesis}
|\Pi_A(x)| &\ll 
x^{1-1/(8g^2+6g+8)} (\log x)^{2/(4g^2+3g+4)}.
\end{align*}
The implicit constants depend on $A/k$.
\end{thm}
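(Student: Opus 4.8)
The plan is to deduce Theorem~\ref{T:Effective Chavdarov} from the large sieve for Frobenius (Theorem~\ref{T:LS for Frobenius}) applied to the mod-$\ell$ representations attached to $A$, after a group-theoretic reduction due to Chavdarov.

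\textbf{Step 1: the Galois representations.} For a prime $\ell$ and $\p\in\Sigma_k-S_A$ with $\p\nmid\ell$, the representation $\rho_\ell\colon G_k\to\GSp_{2g}(\FF_\ell)$ on $A[\ell]$ is unramified at $\p$, and $\rho_\ell(\Frob_\p)$ is a well-defined conjugacy class whose characteristic polynomial is $P_{A_\p}(T)\bmod\ell$. Since $\End_{\kbar}(A)=\ZZ$ and $g$ is $2$, $6$ or odd, a theorem of Serre (cf.\ \cite{Chavdarov}) shows that the image of the adelic representation $G_k\to\GSp_{2g}(\Zhat)$ is open; hence there is a finite set of primes $\mathcal E$ outside of which $\rho_\ell$ is surjective, the family $\{\rho_\ell\}_{\ell\notin\mathcal E}$ is independent in the sense required by the large sieve, $[k(A[d]):k]=\prod_{\ell\mid d}|\GSp_{2g}(\FF_\ell)|\asymp d^{\,2g^2+g+1}$ for squarefree $d$, and $\log|\mathrm{disc}_{k(A[d])}|\ll_{A/k}[k(A[d]):k]\,(\log d+1)$.

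\textbf{Step 2: reduction to a Frobenius sieve.} Here I would invoke Chavdarov's analysis \cite{Chavdarov}: for all but finitely many $\p\in\Sigma_k-S_A$ one has $\p\in\Pi_A$ precisely when $\Gal\!\big(P_{A_\p}(T)\big)\not\cong W_{2g}$, the point being that the full Weyl group at $n=1$ forces it for every $n\geq 1$ because the roots of $P_{A_\p}$ are then in sufficiently general position. The group theory needed is a finite list $\tau_1,\dots,\tau_m$ of conjugacy classes of $W_{2g}$ (for instance the classes of a Coxeter element, of a transposition of a single conjugate pair, and of a transposition of two pairs) with the property that the only subgroup of $W_{2g}$ meeting all of $\tau_1,\dots,\tau_m$ is $W_{2g}$ itself. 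Since reduction modulo a prime of good reduction is injective on decomposition groups, the image of $\Gal(P_{A_\p})$ in $W_{2g}$ contains an element with the same cycle type as $\rho_\ell(\Frob_\p)$ for every $\ell\notin\mathcal E$ with $\p\nmid\ell$; so if $\Gal(P_{A_\p})\not\cong W_{2g}$ there is an index $j=j(\p)$ with $\rho_\ell(\Frob_\p)$ never of cycle type $\tau_j$. Writing $\Omega_\ell^{(j)}\subseteq\GSp_{2g}(\FF_\ell)$ for the conjugation-stable set of matrices whose characteristic polynomial does not have factorization type $\tau_j$, this gives, for any set $\mathcal L$ of primes disjoint from $\mathcal E$,
\[
|\Pi_A(x)|\ \le\ \sum_{j=1}^{m}\big|\{\p\in\Sigma_k(x):\rho_\ell(\Frob_\p)\in\Omega_\ell^{(j)}\text{ for all }\ell\in\mathcal L\}\big|+O(1).
\]

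\textbf{Step 3: the density estimate, the sieve, and the obstacle.} The analytic input is a bound, uniform in $\ell\notin\mathcal E$ and in $j$, of the form
\[
1-\frac{|\Omega_\ell^{(j)}|}{|\GSp_{2g}(\FF_\ell)|}=\frac{\big|\{M\in\GSp_{2g}(\FF_\ell):\text{type }\tau_j\}\big|}{|\GSp_{2g}(\FF_\ell)|}\ \geq\ \delta_g>0,
\]
obtained by counting regular semisimple conjugacy classes with a prescribed symplectically admissible characteristic polynomial of type $\tau_j$: there are $\asymp\ell^{\,g+1}$ such polynomials, each accounting for $O(1)$ classes of size $\asymp|\GSp_{2g}(\FF_\ell)|/\ell^{\,g+1}$. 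Feeding this estimate together with the index and discriminant bounds of Step~1 into Theorem~\ref{T:LS for Frobenius}, and optimizing the cutoff defining $\mathcal L$ (of size a constant times $\log x$ unconditionally, a small power of $x$ under GRH), yields for each $j$ a bound of exactly the asserted shape, with denominators $6(2g^2+g+1)=12g^2+6g+6$ and $8g^2+6g+8$; summing over the $m=O_g(1)$ indices only affects the implied constant. The sieving itself is then mechanical. The real work is in the two inputs, and I expect the main obstacle to be making Chavdarov's reduction precise — showing that $\Gal(P_{A_\p})\cong W_{2g}$ rules out the degeneracies (coincidences among $n$-th powers of the roots of $P_{A_\p}$, or ratios of roots that are roots of unity) which could shrink the Galois group after base change to $\FF_\p^{(n)}$ — together with the uniform conjugacy-class count giving $\delta_g>0$, which requires checking that each generating type $\tau_j$ is realized by a symplectically admissible polynomial over every $\FF_\ell$ with $\ell\notin\mathcal E$.
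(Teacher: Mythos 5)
Your overall plan — set up the mod-$\ell$ representations via Serre's open-image theorem, use a union bound over a finite list of conjugacy conditions that detect failure of $W_{2g}$, estimate the densities using Chavdarov's counting, and feed $r=2g^2+g+1$ and $|\GSp_{2g}(\FF_\ell)^\sharp|\ll\ell^{g+1}$ into Theorem~\ref{T:LS for Frobenius} — is essentially the paper's strategy, and your exponent bookkeeping ($6r=12g^2+6g+6$ unconditionally, $8g^2+6g+8$ and $4g^2+3g+4$ under GRH) matches.

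The gap is in Step 2, and you have in fact mislocated the difficulty you raise at the end. You assert that for all but finitely many $\p$ one has $\p\in\Pi_A$ iff $\Gal(P_{A_\p})\not\cong W_{2g}$, on the grounds that full Weyl group at $n=1$ forces it for every $n$. That implication is not proved in the paper and is unlikely to be true on its own: having $\Gal(P_{A_\p})\cong W_{2g}$ does \emph{not} by itself rule out two Frobenius eigenvalues whose ratio is a root of unity, which is exactly what can collapse $\Gal(P_{A_\p\times\FF_\p^{(n)}})$. The paper's resolution is different from the one you sketch at the end: rather than trying to derive the nondegeneracy from the Galois group condition, it treats it as an \emph{additional sieving condition}. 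Concretely, it invokes Chavdarov's Lemma~5.3 (Lemma~\ref{L:geometric criterion} in the paper): there is an integer $s=s(2g)$ such that if $P_{A_\p}(T)$ is irreducible with Galois group $G$ \emph{and} $P_{A_\p}^{(s)}(T)$ is separable, then $\Gal(P_{A_\p}^{(m)}(T))\cong G$ for all $m\geq1$. So $\Pi_A(x)$ is covered not just by the sets where some cycle type of $W_{2g}$ is missed, but also by a set $\calS_3(x)$ of primes $\p$ for which $P_{A_\p}^{(s)}(T)$ fails to be separable, and this gets its own density bound $|C_\ell^3|/|\GSp_{2g}(\FF_\ell)|\leq\delta<1$ (Chavdarov's Lemma~5.4) and its own application of the large sieve. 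Without adding that condition, your reduction does not capture $\Pi_A$: a prime $\p$ with $\Gal(P_{A_\p})\cong W_{2g}$ but $P_{A_\p}^{(s)}$ inseparable would escape all your sieves while potentially lying in $\Pi_A$.

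Two smaller points worth noting. First, the specific cycle types the paper uses are $(2g)$ and $(2,1,\dots,1)$ in $\Sym_{2g}$ together with \emph{all} partitions $\sigma$ of $g$ for the quotient $W_{2g}\twoheadrightarrow\Sym_g$ (via the auxiliary polynomial $Q_{A_\p}(T)$ with $P_{A_\p}(T)=T^gQ_{A_\p}(T+q/T)$); the group-theoretic input is Jordan's lemma plus Lemma~\ref{L:W group theory}, not a hand-picked short list of classes. Second, your heuristic count of regular semisimple classes of a fixed type is in the right spirit, but the paper does not redo this — it cites Chavdarov's Corollary~3.6 and Lemmas~5.7, 5.4, 5.9 for the bounds $|C_\ell^{(\cdot)}|/|\GSp_{2g}(\FF_\ell)|\leq\delta<1$, and one should be aware that those estimates are stated per $\Sp_{2g}$-coset of $\GSp_{2g}$, so a small amount of bookkeeping is needed to pass to the whole group.
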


\begin{remark}
\begin{romanenum}
\item  
Theorem~\ref{T:Effective Chavdarov} can be used to bound the number of $\p$ for which $A_\p$ is not geometrically simple.  Fix a prime $\p\in\Sigma_k -(S\cup \Pi_A)$.  For each $n\geq 1$, the polynomial $P_{A_\p\times \FF_{\p}^{(n)}}(T)$ is irreducible since $\Gal(P_{A_\p\times \FF_{\p}^{(n)}}(T))\cong W_{2g}$ acts transitively on its $2g$ roots.  We also deduce that $\QQ(\pi_{A_\p}^n) = \QQ(\pi_{A_\p})$ for all $n\geq 1$.

By \cite{MilneWaterhouse}*{Theorem~8}, for each $n\geq 1$ we have
\[
\End_{\FF_{\p}^{(n)}}(A_\p)\otimes_\ZZ\QQ = \QQ(\pi_{A_\p}^n)
\] 
and hence $\End_{\FF_{\p}^{(n)}}(A_\p)\otimes_\ZZ\QQ= \QQ(\pi_{A_\p})$.  We deduce that $\End_{\overline{\FF}_\p}(A_\p) \otimes_\ZZ \QQ= \QQ(\pi_{A_\p})$ and thus $A_\p/\FF_\p$ is geometrically simple.  We then have an inequality
\[
|\{\p\in\Sigma_k(x)-S_A: A_\p \text{ \emph{not} geometrically simple}\}| \leq |\Pi_A(x)|
\]
and Theorem~\ref{T:Effective Chavdarov} gives an explicit upper bound.
\item 
The dimension assumptions on the abelian varieties are needed only to invoke a theorem of Serre which says that $\Gal(k(A[\ell])/k) \cong \GSp_{2g}(\ZZ/\ell\ZZ)$ for all sufficiently large primes $\ell$.   This condition will hold for a ``random'' abelian variety $A/k$ of any dimension $g$.  See the recent paper of Hall \cite{Hall} which gives a nice sufficient condition to have $\Gal(k(A[\ell])/k) \cong \GSp_{2g}(\ZZ/\ell\ZZ)$ for almost all $\ell$.

\item 
The majority of \cite{Chavdarov} deals with the function field setting in which Galois representations can be identified with representations of \'etale fundamental groups.  The large sieve method is applicable in this context as well and this has already been studied in a paper of Kowalski \cite{Kowalski-Monodromy} (see Chapter~8 of \cite{Kowalski-LS}).  The large sieve presented in this paper would need to be altered slightly to deal properly with both the arithmetic and geometric fundamental groups.  In the function field setting one can prove strong unconditional bounds since one may use the full force of the Weil conjectures.
\end{romanenum}
\end{remark}

\subsection{An explanatory example} \label{SS:toy example}
We shall now illustrate the basic concepts underlying the paper with a simple (but nontrivial!) example.  The reader may safely skip ahead.   \\

Fix an elliptic curve $E$ defined over $\QQ$ and assume that $E$ does not have complex multiplication.  Let $S_E$ be the set of places of $k$ for which $E$ has bad reduction.  For each prime $p\notin S_E$, let $a_p(E)$ be the integer such that $|E_p(\FF_p)|=p-a_p(E)+1$
where $E_p/\FF_p$ is the reduction of $E$ at $p$.  In this example, we will study the set
\[
\Aa:= \{ p \notin S_E :  a_p(E) \text{ is a square}\}.
\]
The set $\Aa$ is infinite (Elkies has shown that there are infinitely many $p$ with $a_p(E)=0$ \cite{Elkies87}).  For each real number $x$, let $\Aa(x)$ be the set of $p\in\Aa$ with $p\leq x$.  We will see that $\Aa$ has natural density zero; what is more interesting is to find explicit bounds for $|\Aa(x)|$.  

Crude heuristics suggest that there is a constant $C>0$, depending on $E$, such that
\[
|\Aa(x)| \sim C \frac{x^{3/4}}{\log x}
\]
as $x\to\infty$.  Proving anything like this is exceedingly difficult;  we will focus on finding upper bounds for $|\Aa(x)|$.\\

The basic idea is to study the integers $a_p(E)$ modulo several small primes $\ell$ and then combine this local information to find an explicit upper bound for $|\Aa(x)|$.  To understand the distribution of the $a_p(E)$ modulo $\ell$, it is advantageous to express everything in terms of Galois representations.  For each prime $\ell$, let $E[\ell]$ be the group of $\ell$-torsion points in $E(\Qbar)$.  The absolute Galois group of $\QQ$ naturally acts on $E[\ell]$ giving a representation
\[
\rho_{E,\ell}\colon \Gal(\Qbar/\QQ) \to \Aut(E[\ell]) \cong \GL_2(\ZZ/\ell\ZZ).
\]
From Serre (Theorem~\ref{T:Abelian variety with full GSp} with $g=1$), we know that there is a positive integer $B$ such that 
\begin{equation} \label{E:SerreInv intro}
\Bigl(\prod_{\ell\nmid B}\rho_{E,\ell}\Bigr)(\Gal(\Qbar/\QQ)) = \prod_{\ell\nmid B} \GL_2(\ZZ/\ell\ZZ).
\end{equation}
Fix a prime $\ell\nmid B$ and take any prime $p\notin S_E\cup\{\ell\}$.  The Galois representation $\rho_{E,\ell}$ is unramified at $p$, so we obtain a well-defined conjugacy class $\rho_{E,\ell}(\Frob_p)$ of $\GL_2(\ZZ/\ell\ZZ)$.   The connection with the integer $a_p(E)$ is the congruence
\[
\tr(\rho_\ell(\Frob_p)) \equiv a_p(E) \bmod{\ell}.
\]
Now take any prime $p\in \Aa(x)$ with $p\neq \ell$.  Since $a_p(E)$ is a square, the trace of the Frobenius conjugacy class $\rho_{E,\ell}(\Frob_p)$ is a square in $\ZZ/\ell\ZZ$.  Therefore,
\[
\rho_{E,\ell}(\Frob_p)\subseteq C_\ell:= \{ A\in \GL_2(\ZZ/\ell\ZZ) : \tr(A) \text{ is a square in }\ZZ/\ell\ZZ\}. 
\]  
One readily checks that 
\begin{equation} \label{E:toy probability}
\frac{|C_\ell|}{|\GL_2(\ZZ/\ell\ZZ)|} = \frac{1}{2} + O\left(\frac{1}{\ell} \right).\\
\end{equation}

Let us now describe our example in a sieve theoretic fashion. 
Let $Q=Q(x)$ be a positive function such that $Q(x)\ll \sqrt{x}$; we will make a specific choice later.
Let $\Lambda(Q)$ be the set of primes $\ell\nmid B$ with $\ell\leq Q$.

We now sieve the set $\Sigma_\QQ(x)-S_E$ by the primes $\ell\in\Lambda(Q)$.  More precisely, for each $\ell \in \Lambda(Q)$ we remove those primes $p$ for which $\tr(\rho_\ell(\Frob_p)) \in \ZZ/\ell\ZZ$ is a \emph{non-square}.   We are then left with the set
\begin{align*}
\Ss(x)
&= \{p \in \Sigma_\QQ(x)-S_E : p= \ell \; \text{ or } \; \rho_\ell(\Frob_p))\subseteq C_\ell \text{ for all } \ell\in \Lambda(Q) \}.
\end{align*}
The set $\Ss(x)$ contains $\Aa(x)$, so it suffices to consider upper bounds for $|\Ss(x)|$.

Intuitively, the Chebotarev density theorem and (\ref{E:toy probability}) tell us that sieving $\Sigma_\QQ(x)-S_E$ by a prime $\ell\in \Lambda(Q)$ will remove roughly half the elements, while (\ref{E:SerreInv intro}) shows that our sieving conditions (indexed by the primes $\ell\in\Lambda(Q)$) are  independent of each other.

Let $Q$ be a constant function.  The Chebotarev density theorem gives us
\[
\limsup_{x\to\infty} \frac{|\Ss(x)|}{x/\log x} \leq 
\prod_{\ell\in\Lambda(Q)} \frac{|C_\ell|}{|\GL_2(\ZZ/\ell\ZZ)|}
\leq \prod_{\ell\in\Lambda(Q)} \left(\frac{1}{2} + O\left(\frac{1}{\ell}\right)\right).
\]
Since this holds for every constant $Q$, we find that the set $\Aa$ has natural density $0$; i.e., \[\lim_{x\to\infty} \frac{|\Aa(x)|}{x/\log x} = 0.\] 
In the same manner, we can apply effective versions of the Chebotarev density theorem (as in Appendix~\ref{A:Character sums}) to obtain explicit upper bounds for $|\Aa(x)|$.  However, the resulting bounds will be weaker that those coming from the sieve theoretic methods discussed in this paper.  In particular, assuming the Generalized Riemann Hypothesis (GRH), they will not be strong enough to prove that there is a number $\delta>0$ such that $|\Aa(x)|\ll x^{1-\delta}$.

This direct approach requires equidistribution of the conjugacy classes $\{(\prod_{\ell\in\Lambda(Q)}\rho_\ell)(\Frob_p)\}_p$ in the conjugacy classes of $\prod_{\ell\in\Lambda(Q)} \GL_2(\ZZ/\ell\ZZ)$ (with respect to the measure induces by Haar measure).   The prime number theorem shows that the order of this group grows quickly as a function of $Q$,
\begin{equation}\label{E:naive equidistribution}
\Bigl|{\prod}_{\ell\in\Lambda(Q)} \GL_2(\ZZ/\ell\ZZ)\Bigr| = e^{4Q+o(Q)}.
\end{equation}
So in order for the Frobenius elements $\{({\prod}_{\ell\in\Lambda(Q)}\rho_\ell)(\Frob_p)\}_{p\leq x}$ to be well equidistributed, we need the function $Q(x)$ to grow quite slowly as a function of $x$. \\

Let us now discuss what the large sieve method will give. (We will be applying the large sieve as in Theorem~\ref{T:LS for Frobenius}.  The details are similar to those given in \S\ref{S:Koblitz} for our application to a conjecture of Koblitz.)  The advantage over the direct approach just given is that it allows one to limit the size of the groups considered. Let $\Zz(Q)$ be the set of $D\subseteq\Lambda(Q)$ such that $\prod_{\ell\in D} \ell \leq Q$ and define
\[
L(Q) = \sum_{D\in \Zz(Q)} \prod_{\ell\in D} \frac{1-{|C_\ell|}/{|\GL_2(\ZZ/\ell\ZZ)|} }{{|C_\ell|}/{|\GL_2(\ZZ/\ell\ZZ)|}}.
\]
Using (\ref{E:toy probability}) one shows that $L(Q) \gg Q$.
Assuming GRH, our large sieve will give the bound 
\begin{align*}
 |\Ss(x)| &\ll \frac{x/\log x +  Q^{11} x^{1/2}\log x}{L(Q)}
\ll \frac{x/\log x +  Q^{11} x^{1/2}\log x}{Q}.
\end{align*}
Setting $Q(x)= x^{1/22}/(\log x)^{2/11}$ (this choice makes the two terms in the numerator of our bound equal), we have
\[
|\Aa(x)| \leq |\Ss(x)| \ll \frac{x^{21/22}}{(\log x)^{9/11}}.
\]
Unconditionally, with $Q(x) \approx (\log x/ (\log\log x)^2)^{1/24}$, our large sieve will give
\begin{align*}
|\Aa(x)| & \ll \frac{x/\log x}{L(Q)} \ll \frac{x(\log\log x)^{1/12}}{(\log x)^{25/24}}.
\end{align*}
An examination of the proof of the large sieve shows that in our example, we use equidistribution results only for the groups $\prod_{\ell\in D\cup D'}\GL_2(\ZZ/\ell\ZZ)$ with $D,D' \in \Zz(Q)$.  For $D,D'\in \Zz(Q)$,
\[
\Bigl|{\prod}_{\ell\in D\cup D'} \GL_2(\ZZ/\ell\ZZ)\Bigr| \leq {\prod}_{\ell\in D\cup D'} \ell^4 \leq Q^8.
\]
Thus to give a bound for $|\Aa(x)|$ using the primes $\ell \in \Lambda(Q)$, the large sieve uses equidistribution for groups of size at most $Q^8$ (this should be contrasted with (\ref{E:naive equidistribution})).

\subsection{Notation}   \label{S:notation}
For each field $k$, let $\kbar$ be an algebraic closure of $k$ and let $\calG_k := \Gal(\kbar/k)$ be the absolute Galois group of $k$.  


Let $L$ be a Galois extension of a number field $k$.  For each $\p\in\Sigma_k$ that is unramified in $L$, let $\Fr_\p$ be the Frobenius conjugacy class of $\p$ in $\Gal(L/k)$ (though the notation does not indicate it, the extension $L$ will always be clear from context).

Suppose that $f$ and $g$ are complex valued functions of a real variable $x$.  By $f\ll g$ (or $g \gg f$), we shall mean that there are positive constants $C_1$ and $C_2$ such that for all $x\geq C_1$, $|f(x)| \leq C_2 |g(x)|$.  The dependence of the implied constants will not always be given but will be made precise in the statement of each of the main theorems.  We shall use $O(f)$ to denote an unspecified function $g$ with $g \ll f$.  We shall write $f=o(g)$ if $g$ is nonzero for sufficiently large $x$ and $f(x)/g(x) \to 0$ as $x\to \infty$.

For $x \geq 2$, define the logarithmic integral $\Li x = \int_2^x (\log t)^{-1} dt$.  The function $\Li x$ is useful when counting primes, and $\Li x = (1+o(1)) x/\log x$ as $x\to \infty$.

For a finite group $G$, the set of conjugacy classes of $G$ will be denoted by $G^\sharp$.  We denote by $\Irr(G)$ the set of characters of $G$ which come from irreducible linear representations of $G$ over $\CC$.  Let $(\;,\;)$ be the inner product on the the space of complex-valued class functions of $G$ for which $\Irr(G)$ is an orthonormal basis.   For the basic notions of representation theory see \cite{Serre-rep}.

Finally, $\ell$ and $p$ will denote rational primes.

\subsection*{Acknowledgments}

Many thanks to Bjorn Poonen for his encouragement and assistance.   Thanks also to  Jeff Achter, Alina Cojocaru and Emmanuel Kowalski for their helpful suggestions.   This research was supported by an NSERC postgraduate scholarship.

\section{A general large sieve} \label{S:LS}
\subsection{Setup and statement} \label{S:LS setup}
Let $X$ and $\Lambda$ be finite sets.  We will sieve subsets of $X$ via conditions indexed by $\Lambda$.  

For each $\lambda\in \Lambda$, fix a finite group $G_\lambda$ and a map $\rho_\lambda\colon X \to G_\lambda^\sharp$.  Let $\mu_\lambda$ be the probability measure on $G_\lambda^\sharp$ induced by the counting (Haar) measure on $G_\lambda$.  More concretely, we have $\mu_\lambda(U) = |G_\lambda|^{-1} \sum_{C\in U} |C|$ for every subset $U\subseteq G_\lambda^\sharp$.

Consider a set $\Ss\subseteq X$.  The goal of our sieve is to find an upper bound for $|\Ss|$ in terms of the values $\mu_\lambda(\rho_\lambda(\Ss))$.\\

We now introduce some more notation.  For each $D\subseteq \Lambda$, define the group $G_D = \prod_{\lambda\in D} G_\lambda$.  For a subset $E\subseteq D$, composition with the projection $G_D\to G_E$ induces an injective map $\Irr(G_E) \hookrightarrow \Irr(G_D)$.  We say that a character $\chi \in \Irr(G_D)$ is \defi{imprimitive} if it comes from a character in $\Irr(G_E)$ for some {proper} subset $E$ of $D$; otherwise we say that $\chi$ is \defi{primitive}.  Let $\Prim(G_D)$ denote the set of primitive characters in $\Irr(G_D)$.  Define $\rho_D:= (\prod_{\lambda\in D} \rho_\lambda) \colon X \to G_D^\sharp$.  Finally, let $\mathcal{P}(\Lambda)$ be the set of all subsets of $\Lambda$.

\begin{thm}[Large sieve] \label{T:Large Sieve} Fix notation as above, and let $\Zz$ be a subset of $\power(\Lambda)$.  Let $\Delta(X,\rho,\Zz)\geq 0$ be the least real number for which the inequality
 \begin{equation} \label{E:LSI of theorem}
 \sum_{D\in\Zz} \sum_{\chi\in\Prim(G_D)}\left| \sum_{v\in X} a_v\chi(\rho_D(v))\right|^2 \leq \Delta(X,\rho,\Zz) \sum_{v\in X} |a_v|^2 \end{equation}
holds for every sequence $(a_v)_{v\in X}$ of complex numbers.  

Let $\Ss$ be a subset of $X$.  For each $\lambda\in \Lambda$, fix a real number $0<\delta_\lambda \leq 1$ such that
\begin{equation} \label{E:delta size}
\mu_\lambda(\rho_\lambda(\Ss)) \leq \delta_\lambda.
\end{equation}
Define
\[
L(\Zz) = \sum_{D\in \Zz} \prod_{\lambda\in D} \frac{1-\delta_\lambda}{\delta_\lambda}. 
\]
Then 
\[ 
L(\Zz) |\Ss| \leq \Delta(X,\rho,\Zz).
\]
\end{thm}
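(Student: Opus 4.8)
The plan is to feed the large sieve inequality \eqref{E:LSI of theorem} a single, cleverly chosen sequence, namely the indicator function of $\Ss$: set $a_v = 1$ for $v\in\Ss$ and $a_v = 0$ otherwise, so that $\sum_{v\in X}|a_v|^2 = |\Ss|$. For this choice the right-hand side of \eqref{E:LSI of theorem} is $\Delta(X,\rho,\Zz)\,|\Ss|$, while the left-hand side equals $\sum_{D\in\Zz}P(D)$ with
\[
P(D) := \sum_{\chi\in\Prim(G_D)}\Bigl|\sum_{v\in\Ss}\chi(\rho_D(v))\Bigr|^2 .
\]
Everything thus reduces to the pointwise-in-$D$ estimate
\[
P(D)\ \ge\ |\Ss|^2\prod_{\lambda\in D}\frac{1-\delta_\lambda}{\delta_\lambda}\qquad (D\subseteq\Lambda),
\]
because summing it over $D\in\Zz$ gives $\sum_{D\in\Zz}P(D)\ge L(\Zz)|\Ss|^2$, hence $L(\Zz)|\Ss|^2\le\Delta(X,\rho,\Zz)|\Ss|$, and dividing by $|\Ss|$ (the case $\Ss=\emptyset$ being trivial since $\Delta\ge 0$) yields the theorem.

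I would prove the displayed estimate by induction on $|D|$. When $D=\emptyset$ it is the identity $P(\emptyset)=|\Ss|^2$. For the inductive step, pick $\lambda_0\in D$ and put $D'=D\setminus\{\lambda_0\}$; since $G_D=G_{\lambda_0}\times G_{D'}$ we have $\Irr(G_D)=\Irr(G_{\lambda_0})\times\Irr(G_{D'})$, and $\chi=\chi_{\lambda_0}\otimes\chi'$ lies in $\Prim(G_D)$ exactly when $\chi_{\lambda_0}$ and $\chi'$ are both primitive. Partition $\Ss=\bigsqcup_{c\in G_{\lambda_0}^\sharp}\Ss_c$ according to the value $c=\rho_{\lambda_0}(v)$, and for $\chi'\in\Prim(G_{D'})$ set $F_c(\chi')=\sum_{v\in\Ss_c}\chi'(\rho_{D'}(v))$ and $F(\chi')=\sum_{v\in\Ss}\chi'(\rho_{D'}(v))=\sum_c F_c(\chi')$. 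Using $\chi(\rho_D(v))=\chi_{\lambda_0}(\rho_{\lambda_0}(v))\chi'(\rho_{D'}(v))$, we get
\[
P(D)=\sum_{\chi'\in\Prim(G_{D'})}\ \sum_{\chi_{\lambda_0}\in\Prim(G_{\lambda_0})}\Bigl|\sum_{c\in G_{\lambda_0}^\sharp}\chi_{\lambda_0}(c)F_c(\chi')\Bigr|^2 .
\]

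The inner sum over $\chi_{\lambda_0}$ is precisely the $|D|=1$ instance of the problem, with complex weights $F_c(\chi')$ in place of point-counts. By the column (second) orthogonality relations for the character table of $G_{\lambda_0}$, $\sum_{\chi_{\lambda_0}\in\Irr(G_{\lambda_0})}\bigl|\sum_c\chi_{\lambda_0}(c)F_c\bigr|^2 = |G_{\lambda_0}|\sum_c |F_c|^2/|c|$; removing the trivial-character term gives $\sum_{\chi_{\lambda_0}\in\Prim(G_{\lambda_0})}\bigl|\sum_c\chi_{\lambda_0}(c)F_c\bigr|^2 = |G_{\lambda_0}|\sum_c|F_c|^2/|c| - |F|^2$. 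Now Cauchy--Schwarz, applied only over the classes $c$ with $\Ss_c\ne\emptyset$, yields $|F|^2\le\bigl(\sum_c |F_c|^2/|c|\bigr)\bigl(\sum_{c\in\rho_{\lambda_0}(\Ss)}|c|\bigr)$, and the hypothesis $\mu_{\lambda_0}(\rho_{\lambda_0}(\Ss))=|G_{\lambda_0}|^{-1}\sum_{c\in\rho_{\lambda_0}(\Ss)}|c|\le\delta_{\lambda_0}$ turns this into $|G_{\lambda_0}|\sum_c|F_c|^2/|c|\ge |F|^2/\delta_{\lambda_0}$. Hence the inner sum is at least $\frac{1-\delta_{\lambda_0}}{\delta_{\lambda_0}}|F(\chi')|^2$, and summing over $\chi'\in\Prim(G_{D'})$ gives $P(D)\ge\frac{1-\delta_{\lambda_0}}{\delta_{\lambda_0}}P(D')$; the induction hypothesis applied to $D'$ closes the step.

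In the end this is the classical two-step Cauchy--Schwarz proof of the large sieve, and I expect no genuine analytic difficulty here — the hard analysis, namely estimating $\Delta(X,\rho,\Zz)$, lives entirely elsewhere. The points that do need care are bookkeeping: identifying $\Prim(G_D)$ with tensor products of nontrivial irreducible characters of the factors $G_\lambda$ (this is where the direct-product structure of $G_D$ genuinely enters), and checking that a character of $G_D$ inflated from $G_E$, evaluated at the conjugacy class $\rho_D(v)$, agrees with the corresponding character of $G_E$ evaluated at $\rho_E(v)$, so that the $F(\chi')$'s really do reassemble $P(D')$. Keeping these nonabelian orthogonality relations and the subset-indexing straight is the main, though modest, obstacle.
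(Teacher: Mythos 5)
Your proof is correct and follows essentially the same route as the paper's: reduce to a pointwise-in-$D$ lower bound proved by induction on $|D|$, with the key one-variable step being the second orthogonality relation for $\Irr(G_{\lambda_0})$ combined with Cauchy--Schwarz over the classes hit by $\rho_{\lambda_0}(\Ss)$. The only cosmetic differences are that the paper proves its Lemma~\ref{L:algebra lemma} for arbitrary weights $(a_v)$ vanishing off $\Ss$ (enabling the smoothed-sums remark~(\ref{E:smoothing})) rather than only the indicator of $\Ss$, and phrases the inductive step via a symmetric split $D=E\cup E'$ rather than peeling off a single $\lambda_0$; neither changes the substance.
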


\begin{remark}
\begin{romanenum}
\item  In classical versions of the large sieve, the groups $G_\lambda$ are abelian and the set $\Lambda$ usually consists of prime 
numbers.  Theorem \ref{T:Large Sieve} shows that the basic sieve theoretic principle underlying the large sieve can be fruitfully 
generalized.
\item Note that $\Delta(X,\rho,\Zz)$ does not depend on the set $\Ss$ and the dependency of $L(\Zz)$ on $\Ss$ is only in terms 
of the $\delta_\lambda$. 
\item  An inequality of the form (\ref{E:LSI of theorem}) is called a \defi{large sieve inequality}.  The study of such inequalities 
has been very important for analytic number theory (cf.~\cite{IwaniecKowalski}*{\S7}).   Quite often in the literature, the term 
``large sieve'' refers to the study of large sieve type inequalities, even when no actual sieving is involved!  

Proposition \ref{P:LSI} below gives a very basic upper bound on $\Delta(X,\rho,\Zz)$.  In the abelian case one can often prove stronger large sieve equalities using harmonic analysis.  

\item  Theorem~\ref{T:Large Sieve} can be further generalized but this version will more than suffice for our applications.  For example, in the proof of Theorem~\ref{T:Large Sieve} we do not explicitly need the group structure of the groups $G_\lambda$.  The main fact we need is that the characters $\Irr(G_\lambda)$ form an orthonormal basis for the Hilbert space of class functions on $G_\lambda$.  We refer to the Kowalski's book \cite{Kowalski-LS} for further studies in this direction.   
\end{romanenum}
\end{remark}

\begin{prop} \label{P:LSI}
With notation as in Theorem \ref{T:Large Sieve},  \[
 \Delta(X,\rho,\Zz) \leq \max_{\substack{D'\in \Zz \\ \chi'\in \Prim(G_{D'})} }  \sum_{D\in\Zz} \sum_{\chi\in\Prim(G_D)}  \left| \sum_{v\in X} \chi(\rho_{D}(v)) \overline{\chi'(\rho_{D'}(v))} \right|.
 \] 
\end{prop}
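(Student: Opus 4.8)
The plan is to prove Proposition~\ref{P:LSI} by the standard duality argument for large sieve inequalities. For $D\in\Zz$, $\chi\in\Prim(G_D)$ and a sequence $(a_v)_{v\in X}$, set $b_{D,\chi} = \sum_{v\in X} a_v\,\chi(\rho_D(v))$, so that the left-hand side of \eqref{E:LSI of theorem} is $S:=\sum_{D,\chi}|b_{D,\chi}|^2$, the sum running over all pairs with $D\in\Zz$ and $\chi\in\Prim(G_D)$. First I would expand one of the two factors in $|b_{D,\chi}|^2$ and interchange summation to write $S = \sum_{v\in X} a_v\big(\sum_{D,\chi}\overline{b_{D,\chi}}\,\chi(\rho_D(v))\big)$, and then apply the Cauchy--Schwarz inequality in the variable $v$. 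Since $S$ is real and nonnegative this yields $S^2 \le \big(\sum_{v\in X}|a_v|^2\big)\cdot\sum_{v\in X}\big|\sum_{D,\chi}\overline{b_{D,\chi}}\,\chi(\rho_D(v))\big|^2$.

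The next step is to bound the last factor. Multiplying out the square and interchanging the order of summation turns it into $\sum_{(D,\chi)}\sum_{(D',\chi')}\overline{b_{D,\chi}}\,b_{D',\chi'}\,K\big((D,\chi),(D',\chi')\big)$, where $K\big((D,\chi),(D',\chi')\big):=\sum_{v\in X}\chi(\rho_D(v))\overline{\chi'(\rho_{D'}(v))}$. Two simple observations finish the argument: the quantity $|K|$ is symmetric under interchanging $(D,\chi)\leftrightarrow(D',\chi')$ (since the interchange replaces the inner sum by its complex conjugate), and $\big|\overline{b_{D,\chi}}\,b_{D',\chi'}\big|\le\tfrac12\big(|b_{D,\chi}|^2+|b_{D',\chi'}|^2\big)$. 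Applying the triangle inequality, then this bound, and then symmetrizing the resulting double sum, the last factor is at most $\big(\max_{D'\in\Zz,\,\chi'\in\Prim(G_{D'})}\sum_{D\in\Zz}\sum_{\chi\in\Prim(G_D)}|K|\big)\cdot S$, and that maximum is exactly the right-hand side of the asserted inequality. Feeding this back into the Cauchy--Schwarz estimate gives $S^2 \le \big(\sum_v|a_v|^2\big)\cdot(\text{that maximum})\cdot S$; cancelling the factor $S$ (legitimate since $S\ge 0$, and the case $S=0$ is trivial) produces $S \le (\text{that maximum})\cdot\sum_v|a_v|^2$ for every choice of $(a_v)$, and hence $\Delta(X,\rho,\Zz)$ is bounded by that maximum, by the minimality in its definition.

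I do not expect a genuine obstacle here: the argument is a formal manipulation, and conceptually it is just the statement that the operator norm of the positive semidefinite Gram-type matrix $\big(K((D,\chi),(D',\chi'))\big)$ is bounded by its largest absolute row sum. The only points needing care are the index bookkeeping over pairs $(D,\chi)$ with $D\in\Zz$ and $\chi\in\Prim(G_D)$, verifying the symmetry of $|K|$ so that the symmetrization step is valid, and disposing of the degenerate case $S=0$ separately.
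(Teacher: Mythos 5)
Your proof is correct and takes essentially the same approach as the paper: the paper factors the argument into a duality principle (Lemma~\ref{L:duality principle}, operator norm equals that of the adjoint) followed by a row-sum bound on the Gram matrix via the symmetrization $|y_j||y_{j'}|\le\tfrac12(|y_j|^2+|y_{j'}|^2)$ (Lemma~\ref{L:duality principle 2}), whereas you inline the standard Cauchy--Schwarz proof of the duality step (writing $S=\sum_v a_v\sum_{D,\chi}\overline{b_{D,\chi}}\chi(\rho_D(v))$ and cancelling a factor of $S$) and then use the identical symmetrization. The content and the key observation about the symmetry of $|K|$ are the same; only the packaging differs.
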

A proof of the proposition can be found in \S\ref{SS:Proof of LSI}.

\begin{remark}
Thinking of the elements $\{\rho_{D\cup D'}(v)\}_{v\in X}$ as being equidistributed in $G_{D\cup D'}^\sharp$ with respect to the measure $\prod_{\lambda\in D\cup D'}\mu_\lambda$, we would expect the following expression to be small:
\[
\sum_{v\in X} \chi(\rho_D(v))\overline{\chi'(\rho_{D'}(v))} - \begin{cases}
      |X| & \text{if $\chi=\chi'$}, \\
       0 & \text{otherwise}.
\end{cases}
\]
The large sieve and Proposition~\ref{P:LSI} then gives an explicit bound of the form 
\[
|\Ss| \leq \frac{|X| + E(\Zz) }{L(\Zz)}
\]
where one can think of $E(\Zz)$ as being an ``error term''.

Taking $\Zz=\power(\Lambda)$, we find that $L(\Zz) = \prod_{\lambda\in\Lambda}(1 + (1-\delta_\lambda)/\delta_\lambda) =(\prod_{\lambda\in \Lambda} \delta_\lambda )^{-1}.$
Thus the ``main term'' of our bound is $|X|/L(\power(\Lambda))=( \prod_{\lambda\in\Lambda} \delta_\lambda ) |X|$; i.e.,  what one would naively expect after sieving $X$ by independent conditions, indexed by $\lambda\in \Lambda$, each with probability $\delta_\lambda$.  Unfortunately, taking $\Zz=\power(\Lambda)$ in most applications will not be useful since the ``error term'' will be too large.  The set $\Zz$  is called the \defi{sieve support} and should be chosen to optimize or simplify the bounds in a given application.  
\end{remark}

\subsection{The classical large sieve}
As a simple example, let us show how the large sieve of Theorem \ref{T:Large Sieve} relates to the familiar case of sieving integers.  Fix a natural number $N$ and real numbers $M$ and $Q\geq 2$.  Define the sets
\[
X=\{n\in\ZZ: M<n\leq M+N \} \text{\quad and \quad} \Lambda=\{\ell: \ell \text{ prime and }\ell\leq Q\}.
\]
For each $\ell\in\Lambda$, let $G_\ell$ be the group $\ZZ/\ell\ZZ$ and let $\rho_\ell \colon X \to G_\ell^\sharp  =\ZZ/\ell\ZZ$ be 
reduction modulo $\ell$.  The set $\power(\Lambda)$ can be identified with the squarefree natural numbers whose prime factors have size at most $Q$ (identify a 
squarefree natural number with the set of its prime divisors).  For each $d\in\power(\Lambda)$, $G_d = \prod_{\ell|d} G_\ell = 
\ZZ/d\ZZ$ and $\rho_d$ is simply reduction modulo $d$.  The irreducible characters of $G_d=\ZZ/d\ZZ$ are those of form $x\mapsto 
e^{2\pi i \cdot {ax}/{d}}$ for $a\in \ZZ/d\ZZ$.  The set $\Prim(G_d)$ consists of those characters with $a\in 
(\ZZ/d\ZZ)^\times$.

The classical choice for $\Zz$ is the set $ \{ d \in \power(\Lambda): d\leq Q \}$; the squarefree natural numbers less than 
or equal to $Q$.  The following lemma shows that in the above setting, $\Delta(X,\rho,\Zz) \leq N+Q^2$; it is a consequence of \cite{Bombieri}*{Th\'eor\`eme 4}.

\begin{lemma} \label{L:classical LSI}
For any sequence of complex numbers $(a_n)_{n\in X}$,
\[
\sum_{d\leq Q} \sum_{a\in (\ZZ/d\ZZ)^\times}\Bigl| \sum_{n\in X} a_n e^{2\pi i \cdot a n/d} \Bigr| \leq (N + Q^2) \sum_{n\in X} 
|a_n|^2.
\]
\end{lemma}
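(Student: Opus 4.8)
The plan is to recognize the left-hand side as a sum of squared exponential sums evaluated at the Farey fractions of order $Q$, and then to invoke the analytic large sieve inequality for well-spaced points; the only step needing genuine care is the spacing. Write $e(\theta)=e^{2\pi i\theta}$ and $S(\theta)=\sum_{n\in X}a_n e(n\theta)$, and read the inner sums in the statement as squared, in agreement with \eqref{E:LSI of theorem} and with the bound $\Delta(X,\rho,\Zz)\le N+Q^2$ quoted just before the lemma. Translating $n$ by an integer multiplies each $S(a/d)$ by a unimodular factor, so $M$ plays no role; and as $d$ runs over positive integers $\le Q$ and $a$ over representatives of $(\ZZ/d\ZZ)^\times$, the fractions $a/d$ run, each exactly once, over the rationals in $[0,1)$ of reduced denominator at most $Q$, i.e.\ over the Farey fractions of order $Q$. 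Thus it suffices to establish the following general inequality and apply it with the $\theta_r$ being these fractions and $\delta=Q^{-2}$: for $\delta$-spaced points $\theta_1,\dots,\theta_R\in\RR/\ZZ$ --- meaning $\|\theta_r-\theta_s\|\ge\delta$ for $r\ne s$, where $\|x\|$ denotes the distance from $x$ to the nearest integer --- one has
\[
\sum_{r=1}^{R}|S(\theta_r)|^2 \le (N+\delta^{-1})\sum_{n\in X}|a_n|^2.
\]

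This general inequality is the analytic large sieve in its sharp Selberg--Montgomery--Vaughan form and is precisely \cite{Bombieri}*{Th\'eor\`eme 4}; moreover any weaker version with $N+\delta^{-1}$ replaced by $N+C\delta^{-1}$ for an absolute $C$ would yield a bound of the same shape, so the exact constant is not essential here. If one wanted a self-contained proof I would run the standard duality argument: the displayed inequality is equivalent to $\sum_{n\in X}\bigl|\sum_r c_r e(n\theta_r)\bigr|^2\le(N+\delta^{-1})\sum_r|c_r|^2$, and expanding the square, interchanging the order of summation, and peeling off the diagonal term $N\sum_r|c_r|^2$ reduces matters to the Hilbert-type estimate $\bigl|\sum_{r\ne s}c_r\overline{c_s}\,K(\theta_r-\theta_s)\bigr|\ll\delta^{-1}\sum_r|c_r|^2$, where $K(\theta)=\sum_{n\in X}e(n\theta)$ is a geometric sum of modulus $\ll\min(N,\|\theta\|^{-1})$; the Montgomery--Vaughan inequality supplies this with the clean constant. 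I would not reproduce that argument and would simply cite the theorem.

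It then remains to verify that the Farey fractions of order $Q$ are $Q^{-2}$-spaced in $\RR/\ZZ$. First, every such fraction $a/d$ lies in the real interval $[0,1-Q^{-1}]$: the fraction with $d=1$ is $0$, and for $d\ge 2$ coprimality forces $1\le a\le d-1$, so $a/d\le (d-1)/d\le 1-Q^{-1}$. Next, if $a/d$ and $a'/d'$ are distinct reduced fractions with $d,d'\le Q$, then $a/d-a'/d'=(ad'-a'd)/(dd')$ with $ad'-a'd$ a nonzero integer, so $|a/d-a'/d'|\ge (dd')^{-1}\ge Q^{-2}$; and since both lie in $[0,1-Q^{-1}]$ we also have $|a/d-a'/d'|\le 1-Q^{-1}$, whence $1-|a/d-a'/d'|\ge Q^{-1}\ge Q^{-2}$. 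Hence $\|a/d-a'/d'\|=\min\bigl(|a/d-a'/d'|,\,1-|a/d-a'/d'|\bigr)\ge Q^{-2}$, as required. Taking $\delta=Q^{-2}$ in the general inequality then gives $\sum_{d\le Q}\sum_{a\in(\ZZ/d\ZZ)^\times}|S(a/d)|^2\le (N+Q^2)\sum_{n\in X}|a_n|^2$, which is the lemma. The single real obstacle is the analytic large sieve inequality itself; granted it in any form with an absolute constant, the Farey spacing is the only point requiring verification, and it is elementary.
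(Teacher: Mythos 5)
Your proof is correct and takes essentially the same route the paper does: the paper gives no argument beyond citing \cite{Bombieri}*{Th\'eor\`eme 4}, and you simply flesh that citation out by reducing to the analytic large sieve at Farey fractions of order $Q$ and verifying the $Q^{-2}$-spacing in $\RR/\ZZ$, which is the standard elementary step. You also correctly spotted that the inner absolute value in the lemma as printed is missing its square, reading it as $|\cdot|^2$ in agreement with the surrounding text.
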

In the present case, Theorem \ref{T:Large Sieve} specializes to the following familiar version of the large sieve. 
\begin{thm} \label{T:classical LS} Let $\Ss$ be a set of integers contained in an interval of length $N\geq 1$.  Let $Q\geq 2$ be a real number.  For each prime $\ell\leq Q$, fix a number $0<\delta_\ell\leq 1$ such that 
${ |\{n \bmod{\ell}: n \in \Ss\}| } \leq \delta_\ell \ell$.
Then 
\[
|\Ss| \leq (N+Q^2) \Big(\sum_{d\leq Q \text{ squarefree}} \prod_{\ell|d}  \frac{1-\delta_\ell}{\delta_\ell} \Big)^{-1}.
\]
\end{thm}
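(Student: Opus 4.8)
The plan is to apply the general large sieve, Theorem~\ref{T:Large Sieve}, with exactly the data set up in the discussion above: take $X=\{n\in\ZZ: M<n\le M+N\}$ for a real number $M$ chosen so that $\Ss\subseteq X$ (possible since $\Ss$ lies in an interval of length $N$), put $G_\ell=\ZZ/\ell\ZZ$ and $\rho_\ell\colon X\to G_\ell^\sharp=\ZZ/\ell\ZZ$ equal to reduction modulo $\ell$ for each prime $\ell\le Q$, and choose the sieve support $\Zz=\{d\in\power(\Lambda): d\le Q\}$, identified with the set of squarefree integers $\le Q$. With these choices the conclusion $L(\Zz)\,|\Ss|\le\Delta(X,\rho,\Zz)$ of Theorem~\ref{T:Large Sieve} becomes the asserted bound once each of $\delta_\ell$, $L(\Zz)$, and $\Delta(X,\rho,\Zz)$ is matched with its classical counterpart.

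First I would check the density hypothesis~(\ref{E:delta size}). Since $G_\ell$ is abelian, conjugacy classes are singletons and $\mu_\ell$ is the normalized counting measure $\mu_\ell(U)=|U|/\ell$; as $\rho_\ell(\Ss)=\{n\bmod\ell: n\in\Ss\}$, the assumption $|\{n\bmod\ell: n\in\Ss\}|\le\delta_\ell\ell$ is precisely $\mu_\ell(\rho_\ell(\Ss))\le\delta_\ell$, and $0<\delta_\ell\le 1$ holds by hypothesis. Next, under the identification of a subset $D$ of $\Lambda$ with the squarefree integer $d=\prod_{\ell\in D}\ell$, a squarefree $d\le Q$ automatically has all prime factors $\le Q$, so $\Zz$ is literally the set of squarefree $d\le Q$ and
\[
L(\Zz)=\sum_{D\in\Zz}\prod_{\ell\in D}\frac{1-\delta_\ell}{\delta_\ell}=\sum_{\substack{d\le Q\\ d\ \text{squarefree}}}\prod_{\ell\mid d}\frac{1-\delta_\ell}{\delta_\ell},
\]
the reciprocal of the factor appearing in the statement.

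It remains to identify $\Delta(X,\rho,\Zz)$. For $D\in\Zz$ corresponding to $d$ we have $G_D=\ZZ/d\ZZ$, whose irreducible characters are $\chi_a\colon x\mapsto e^{2\pi i a x/d}$ with $a\in\ZZ/d\ZZ$; because $d$ is squarefree, for a subset $E\subseteq D$ the image of $\Irr(G_E)\hookrightarrow\Irr(G_D)$ consists of those $\chi_a$ for which $\prod_{\ell\in D\setminus E}\ell$ divides $a$, so $\chi_a$ is primitive if and only if no prime $\ell\in D$ divides $a$, i.e. $a\in(\ZZ/d\ZZ)^\times$. Hence for this setup the left-hand side of~(\ref{E:LSI of theorem}) is
\[
\sum_{\substack{d\le Q\\ d\ \text{squarefree}}}\ \sum_{a\in(\ZZ/d\ZZ)^\times}\Bigl|\sum_{n\in X}a_n e^{2\pi i a n/d}\Bigr|^2,
\]
which by Lemma~\ref{L:classical LSI} is at most $(N+Q^2)\sum_{n\in X}|a_n|^2$ for every sequence $(a_n)_{n\in X}$; thus $\Delta(X,\rho,\Zz)\le N+Q^2$. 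Substituting the three identifications into $L(\Zz)\,|\Ss|\le\Delta(X,\rho,\Zz)$ yields $|\Ss|\le (N+Q^2)/L(\Zz)$, which is the claim. The only nonroutine ingredient here is the classical large sieve inequality quoted as Lemma~\ref{L:classical LSI}; everything else is bookkeeping, the mildest subtlety being the identification of $\Prim(\ZZ/d\ZZ)$ with the primitive residue classes modulo $d$.
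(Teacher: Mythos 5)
Your proposal is correct and is essentially the same argument the paper intends: it instantiates Theorem~\ref{T:Large Sieve} with the data set up in the surrounding discussion (the interval $X$, $G_\ell=\ZZ/\ell\ZZ$ with reduction maps, the sieve support of squarefree $d\le Q$, and the identification of $\Prim(\ZZ/d\ZZ)$ with primitive residues), then bounds $\Delta(X,\rho,\Zz)$ by $N+Q^2$ via Lemma~\ref{L:classical LSI}. The bookkeeping checks out, so nothing further is needed.
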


\subsection{Proof of Theorem \ref{T:Large Sieve}} 
\label{S:LS proof}
\begin{lemma}  \label{L:algebra lemma} 
For any $D\subseteq\Lambda$, we have
\[
\Bigl( \prod_{\lambda\in D} \frac{1-\delta_\lambda}{\delta_\lambda} \Bigr) \Bigl| \sum_{v\in X} a_v\Bigr|^2 \leq \sum_{\chi\in \Prim(G_D)} \Bigl| \sum_{v\in X} a_v \chi(\rho_D(v)) \Bigr|^2
\]
where $(a_v)_{v\in X}$ is any sequence of complex numbers such that $a_v=0$ for all $v \in X-\Ss$.
\end{lemma}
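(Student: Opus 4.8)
The plan is to prove this one factor at a time, reducing the claim about $G_D$ to an elementary inequality for a single index $\lambda$, and then iterate. Fix $\lambda \in D$ and write $D = D' \sqcup \{\lambda\}$. The first step is a decomposition of characters: every $\chi \in \Irr(G_D) = \Irr(G_{D'} \times G_\lambda)$ factors uniquely as $\chi = \chi' \boxtimes \psi$ with $\chi' \in \Irr(G_{D'})$ and $\psi \in \Irr(G_\lambda)$, and $\chi$ is primitive for $G_D$ exactly when $\chi'$ is primitive for $G_{D'}$ and $\psi \neq \mathbf{1}$ (the trivial character of $G_\lambda$). So $\Prim(G_D) = \Prim(G_{D'}) \times (\Irr(G_\lambda) \setminus \{\mathbf{1}\})$, and $\chi(\rho_D(v)) = \chi'(\rho_{D'}(v))\,\psi(\rho_\lambda(v))$.

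The second step is the single-prime estimate. For $v \in \Ss$ we have $\rho_\lambda(v) \in \rho_\lambda(\Ss)$, so $\mu_\lambda$-averaging is controlled by $\delta_\lambda$. Concretely, I would fix $\chi' \in \Prim(G_{D'})$, set $b_v = a_v\,\chi'(\rho_{D'}(v))$ (which still vanishes off $\Ss$ and has $|b_v| = |a_v|$), and apply the orthogonality relations for $\Irr(G_\lambda)$. Summing $|\sum_v b_v \psi(\rho_\lambda(v))|^2$ over all $\psi \in \Irr(G_\lambda)$ and using that the class functions $\psi$ form an orthonormal basis gives $\sum_{\psi \in \Irr(G_\lambda)} |\sum_v b_v \psi(\rho_\lambda(v))|^2 = |G_\lambda| \sum_{C \in G_\lambda^\sharp} |C|^{-1} |\sum_{v:\,\rho_\lambda(v)=C} b_v|^2$. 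Isolating the $\psi = \mathbf{1}$ term (which is $|\sum_v b_v|^2$) and applying Cauchy--Schwarz to the inner sums over the fibers $\{v \in \Ss : \rho_\lambda(v) = C\}$, using that these fibers are supported on $C \in \rho_\lambda(\Ss)$ with $\sum_{C \in \rho_\lambda(\Ss)} |C|/|G_\lambda| = \mu_\lambda(\rho_\lambda(\Ss)) \leq \delta_\lambda$, yields
\[
\sum_{\psi \neq \mathbf{1}} \Bigl| \sum_{v} b_v\,\psi(\rho_\lambda(v)) \Bigr|^2 \;\geq\; \frac{1-\delta_\lambda}{\delta_\lambda}\, \Bigl| \sum_{v} b_v \Bigr|^2.
\]
This is the crux of the argument — the rest is bookkeeping.

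The third step is to assemble the pieces. Summing the displayed inequality over $\chi' \in \Prim(G_{D'})$ and using the character factorization from step one gives
\[
\sum_{\chi \in \Prim(G_D)} \Bigl| \sum_v a_v\,\chi(\rho_D(v)) \Bigr|^2 \;\geq\; \frac{1-\delta_\lambda}{\delta_\lambda} \sum_{\chi' \in \Prim(G_{D'})} \Bigl| \sum_v a_v\,\chi'(\rho_{D'}(v)) \Bigr|^2,
\]
and then I would induct on $|D|$, the base case $D = \emptyset$ being the trivial identity $|\sum_v a_v|^2 = |\sum_v a_v|^2$ (empty product equals $1$, and $\Prim(G_\emptyset)$ consists of the trivial character of the trivial group). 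Peeling off one $\lambda$ at a time produces the full product $\prod_{\lambda \in D}(1-\delta_\lambda)/\delta_\lambda$ in front of $|\sum_v a_v|^2$.

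The main obstacle I anticipate is the single-prime estimate in step two: one has to be careful that the Cauchy--Schwarz step is applied to the right quantity and that the weighting by $|C|$ versus $|G_\lambda|$ lines up so that $\mu_\lambda$ — rather than a naive count of conjugacy classes — is what appears, since the $\rho_\lambda$ land in $G_\lambda^\sharp$ and not in $G_\lambda$ itself. This is exactly the place where the nonabelian setting differs from the classical one, and getting the measure-theoretic normalization right (the identity $\sum_{\psi} |\widehat{b}(\psi)|^2 = |G_\lambda| \sum_C |C|^{-1} |\sum_{v \in \text{fiber}} b_v|^2$, which is Parseval for class functions) is the one genuinely nontrivial computation.
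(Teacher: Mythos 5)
Your proposal is correct and follows essentially the same route as the paper: an induction on $|D|$ whose engine is the single-$\lambda$ Parseval/Cauchy--Schwarz estimate applied to the auxiliary sequence $b_v = a_v\chi'(\rho_{D'}(v))$, combined with the factorization $\Prim(G_{D'\sqcup\{\lambda\}}) = \Prim(G_{D'})\times(\Irr(G_\lambda)\setminus\{\mathbf 1\})$. The only cosmetic difference is that the paper phrases the inductive step via an arbitrary disjoint split $D = E\cup E'$ with two invocations of the inductive hypothesis, whereas you always peel off a singleton; this is a harmless specialization.
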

\begin{proof}
We proceed by induction on the cardinality of the set $D$. \\ 
\noindent $\bullet$ If $|D|=0$, then $D=\emptyset$ and the lemma is trivial.  Note that $\Prim(G_\emptyset)=\{1\}$.\\
\noindent $\bullet$ If $|D|=1$, then $D=\{\lambda\}$ for some $\lambda \in \Lambda$. \\
We first use the Cauchy-Schwarz inequality and our assumption that $a_v=0$ for $v\not\in\Ss$.
\[
\Bigl| \sum_{v\in X} a_v\Bigr|^2 = \Biggl|\sum_{C\in \rho_\lambda(\Ss)}  \sum_{\substack{v\in X\\ \rho_\lambda(v)= C}} a_v \Biggr|^2 
\leq  \Bigl(\sum_{C\in\rho_\lambda(\Ss)}|C|\Bigr) \sum_{C\in \rho_\lambda(\Ss)}  \frac{1}{|C|} \Biggl|\sum_{\substack{v\in X \\ \rho_\lambda(v)=C}} a_v \Biggr|^2 \]
From (\ref{E:delta size}) we have $\sum_{C\in \rho_\lambda(\Ss)}|C|  = \mu_\lambda(\rho_\lambda(\Ss))|G_\lambda| \leq \delta_\lambda |G_\lambda|$,
so
\[
\Bigl| \sum_{v\in X} a_v\Bigr|^2 \leq \delta_\lambda |G_\lambda| \sum_{C\in G_\lambda^\sharp }  \frac{1}{|C|} \Biggl|\sum_{\substack{v\in X \\ \rho_\lambda(v)=C}} a_v \Biggr|^2.
\]
The characteristic function of a conjugacy class $C\in G_\lambda^\sharp $ in $G_\lambda$ has Fourier expansion
\[
\sum_{\chi\in\Irr(G_\lambda)} \bigg( \frac{1}{|G_\lambda|} \sum_{g\in C} \overline{\chi(g)} \bigg) \chi 
  =  \sum_{\chi\in\Irr(G_\lambda)}  \frac{|C|}{|G_\lambda|}  \overline{\chi(C)} \cdot \chi.
\]
We now substitute this into our previous inequality and expand.
\begin{align*} 
 \delta_\lambda^{-1} \Bigl| \sum_{v\in X} a_v\Bigr|^2 
& \leq  |G_\lambda| \sum_{C\in G_\lambda^\sharp } \frac{1}{|C|} \bigg|  \sum_{\substack{v\in X \\ \rho_\lambda(v)= C}} a_v \bigg|^2\\
&=  |G_\lambda| \sum_{C\in G_\lambda^\sharp } \frac{1}{|C|} \Biggl|  \sum_{{v\in X}} \Biggl(\sum_{\chi\in \Irr(G_\lambda)} \frac{|C|}{|G_\lambda|}  \overline{\chi(C)} \cdot                 \chi(\rho_\lambda(v)) \Biggr) a_v \Biggr|^2\\
&=   |G_\lambda| \sum_{C\in G_\lambda^\sharp } \frac{1}{|C|}  \sum_{{v,v'\in X}}    \sum_{\chi,\chi' \in \Irr(G_\lambda)} \frac{|C|^2}{|G_\lambda|^2}  \overline{\chi(C)} {\chi'(C)}               \chi(\rho_\lambda(v)) \overline{\chi'(\rho_\lambda(v'))} a_v \overline{a_{v'}}\\
&=  \sum_{\chi,\chi' \in \Irr(G_\lambda)} \Bigl(  \frac{1}{|G_\lambda|}   \sum_{C\in G_\lambda^\sharp } |C| \overline{\chi(C)} {\chi'(C)}        \Bigr)         \sum_{{v\in X}} a_v \chi(\rho_\lambda(v)) \overline{ \sum_{{v'\in X}}{a_{v'} \chi'(\rho_\lambda(v'))}  }\\
&=  \sum_{\chi,\chi' \in \Irr(G_\lambda)} (\chi,\chi')        \sum_{{v\in X}} a_v \chi(\rho_\lambda(v)) \overline{ \sum_{{v'\in X}}{a_{v'} \chi'(\rho_\lambda(v'))}  }
\end{align*}
Since the irreducible characters of $G_\lambda$ are orthonormal,
\begin{align*} 
\delta_\lambda^{-1} \Bigl| \sum_{v\in X} a_v\Bigr|^2
&\leq   \sum_{\chi \in \Irr(G_\lambda)}  \Bigl| \sum_{{v\in X}} a_v \chi(\rho_\lambda(v)) \Bigr|^2
=   \sum_{\chi\in \Irr(G_\lambda)-\{1\} } \Bigl| \sum_{v\in X} a_v \chi(\rho_\lambda(v)) \Bigr|^2 +     \Bigl| \sum_{v\in X}  a_v\Bigr|^2. 
\end{align*}
The lemma for $D=\{\lambda\}$ follows by noting that $\Prim(G_\lambda)=\Irr(G_\lambda)-\{1\}$ and collecting both sides; i.e.,
\[
\frac{1-\delta_\lambda}{\delta_\lambda} \Bigl| \sum_{v\in X} a_v\Bigr|^2  \leq \sum_{\chi\in \Prim(G_\lambda)} \Bigl| \sum_{v\in X} a_v \chi(\rho_\lambda(v)) \Bigr|^2.
\]
\noindent $\bullet$ Suppose that  $|D|\geq 2$.  Then $D=E\cup E'$, where $E$ and $E'$ are disjoint proper subsets of $D$.  We have a bijection
\begin{align*} 
\Irr(G_E) \times \Irr(G_{E'}) &\leftrightarrow \Irr(G_D), \; (\chi,\chi') \mapsto  \chi \chi',
\end{align*}
where $(\chi\chi')(g,g')=\chi(g)\chi(g')$ for $(g,g') \in G_{E}\times G_{E'} = G_D$.   This also induces a bijection between $\Prim(G_E) \times \Prim(G_{E'})$ and $\Prim(G_D)$.  Using the inductive hypothesis for $E$ and $E'$, we have:
\begin{align*} 
\sum_{\chi\in\Prim(G_D)}\Bigl| \sum_{v\in X} a_v \chi( \rho_D(v) ) \Bigr|^2 
&=  \sum_{\alpha\in\Prim(G_E)} \sum_{\beta \in\Prim(G_{E'})}\Bigl| \sum_{v\in X} a_v \alpha(\rho_E(v)) \beta(\rho_{E'}(v)) \Bigr|^2\\
&\geq   \Bigl( \prod_{\lambda\in E'} \frac{1-\delta_\lambda}{\delta_\lambda} \Bigr) \sum_{\alpha\in\Prim(G_E)} \Bigl| \sum_{v\in X} a_v \alpha(\rho_E(v)) \Bigr|^2\\
&\geq   \Bigl( \prod_{\lambda\in E'} \frac{1-\delta_\lambda}{\delta_\lambda} \Bigr)\Bigl( \prod_{\lambda\in E} \frac{1-\delta_\lambda}{\delta_\lambda}\Bigr) \Bigl| \sum_{v\in X} a_v  \Bigr|^2 
=  \Bigl( \prod_{\lambda\in D} \frac{1-\delta_\lambda}{\delta_\lambda} \Bigr) \Bigl| \sum_{v\in X} a_v  \Bigr|^2  \qedhere
\end{align*} 
\end{proof}

We now complete the proof of Theorem \ref{T:Large Sieve}.  Let $(a_v)_{v\in X}$ be a sequence of complex numbers with $a_v=0$ for $v\not\in \Ss$.  Using Lemma \ref{L:algebra lemma} and summing over all $D\in\Zz$ we obtain
\begin{align*}
\Bigl(\sum_{D\in \Zz} \prod_{\lambda\in D} \frac{1-\delta_\lambda}{\delta_\lambda} \Bigr) \Bigl|\sum_{v\in X} a_v\Bigr|^2  & \leq \sum_{D\in\Zz} \sum_{\chi\in\Prim(G_D)}\Bigl| \sum_{v\in X} a_v\chi(\rho_D(v))\Bigr|^2.
\end{align*}
The large sieve inequality (\ref{E:LSI of theorem}) then gives
\begin{equation} \label{E:smoothing}
\Bigl(\sum_{D\in \Zz} \prod_{\lambda\in D} \frac{1-\delta_\lambda}{\delta_\lambda} \Bigr) \Bigl|\sum_{v\in X} a_v\Bigr|^2 \leq \Delta(X,\rho,\Zz) \sum_{v\in X} |a_v|^2.
\end{equation}
In the special case where $a_v=1$ for $v\in\Ss$, we have
\[
\Bigl(\sum_{D\in \Zz} \prod_{\lambda\in D} \frac{1-\delta_\lambda}{\delta_\lambda} \Bigr)|\Ss|^2 \leq \Delta(X,\rho,\Zz) |\Ss|.
\]
The theorem follows by cancelling $|\Ss|$ from both sides (the theorem is trivial if $|\Ss|=0$).

\begin{remark}
Equation (\ref{E:smoothing}) can be useful in practice because it allows one to work with \emph{smoothed sums}.   We will not use this in the present paper.

\end{remark}
\subsection{Duality principle} 
\label{SS:Proof of LSI}
\begin{lemma}[Duality principle] \label{L:duality principle}
Let $I$ and $J$ be finite sets and let $\{c_{i,j}\}_{i \in I, \, j\in J}$ be a sequence of complex numbers.  Then the following assertions concerning a real number $\Delta$ are equivalent:
\begin{romanenum}
\item For any sequence $\{x_i\}_{i \in I}$ of complex numbers,
\[
\sum_{j\in J} \big| \sum_{i\in I} c_{i,j} x_i \big|^2  \leq \Delta  \sum_{i \in I} |x_i|^2.
\]
\item For any sequence $\{y_j\}_{j \in J}$ of complex numbers,
\[
 \sum_{i\in I} \big| \sum_{j\in J} c_{i,j} y_j \big|^2  \leq \Delta  \sum_{j \in J} |y_j|^2.
\]
\end{romanenum}
\end{lemma}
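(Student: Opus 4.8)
The plan is to recognize Lemma~\ref{L:duality principle} as the standard fact that a linear operator between finite-dimensional Hermitian inner product spaces and its adjoint have the same operator norm: the two displayed inequalities say precisely that the operator norm of the matrix $C=(c_{i,j})\colon\CC^I\to\CC^J$ is at most $\sqrt{\Delta}$, respectively that the operator norm of its conjugate transpose $C^\ast\colon\CC^J\to\CC^I$ is at most $\sqrt{\Delta}$. I would give a short self-contained Cauchy--Schwarz proof rather than invoke spectral theory, since everything is finite-dimensional.

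First I would reduce to a single implication by symmetry. It suffices to prove that (i) implies (ii): applying that implication to the array $\{c'_{j,i}\}_{j\in J,\,i\in I}$ defined by $c'_{j,i}=c_{i,j}$ converts statement (i) for $\{c'\}$ — which is literally statement (ii) for $\{c\}$ — into statement (ii) for $\{c'\}$ — which is statement (i) for $\{c\}$. This avoids writing the same computation twice.

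For the remaining implication, assume (i) holds, fix a sequence $\{y_j\}_{j\in J}$, and set
\[
S=\sum_{i\in I}\Big|\sum_{j\in J}c_{i,j}y_j\Big|^2, \qquad x_i=\overline{\sum_{j\in J}c_{i,j}y_j},
\]
so that $\sum_{i\in I}|x_i|^2=S$. Interchanging the (finite) order of summation gives the identity
\[
S=\sum_{i\in I}x_i\sum_{j\in J}c_{i,j}y_j=\sum_{j\in J}y_j\sum_{i\in I}c_{i,j}x_i,
\]
and then the Cauchy--Schwarz inequality followed by hypothesis (i) applied to $\{x_i\}$ yields
\[
S\leq\Big(\sum_{j\in J}|y_j|^2\Big)^{1/2}\Big(\sum_{j\in J}\Big|\sum_{i\in I}c_{i,j}x_i\Big|^2\Big)^{1/2}\leq\Big(\sum_{j\in J}|y_j|^2\Big)^{1/2}(\Delta S)^{1/2}.
\]
If $S=0$ then (ii) is trivial; otherwise dividing through by $S^{1/2}$ and squaring gives $S\leq\Delta\sum_{j\in J}|y_j|^2$, which is exactly (ii).

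I do not expect a genuine obstacle here: this is a routine duality argument. The only points worth a word of care are the legitimacy of the summation interchange (immediate since $I$ and $J$ are finite), the treatment of the degenerate case $S=0$ so as not to divide by zero, and the symmetry reduction at the outset, which is what keeps the write-up from redoing the estimate in the opposite direction.
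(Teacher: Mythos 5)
Your proof is correct. The paper itself does not give an argument for Lemma~\ref{L:duality principle}; it simply cites Montgomery's ``The analytic principle of the large sieve,'' Lemma~2, where exactly this duality statement is proved. What you have written out is the standard self-contained Cauchy--Schwarz proof of that cited lemma: take $x_i$ to be the conjugate of the inner sum, observe that $S=\sum_i|x_i|^2$ equals the interchanged double sum $\sum_j y_j\sum_i c_{i,j}x_i$, apply Cauchy--Schwarz in $j$, then apply hypothesis (i) to the sequence $\{x_i\}$, and cancel $S^{1/2}$. The symmetry reduction via transposing the array is clean and does exactly what you claim. You handle the degenerate case $S=0$ and note that finiteness of $I$ and $J$ licenses the interchange, so there are no gaps. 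The trade-off is simply one of exposition: the paper delegates a routine fact to the literature (and immediately restates it in operator-norm language, $\norm{C}_2=\norm{C^*}_2$, in the remark following Lemma~\ref{L:duality principle 2}), whereas you make the lemma self-contained. Either is acceptable; your version has the small advantage that a reader need not look up Montgomery, at the small cost of a few extra lines.
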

\begin{proof}
This is a special case of \cite{Montgomery}*{Lemma 2}.
\end{proof}

\begin{lemma} \label{L:duality principle 2}
Let $I$ and $J$ be finite sets and let $\{c_{i,j}\}_{i \in I, \, j\in J}$ be a sequence of complex numbers.    Then for any sequence $\{x_i\}_{i \in I}$ of complex numbers, we have
\[
 \sum_{j\in J} \big| \sum_{i\in I} c_{i,j} x_i \big|^2  \leq \bigg(\max_{j' \in J} \sum_{j \in J} \big| \sum_{i\in I} c_{i,j} \bbar{c_{i,j'}} \big|
 \bigg)\sum_{i \in I} |x_i|^2.
\]
\end{lemma}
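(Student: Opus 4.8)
The plan is to deduce the inequality from the duality principle of Lemma~\ref{L:duality principle}. Put
\[
\Delta := \max_{j'\in J}\sum_{j\in J}\Bigl|\sum_{i\in I}c_{i,j}\overline{c_{i,j'}}\Bigr| .
\]
The inequality to be proved is exactly assertion~(i) of Lemma~\ref{L:duality principle} for this value of $\Delta$ (with the $x_i$ as the free variables), so by that lemma it is equivalent to assertion~(ii): for every sequence $\{y_j\}_{j\in J}$ of complex numbers,
\[
\sum_{i\in I}\Bigl|\sum_{j\in J}c_{i,j}y_j\Bigr|^2 \;\le\; \Delta\sum_{j\in J}|y_j|^2 .
\]
I would prove this dual form directly, the point being that once the outer square is expanded the Gram-type quantities $b_{j,j'}:=\sum_{i\in I}c_{i,j}\overline{c_{i,j'}}$ that define $\Delta$ appear on the nose.

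Concretely, first expand and interchange summation:
\[
\sum_{i\in I}\Bigl|\sum_{j\in J}c_{i,j}y_j\Bigr|^2 = \sum_{i\in I}\sum_{j,j'\in J}c_{i,j}\overline{c_{i,j'}}\,y_j\overline{y_{j'}} = \sum_{j,j'\in J}b_{j,j'}\,y_j\overline{y_{j'}} .
\]
Then apply the triangle inequality together with the elementary bound $|y_j\overline{y_{j'}}|\le\tfrac12(|y_j|^2+|y_{j'}|^2)$, using the symmetry $b_{j',j}=\overline{b_{j,j'}}$, hence $|b_{j,j'}|=|b_{j',j}|$, to symmetrize the two terms:
\[
\Bigl|\sum_{j,j'\in J}b_{j,j'}\,y_j\overline{y_{j'}}\Bigr| \le \sum_{j,j'\in J}|b_{j,j'}|\cdot\tfrac12\bigl(|y_j|^2+|y_{j'}|^2\bigr) = \sum_{j\in J}|y_j|^2\sum_{j'\in J}|b_{j,j'}| .
\]
Finally bound each inner sum by its maximum over $j$ and relabel the summation index, noting $\max_{j\in J}\sum_{j'\in J}|b_{j,j'}| = \max_{j'\in J}\sum_{j\in J}|b_{j,j'}| = \Delta$. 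This establishes assertion~(ii), and therefore the lemma.

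There is no genuine obstacle here: the entire content is the observation that duality converts the $J$-indexed outer square one is asked to bound into an $I$-indexed one, for which the expansion naturally produces the matrix $(b_{j,j'})$ whose largest absolute row sum is the constant $\Delta$. The only points needing (minor) care are the use of $|b_{j,j'}|=|b_{j',j}|$ in the symmetrization and the harmless relabeling $j\leftrightarrow j'$ identifying the maximal row sum with $\Delta$.
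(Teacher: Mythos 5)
Your proof is correct and is essentially the same as the paper's: both expand the dual quadratic form to get the Gram quantities $\sum_i c_{i,j}\overline{c_{i,j'}}$, apply the triangle inequality and $|y_j||y_{j'}|\le\tfrac12(|y_j|^2+|y_{j'}|^2)$, symmetrize using $|b_{j,j'}|=|b_{j',j}|$, bound by the maximal row sum, and invoke Lemma~\ref{L:duality principle}. The only cosmetic difference is that you state the Hermitian symmetry explicitly, whereas the paper uses it silently in passing from $\tfrac12(|y_j|^2+|y_{j'}|^2)$ to $|y_{j'}|^2$.
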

\begin{proof}
Take any sequence $\{y_j\}_{j \in J}$ of complex numbers.
\begin{align*}
\sum_{i\in I} \big| \sum_{j\in J} c_{i,j} y_j \big|^2 & = \sum_{j,j' \in J} \sum_{i \in I} c_{i,j}\bbar{c_{i,j'}} y_j \bbar{y_{j'}} \\
& \leq\sum_{j, j'\in J} \Bigl| \sum_{i \in I}  c_{i,j} \bbar{c_{i,j'}} \Bigr| |y_j| |y_{j'}| \\
&\leq\sum_{j,\, j'} \Bigl| \sum_{i} c_{i,j} \bbar{c_{i,j'}} \Bigr| \frac{|y_j|^2 + |y_{j'}|^2}{2} \\
&=\sum_{j,\, j'} \Bigl| \sum_{i}  c_{i,j}\bbar{ c_{i,j'}} \Bigr|  |y_{j'}|^2 
\leq \Bigl( \max_{j'} \sum_j \Bigl| \sum_{i} c_{i,j}\bbar{ c_{i,j'}}\Bigr|\Bigr) \sum_{j'}  |y_{j'}|^2 
\end{align*}
The lemma is now an immediate consequence of Lemma~\ref{L:duality principle}.
\end{proof}

\begin{proof}[Proof of Proposition~\ref{P:LSI}]
Define $I:= X$ and $J:= \bigcup_{D \in \Zz} \Prim(G_D)$.  For a character $\chi\in J$, let $D_\chi$ be the element of $\Zz$ for which $\chi\in\Prim(G_{D_\chi})$.  For $v \in I$ and $\chi \in J$, define $c_{v,\chi}:=\chi(\rho_{D_\chi}(v))$.   The proposition then follows directly from Lemma~\ref{L:duality principle 2}.
\end{proof}

\begin{remark} 
Let $C=(c_{i,j})$ be an $m\times n$ matrix with complex entries.   For a (not necessarily prime!) value $p$ with $1\leq p \leq \infty$, we can endow $\CC^n$ with the usual $p$-norm $\norm{\cdot}_p$.  We then define $\norm{C}_p$ to be the supremum of $\norm{Cy}_p/\norm{y}_p$ over all non-zero $y\in \CC^n$.
In particular, note that $\norm{C}_2$ is the smallest nonnegative number such that 
\[
 \sum_{i = 1}^m \big| \sum_{j=1}^n c_{i,j} y_j \big|^2  \leq \norm{C}_2^2 \sum_{j =1}^n |y_j|^2
\]
holds for all $y \in \CC^n$.  Lemma~\ref{L:duality principle} is thus equivalent to $\norm{C}_2 = \norm{C^*}_2$ where $C^*$ is the conjugate transpose of $C$.
For a complex matrix $A$, one has $\norm{A}_\infty = \max_{i} \sum_j |a_{i,j}|$.  In particular,
\[
\norm{C^*C}_\infty = \max_{j'} \sum_{j} |\sum_{i} \overline{c_{i,j'}} c_{i,j} |.
\]
Lemma~\ref{L:duality principle 2} is thus equivalent to $\norm{C}_2^2 \leq \norm{C^*C}_\infty$.  
\end{remark}

\section{The large sieve for Galois representations} \label{S:Frobenius Sieve}
\subsection{Galois representations} 
\begin{defn}
Let $k$ be a number field and let $H$ be a (Hausdorff) topological group.  
A homomorphism $\rho\colon \calG_k \to H$ is a \defi{Galois representation} of $k$ 
if it is continuous (where $\calG_k=\Gal(\kbar/k)$ is endowed with the Krull topology). 
\end{defn}

Let $\rho\colon \calG_k\to H$ be a Galois representation.  The group $\ker(\rho)$ is a closed normal subgroup of $\calG_k$ whose fixed field we denote by $k(\rho)$.    The representation $\rho$ thus factors through an injective homomorphism $\Gal(k(\rho)/k)\hookrightarrow H$.   The representation $\rho$ is \defi{unramified} at a prime $\p\in\Sigma_k$ if $\p$ is unramified in the field extension $k(\rho)/k$.  

Fix a prime $\p \in \Sigma_k$ for which $\rho$ is unramified at $\p$ and take any place $\P$ of $k(\rho)$ which extends $\p$.  Since $\p$ is unramified in $k(\rho)$, we have a well-defined element $\rho(\Frob_\P) \in H$.  The conjugacy class of $\rho(\Frob_\P)$ in $\rho(\calG_k)$ does not depend on the choice of $\P$ and we shall denote it by $\rho(\Frob_\p)$. 

\begin{defn}
Let $\{\rho_\lambda\colon\calG_k \to H_\lambda \}_{\lambda\in \Lambda}$ be a collection of Galois representations.  We say that the representations $\{\rho_\lambda\}_{\lambda\in\Lambda}$ are \defi{independent} if 
\[
\Bigl(\prod_{\lambda\in\Lambda}\rho_\lambda \Bigr)(\calG_k) = \prod_{\lambda\in\Lambda} \rho_\lambda(\calG_k).
\]
An equivalent definition of independence is that the fields $k(\rho_\lambda)$ are linearly disjoint over $k$.
\end{defn}
\subsection{Statement of the large sieve}

\begin{thm} \label{T:LS for Frobenius}
Let $F$ be a number field and let $\Lambda$ be a set of nonzero ideals of $\OO_F$ which are pairwise relatively prime.  Let $k$ be a number field and suppose we have a collection of independent Galois representations
\[
\big\{\rho_\lambda \colon \calG_k \to H_\lambda \big\}_{\lambda\in\Lambda}.
\]
Assume that all the groups $G_\lambda:=\rho_\lambda(\calG_k)$ are finite and that there exists a real number $r\geq 1$ such that $|G_\lambda| \leq N(\lambda)^r$ for all but finitely many $\lambda \in \Lambda$.
Assume further that there is a finite set $S\subseteq \Sigma_k$ such that each $\rho_\lambda$ is unramified away from $S_\lambda:= S\cup  \{\p \in \Sigma_k : \p | N(\lambda)\}$.  \\
 
For every $\lambda\in \Lambda$, fix a non-empty subset $C_\lambda$ of $G_\lambda$ that is stable under conjugation.    Let $Q=Q(x)$ be a positive function of a real variable $x$ such that $Q(x) \ll  \sqrt{x}$ and let $\Lambda(Q)$ be the set of $\lambda\in \Lambda$ with $N(\lambda)\leq Q$.    Define the set
\begin{equation*} 
\Ss(x) := \big\{ \p \in \Sigma_k(x) : \; \p\in S_\lambda \;\text{ or }\; \rho_\lambda(\Frob_\p) \subseteq C_\lambda  \text{ for all $\lambda\in \Lambda(Q)$}\big\}.\\
\end{equation*} 

Choose subsets $\Zz(Q)\subseteq \{ D : D\subseteq \Lambda,\; {\prod}_{\lambda\in D} N(\lambda)\leq Q\}$ and define
\[
L(Q) = \sum_{D\in\Zz(Q)} \prod_{\lambda \in D} \frac{1-|C_\lambda|/|G_\lambda|}{|C_\lambda|/|G_\lambda|}.
\]
For each $D\subseteq \Lambda$, define $G_D=\prod_{\lambda\in D} G_\lambda$. 
\begin{romanenum}  
\item  Let $B>0$ be a real number.  If $Q(x) := c \big({\log x}/{(\log \log x)^2} \big)^{{1}/{(6r)}}$ for a sufficiently small constant $c>0$, then
\[  
	|\Ss(x)| \leq \Big(\Li x   + O(x/(\log x)^{1+B}) \Big) L(Q)^{-1}.
\]
\item 
Assuming the Generalized Riemann Hypothesis,
\[
	|\Ss(x)| \leq \Big(\Li x + O\bigl( \max_{D'\in\Zz(Q)}|G_{D'}|\cdot \sum_{D\in\Zz(Q)}|G_D^\sharp||G_D|\cdot x^{1/2}\log x\bigr) \Big) L(Q)^{-1}.
\]
\item 
Assuming Artin's Holomorphy Conjecture for the extensions $k(\rho_{D\cup D'})/k$ for $D,D'\in \Zz(Q)$ and assuming the Generalized Riemann Hypothesis,
\[
	 |\Ss(x)| \leq \Bigg(\Li x + O\Big( \max_{D'\in\Zz(Q),\, \chi' \in \Irr(G_{D'})}\chi'(1)\sum_{D\in\Zz(Q), \, \chi\in \Irr(G_D)}\chi(1) \cdot x^{1/2}\log x\Big) \Bigg)L(Q)^{-1}.
\]
\end{romanenum}
The implicit constants depend on $k$, the representations $\{\rho_\lambda\}_{\lambda\in \Lambda}$ and in part (i) also on $r$ and $B$.
\end{thm}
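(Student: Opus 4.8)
The plan is to deduce the theorem from the abstract large sieve, Theorem~\ref{T:Large Sieve}, taking the Frobenius conjugacy classes as the sieving maps $\rho_\lambda$ and then bounding the sieve constant $\Delta(X,\rho,\Zz(Q))$ by means of Proposition~\ref{P:LSI} together with effective forms of the Chebotarev density theorem. First I would set up the sieve. Let $X$ be the set of $\p\in\Sigma_k(x)$ that do not lie in $S$ and do not divide $\prod_{\lambda\in\Lambda(Q)}N(\lambda)$; since every rational prime dividing this product is $\le Q\ll\sqrt x$, only $O(\sqrt x)$ primes are discarded, so $|X|=|\Sigma_k(x)|+O(\sqrt x)$ and $|\Ss(x)|\le|\Ss(x)\cap X|+O(\sqrt x)$. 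For $\lambda\in\Lambda(Q)$ and $\p\in X$ put $\rho_\lambda(\p):=\rho_\lambda(\Frob_\p)\in G_\lambda^\sharp$, which is well defined since $\p\notin S_\lambda$; under the identification $G_D^\sharp=\prod_{\lambda\in D}G_\lambda^\sharp$ this makes $\rho_D(\p)$ the $G_D$-conjugacy class of $\rho_D(\Frob_\p)$ for the composite representation $\rho_D\colon\calG_k\to G_D$. Now apply Theorem~\ref{T:Large Sieve} to $\Ss(x)\cap X$ with sieve support $\Zz(Q)$ and $\delta_\lambda:=|C_\lambda|/|G_\lambda|$: the hypothesis $\mu_\lambda(\rho_\lambda(\Ss(x)\cap X))\le\delta_\lambda$ holds because $C_\lambda$ is stable under conjugation and $\rho_\lambda(\Frob_\p)\subseteq C_\lambda$ for every $\p\in\Ss(x)\cap X$, so $\rho_\lambda(\Ss(x)\cap X)$ is contained in the set of conjugacy classes lying inside $C_\lambda$, which has $\mu_\lambda$-measure $|C_\lambda|/|G_\lambda|$. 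The conclusion is $L(Q)\,|\Ss(x)\cap X|\le\Delta(X,\rho,\Zz(Q))$ with $L(Q)$ exactly the quantity in the statement, so it suffices to prove $\Delta(X,\rho,\Zz(Q))\le\Li x+(\text{error})$ with the error as in (i)--(iii).

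By Proposition~\ref{P:LSI},
\[
\Delta(X,\rho,\Zz(Q)) \le \max_{\substack{D'\in\Zz(Q)\\ \chi'\in\Prim(G_{D'})}} \sum_{\substack{D\in\Zz(Q)\\ \chi\in\Prim(G_D)}} \Bigl| \sum_{\p\in X} \chi(\rho_D(\Frob_\p))\,\overline{\chi'(\rho_{D'}(\Frob_\p))} \Bigr|.
\]
Fix $D,D'\in\Zz(Q)$ and put $D''=D\cup D'$; then $\prod_{\lambda\in D''}N(\lambda)\le Q^2$ and $|G_{D''}|\le\prod_{\lambda\in D''}N(\lambda)^r\le Q^{2r}$. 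By the independence hypothesis, $\rho_{D''}\colon\calG_k\to G_{D''}$ is surjective, $k(\rho_{D''})/k$ is Galois with group $G_{D''}$ and unramified outside $S\cup\{\p:\p\mid\prod_{\lambda\in D''}N(\lambda)\}$, and $\chi,\chi'$ pull back to irreducible characters $\widetilde\chi,\widetilde\chi'$ of $G_{D''}$ with $\chi(\rho_D(\Frob_\p))=\widetilde\chi(\rho_{D''}(\Frob_\p))$ and likewise for $\chi'$. Hence the inner sum is $\sum_{\p\in X}\Phi(\rho_{D''}(\Frob_\p))$, where $\Phi=\widetilde\chi\,\overline{\widetilde\chi'}$ is a class function on $\Gal(k(\rho_{D''})/k)$ with $\|\Phi\|_\infty\le\chi(1)\chi'(1)$ and mean value $(\widetilde\chi,\widetilde\chi')_{G_{D''}}$ equal, by primitivity of $\chi$ and $\chi'$, to $1$ when $(D,\chi)=(D',\chi')$ and to $0$ otherwise. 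Decomposing $\Phi$ over the conjugacy classes of $G_{D''}$ (equivalently, into irreducible characters) and applying the effective Chebotarev estimates of Appendix~\ref{A:Character sums} to $k(\rho_{D''})/k$, each such sum is $(\text{mean of }\Phi)\cdot|X|$ plus an explicit error. Thus the unique diagonal term contributes $|X|=\Li x+(\text{prime ideal theorem error})$, and $\Delta(X,\rho,\Zz(Q))$ is $\Li x$ plus a sum of Chebotarev error terms over the pairs $(D,\chi),(D',\chi')$.

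The three parts differ only in which estimate from Appendix~\ref{A:Character sums} is inserted. Under GRH and Artin's Holomorphy Conjecture for $k(\rho_{D''})/k$ (part (iii)) I would decompose $\Phi$ into irreducible characters $\psi$ of $G_{D''}$ — with nonnegative multiplicities whose $\psi(1)$-weighted sum is $\chi(1)\chi'(1)$ — and use GRH for the now-entire Artin $L$-functions $L(s,\psi)$; since their conductors and degrees, and $\log d_{k(\rho_{D''})}$, are bounded by powers of $Q$ while $Q\ll\sqrt x$, each $\psi$ contributes $O(\psi(1)\,x^{1/2}\log x)$ and hence each inner term $O(\chi(1)\chi'(1)\,x^{1/2}\log x)$; summing over $(D,\chi)$ and maximising over $(D',\chi')$ gives the stated bound. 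Without Artin holomorphy (part (ii)) one routes $\sum_\p\Phi(\rho_{D''}(\Frob_\p))$ through the Dedekind zeta functions of $k(\rho_{D''})$ and its subfields instead; under GRH the error per conjugacy class is $O(|G_{D''}|\,x^{1/2}\log x)$, and weighting by $\|\Phi\|_\infty$ across the $|G_{D''}^\sharp|$ classes and summing, with the crude bounds $\chi(1)\le|G_D|^{1/2}$, $|G_{D''}|\le|G_D||G_{D'}|$, $|G_{D''}^\sharp|\le|G_D^\sharp||G_{D'}^\sharp|$, yields $O\!\bigl(\max_{D'}|G_{D'}|\sum_{D}|G_D^\sharp||G_D|\,x^{1/2}\log x\bigr)$. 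For part (i) one uses the unconditional effective Chebotarev theorem (following Lagarias--Odlyzko, with Stark's handling of a possible exceptional zero): it applies once $\log x$ exceeds a fixed multiple of $[k(\rho_{D''}):\QQ]\,(\log d_{k(\rho_{D''})})^2$, and then gives an error that is $|X|$ times an arbitrarily large negative power of $\log x$; since ramification in $k(\rho_{D''})/k$ is confined to $S$ and to the primes dividing $\prod_{\lambda\in D''}N(\lambda)\le Q^2$, the conductor--discriminant formula gives $\log d_{k(\rho_{D''})}\ll|G_{D''}|\log Q\ll Q^{2r}\log Q$ while $[k(\rho_{D''}):\QQ]\ll Q^{2r}$, so the applicability condition reads $Q^{6r}(\log Q)^2\ll\log x$. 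Taking $Q(x)=c(\log x/(\log\log x)^2)^{1/(6r)}$ for a sufficiently small $c=c(B,r)>0$ meets this (using $\log Q\asymp\log\log x$), and since there are only $Q^{O(1)}$ terms the total error is $O(x/(\log x)^{1+B})$. In all three cases the prime ideal theorem error inside $|X|$ is dominated by the errors just displayed, and all implied constants depend only on $k$ and the $\rho_\lambda$, with an extra dependence on $r$ and $B$ in part~(i).

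I expect the technical heart to be exactly the uniform control of the family $\{k(\rho_{D''}):D,D'\in\Zz(Q)\}$ — whose members grow with $x$ — inside the effective Chebotarev estimates: bounding their discriminants and the relevant $L$-function conductors polynomially in $Q$ (using the confinement of ramification recalled above, together with a bound on the ramification exponents), and, in the unconditional case~(i), controlling a possible Siegel zero and carrying out the resulting optimisation of $Q$ against $x$. These are precisely the points handled in Appendix~\ref{A:Character sums}; granting them, the rest — the application of Theorem~\ref{T:Large Sieve} and Proposition~\ref{P:LSI}, the identification of the main term via primitivity and orthogonality, and the bookkeeping with $\chi(1)$, $|G_D|$ and $|G_D^\sharp|$ — is routine.
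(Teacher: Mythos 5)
Your overall architecture matches the paper's proof exactly: apply the abstract sieve (Theorem~\ref{T:Large Sieve}) with $\delta_\lambda=|C_\lambda|/|G_\lambda|$, bound $\Delta$ by Proposition~\ref{P:LSI}, pass to the field $k(\rho_{D\cup D'})$ for each pair $D,D'\in\Zz(Q)$, identify the diagonal term via primitivity (your argument that $\widetilde\chi=\widetilde\chi'$ forces $D=D'$ and $\chi=\chi'$ is the same orthogonality fact the paper encodes as $\delta_{\chi,\chi'}$), and feed the character sums into the effective Chebotarev results of Appendix~\ref{A:Character sums}. Parts (i) and (iii) are reconstructed essentially correctly in outline. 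One cosmetic difference: you trim $X$ to the unramified primes and pay an additive $O(\sqrt x)$; the paper instead extends $\rho_\lambda$ to all of $\Sigma_k(x)$ by assigning ramified primes a class inside $C_\lambda$, so $\Ss(x)$ is sieved directly with no stray term. Your version is salvageable, but you would still need to check that the extra $O(\sqrt x)$ is absorbed after dividing by $L(Q)$ (it is, since $L(Q)\cdot\sqrt x$ is dominated by the stated error in each part, but you do not verify this).

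The genuine gap is in part (ii). You estimate the per-pair error by bounding $\sum_{C}|\Phi(C)|\cdot|C|$ through $\|\Phi\|_\infty\cdot|G_{D\cup D'}| = \chi(1)\chi'(1)|G_{D\cup D'}|$, and then use $\chi(1)\le|G_D|^{1/2}$, $\chi'(1)\le|G_{D'}|^{1/2}$. Following this through gives, at best, an error of shape
$\max_{D'}|G_{D'}|^{3/2}\sum_{D}|G_D^\sharp|\,|G_D|^{3/2}\,x^{1/2}\log x$, which is strictly weaker than the stated $\max_{D'}|G_{D'}|\sum_D|G_D^\sharp|\,|G_D|\,x^{1/2}\log x$ (since $|G|^{3/2}\ge|G|$). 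The claim that your crude bounds ``yield'' the stated error term does not follow. The missing step is the Cauchy--Schwarz estimate the paper uses in Lemma~\ref{L:gory lemma}(ii): apply Proposition~\ref{P:conditional character sum}(i) directly to the (non-irreducible) character $\chi\overline{\chi'}$, giving an error factor $\sum_{g}|\chi(g)||\chi'(g)|$, and then bound this by
$\bigl(\sum_g|\chi(g)|^2\bigr)^{1/2}\bigl(\sum_g|\chi'(g)|^2\bigr)^{1/2}=|G_{D\cup D'}|\le|G_D||G_{D'}|$, using orthonormality of the irreducible characters. This removes the superfluous $\chi(1)\chi'(1)$ factor and recovers the stated per-pair bound $O(|G_D||G_{D'}|x^{1/2}\log x)$, after which summing over $(D,\chi)$ with $|\Prim(G_D)|\le|G_D^\sharp|$ and maximising over $D'$ gives exactly the theorem's error term.
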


\begin{remark} \label{R:LS for Frobenius}
\begin{romanenum}
\item
If $L(Q)=0$, then one should interpret the theorem as giving the trivial bound $|\Ss(x)|\leq +\infty$.
\item 
The dependence of the bounds in Theorem~\ref{T:LS for Frobenius} on the sets $C_\lambda$ are only in terms of the ratios $|C_\lambda|/|G_\lambda|$.  For each $\lambda \in \Lambda$, fix a real number $0<\delta_\lambda\leq 1$ such that $|C_\lambda|/|G_\lambda| \leq \delta_\lambda$. Then
\[
L(Q)  \geq \sum_{D\in\Zz(Q)} \prod_{\lambda \in D} \frac{1-\delta_\lambda}{\delta_\lambda}.
\]

The smaller the values of $\delta_\lambda$ are, the stronger our bound on $|\Ss(x)|$ is.  The ``large'' in the large sieve refers to the fact that one may take $\delta_\lambda$ to be relatively small (at least smaller than earlier sieve methods); i.e., a \emph{large} number of elements $G_\lambda$ are not hit by the conjugacy classes $\rho_\lambda(\Frob_\p)$. 

The example of \S\ref{SS:toy example} is typical of a large sieve where we have $\delta_\ell = 1/2 + O(1/\ell)$.   In our application to the Koblitz conjecture, we will have $\delta_\ell=(1-1/\ell) + O(1/\ell^2)$ which is typical of so-called ``small sieves''.

\item There is flexibility in what the set $\Zz(Q)$ can be.   
The choice $\Zz(Q)=\{  D : D\subseteq \Lambda, \, \prod_{\lambda\in D} N(\lambda)\leq Q\}$ is usually appropriate, but as we will see other subtle choices may be useful.  
\item  Suppose that $s$ is a number such that $|G_\lambda^\sharp|\leq N(\lambda)^s$ for all but finitely many $\lambda\in \Lambda$ (one can always take $s=r$).  
Assuming GRH, the bound in Theorem~\ref{T:LS for Frobenius}(ii) gives the simpler expression
$|\Ss(x)| \leq (\Li x + O( Q^{2r+s+1} x^{1/2}\log x) )L(Q)^{-1}$.  Choosing $Q(x)= \big(x^{1/2}/(\log x)^2\big)^{1/(2r+s+1)}$, we obtain the bound 
\[
|\Ss(x)| \ll \frac{x/\log x}{L\big(x^{1/(4r+2s+2)}/(\log x)^{2/(2r+s+1)}\big)}.
\]
\item In many arithmetic situations (including those considered in this paper) we have $G_\lambda \subseteq \GG(\OO_F/\lambda)$ where $\GG$ is a group scheme of finite type over $\Spec \OO_F$.  In Theorem \ref{T:LS for Frobenius}, one can then take $r$ to be any value greater than the dimension of $\GG$.
\end{romanenum}
\end{remark}

\subsection{Abelian varieties}
We now recall some basic facts concerning Galois representations associated to abelian varieties;  these will supply us with interesting examples that satisfy the conditions of Theorem~\ref{T:LS for Frobenius}.  In particular, these representations will be needed for our applications.\\

Let $A$ be an abelian variety of dimension $g\geq 1$ defined over a number field $k$.    For each integer $m\geq 1$, the absolute Galois group of $k$ acts on the $m$-torsion points $A[m]$ of $A(\kbar)$ inducing a Galois representation
\[
\rho_{A,m} \colon \calG_k \to \Aut(A[m]) \cong \GL_{2g}(\ZZ/m\ZZ).
\]
Let $S_A$ be the set of $\p \in \Sigma_k$ for which $A$ has bad reduction.   The representation $\rho_{A,m}$ is unramified outside of $S_A\cup\{ \p\in \Sigma_k: \p | m\}$.   For every prime ideal $\p \in \Sigma_k - S_A$, there is a unique polynomial $P_{A_\p}(T) \in \ZZ[T]$ such that
\[
P_{A_\p}(T) \equiv \det(TI-\rho_{A,m}(\Frob_\p)) \bmod{m}
\]
for all positive $m$ with $\p \nmid m$.  This agrees with the definition of $P_{A_\p}(T)$ given in \S\ref{SSS:Abelian varieties over finite fields}.

Now fix a polarization $\phi\colon A\to A^\vee$.  Combining this polarization with the Weil pairing gives an alternating bilinear form $e_m\colon A[m] \times A[m] \to \mu_m$.  For $x,y \in A[m]$ and $\sigma \in \calG_k$, we have
\[
e_m(\sigma x, \sigma y) = \sigma( e_m(x,y) ) = e_m(x,y)^{\chi_{k,m}(\sigma)}
\]
where $\chi_{k,m}\colon \calG_k \to (\ZZ/m\ZZ)^\times$ is the cyclotomic character of $k$ modulo $m$.  

Let $\GSp(A[m], e_m)$ be the group of $C\in \Aut(E[m])$ for which there exists an $\mult(C)\in (\ZZ/m\ZZ)^\times$ such that $e_m(Cx,Cy)= e_m(x,y)^{\mult(C)}$ for all $x,y \in A[m]$.   Our Galois representation thus becomes
\[
\rho_{A,m} \colon \calG_k \to \GSp(A[m],e_m).
\]
If $m$ is relatively prime to the degree of $\phi$, then the pairing $e_m$ is non-degenerate.  In this case, the isomorphism class of the pair $(A[m],e_m)$ depends only on $g$ and $m$, and we denote the corresponding abstract group by $\GSp_{2g}(\ZZ/m\ZZ)$.\\

We now give some examples of abelian varieties for which we can describe the image of $\rho_{A,m}$. 

\begin{thm}[Serre] \label{T:Abelian variety with full GSp}
Let $A$ be an abelian variety of dimension $g$ defined over a number field $k$.  Suppose that the following hold:
\begin{romanenum}
\item $\End_{\kbar}(A)=\ZZ$,
\item $g$ is either $2$, $6$, or an odd integer.
\end{romanenum}
Then $\rho_{A,m}(\calG_k)$ is a subgroup of $\GSp(A[m],e_m)$ whose index is bounded independent of $m$.  In particular, there exists a positive integer $B$ such that  $\rho_{A,m}(\calG_k) \cong \GSp_{2g}(\ZZ/m\ZZ)$ for all $m$ relatively prime to $B$.
\end{thm}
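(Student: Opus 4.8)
This is Serre's open-image theorem in these dimensions; I would assemble it from the $\ell$-adic results of Faltings and Bogomolov together with Serre's representation-theoretic analysis of the Mumford--Tate group. The plan is to prove that the image $H$ of the adelic representation $\rho_A\colon\calG_k\to\varprojlim_m\GSp(A[m],e_m)$ is open, and then translate back. Fix a polarization $\phi$ of $A$ of degree $N$, so that $\GSp(A[m],e_m)\cong\GSp_{2g}(\ZZ/m\ZZ)$ whenever $\gcd(m,N)=1$. The assertion that the index $[\GSp(A[m],e_m):\rho_{A,m}(\calG_k)]$ is bounded independently of $m$ is exactly the openness of $H$, and the ``in particular'' follows from it: for a prime $\ell\geq 5$ a closed subgroup of $\GSp_{2g}(\ZZ_\ell)$ that surjects onto $\GSp_{2g}(\FF_\ell)$ must be all of $\GSp_{2g}(\ZZ_\ell)$ (the congruence-filtration lemma, using that $\Sp_{2g}(\FF_\ell)$ is perfect and its adjoint action on $\mathfrak{sp}_{2g}(\FF_\ell)$ has no invariants), so if we take $B$ divisible by $N$, by $2$ and $3$, and by the finitely many exceptional primes produced below, the Chinese remainder theorem gives $\rho_{A,m}(\calG_k)\cong\GSp_{2g}(\ZZ/m\ZZ)$ for all $m$ coprime to $B$.

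For each prime $\ell$, let $G_\ell\subseteq\GSp_{2g,\QQ_\ell}$ be the Zariski closure of the image of $\rho_{A,\ell^\infty}\colon\calG_k\to\Aut(T_\ell A)$, acting on $V_\ell A:=T_\ell A\otimes_{\ZZ_\ell}\QQ_\ell$. By Faltings' theorem (the Tate conjecture for abelian varieties over number fields), $V_\ell A$ is a semisimple $\calG_k$-module and $\End_{\calG_L}(V_\ell A)=\End_L(A)\otimes\QQ_\ell=\QQ_\ell$ for every finite extension $L/k$ (using $\End_L(A)\subseteq\End_{\kbar}(A)=\ZZ$); hence $G_\ell$ is reductive and $V_\ell A$ is absolutely irreducible as a $G_\ell^\circ$-module. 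By Bogomolov's theorem the image of $\rho_{A,\ell^\infty}$ is open in $G_\ell(\QQ_\ell)$; in particular $G_\ell$ contains the group $\GG_m$ of homotheties. The crucial step is the identification $G_\ell=\GSp_{2g,\QQ_\ell}$, and this is where the hypotheses enter. I would combine: (a) Serre's determination of the Mumford--Tate group, namely that when $\End_{\kbar}(A)=\ZZ$ and $g$ is odd or $g\in\{2,6\}$ the Mumford--Tate group $\mathrm{MT}(A)$ is $\GSp_{2g}$ (this rests on the classification of which semisimple groups admit a faithful symplectic irreducible representation of dimension $2g$, and is exactly where $g\in\{2,6,\text{odd}\}$ is used); (b) the inclusion $G_\ell^\circ\subseteq\mathrm{MT}(A)_{\QQ_\ell}$ (Deligne) together with Serre's theory of Frobenius tori, which forces $\operatorname{rank}(G_\ell^\circ)=\operatorname{rank}(\mathrm{MT}(A))=g+1$; and (c) the observation, via Borel--de Siebenthal, that a connected semisimple subgroup of $\Sp_{2g}$ of full rank $g$ acting irreducibly on the standard representation must equal $\Sp_{2g}$, since every proper full-rank closed subsystem of $C_g$ lies in one of type $C_a\times C_{g-a}$, whose group acts reducibly. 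Since $V_\ell A$ is absolutely irreducible over $G_\ell^\circ$, the connected centre of $G_\ell^\circ$ acts by scalars and so equals $\GG_m$; hence $(G_\ell^\circ)^{\mathrm{der}}$ is semisimple of full rank $g$ inside $\Sp_{2g}$ and acts absolutely irreducibly, so $(c)$ gives $(G_\ell^\circ)^{\mathrm{der}}=\Sp_{2g}$, whence $G_\ell=G_\ell^\circ=\GG_m\cdot\Sp_{2g}=\GSp_{2g,\QQ_\ell}$ and the $\ell$-adic image is open in $\GSp_{2g}(\ZZ_\ell)$.

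It remains to upgrade openness to surjectivity for almost all $\ell$ and then to glue. From the $\ell$-adic openness the image surjects onto $\GSp_{2g}(\ZZ/\ell^n\ZZ)$ for $n$ large (depending on $\ell$); using the comparison with $G_\ell=\GSp_{2g,\QQ_\ell}$, a standard argument shows that for all $\ell$ outside a finite set $T$ the reduction mod $\ell$ of the image is not contained in any proper algebraic subgroup of $\GSp_{2g,\FF_\ell}$, so by the known description of the subgroups of $\GSp_{2g}(\FF_\ell)$ for $\ell$ large the mod-$\ell$ image contains $\Sp_{2g}(\FF_\ell)$; since the cyclotomic character $\chi_{k,\ell}$ is surjective for almost all $\ell$, we obtain $\rho_{A,\ell}(\calG_k)=\GSp_{2g}(\FF_\ell)$, hence $\rho_{A,\ell^\infty}(\calG_k)=\GSp_{2g}(\ZZ_\ell)$, for all $\ell\notin T$. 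For $\ell\in T$ the image is at least open in $\GSp_{2g}(\ZZ_\ell)$, and the finitely many primes dividing $N$ are treated the same way and absorbed into $B$. Finally, because the groups $\mathrm{PSp}_{2g}(\FF_\ell)$ are pairwise non-isomorphic nonabelian simple groups for distinct large $\ell$ and the characters $\chi_{k,\ell}$ are mutually independent, a Goursat-type argument shows that $H$ surjects onto $\prod_{\ell\notin T}\GSp_{2g}(\ZZ_\ell)$ and meets the remaining finite product in an open subgroup; thus $H$ is open, as needed. The main obstacle is the identification $G_\ell=\GSp_{2g,\QQ_\ell}$: it is the only place the hypotheses $\End_{\kbar}(A)=\ZZ$ and $g\in\{2,6,\text{odd}\}$ are really used, and it rests on the deepest inputs---Faltings' and Bogomolov's theorems and Serre's study of the Mumford--Tate group.
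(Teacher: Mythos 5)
The paper's proof of this theorem is essentially a pointer to the literature: Serre's \emph{R\'esum\'e des cours de 1985--1986} (Th\'eor\`eme~3) for the surjectivity of $\rho_{A,\ell}$ for $\ell\nmid B$, Serre's letter to Vign\'eras for the group-theoretic lemmas that bootstrap this to all $m$ coprime to $B$ and to boundedness of the index, and Serre's 1972 paper for $g=1$. Your proposal correctly reconstructs the content of the cited theorem: Faltings for semisimplicity and $\End_{\calG_k}(V_\ell A)=\QQ_\ell$, Bogomolov for openness in the Zariski closure $G_\ell$, Frobenius tori to pin down the rank, minuscule/Borel--de~Siebenthal-type classification to force $G_\ell=\GSp_{2g,\QQ_\ell}$ under the dimension hypothesis, the standard congruence-filtration lemma to lift mod-$\ell$ surjectivity to $\ZZ_\ell$-surjectivity, and a Goursat argument to glue across primes. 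This is the same route Serre's cited references take; you have simply unpacked the citation.

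Two small cautions on the details of your reconstruction. First, be careful about attributing the use of the hypothesis ``$g$ odd or $g\in\{2,6\}$'' to a determination of $\mathrm{MT}(A)$: Serre's argument does not need the (still open in general) Mumford--Tate conjecture, and the classification input is better phrased as a constraint on connected reductive $G\subseteq\GSp_{2g,\bbar{\QQ}_\ell}$ admitting an absolutely irreducible minuscule cocharacter with weights $\{0,1\}$ — that is the place where other $g$ (e.g., $g=4$, with $\SL_2^{\times 3}$ acting on $V_1\otimes V_2\otimes V_3$) admit genuine exceptions that are not full rank and so are not eliminated by Borel--de~Siebenthal alone. Second, the cyclotomic characters $\chi_{k,\ell}$ are not literally ``mutually independent'' (they are the reductions of a single adelic character whose image is merely open in $\widehat\ZZ^\times$); the Goursat argument works because the image is open, not because the factors are independent. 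Neither issue affects the overall soundness of your plan, and the paper itself bypasses all of this by citation.
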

\begin{proof}
In the case of non-CM elliptic curves (i.e., $g=1$), this is a well-known result of Serre \cite{Serre-Inv72} (we may choose $\phi$ to be a principal polarization and then $\GSp(A[m],e_m) \cong \GL_2(\ZZ/m\ZZ)$).

In the general case, Th\'eor\`eme 3 from the \emph{R\'esum\'e des cours de 1985-1986} in \cite{Serre-VolumeIV} implies that there is a $B$ such that $\rho_{A,\ell}$ is surjective for all $\ell\nmid B$.  
For an overview of the proof, see the letters at the beginning of \cite{Serre-VolumeIV} especially the one to Marie-France Vign\'eras.   The theorem then follows from the group theoretic Lemmas 1 and 2 in Serre's letter to Vign\'eras.
\end{proof}

The following describes the image of the Galois representations when $A$ is a product of non-CM elliptic curves that are pairwise non-isogenous.

\begin{thm} \label{T:Elliptic curves with large monodromy}
Let $E_1,\dots, E_n$ be elliptic curves without complex multiplication defined over a number field $k$.  Assume that the curves $E_i$ are pairwise non-isogenous over $\kbar$.  Then there exists a positive integer $B$ such that
\[
\rho_m:=\Big(\prod_{i=1}^n \rho_{E_i,m}\Big) (\calG_k) = \big\{ (A_i) \in \GL_2(\ZZ/m\ZZ)^n : \det(A_1)=\cdots=\det(A_n) \big\}
\]
for all $m$ relatively prime to $B$.
\end{thm}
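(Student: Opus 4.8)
Set $G_m^{\max}:=\{(A_i)\in\GL_2(\ZZ/m\ZZ)^n:\det A_1=\cdots=\det A_n\}$; the theorem asserts that the image of $\prod_{i=1}^n\rho_{E_i,m}$ equals $G_m^{\max}$ for every $m$ coprime to a suitable positive integer $B$. One inclusion is immediate: $\det\circ\rho_{E_i,m}=\chi_{k,m}$ is the mod-$m$ cyclotomic character for every $i$, so the blocks of any element of the image have equal determinant. For the reverse inclusion I would first reduce to $m=\ell$ prime. Applying Theorem~\ref{T:Abelian variety with full GSp} with $g=1$ to each $E_i$ gives an integer $B_i$ with $\rho_{E_i,\ell}(\calG_k)=\GL_2(\FF_\ell)$ for $\ell\nmid B_i$; I enlarge $B$ so that in addition every prime $\ell\nmid B$ is $\geq 5$, the mod-$\ell$ cyclotomic character of $k$ is surjective, and $\ell$ avoids the finite exceptional set produced in the last step. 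Granting that the image of $\prod_i\rho_{E_i,\ell}$ is $G_\ell^{\max}$ for all $\ell\nmid B$, the general case is formal: the standard lifting lemma (the kernel of $G_{\ell^a}^{\max}\to G_\ell^{\max}$ is an $\ell$-group whose graded pieces are direct sums of the irreducible module $\mathfrak{sl}_2(\FF_\ell)$, which does not split off for $\ell\geq 5$) shows the image surjects onto each $G_{\ell^a}^{\max}$ with $\ell^a\|m$; the $G_{\ell^a}^{\max}$ for distinct primes share no nonabelian composition factor while the combined determinant $\calG_k\to(\ZZ/m\ZZ)^\times$ is onto; Goursat then yields image $=\prod_{\ell^a\|m}G_{\ell^a}^{\max}=G_m^{\max}$.

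So I fix a prime $\ell\nmid B$ and let $G$ be the image of $\prod_i\rho_{E_i,\ell}$ in $\GL_2(\FF_\ell)^n$; the goal is $G=G_\ell^{\max}$. Since $\det\circ\rho_{E_1,\ell}=\chi_{k,\ell}$ is surjective onto $\FF_\ell^\times$, the group $G$ surjects onto the common-determinant quotient $\FF_\ell^\times$ of $G_\ell^{\max}$, so it is equivalent to prove $G_0:=G\cap\SL_2(\FF_\ell)^n=\SL_2(\FF_\ell)^n$. The group $G_0$ is subdirect: given $A\in\SL_2(\FF_\ell)$, choose $(A_i)\in G$ with $A_1=A$ (Serre's surjectivity onto the first factor); then $\det A_i=\det A=1$ for all $i$, so $(A_i)\in G_0$, and likewise for the other coordinates. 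If the image $\pi(G_0)$ in $\PSL_2(\FF_\ell)^n$ is everything, then $G_0\cdot\{\pm I\}^n=\SL_2(\FF_\ell)^n$, and since $\{\pm I\}^n$ is central this makes $G_0$ normal, so $\SL_2(\FF_\ell)^n/G_0$ is an abelian quotient of a perfect group ($\ell\geq 5$), hence trivial. Finally, by the structure of subdirect products of copies of the nonabelian simple group $\PSL_2(\FF_\ell)$, one has $\pi(G_0)=\PSL_2(\FF_\ell)^n$ as soon as the projection of $\pi(G_0)$ to each pair of coordinates $i\neq j$ is all of $\PSL_2(\FF_\ell)^2$; and because $G\subseteq G_\ell^{\max}$, that pair-projection is exactly the reduction modulo $\{\pm I\}^2$ of $(\rho_{E_i,\ell}\times\rho_{E_j,\ell})(\calG_k)\cap\SL_2(\FF_\ell)^2$, which in turn equals $(\rho_{E_i,\ell}\times\rho_{E_j,\ell})(\calG_{k(\zeta_\ell)})$ since the determinant $\chi_{k,\ell}$ is trivial exactly on $\calG_{k(\zeta_\ell)}$. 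Thus the whole theorem reduces to the case $n=2$.

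For $n=2$, suppose the conclusion fails for infinitely many primes $\ell$. For each such $\ell$ the reductions above together with Goursat (using that $\PSL_2(\FF_\ell)$ is simple and $\Aut(\PSL_2(\FF_\ell))=\PGL_2(\FF_\ell)$, there being no field automorphisms over the prime field) show that the projective representations $\bar\rho_{E_i,\ell},\bar\rho_{E_j,\ell}\colon\calG_k\to\PGL_2(\FF_\ell)$ are conjugate on $\calG_{k(\zeta_\ell)}$; since $\calG_{k(\zeta_\ell)}$ has index only $\ell-1$ in $\calG_k$ while $|\PSL_2(\FF_\ell)|$ is much larger, a comparison of group orders forces the image of $\bar\rho_{E_i,\ell}\times\bar\rho_{E_j,\ell}$ in $\PGL_2(\FF_\ell)^2$ to have order exactly $|\PGL_2(\FF_\ell)|$, hence to be the graph of an automorphism of $\PGL_2(\FF_\ell)$, so that $\bar\rho_{E_i,\ell}$ and $\bar\rho_{E_j,\ell}$ are conjugate on all of $\calG_k$. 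Consequently $a_\p(E_i)^2\equiv a_\p(E_j)^2\pmod\ell$ for every $\p$ of good reduction; as this holds for infinitely many $\ell$, $a_\p(E_i)^2=a_\p(E_j)^2$ for all such $\p$, whence the symmetric-square $\ell$-adic representations of $E_i$ and $E_j$ have equal Frobenius traces and, being semisimple, are isomorphic. By Faltings' isogeny theorem this forces $E_i$ to be $\kbar$-isogenous to $E_j$ (a $\kbar$-isogeny class being determined by the symmetric square up to quadratic twist, and quadratic twists being $\kbar$-isomorphic), contradicting the hypothesis.

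The one genuinely hard step, and the only place the non-isogeny hypothesis enters, is precisely this final appeal to Faltings, exactly as foreshadowed in the remark following Theorem~\ref{T:LS for elliptic curves}; everything else is either the cited theorem of Serre or routine group theory. In a full write-up one would either carry out the Goursat bookkeeping in detail or quote a ready-made independence statement of Serre or Ribet type, and I would also take care in the $n=2$ step to pin down exactly which quadratic twists can occur and to invoke Faltings in the most economical form available.
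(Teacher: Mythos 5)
Your proposal is correct in substance, but it takes a genuinely different route from the paper: the paper's entire proof is a one-line citation of Ribet's Theorem~3.5 from \cite{Ribet-modular}, together with the observation that, by Faltings, Ribet's hypothesis (that no two of the mod-$\ell$ systems are twist-equivalent) is equivalent to pairwise non-isogeny over $\kbar$. What you have written is, in effect, a direct proof of the elliptic-curve case of Ribet's theorem: mod-$\ell$ surjectivity for each factor via Serre, reduction of the mod-$m$ statement to mod-$\ell$ via the standard lifting lemma plus Goursat across distinct primes, reduction of the mod-$\ell$ statement to the pairwise case via the subdirect-product lemma for powers of a nonabelian simple group, and then the crucial step where Faltings enters to exclude projective conjugacy of $\bar\rho_{E_i,\ell}$ and $\bar\rho_{E_j,\ell}$ for infinitely many $\ell$. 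You have correctly identified that the Faltings step is the only genuinely arithmetic input and that it sits in exactly the place the paper's parenthetical remark indicates.

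Two small points worth tightening in a full write-up. First, the graded pieces of $\ker\bigl(G_{\ell^a}^{\max}\to G_\ell^{\max}\bigr)$ are not just sums of copies of $\mathfrak{sl}_2(\FF_\ell)$: the common-determinant constraint leaves a residual scalar line $\FF_\ell$ coming from the shared trace, and surjectivity onto that line has to be supplied by surjectivity of the mod-$\ell^a$ cyclotomic character rather than by the $\mathfrak{sl}_2$-irreducibility argument. Second, your order-counting step (passing from the image on $\calG_{k(\zeta_\ell)}$ being the graph of an automorphism of $\PSL_2(\FF_\ell)$ to the image on all of $\calG_k$ being the graph of an automorphism of $\PGL_2(\FF_\ell)$) is correct but would read more cleanly as a direct Goursat argument in $\PGL_2(\FF_\ell)^2$: if the image did not contain $\PSL_2(\FF_\ell)^2$, the only remaining Goursat possibility for a subdirect subgroup with common quotient other than $1$ or $\ZZ/2\ZZ$ is the graph case, and the index bound $[\calG_k:\calG_{k(\zeta_\ell)}]\leq\ell-1<|\PSL_2(\FF_\ell)|$ rules out the $\ZZ/2\ZZ$ case. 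With these clarifications your argument is a complete and self-contained alternative to the paper's citation, and the comparison of the two is instructive: the citation buys brevity, your version makes visible exactly where the non-isogeny hypothesis and the theorem of Faltings are consumed.
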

\begin{proof}
This follows from \cite{Ribet-modular}*{Theorem~3.5} (from Faltings we know that Ribet's hypothesis is equivalent to the curves $E_i$ being pairwise non-isogenous over $\kbar$).
\end{proof}

\begin{remark}
\begin{romanenum}
\item Let $A$ be an abelian variety over a number field $k$.  In contrast to the abelian varieties occurring in Theorems \ref{T:Abelian variety with full GSp} and \ref{T:Elliptic curves with large monodromy}, there need not exist an integer $B$ for which the Galois representations $\{\rho_{A,\ell} \colon \calG_k \to \Aut(A[\ell]) \}_{\ell \nmid B}$ are independent.  However, there does exists a finite extension $K/k$ such that the representations $\{\rho_{A,m}\colon \calG_K \to \Aut(A[m])\}_{m\in \Lambda}$ are independent for any set $\Lambda$ of relatively prime natural numbers (see Serre's letter to Ribet \cite{Serre-VolumeIV}*{p56}).  
\item
Conjecturally many other systems of Galois representations will satisfy the conditions of Theorem~\ref{T:LS for Frobenius}.   See \cite{SerreMotivicGalois}*{\S2} for a discussion of $\ell$-adic representations associated to motives.
\end{romanenum}
\end{remark}

\subsection{Proof of Theorem \ref{T:LS for Frobenius}}

Fix notation as in Theorem~\ref{T:LS for Frobenius}. We first explain how to apply our abstract large sieve (Theorem~\ref{T:Large Sieve}).  For each finite set $D\subseteq \Lambda$, define the Galois representation
\[
\rho_D:= \Big(\prod_{\lambda\in D}\rho_\lambda\Big) \colon \calG_k \to \prod_{\lambda\in D} H_\lambda.
\] 
By our ramification assumptions, we find that $\rho_D$ is unramified at all $\p\in \Sigma_k(x)-S_D$ where $S_D:= S\cup \{ \p\in \Sigma_k: \p\mid \prod_{\lambda \in D} N(\lambda)\} = \bigcup_{\lambda \in D} S_\lambda$.
That the representations $\{\rho_\lambda\}_{\lambda\in\Lambda}$ are independent implies that $G_D= \prod_{\lambda\in D} G_\lambda$ is the image of $\rho_D$ and hence $\rho_D$ induces an isomorphism $\Gal(k(\rho_D)/k)\overset{\sim}{\to} G_D$.\\

Define $X=\Sigma_k(x)$.   For each $\lambda\in \Lambda(Q)$, fix a function $\rho_\lambda \colon X \to G_\lambda^\sharp$ such that the following conditions hold:
\begin{itemize}
\item
$\rho_\lambda(\p) = \rho_\lambda(\Frob_\p)$ \; if $\p \in \Sigma_k(x)-S_\lambda$,
\item
$\rho_\lambda(\p) \subseteq C_\lambda$ \; \quad\quad\quad if $\p \in \Sigma_k(x) \cap S_\lambda$ 
\end{itemize}
(the second condition is imposed simply to match the set-up of our abstract large sieve).  It will be clear from context which function $\rho_\lambda$ we are using.  
 
 For $D\subseteq \Lambda(Q)$, define the function $\rho_D:= \big(\prod_{\lambda\in D} \rho_\lambda\big) \colon X \to G_D^\sharp$.   We may now define sets of primitive characters $\Prim(G_D)$ just as in \S\ref{S:LS setup}.  

 For every $\lambda\in\Lambda(Q)$, define $\mu_\lambda$ to be the measure on $G_\lambda$ as in \S\ref{S:LS setup}.   We have $\rho_\lambda(\p)\subseteq C_\lambda$ for all $\p \in \Ss(x)$ and hence
\[ 
\mu_\lambda(\rho_\lambda(\Ss(x))) \leq {|C_\lambda|}/{|G_\lambda|}.
\]
We may now apply our abstract large (Theorem~\ref{T:Large Sieve}), which gives
\begin{equation} \label{E:ultimate LSI}
L(Q)|\Ss(x)| \leq \Delta(X,\rho,\Zz(Q))
\end{equation}
where $L(Q) = \sum_{D\in\Zz(Q)} \prod_{\lambda \in D} \frac{1-|C_\lambda|/|G_\lambda|}{|C_\lambda|/|G_\lambda|}$.
It remains to bound $\Delta(X,\rho,\Zz(Q))$.  From Proposition \ref{P:LSI} we have
 \begin{equation} \label{E:Duality bound for Frobenius}
 \Delta(X,\rho,\Zz(Q)) \leq \max_{\substack{D'\in \Zz(Q) \\ \chi'\in \Prim(G_{D'})} }  \sum_{D\in\Zz(Q)} \sum_{\chi\in\Prim(G_D)}  \Bigl| \sum_{\p\in \Sigma_k(x)} \chi(\rho_{D}(\p)) \overline{\chi'(\rho_{D'}(\p))} \Bigr|,
 \end{equation} 
and to bound this quantity we will make use of the character sums worked out in Appendix~\ref{A:Character sums}.

\begin{lemma} \label{L:gory lemma}
With assumptions as above; fix $D,D' \in \Zz(Q)$ and characters $\chi\in \Irr(G_D)$, $\chi'\in \Irr(G_{D'})$.
\begin{romanenum}
\item 
Let $B>0$ be a constant.  If $Q(x) := c \big({\log x}/{(\log \log x)^2} \big)^{{1}/{(6r)}}$ for a constant $c>0$ sufficiently small, then
\[
 \sum_{\p\in \Sigma_k(x)} \chi(\rho_D(\p)) \overline{\chi'(\rho_{D'}(\p))}  =  \delta_{\chi,\chi'}\Li x + O\Bigl( \frac{x}{(\log x)^{1+B}} \Bigr). \text{\quad\quad\;\;}
\]
\item Assuming GRH, 
\[
\sum_{\p\in \Sigma_k(x)} \chi(\rho_D(\p)) \overline{\chi'(\rho_{D'}(\p))}  =  \delta_{\chi,\chi'} \Li x + O\Bigl( |G_D||G_{D'}|x^{1/2} \log x \Bigr). 
\]
\item Assuming AHC for the extension $k(\rho_{D\cup D'})/k$ and assuming GRH, 
\[
\sum_{\p\in \Sigma_k(x)} \chi(\rho_D(\p)) \overline{\chi'(\rho_{D'}(\p))}  =  \delta_{\chi,\chi'}\Li x + O\Bigl( \chi(1)\chi'(1) x^{1/2} \log x \Bigr).
\]
\end{romanenum}
\end{lemma}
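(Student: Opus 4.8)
The plan is to deduce Lemma~\ref{L:gory lemma} from an effective Chebotarev-type estimate applied to the Galois extension $k(\rho_{D\cup D'})/k$. First I would observe that $\chi$ pulls back to a character of $\Gal(k(\rho_D)/k)$ and $\overline{\chi'}$ to one of $\Gal(k(\rho_{D'})/k)$; since $D, D' \subseteq \Lambda(Q)$ and the representations $\{\rho_\lambda\}$ are independent, both factor through $\Gal(k(\rho_{D\cup D'})/k) \cong G_{D\cup D'}$. Thus $\psi := \chi \cdot \overline{\chi'}$, extended by inflation, is a character (generally reducible, a sum of irreducibles) of $G_{D\cup D'}$, and
\[
\sum_{\p\in \Sigma_k(x)} \chi(\rho_D(\p)) \overline{\chi'(\rho_{D'}(\p))} = \sum_{\p\in \Sigma_k(x)} \psi(\rho_{D\cup D'}(\Frob_\p)) + O\bigl(|S_{D\cup D'}|\bigr),
\]
where the error accounts for the finitely many $\p$ in $S_{D\cup D'}$ at which we used the auxiliary convention $\rho_\lambda(\p) \subseteq C_\lambda$; since $|S_{D\cup D'}| \ll |S| + \sum_{\lambda \in D\cup D'} \omega(N(\lambda)) \ll \log Q$, this is absorbed into every stated error term.

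Next I would apply the relevant form of Chebotarev. Writing $\psi = \sum_j m_j \psi_j$ with $\psi_j \in \Irr(G_{D\cup D'})$, the orthogonality relations give that the "main term'' is $(\psi, 1)\Li x$, and one checks $(\psi, 1) = (\chi, \chi') = \delta_{\chi,\chi'}$ when $\chi, \chi' \in \Irr$ are viewed inside $\Irr(G_{D\cup D'})$ via inflation — this uses that inflation from $G_D$ and from $G_{D'}$ to $G_{D\cup D'}$ preserves the inner product, which holds precisely because of independence. For part (iii), the Artin $L$-functions $L(s, \psi_j, k(\rho_{D\cup D'})/k)$ are entire by AHC, and GRH controls their zeros; the standard explicit-formula argument (carried out in Appendix~\ref{A:Character sums}, which I would cite) yields an error $O\bigl(\psi_j(1) x^{1/2}(\log x + \log \mathfrak{d})\bigr)$ per constituent, where $\mathfrak{d}$ is the absolute discriminant of $k(\rho_{D\cup D'})$; summing over $j$ with multiplicities gives $O\bigl((\sum_j m_j \psi_j(1)) x^{1/2}\log x\bigr) = O\bigl(\psi(1) x^{1/2}\log x\bigr) = O\bigl(\chi(1)\chi'(1) x^{1/2}\log x\bigr)$, after noting $\log \mathfrak{d} \ll \log Q \ll \log x$. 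For part (ii), one drops AHC and instead works with the Dedekind zeta function of the subfields cut out by the $\psi_j$ (or uses the Deligne-style factorization into Hecke $L$-functions via induction from cyclic subgroups): the price is replacing $\chi(1)\chi'(1)$ by $|G_D||G_{D'}|$, since one bounds $\psi_j(1) \leq |G_{D\cup D'}|$ and the number of constituents plus ramification data by a similar power — again the conductor-discriminant estimate $\log \mathfrak{d} \ll \log Q$ keeps this inside the claimed bound. For part (i), one replaces GRH by an unconditional zero-free region (Lagarias–Odlyzko / Serre's effective Chebotarev), which forces the growth restriction $Q(x) = c(\log x/(\log\log x)^2)^{1/(6r)}$: the bound $|G_\lambda| \leq N(\lambda)^r$ gives $|G_{D\cup D'}| \leq Q^{2r}$ and $\log \mathfrak{d} \ll Q^{2r}\log Q$, and feeding this into the unconditional error term $x\exp(-c'\sqrt{\log x}/\ldots)$ and comparing with $x/(\log x)^{1+B}$ pins down the admissible size of $c$.

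The main obstacle, and where the real content sits, is the uniformity in the effective Chebotarev estimates — specifically keeping careful track of how the error depends on $[k(\rho_{D\cup D'}):k] = |G_{D\cup D'}|$ and on the discriminant, since these grow with $Q$ and hence with $x$. In the unconditional case (i) this is delicate because of the possible Siegel zero; the standard remedy is to absorb any exceptional modulus into the implied constant (depending on the full system $\{\rho_\lambda\}$) and then use that for the remaining extensions the Lagarias–Odlyzko bound applies with the displayed $Q$. Since all of these estimates are precisely the ones assembled in Appendix~\ref{A:Character sums}, the proof of Lemma~\ref{L:gory lemma} reduces to quoting those results for the character $\psi$ on $k(\rho_{D\cup D'})/k$ and performing the bookkeeping above; I would present it in that form, with the three parts handled in parallel.
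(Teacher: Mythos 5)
Your setup and the reduction to a character sum over unramified primes of $k(\rho_{D\cup D'})/k$, followed by an appeal to the effective Chebotarev estimates of Appendix~\ref{A:Character sums}, is exactly the paper's approach, and parts (ii) and (iii) are handled essentially as the paper does them (with the minor caveat that in (ii) the $|G_D||G_{D'}|$ really comes from a Cauchy--Schwarz bound $\sum_{g}|\chi(g)||\chi'(g)|\le|G_{D\cup D'}|$ applied inside Proposition~\ref{P:conditional character sum}(i), rather than from a crude $\psi_j(1)\le |G_{D\cup D'}|$ bound; and the ramified-prime error should carry a $\chi(1)\chi'(1)$ factor, though as you note it is absorbed anyway).

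The gap is in part (i), specifically your treatment of the exceptional zero. You propose to ``absorb any exceptional modulus into the implied constant (depending on the full system $\{\rho_\lambda\}$),'' but this cannot work: the field $L=k(\rho_{D\cup D'})$ varies with $Q=Q(x)$, so as $x\to\infty$ you encounter infinitely many different extensions $L$, each potentially carrying its own exceptional zero $\beta_L$, and no single constant absorbs them all. The paper must instead show that the exceptional contribution $\Li(x^{\beta_L})$ is genuinely $\ll x/(\log x)^{1+B}$ uniformly in the $L$ that arise. This requires an explicit lower bound on $1-\beta_L$: by Proposition~\ref{P:exceptional zeros}, the exceptional zero is inherited from the Dedekind zeta function of a subfield $F$ with $k\subseteq F\subseteq L$ and $[F:k]\le 2$, and Stark's estimate gives $1-\beta_L\gg d_F^{-1/[F:\QQ]}$. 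The crucial point, which your sketch omits, is that $F$ is ramified only at primes in $P(L/k)$, so $d_F^{1/[F:\QQ]}\ll \prod_{p\in P(L/k)}p\ll Q^2\ll(\log x)^{1/(3r)}$; this yields $1-\beta_L\gg(\log x)^{-1/(3r)}$, hence $(1-\beta_L)\log x\ge(1+B)\log\log x$ for large $x$, i.e.\ $x^{\beta_L}\le x/(\log x)^{1+B}$. Without this argument the unconditional bound in (i) does not follow, since the Lagarias--Odlyzko error term alone leaves the exceptional-zero contribution uncontrolled.
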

\begin{proof}
To ease notation, define $T:=\sum_{\p\in \Sigma_k(x)} \chi(\rho_D(\p)) \overline{\chi'(\rho_{D'}(\p))}$ and $L:=k(\rho_{D\cup D'})$.  We may view $\chi$ (resp.~$\chi'$) as an irreducible character of $G_{D \cup D'}$ by composing with the projection maps from $G_{D\cup D'}$ to $G_D$ (resp.~$G_{D'}$).

The Galois representation $\rho_{D \cup D'}$ is unramified at all $\p \not\in S_{D \cup D'}:= S \cup \{ \p\in \Sigma_k : \p | \prod_{\lambda\in {D \cup D'}} N(\lambda)\}$.   In particular, 
$\rho_D(\p)=\rho_D(\Fr_\p)$ and $\rho_{D'}(\p)=\rho_{D'}(\Fr_\p)$ for all $\p \in \Sigma_k(x) - S_{D\cup D'}$.   Since $D,D' \in \Zz(Q)$ and $Q(x)\ll \sqrt{x}$, we have
\begin{align*}
|S_{D\cup D'}| \ll |S| + \sum_{\lambda \in D\cup D'} \log N(\lambda) &\leq |S|+  \log({\prod}_{\lambda\in D} N(\lambda)\cdot {\prod}_{\lambda\in D'} N(\lambda) )\\
& \leq |S| + \log (Q \cdot Q) \ll \log x.
\end{align*}
Therefore,
\begin{align} \label{E:reduce to character sum}
T & =\sum_{ \substack{ \p\in \Sigma_k(x)\\  \text{unramified in }L} }(\chi\overline{\chi'})(\rho_{D\cup D'}(\Frob_\p))
+ O(\chi(1)\chi'(1)|S_{D\cup D'}|  )\\
\notag 
& =\sum_{ \substack{ \p\in \Sigma_k(x)\\  \text{unramified in }L} }(\chi\overline{\chi'})(\rho_{D\cup D'}(\Frob_\p))
+ O(\chi(1)\chi'(1) \log x).
\end{align}

Before considering the different cases, we first bound some quantities that will show up in our character sums.   For any $D\in \Zz(Q)$, we have $|G_D| \ll \prod_{\lambda\in D} N(\lambda)^r \leq Q^r$.  Thus
\[
[L:k] \leq |G_D||G_{D'}| \ll Q^{2r}.
\]
Let $P(L/k)$ be the set of primes $p$ for which there exists a $\p \in \Sigma_k$ such that $\p|p$ and $\p$ is ramified in $L$.  From our ramification assumptions,
\[
\prod_{p\in P(L/k)}p \leq  \prod_{\lambda\in S_{D\cup D'}} N(\lambda) \ll \prod_{\lambda\in D} N(\lambda) \prod_{\lambda\in  D'} N(\lambda) \leq Q^2.
\]
Therefore
\begin{align*} 
M(L/k)&:=[L:k] d_k^{1/[k:\QQ]}\prod_{p\in P(L/k)} p\ll Q^{2r+2}.
\end{align*}
In particular, $\log M(L/k) \ll \log x$, since we have assumed that $Q(x)\ll \sqrt{x}$.\\

\noindent(iii) Assume AHC and GRH.  Applying Proposition~\ref{P:conditional character sum}(ii) to (\ref{E:reduce to character sum}), we have
\begin{align*} 
T &= (\chi\overline{\chi'},1) \Li x + O\Bigl( \chi(1)\chi'(1) [k:\QQ] x^{1/2} \log(M(L/k) x) \Bigr)\\
&= \delta_{\chi,\chi'} \Li x + O\Bigl( \chi(1)\chi'(1) x^{1/2} \log  x \Bigr).
\end{align*} 

\noindent(ii) Assume GRH.  Applying Proposition~\ref{P:conditional character sum}(i) to (\ref{E:reduce to character sum}), we have
\begin{align*} 
T &= (\chi\overline{\chi'},1) \Li x + O\Bigl( \Bigl({\sum}_{g\in G_{D\cup D'}}|\chi(g)||\chi'(g)| \Bigr) [k:\QQ] x^{1/2} \log(M(L/k) x) \Bigr)\\
&= \delta_{\chi,\chi'} \Li x + O\Bigl( {\sum}_{g\in G_{D\cup D'}}|\chi(g)||\chi'(g)|  x^{1/2} \log  x \Bigr).
\end{align*}
By the Cauchy-Schwartz inequality and the irreducibility of the characters $\chi$ and $\chi'$,
\[
\sum_{g\in G_{D\cup D'}}|\chi(g)||\chi'(g)| \leq \Big({\sum}_{g\in G_{D\cup D'}}|\chi(g)|^2\Big)^{1/2} \Bigl({\sum}_{g\in G_{D\cup D'}}|\chi'(g)|^2\Bigr)^{1/2} = |G_{D\cup D'}|.
\]
Therefore,
\[
T =\delta_{\chi,\chi'}\Li x + O\Bigl( |G_{D\cup D'}| x^{1/2} \log x \Bigr) = \delta_{\chi,\chi'}\Li x + O\Bigl( |G_D||G_{D'}|  x^{1/2} \log x \Bigr).
\]

\noindent(i) By Lemma \ref{L:discriminant bound}, 
$\log d_{L} \leq [L:\QQ] \log M(L/k) \ll Q^{2r} \log Q$,
and hence $10 [L:\QQ] (\log d_{L})^2\ll Q^{6r} (\log Q)^2$.  So for $Q(x):=c\Bigl(\frac{\log x}{(\log\log x)^2} \Bigr)^{\frac{1}{6r}}$, with a sufficiently small constant $c>0$, we will have $\log x \geq 10 [L:\QQ] (\log d_{L})^2$ for $x$ sufficiently large.  Applying Proposition~\ref{P:unconditional character sum} to (\ref{E:reduce to character sum}), we have
\[
   T - \delta_{\chi,\chi'}\Li x \ll  \chi(1)\chi'(1)\Li(x^{\beta_{L}})  + \chi(1)\chi'(1) |G_{D\cup D'}^\sharp |x\exp\Bigl( -c_1  [L:\QQ]^{-1/2}(\log x)^{1/2}\Bigr), \\
\]
where the $ \chi(1)\chi'(1)\Li(x^{\beta_{L}})$ term is present only when the exceptional zero $\beta_L$ exists.  The trivial bounds $|G_{D\cup D'}^\sharp |\leq |G_{D\cup D'}|\ll Q^{2r}$, $\chi(1)\leq |G_{D}|^{1/2}\ll Q^{r/2}$, and $\chi'(1)\leq |G_{D'}|^{1/2}\ll Q^{r/2}$ give
\[
T - \delta_{\chi,\chi'}\Li x \ll  Q^r\Li(x^{\beta_{L}})  + Q^{3r} x\exp\Bigl( -\frac{c_1'}{[k:\QQ]^{1/2}Q^r}  (\log x)^{1/2} \Bigr),
\]
for some constant $c'>0$.  The second term is easily bounded:
\begin{align*}
& Q^{3r} x\exp\Bigl( -\frac{c_1'}{[k:\QQ]^{1/2}Q^r}  (\log x)^{1/2} \Bigr)\\
\leq &  c^{3r} (\log x)^{1/2} x\exp\Bigl( -\frac{c_1'}{c^r [k:\QQ]^{1/2}}  (\log x)^{1/3}(\log\log x)^{1/3}\Bigr) \ll_{B}  \frac{x}{(\log x)^{1+B}}.
\end{align*}
Finally, consider the term containing the exceptional zero.  We have $Q^r\Li(x^{\beta_{L}}) \ll x^{\beta_{L}}$ so it suffices to show that $x^{\beta_{L}}\ll x/(\log x)^{1+B}$.  By Proposition~\ref{P:exceptional zeros}(ii), there is a field $F$ with $k\subseteq F\subseteq L$ such that $[F:k]\leq 2$ and $\zeta_F(\beta_L)=0$.  By Proposition~\ref{P:exceptional zeros}(iii),
\begin{align*}
1-\beta_L & \gg \min\big\{ ([F:\QQ]!\log d_F)^{-1}, d_F^{-1/[F:\QQ]}\big\}\\
&\geq \min\big\{ \big((2[k:\QQ]-1)!\log d_F^{1/[F:\QQ]}\big)^{-1}, d_F^{-1/[F:\QQ]}\big\} \gg d_F^{-1/[F:\QQ]}.
\end{align*}
By Lemma~\ref{L:discriminant bound}, $d_F^{1/[F:\QQ]} \leq M(F/k)$.  Using $P(F/k)\subseteq P(L/k)$, 
\[
d_F^{1/[F:\QQ]} \ll \prod_{p\in P(L/k)} p \ll Q^2 \ll (\log x)^{1/(3r)},
\]
and hence $1-\beta_L \gg (\log x)^{-1/(3r)}$.  Thus for $x$ sufficiently large, $(1-\beta_L) \log x \geq (1+B) \log\log x$, or equivalently $x^{\beta_L} \leq x/(\log x)^{1+B}$.
\end{proof}

We now bound $\Delta(X,\rho,\Zz(Q))$.  Theorem \ref{T:LS for Frobenius} will follow by combining these bounds with (\ref{E:ultimate LSI}).

\noindent (i) Fix any $B>1/3$.  By (\ref{E:Duality bound for Frobenius}) and Lemma \ref{L:gory lemma}(i),
\begin{align*} 
\Delta(X,\rho,\Zz(Q))&\leq  \max_{\substack{D'\in \Zz(Q) \\ \chi'\in \Prim(G_{D'})} }  \sum_{D\in\Zz(Q)} \sum_{\chi\in\Prim(G_D)}  \Bigl| \sum_{\p\in X} \chi(\rho_D(\p)) \overline{\chi'(\rho_{D'}(\p))} \Bigr| \\
&=  \max_{\substack{D'\in \Zz(Q) \\ \chi'\in \Prim(G_{D'})} }   \sum_{D\in\Zz(Q)} \sum_{\chi\in\Prim(G_D)} \Bigl( \delta_{\chi,\chi'} \Li x + O\Bigl( \frac{x}{(\log x)^{1+B}} \Bigr)  \Bigr)\\
&= \Li x + O\Bigl( \sum_{D\in\Zz(Q)} |\Prim(G_D)|  \frac{x}{(\log x)^{1+B}}  \Bigr).
\intertext{Now use the bound $\sum_{D\in\Zz(Q)}|\Prim(G_D)| \leq \sum_{D\in\Zz(Q)} |G_D| \ll Q^{r+1} \ll (\log x)^{1/3}$:}
\Delta(X,\rho,\Zz(Q)) &\leq \Li x + O\Bigl( \frac{x}{(\log x)^{1+(B-1/3)}} \Bigr).
\end{align*}

\noindent (ii) Assume GRH.  By (\ref{E:Duality bound for Frobenius}) and Lemma \ref{L:gory lemma}(ii),
\begin{align*} 
\Delta(X,\rho,\Zz(Q))&\leq  \max_{\substack{D'\in \Zz(Q)\\ \chi'\in \Prim(G_{D'})} }  \sum_{D\in\Zz(Q)} \sum_{\chi\in\Prim(G_D)}  \Bigl| \sum_{\p\in X} \chi(\rho_D(\p)) \overline{\chi'(\rho_{D'}(\p))} \Bigr| \\
&=  \max_{\substack{D'\in \Zz(Q) \\ \chi'\in \Prim(G_{D'})} }   \sum_{D\in\Zz(Q)} \sum_{\chi\in\Prim(G_D)} \Bigl( \delta_{\chi,\chi'} \Li x + O( |G_D||G_{D'}| x^{1/2} \log x ) \Bigr)\\
&= \Li x + O\Bigl(\max_{D'\in \Zz(Q)} |G_{D'}| \cdot \sum_{D\in\Zz(Q)} |\Prim(G_D)||G_D|  \cdot  x^{1/2}\log x \Bigr),
\end{align*}
and then use the inequality $|\Prim(G_D)| \leq |\Irr(G_D)| = |G_D^\sharp |$.

\noindent (iii) Assume AHC and GRH.  By (\ref{E:Duality bound for Frobenius}) and Lemma \ref{L:gory lemma}(iii),
\begin{align*} 
\Delta(X,\rho,\Zz(Q))&\leq  \max_{\substack{D'\in \Zz(Q) \\ \chi'\in \Prim(G_{D'})} }  \sum_{D\in\Zz(Q)} \sum_{\chi\in\Prim(G_D)}  \Bigl| \sum_{\p\in X} \chi(\rho_D(\p)) \overline{\chi'(\rho_{D'}(\p))} \Bigr| \\
&= \max_{\substack{D'\in \Zz(Q) \\ \chi'\in \Prim(G_{D'})} }   \sum_{D\in\Zz(Q)} \sum_{\chi\in\Prim(G_D)} \Bigl( \delta_{\chi,\chi'} \Li x + O( \chi(1)\chi'(1) x^{1/2} \log x ) \Bigr)\\
&= \Li x + 
O\Bigl(\max_{\substack{D'\in \Zz(Q) \\ \chi'\in\Irr(G_{D'})}}\chi'(1) \sum_{D\in\Zz(Q),\chi\in\Irr(G_D)} \chi(1)  \cdot  x^{1/2}\log x \Bigr).
\end{align*}

\section{The Koblitz Conjecture} \label{S:Koblitz}
The purpose of this section is to prove Theorem~\ref{T:Koblitz bound}.
We maintain the notation introduced in \S\ref{SS:Koblitz introduction}.  For each integer $m\geq 1$, let $G_m$ be the image of $\rho_{E,m}\colon \calG_k\to \GL_2(\ZZ/m\ZZ)$.  By Theorem~\ref{T:Abelian variety with full GSp}, there exists a positive integer $M$ such that the following conditions hold:
\begin{itemize}
\item $(\rho_{E,M}\times \prod_{\ell \nmid M} \rho_{E,\ell})(\calG_k) = G_M \times \prod_{\ell\nmid M} \GL_2(\ZZ/\ell\ZZ),$
\item  if $\ell$ divides $t_{E,k}$, then $v_\ell(t_{E,k}) < v_\ell(M)$ where $v_\ell$ is the $\ell$-adic valuation.
\end{itemize}
\subsection{Sieve setup}  
Fix a positive function $Q=Q(x)$ with $Q(x)\ll \sqrt{x}$; we will make a specific choice later.   We will bound the cardinality of the set
\[
\calS(x):=\{ \p \in \Sigma_k-S_E: ((t_{E,k}Q(x))^{1/2}+1)^2 < N(\p) \leq x,\; |E_\p(\FF_\p)|/t_{E,k} \text{ is prime} \}.
\]
Note that $P_{E,k}(x) = |\calS(x)| + O(Q) = |\calS(x)| + O(\sqrt{x}),$ so it suffices to bound $|\calS(x)|$.

For each $\p\in \calS(x)$, the Hasse bound gives
\begin{align*} 
|E(\FF_\p)|/t_{E,k} &\geq (N(\p)-2 N(\p)^{1/2}+1)/t_{E,k} = (N(\p)^{1/2}-1)^2/t_{E,k}  > Q(x),
\end{align*}
so the primality of $|E_\p(\FF_\p)|/t_{E,k}$ implies that 
\begin{equation} \label{E:Koblitz congruence}
|E_\p(\FF_\p)| \bmod{m} \in t_{E,k} (\ZZ/m\ZZ)^\times
\end{equation}
 for $m\leq Q$.  
 
 Define the set $\Lambda=\{M\} \cup \{\ell: \ell\nmid M\}$ and let $\Lambda(Q)$ be the set of $m\in\Lambda$ with $m\leq Q$.  By our choice of $M$, the Galois representations $\{\rho_{E,m}\}_{m\in \Lambda}$ are independent.   For $m\in\Lambda(Q)$ and $\p\in \calS(x)$, either $\p| m$ or $\det(I-\rho_{E,m}(\Frob_\p)) \equiv |E_\p(\FF_\p)| \bmod{m}$ and hence by (\ref{E:Koblitz congruence})
\[
\rho_{E,m}(\Frob_\p) \subseteq C_m := \{ A \in G_m : \det(I-A) \in t_{E,k} \left(\ZZ/m\ZZ\right)^\times \}.
\]
With our setup matching that of Theorem~\ref{T:LS for Frobenius}, we define 
\[
\Ss(x) := \big\{ \p \in \Sigma_k(x) - S_E : \p| m  \; \text{ or }\; \rho_{E,m}(\Frob_\p) \subseteq C_m  \text{ for all $m\in \Lambda(Q)$}\big\}.
\]
Note that $\calS(x) \subseteq \Ss(x)$, so it suffices to find upper bounds for $|\Ss(x)|$.

Define 
\[
\Zz(Q)=\{ D: D \subseteq \Lambda(Q),\, {\prod}_{m\in D} m \leq Q\} \text{\quad and \quad} L(Q)=\sum_{ D \in \Zz(Q) } \prod_{m\in D} \frac{1-|C_m|/|G_m|}{|C_m|/|G_m|}.
\]
For $D\in\Zz(Q)$, define $G_D=\prod_{m\in D} G_m$.  Before applying the large sieve, we will first carefully consider the asymptotics of $L(Q)$ as a function of $Q$.

\subsection{Asymptotics of $L(Q)$} \label{SS:Koblitz L}
\begin{lemma} \label{L:Koblitz factor}
Suppose $\ell$ is a prime such that $G_\ell = \GL_2(\ZZ/\ell\ZZ)$ and $\ell \nmid t_{E,k}$.  Then 
\[
\frac{1-|C_\ell|/|G_\ell|}{|C_\ell|/|G_\ell|}= \frac{1}{\ell} + \frac{2\ell^2 - \ell - 3}{\ell^4 - 2\ell^3 - \ell^2 + 3\ell}.
\]
\end{lemma}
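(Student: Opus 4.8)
The plan is to compute $|C_\ell|$ directly by a counting argument over $\GL_2(\FF_\ell)$, and then form the ratio $(1 - |C_\ell|/|G_\ell|)/(|C_\ell|/|G_\ell|) = (|G_\ell| - |C_\ell|)/|C_\ell|$. Recall $|G_\ell| = |\GL_2(\FF_\ell)| = (\ell^2-1)(\ell^2-\ell) = \ell(\ell-1)^2(\ell+1)$. Since $\det(I - A) = 1 - \tr(A) + \det(A)$ for a $2\times 2$ matrix $A$, the condition $\det(I-A) \in t_{E,k}(\ZZ/\ell\ZZ)^\times$ becomes, because $\ell \nmid t_{E,k}$, simply $1 - \tr(A) + \det(A) \not\equiv 0 \pmod \ell$. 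So I need to count matrices $A \in \GL_2(\FF_\ell)$ with $\det(A) - \tr(A) + 1 \neq 0$; equivalently, I will count the complement $\calN_\ell := \{A \in \GL_2(\FF_\ell) : \det(A) - \tr(A) + 1 = 0\}$ and use $|C_\ell| = |G_\ell| - |\calN_\ell|$.

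To count $\calN_\ell$, I would stratify by the characteristic polynomial. For $A$ with $\tr(A) = t$ and $\det(A) = d$, the constraint is $d = t - 1$, so the characteristic polynomial is $T^2 - tT + (t-1) = (T-1)(T-(t-1))$. Thus every $A \in \calN_\ell$ has $1$ as an eigenvalue (and the other eigenvalue is $t-1$); moreover $A$ is invertible iff $d = t-1 \neq 0$, i.e. $t \neq 1$. So $\calN_\ell$ consists exactly of the invertible matrices having $1$ as an eigenvalue. I would split into cases according to the conjugacy class type: (a) $A$ diagonalizable with eigenvalues $1$ and $\mu$ where $\mu \neq 1$, $\mu \neq 0$ — the number of such matrices is $|\GL_2(\FF_\ell)| / |\text{centralizer}|$ summed appropriately, where the centralizer of such a semisimple regular element has order $(\ell-1)^2$, giving $(\ell-2)$ choices of $\mu$ times $\ell(\ell+1)$ matrices per class; (b) $A$ a non-semisimple matrix with single eigenvalue $1$ (a single Jordan block), which requires the other eigenvalue also to be $1$, forcing $t-1 = 1$, i.e. $t = 2$, $d = 1$ — there are $\ell^2 - 1$ such matrices (the conjugacy class of $\begin{pmatrix}1 & 1\\ 0 & 1\end{pmatrix}$ has size $\ell^2-1$); (c) the scalar matrix $I$ itself (eigenvalue $1$ with multiplicity $2$, diagonalizable), contributing $1$. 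Adding these, $|\calN_\ell| = (\ell-2)\ell(\ell+1) + (\ell^2-1) + 1$, which I would then simplify.

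With $|\calN_\ell|$ in hand, $|C_\ell| = \ell(\ell-1)^2(\ell+1) - |\calN_\ell|$, and the desired quantity is $|\calN_\ell|/|C_\ell|$. I would then perform the algebraic simplification, factoring numerator and denominator, to match the claimed closed form $\frac{1}{\ell} + \frac{2\ell^2 - \ell - 3}{\ell^4 - 2\ell^3 - \ell^2 + 3\ell}$; note the denominator here factors as $\ell(\ell-1)(\ell^2 - \ell - 3)$, which should drop out of the computation naturally once $|C_\ell|$ is factored. As a sanity check I would verify the leading-order behavior: $|C_\ell|/|G_\ell| \to 1$ as $\ell \to \infty$, so the ratio $\to 0$ like $1/\ell$, consistent with the stated small-sieve behavior $\delta_\ell = (1 - 1/\ell) + O(1/\ell^2)$ mentioned in Remark~\ref{R:LS for Frobenius}(ii).

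The main obstacle is getting the conjugacy-class bookkeeping for $\calN_\ell$ exactly right — in particular being careful that the regular semisimple count excludes $\mu = 0$ (non-invertible) and $\mu = 1$ (which is the scalar case, counted separately), that the Jordan-block case only arises when the repeated eigenvalue is forced to be $1$, and that no matrix is double-counted across the strata. Everything after that is routine polynomial algebra. An alternative, perhaps cleaner, approach would be to count directly: a matrix in $\GL_2(\FF_\ell)$ fixes a nonzero vector (eigenvalue $1$) iff $\ker(A - I) \neq 0$; one can count pairs $(A, v)$ with $v \neq 0$ and $Av = v$ via $\sum_A |\ker(A-I) \setminus \{0\}|$ and use the orbit structure, but the conjugacy-class method above is the most transparent and I would present that.
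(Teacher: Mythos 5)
Your proposal is correct and takes essentially the same route as the paper: the paper also observes that, since $\ell \nmid t_{E,k}$, $C_\ell$ is precisely the set of $A \in \GL_2(\FF_\ell)$ not having $1$ as an eigenvalue, counts $|C_\ell| = \ell^4 - 2\ell^3 - \ell^2 + 3\ell$ via the conjugacy-class table (citing Lang), and finishes with the same direct computation. Your conjugacy-class bookkeeping checks out — $|\calN_\ell| = (\ell-2)\ell(\ell+1) + (\ell^2-1) + 1 = \ell^3 - 2\ell$, giving $|C_\ell| = \ell(\ell-1)^2(\ell+1) - (\ell^3-2\ell) = \ell^4 - 2\ell^3 - \ell^2 + 3\ell$ as claimed — so the remaining algebra indeed yields the stated identity.
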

\begin{proof}
In this case, $|C_\ell|$ is the number of  matrices $A\in\GL_2(\FF_\ell)$ that do not have $1$ as an eigenvalue; this can be counted directly, for example by using \cite{LangAlgebra}*{XVIII Table 12.4}.  We find that $|C_\ell|=\ell^4 - 2\ell^3 - \ell^2 + 3\ell$ and $|\GL_2(\FF_\ell)|=\ell(\ell-1)^2(\ell+1)$.  The lemma is now a direct computation.
\end{proof}
We introduce the Dirichlet series
\[
h(s) = \prod_{m\in \Lambda} \left( 1 + \frac{1-|C_m|/|G_m|}{|C_m|/|G_m|} m^{-s} \right) = \sum_{n=1}^\infty b_n/n^s;
\]
the significance is that $L(Q)=\sum_{n\leq Q} b_n$.  For each prime $\ell$, define 
\[
c_\ell=(2\ell^2 - \ell - 3)/(\ell^4 - 2\ell^3 - \ell^2 + 3\ell).
\]
Instead of $h(s)$, it will be more convenient to work with the Dirichlet series
\begin{equation} \label{E:L(s) equation}
g(s) := \prod_{\ell} \left( 1 + (1/\ell+c_\ell) \ell^{-s} \right)
 = \frac{\prod_{\ell|M} (1 + (\frac{1}{\ell}+c_\ell) \ell^{-s})}{1 + \frac{1-|C_M|/|G_M|}{|C_M|/|G_M|} M^{-s}} \cdot h(s),
\end{equation}
where the expression in terms of $h(s)$ follows from Lemma~\ref{L:Koblitz factor}.  The Dirichlet series $g(s)$ converges to a non-vanishing holomorphic function on the domain $\Re(s) > 0$.
\begin{lemma} \label{L:analytic continuation of g}
The function $g(s)$ has an analytic continuation to a nonvanishing function on a neighbourhood of $\Re(s)\geq 0$ except for a simple pole at $s=0$.  The residue of $g(s)$ at $s=0$ is $\prod_\ell \left( (1 + (\frac{1}{\ell}+c_\ell) ) (1-\frac{1}{\ell})    \right)$. 
\end{lemma}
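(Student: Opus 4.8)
The plan is to peel off from $g(s)$ a Riemann zeta factor that carries the pole at $s=0$, leaving an Euler product that converges absolutely well to the left of $\Re(s)=0$. To this end I would factor each local factor as
\[
1+\Big(\tfrac1\ell+c_\ell\Big)\ell^{-s}=\big(1+\ell^{-1-s}\big)\bigg(1+\frac{c_\ell\,\ell^{-s}}{1+\ell^{-1-s}}\bigg)
\]
and use the standard identity $\prod_\ell(1+\ell^{-1-s})=\zeta(s+1)/\zeta(2s+2)$, so that on the half-plane $\Re(s)>0$, where all the products converge absolutely,
\[
g(s)=\frac{\zeta(s+1)}{\zeta(2s+2)}\,u(s),\qquad u(s):=\prod_\ell\bigg(1+\frac{c_\ell\,\ell^{-s}}{1+\ell^{-1-s}}\bigg).
\]
It then suffices to show that $u$ extends to a holomorphic, nonvanishing function on a half-plane $\Re(s)>-\delta$ with $\delta>0$, since the two zeta factors already supply the required continuation, pole, and nonvanishing; the residue of $g$ at $s=0$ can then be read off directly.

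For the analysis of $u$, note that the explicit formula $c_\ell=(2\ell^2-\ell-3)/(\ell^4-2\ell^3-\ell^2+3\ell)$ from Lemma~\ref{L:Koblitz factor} gives $c_\ell=O(\ell^{-2})$, while for $\Re(s)\ge-\tfrac12+\varepsilon$ the denominators $1+\ell^{-1-s}$ are bounded away from $0$; hence $\sum_\ell\big|c_\ell\ell^{-s}/(1+\ell^{-1-s})\big|$ converges locally uniformly on $\Re(s)>-\tfrac12$ and $u$ is holomorphic there. Since the product converges, the nonvanishing of $u$ reduces to checking that no local factor vanishes, i.e.\ (the denominator being zero-free on $\Re(s)>-\tfrac12$) that $1+(\tfrac1\ell+c_\ell)\ell^{-s}\ne0$. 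Setting $a_\ell:=\tfrac1\ell+c_\ell>0$, the equation $1+a_\ell\ell^{-s}=0$ forces $\ell^{-s}=-1/a_\ell$, which can occur only for $\Re(s)=\log a_\ell/\log\ell$; because $\ell a_\ell$ is bounded and tends to $1$, these real parts tend to $-1$, so for all but finitely many $\ell$ they lie below some fixed $-\delta<0$, and the finitely many exceptions — essentially just $\ell=2$, where $c_2=\tfrac32$ and the relevant real part is $1$ — are also bounded away from a neighbourhood of $\Re(s)=0$ by direct inspection. Shrinking $\delta$ if necessary, $u$ is holomorphic and nonvanishing on $\Re(s)>-\delta$.

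It remains to assemble the pieces. The factor $\zeta(2s+2)$ is holomorphic and nonvanishing on $\Re(s)>-\tfrac12$ away from its pole at $s=-\tfrac12$ (there $\Re(2s+2)>1$), and $\zeta(s+1)$ is meromorphic on $\CC$ with a single simple pole at $s=0$ of residue $1$; by the classical zero-free region for $\zeta$ it is nonvanishing on a region containing $\{\Re(s)\ge0\}$ — this region is what ``a neighbourhood of $\Re(s)\ge0$'' should be understood to mean, and it is the one substantive input. Consequently $g(s)=\zeta(s+1)u(s)/\zeta(2s+2)$ continues to such a region, holomorphic and nonvanishing except for a simple pole at $s=0$. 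As $u$ and $1/\zeta(2s+2)$ are holomorphic at $s=0$,
\[
\Res_{s=0}g(s)=\Big(\Res_{s=0}\zeta(s+1)\Big)\frac{u(0)}{\zeta(2)}=\frac{u(0)}{\zeta(2)},
\]
and using $1/\zeta(2)=\prod_\ell(1-\ell^{-2})=\prod_\ell(1-\tfrac1\ell)(1+\tfrac1\ell)$ together with $u(0)=\prod_\ell\tfrac{1+\frac1\ell+c_\ell}{1+\frac1\ell}$, this telescopes to $\prod_\ell\big((1+(\tfrac1\ell+c_\ell))(1-\tfrac1\ell)\big)$, the asserted residue. The genuinely delicate point throughout is the holomorphy and nonvanishing of $g$ up to (and just past) the line $\Re(s)=0$: this rests on the classical zero-free region of $\zeta$, together with the modest bookkeeping required to control the handful of Euler factors of $u$ for which $c_\ell$ fails to be small.
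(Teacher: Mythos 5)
Your proposal is, at heart, the same as the paper's: peel off a factor of $\zeta(s+1)$ to carry the pole at $s=0$, verify that the remaining Euler product converges absolutely (using $c_\ell=O(\ell^{-2})$) and does not vanish near $\Re(s)=0$, and read off the residue. The one algebraic difference is that you factor each local term through $1+\ell^{-1-s}$, which brings in $\zeta(s+1)/\zeta(2s+2)$ and leaves you a quotient $u(s)$ with $1+\ell^{-1-s}$ in each denominator; the paper instead multiplies by the $\zeta(s+1)^{-1}$ Euler factor $1-\ell^{-1-s}$, producing the \emph{polynomial} local factor $1+c_\ell\ell^{-s}-c_\ell\ell^{-2s-1}-\ell^{-2s-2}$, which is cleaner to handle (no denominator, no separate $\zeta(2s+2)$ to track). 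Both reduce the pole and residue bookkeeping to the same computation, and your residue telescoping is correct.

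Two points to tighten. First, your closing claim ``$u$ is holomorphic and nonvanishing on $\Re(s)>-\delta$'' is literally false: you yourself observe that for $\ell=2$ one has $a_2=2$, so the local factor $1+2\cdot2^{-s}$ vanishes on the line $\Re(s)=1$, which lies inside any half-plane $\Re(s)>-\delta$. What your analysis of the zeros actually establishes, and what is all that is used (for the Wiener--Ikehara application via $h(s-1)$), is nonvanishing on a vertical \emph{strip} $|\Re(s)|<\delta$; the statement should say that. (The paper's lemma statement ``nonvanishing on a neighbourhood of $\Re(s)\geq 0$'' suffers from the identical $\ell=2$ issue, so this is an inherited imprecision rather than an error new to your argument, but since you computed $c_2=\tfrac32$ explicitly you should draw the correct conclusion from it.) Second, a small presentational point: the paper gets analytic continuation of the stripped-down Euler product essentially for free because its local factors are entire; your $u(s)$ requires the extra remark that $1+\ell^{-1-s}\neq 0$ on the region, which you do address, but it is one more thing to carry. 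Neither issue is a conceptual gap --- the decomposition, convergence estimate, and residue computation are all sound --- but the nonvanishing domain should be stated as a strip around $\Re(s)=0$, not a half-plane.
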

\begin{proof}
For $\Re(s) > 0$,
\begin{align*} 
g(s)\zeta(s+1)^{-1} &= \prod_\ell \left( (1 + \Bigl(\frac{1}{\ell}+c_\ell) \ell^{-s}\Bigr) \Bigl(1-\ell^{-s-1}\Bigr)    \right)
= \prod_\ell ( 1 + c_\ell \ell^{-s} - c_\ell \ell^{-2s-1} - \ell^{-2s-2}).
\end{align*}
Since $c_\ell=2/\ell^2 + O(1/\ell^3)$, this Euler product converges absolutely and is nonvanishing in a neighbourhood of $\Re(s)\geq 0$.  The first statement of the lemma is now immediate.  The second statement follows by setting $s=0$ in the product and noting that $\zeta(s+1)$ has a simple pole at $s=0$ with residue $1$.
\end{proof}
By Lemma \ref{L:analytic continuation of g} and (\ref{E:L(s) equation}), we find that $h(s)$ analytically continues to a neighbourhood of $\Re(s) \geq 0$, except for a simple pole at $s=0$.  The residue of $h(s)$ at $s=0$ is
\begin{align*} 
 &  \left(1 + \frac{1-|C_M|/|G_M|}{|C_M|/|G_M|} \right)\prod_{\ell|M} \left(1 - \frac{1}{\ell}\right) \prod_{\ell\nmid M} \left( \left(1 + \frac{1-|C_\ell|/|G_\ell|}{|C_\ell|/|G_\ell|}\right)\left(1-\frac{1}{\ell}\right) \right) \\
&=  \Bigg(  \frac{|C_M|/|G_M|}{\prod_{\ell|M} \left(1 - \frac{1}{\ell}\right)} \prod_{\ell\nmid M} \frac{|C_\ell|/|G_\ell|}{\left(1-\frac{1}{\ell}\right)}\Bigg)^{-1} =: (C_{E,k})^{-1}.
\end{align*}

\begin{remark}
The number $C_{E,k}$ just introduced is exactly the constant from Conjecture \ref{C:Koblitz} that is predicted in \cite{Zywina-Koblitz}.  Using our assumptions on the integer $M$, it is easy to check that $C_{E,k}$ is independent of the initial choice of $M$.
\end{remark}

Applying the Wiener-Ikehara theorem \cite{Lang-ANT}*{XV~Theorem 1} to the Dirichlet series $h(s-1)$, which has a simple pole at $s=1$, we find that $\sum_{n\leq Q} nb_n = C_{E,k}^{-1} Q + o(Q).$  By partial summation (\cite{Murty-ANT}*{Theorem~2.1.1}), 
\begin{equation} \label{E:Koblitz L asymptotics}
L(Q)=\sum_{n\leq Q} b_n = C_{E,k}^{-1} \log Q + o(\log Q).
\end{equation}
\subsection{Proof of Theorem~\ref{T:Koblitz bound}}
We finally apply the large sieve.  First consider the unconditional case.  For all $m\in \Lambda$, $|G_m| \leq |\GL_2(\ZZ/m\ZZ)| \leq m^4$; so set $r=4$.  By Theorem~\ref{T:LS for Frobenius}(i), with $Q(x):= c\left({\log x}/{(\log\log x)^2}\right)^{1/24}$ we have
\[
|\Ss(x)| \leq (x/\log x + o(x/\log x))/L(Q).
\]  
From (\ref{E:Koblitz L asymptotics}),
\[
L(Q) = C_{E,k}^{-1}\log Q+o(\log Q)  = (24C_{E,k})^{-1}\log\log x +o(\log\log x).
\]
Therefore, 
\[ 
|\Ss(x)| \leq (24+o(1))C_{E,k}\frac{x}{ (\log x)(\log\log x)}.
\] 
Now assume GRH.  For $D\in\Zz(Q)$, we have $|G_D|\leq \prod_{m\in D} m^4 \leq Q^4$.  By Lemma~\ref{L:misc bounds for GL}, $|\GL_2(\ZZ/\ell\ZZ)^\sharp|  = \ell^2-1 \leq \ell^2$ and thus for $D\in \Zz(Q)$, $|G_D^\sharp| \ll Q^2$.
\[
\sum_{D \in \Zz(Q)} |G_D^\sharp||G_D| \ll \sum_{d\leq Q} Q^6 \leq Q^7
\]
By Theorem~\ref{T:LS for Frobenius}(ii) and (\ref{E:Koblitz L asymptotics}), 
\[
|\Ss(x)| \leq (\Li x + O(Q^{11}x^{1/2}\log x ))/L(Q) \leq (C_{E,k}+o(1))(\Li x + O(Q^{11}x^{1/2}\log x ))/\log Q.
\]
For a fixed constant $\delta>0$, define $Q(x):=x^{1/22}/(\log x)^{(2+\delta)/11}$.  Therefore
\[
|\Ss(x)| \leq (\Li x  + O(x/(\log x)^{1+\delta}) )/ (  (22C_{E,k})^{-1} \log x + o(\log x)  )  \leq (22 + o(1) ) C_{E,k} \frac{x}{(\log x)^2}.
\]

\section{Elliptic curves and thin sets}
The goal of this section is to prove Theorem~\ref{T:LS for elliptic curves}. 
\subsection{Reduction of thin sets}
 \begin{defn}  Let $\Omega$ be a subset of $\ZZ^n$. For each prime $\ell$, let $\Omega_\ell \subseteq (\ZZ/\ell\ZZ)^n$ be the reduction of $\Omega$  modulo $\ell$.
 \end{defn}
\begin{lemma}\label{L:reduction of thin sets}
Let $\Omega\subseteq \ZZ^n$ be a thin set.  
\begin{romanenum}
\item There are thin sets $\Omega_1,\ldots,\Omega_m\subseteq \ZZ^n$, a set of primes $\Lambda\subseteq \Sigma_\QQ$ with positive natural density, and a real number $0<c<1$ such that $\Omega=\bigcup_{i=1}^m\Omega_i$,
and $|\Omega_{i,\ell}| \leq c \ell^n$ for all $1\leq i\leq m$ and $\ell \in \Lambda$.

\item Suppose $\Omega$ is a thin set of Type 1.  Then $|\Omega_\ell| \ll \ell^{n-1}$ for all $\ell \in \Sigma_\QQ$,
where the implied constant depends on $\Omega$ and $n$.
\end{romanenum} 
\end{lemma}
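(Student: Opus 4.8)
The plan is to prove (ii) directly and then deduce (i) from it together with a Frobenius count for the Type 2 pieces. For (ii), a thin set of Type 1 is contained in a proper closed subvariety of $\AA^n_\QQ$, hence in the zero locus of a single nonzero polynomial $f\in\ZZ[x_1,\dots,x_n]$, which we may take primitive, of degree $d$ say. If $a\in\ZZ^n$ satisfies $f(a)=0$ then $\bar f(\bar a)=0$, so $\Omega_\ell$ lies in the $\FF_\ell$-zero locus of $\bar f$; since $f$ is primitive, $\bar f\neq 0$ for every $\ell$, and the zero locus of a nonzero polynomial of degree $d$ in $\FF_\ell^n$ has at most $d\,\ell^{n-1}$ points (an easy induction on $n$, or Schwartz--Zippel). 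Thus $|\Omega_\ell|\leq d\,\ell^{n-1}$, with $d$ depending only on $\Omega$ and $n$.

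For (i), recall from the excerpt that $\Omega$ is a finite union of subsets of thin sets of Type 1 and Type 2; intersecting with $\Omega$, write $\Omega=\bigcup_{i=1}^m\Omega_i$ with each $\Omega_i$ inside a Type 1 or a Type 2 thin set. A Type 1 piece has $|\Omega_{i,\ell}|\ll\ell^{n-1}<\tfrac12\ell^n$ for all $\ell$ past some bound by (ii), so the content lies in the Type 2 pieces: $\Omega_i\subseteq\pi(X(\QQ))$ with $X$ irreducible over $\QQ$ of dimension $n$ and $\pi\colon X\to\AA^n_\QQ$ dominant of degree $d\geq 2$. I would first replace $X$ by its normalization in $\QQ(X)$; this enlarges $\pi(X(\QQ))$ only by the image of a proper closed subset, which is again a Type 1 thin set to be carved off, so we may assume $X$ normal and $\pi$ finite. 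If $\Omega_i$ is finite we are done; otherwise $X(\QQ)$ is infinite, and a normal irreducible variety over $\QQ$ with a rational point is geometrically irreducible, so we may assume $X$ geometrically irreducible, whence the monodromy group $G\subseteq S_d$ of $\pi$ acting on the $d$-element generic fibre is transitive. Next choose a dense open $U\subseteq\AA^n_\QQ$ over which $\pi$ and the Galois closure of $\QQ(X)/\QQ(\AA^n_\QQ)$ are finite étale, carve off the part of $\Omega_i$ lying over $\AA^n_\QQ\setminus U$ (once more a Type 1 set), and spread the data out to a finite étale morphism $\mathcal X_U\to\mathcal U$ over $\ZZ[1/N]$, with $\mathcal U$ open in $\AA^n_{\ZZ[1/N]}$ and generic fibre $U$.

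The key local point is this: if $a\in\Omega_i\cap U(\QQ)\cap\ZZ^n$ and $\ell\nmid N$ is such that the reduction $\bar a$ lies in $\mathcal U(\FF_\ell)$, then the fibre of $\mathcal X_U$ over $\bar a$ has an $\FF_\ell$-rational point. Indeed $a$ then extends to a point of $\mathcal U(\ZZ_{(\ell)})$; pulling $\mathcal X_U$ back along it yields a finite étale $\ZZ_{(\ell)}$-algebra whose generic fibre has $\QQ$ as a direct factor (because $\pi^{-1}(a)$ has a $\QQ$-point), and since $\ell$ is unramified its special fibre then has $\FF_\ell$ as a direct factor. Consequently, up to the $O(\ell^{n-1})$ reductions of points lying over $\AA^n_\QQ\setminus U$, the set $\Omega_{i,\ell}$ is contained in $N_\ell:=\{\bar a\in\mathcal U(\FF_\ell) : \text{the fibre of } \mathcal X_U \text{ over } \bar a \text{ has an } \FF_\ell\text{-point}\}$, and $\bar a$ lies in $N_\ell$ precisely when the Frobenius conjugacy class $\Frob_{\bar a}$ has a fixed point on the $d$-element fibre. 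By Jordan's theorem a transitive action of degree $d\geq 2$ contains a fixed-point-free element, so the proportion of elements of $G$ having a fixed point is at most $1-1/|G|$. Restricting $\ell$ to the positive-density set of primes that do not divide $N$, exceed a suitable bound, and split completely in the field of constants of the Galois closure of $\QQ(X)/\QQ(\AA^n_\QQ)$ forces the arithmetic and geometric monodromy of $\mathcal X_{U,\FF_\ell}\to\mathcal U_{\FF_\ell}$ to agree, whence Lang--Weil combined with the Chebotarev density theorem for this cover gives
\[
|N_\ell|=\frac{\#\{g\in G : g \text{ has a fixed point on the } d\text{-element set}\}}{|G|}\,\ell^n+O\!\left(\ell^{\,n-1/2}\right)\leq\Big(1-\tfrac1{|G|}\Big)\ell^n+O\!\left(\ell^{\,n-1/2}\right).
\]
Intersecting these prime sets over the finitely many Type 2 pieces, discarding finitely many exceptional primes to absorb the error terms, and taking $c$ strictly between $1$ and the largest of $\tfrac12$ and the quantities $1-1/(2|G|)$ then yields (i).

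The main obstacle is precisely the passage from rational points of $X$ to $\FF_\ell$-points of its reduction and the accompanying arithmetic-versus-geometric monodromy bookkeeping: one has to make sure $X$ is geometrically irreducible (which is where normality and the infinitude of $\Omega_i$ enter) and that the size of $N_\ell$ is controlled by the \emph{geometric} monodromy group $G$. The latter is genuinely necessary rather than a convenience — for instance, with $\pi\colon\AA^1\to\AA^1$, $s\mapsto s^3$, every element of $\FF_\ell$ is a cube when $\ell\equiv 2\pmod 3$ — which is why $\Lambda$ must be cut out by a splitting condition rather than taken to be all primes. The remaining ingredients (spreading out, point counting on varieties over $\FF_\ell$, and Jordan's theorem) are standard.
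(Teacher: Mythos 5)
Your proposal is correct in substance, but it takes a very different route from the paper: the paper disposes of this lemma in two sentences by citing Serre's \emph{Lectures on the Mordell--Weil theorem} (\S13, Theorem~5) for part~(i) and the Lang--Weil bounds for part~(ii), whereas you reconstruct the underlying arguments from scratch. For~(ii), your Schwartz--Zippel bound $|\Omega_\ell|\leq d\,\ell^{n-1}$ is genuinely more elementary than Lang--Weil and is exactly the right tool here, since one only needs an upper bound of the correct order, not an asymptotic. For~(i), your argument is essentially a self-contained proof of the content of Serre's theorem: decompose into Type~1 and Type~2 pieces, normalize and spread out the Type~2 covers, use the unramified-fibre observation to pass from $\QQ$-points to $\FF_\ell$-points, and then combine the function-field Chebotarev/Lang--Weil estimate with Jordan's lemma (which also appears in the paper, as the proof of Lemma~\ref{L:full Galois}) to get a density $<1$. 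This buys explicitness and transparency at the cost of length.

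Two small imprecisions are worth flagging, neither of which breaks the argument. First, normalization of $X$ may \emph{shrink} $\pi(X(\QQ))$ rather than enlarge it (rational points on the non-normal locus need not lift); the correct phrasing is that the difference $\pi(X(\QQ))\setminus\pi'(X'(\QQ))$ lies over a proper closed subvariety, hence is Type~1 and can be carved off. Second, once $\ell$ is required to split in the field of constants $k_0$ of the Galois closure, the Frobenius classes $\Frob_{\bar a}$ equidistribute in the \emph{geometric} monodromy group $G^{\mathrm{geom}}=\Gal(\bar\QQ(\tilde X)/\bar\QQ(U))$ rather than in the full $G$; the fixed-point proportion should therefore be bounded by $1-1/|G^{\mathrm{geom}}|$. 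This is harmless because geometric irreducibility of $X$ forces $G^{\mathrm{geom}}$ itself to act transitively on the $d\geq 2$ points of the generic fibre, so Jordan still applies. Your cubing example $s\mapsto s^3$ is well chosen to explain why the splitting condition defining $\Lambda$ is genuinely necessary rather than a technical convenience.
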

\begin{proof}
Part (i) is a consequence of \cite{SerreMordellWeil}*{\S13~Theorem 5}.  Part (ii) follows from the Lang-Weil bounds \cite{LangWeil}. 
\end{proof}

\subsection{Proof of Theorem~\ref{T:LS for elliptic curves}}
We are interested in bounding the cardinality of the set
$\calS(x):=\{\p \in \Sigma_k(x) :  (a_\p(E_1),\ldots,a_\p(E_n),N(\p))\in \Omega \}$.  By Lemma~\ref{L:reduction of thin sets}, we need only consider the case where there is a set $\Lambda\subseteq \Sigma_k$ of positive density and a number $0<c<1$ such that $|\Omega_\ell| \leq c \ell^{n+1}$ for all $\ell\in \Lambda$.  

Let $\GG/\Spec\ZZ$ be the algebraic subgroup of $(\GL_2)^n$ such that
\[
\GG(R) = \{ (A_1,\dots, A_n) \in \GL_2(R)^n : \det(A_1) = \cdots = \det(A_n) \}
\]
for each commutative ring $R$.   For every integer $m\geq 1$, we have a Galois representation
\[
\rho_m:= \prod_{i=1}^n \rho_{E_i,m}  \colon \calG_k \to \GL_2(\ZZ/m\ZZ)^n.
\]
By Theorem~\ref{T:Elliptic curves with large monodromy}, there is an integer $B$ such that $\rho_m(\calG_k) = \GG(\ZZ/m\ZZ)$ for all $m$ relatively prime to $B$.   We may assume that $\ell \nmid B$ for all $\ell \in \Lambda$ and hence the Galois representations $\{\rho_\ell\}_{\ell \in \Lambda}$ are independent.  Let $S$ be a finite subset of $\Sigma_k$ such that the elliptic curves $E_1,\ldots,E_n$ have good reduction outside $S$.  

For each prime $\ell \in \Lambda$, define the set
\[
C_\ell= \{ (A_i) \in \GG(\ZZ/\ell\ZZ) : (\tr(A_1),\ldots,\tr(A_n),\det(A_1)) \in \Omega_\ell\}.
\]
For all $\ell\in\Lambda$ and $\p\in \calS(x)$, either $\p \in S_{\ell}:= S \cup \{ \p : \p |\ell
\}$ or $\rho_\ell(\Frob_\p) \subseteq C_\ell.$

Fix a positive function $Q=Q(x)$ with $Q(x)\ll\sqrt{x}$, we will make a specific choice later.  With our setup matching that of Theorem~\ref{T:LS for Frobenius}, we define 
\[
\Ss(x) = \big\{ \p \in \Sigma_k(x) : \; \p\in S_{\ell} \;\text{ or }\; \rho_{\ell}(\Frob_\p) \subseteq C_\ell  \text{ for all $\ell \in \Lambda(Q)$}\big\}.
\]
Note that $\calS(x) \subseteq \Ss(x)$, so it suffices to bound $|\Ss(x)|$.

Before applying the large sieve, we first calculate some related quantities.  The calculations reduce to counting various elements of $\GL_2(\ZZ/\ell\ZZ)$, see Lemma~\ref{L:misc bounds for GL}.  For any $\ell\in\Lambda(Q)$:
\begin{align*}
|\GG(\ZZ/\ell\ZZ)| &= (\ell-1)|\SL_2(\ZZ/\ell\ZZ)|^n= \ell^n(\ell-1)^{n+1}(\ell+1)^n\\
|C_\ell| &= \sum_{(t_1,\ldots,t_n,d )\in \Omega_\ell } \prod_{i=1}^n |\{A \in \GL_2(\ZZ/\ell\ZZ): \tr(A)=t_i, \, \det(A)=d \}|\\
&\leq \sum_{(t_1,\ldots,t_n,d )\in \Omega_\ell } (\ell(\ell+1))^n 
=  |\Omega_\ell|(\ell(\ell+1))^n \leq c \ell^{2n+1}(\ell+1)^n\\
|C_\ell|/|\GG(\ZZ/\ell\ZZ)| &\leq c (1+ (\ell-1)^{-1})^{n+1} 
\end{align*}
After possibly removing finitely many primes from $\Lambda$, there is a constant $c'<1$ such that $|C_\ell|/|\GG(\ZZ/\ell\ZZ)| \leq c'$ for all $\ell\in \Lambda$.  By Lemma~\ref{L:Gallagher}(iii), $|\GG(\ZZ/\ell\ZZ)^\sharp|\leq (\ell-1)|\SL_2(\ZZ/\ell\ZZ)^\sharp|^n$, and by Lemma~\ref{L:misc bounds for GSp} there is an absolute constant $\kappa\geq 1$ such that $|\GG(\ZZ/\ell\ZZ)^\sharp| \leq \kappa^n \ell^{n+1}\leq (\kappa\ell)^{n+1}$.\\

Define $\Zz(Q)=\{D : D\subseteq \Lambda(Q),\, \prod_{\ell\in D}\kappa\ell \leq Q\}$ 
and
$L(Q) = \sum_{D\in\Zz(Q)} \prod_{\ell\in D} (1-c')/c'$.  
Since $\Lambda$ has positive density, we have
\[
L(Q) \geq \sum_{\ell\in\Lambda, \; \ell \leq Q/\kappa} \frac{1-c'}{c'} \gg \frac{Q}{\log Q}.
\]
For $D\in \Zz(Q)$, define $G_D=\prod_{\ell\in D}\GG(\ZZ/\ell\ZZ)$.
\begin{align*}
&|G_D|=\prod_{\ell\in D}|\GG(\ZZ/\ell\ZZ)| \leq \Bigl(\prod_{\ell\in D} \ell\Bigr)^{3n+1} \leq (Q/\kappa^{ |D|})^{3n+1}\leq Q^{3n+1}\\
&|G_D^\sharp|=\prod_{\ell\in D}|\GG(\ZZ/\ell\ZZ)^\sharp| \leq \Bigl(\prod_{\ell\in D} \kappa \ell \Bigr)^{n+1} \leq Q^{n+1}\\
&\sum_{D\in\Zz(Q)} |G_D^\sharp| |G_D| \leq |\Zz(Q)| Q^{4n+2} \leq Q^{4n+3}
\end{align*}
Applying Theorem~\ref{T:LS for Frobenius}(i) with $r=3n+1$, 
\[ 
L(Q) \gg Q/\log Q \gg \frac{(\log x)^{1/(6r)}}{(\log\log x)^{1+1/(3r)}} 
\]
and hence
\[ 
|\Ss(x)| \ll (x/\log x)/L(Q)\ll \frac{x(\log\log x)^{1+1/(9n+3)}}{(\log x)^{1+1/(18n+6)} }.
\]
Assuming GRH, by Theorem~\ref{T:LS for Frobenius}(ii) 
\[ 
|\Ss(x)| \ll (x/\log x +  Q^{7n+4} x^{1/2}\log x )/(Q/\log Q); 
\]
choosing $Q(x)= (x^{1/2}/(\log x)^2)^{1/(7n+4)}$ gives
\[ 
|\Ss(x)| \ll x^{1-1/(14n+8)} (\log x)^{2/(7n+4)}. 
\]

\subsubsection{Thin sets of Type $1$}
Now assume that $\Omega$ is thin of type $1$; i.e., $\Omega$ is not Zariski dense in $\AA_\QQ^{n+1}$.  By Lemma~\ref{L:reduction of thin sets}, there is a constant $C>0$ such that $|\Omega_\ell|\leq C\ell^{n}$ for all primes $\ell$.  For each $\ell$, define 
$C_\ell = \{ (A_i) \in \GG(\ZZ/\ell\ZZ) : (\tr(A_1),\ldots,\tr(A_n),\det(A_1)) \in \Omega_\ell\}$.
Arguing as before, we find the following bounds.
\begin{align*}
|C_\ell| &= \sum_{(t_1,\ldots,t_n,d )\in \Omega_\ell } \prod_{i=1}^n |\{A \in \GL_2(\ZZ/\ell\ZZ): \tr(A)=t_i, \det(A)=d \}|\\
&\leq \sum_{(t_1,\ldots,t_n,d )\in \Omega_\ell } (\ell(\ell+1))^n 
=  |\Omega_\ell|(\ell(\ell+1))^n \leq C \ell^{2n}(\ell+1)^n
\end{align*}
By possibly increasing the value of $C$, we always have $|C_\ell|/|\GG(\ZZ/\ell\ZZ)| \leq C/\ell.$
Take any prime $\ell\nmid B$, where $B$ is the constant from Theorem~\ref{T:Elliptic curves with large monodromy}.  Let $L_\ell$ be the fixed field of $\ker(\rho_\ell)$ in $\kbar$; there is an isomorphism
\[
\rho_\ell\colon \Gal(L_\ell/k) \overset{\sim}{\to} \GG(\ZZ/\ell\ZZ),
\]
which we will use as an identification.  For all $\p\in \Sigma_k-S$ with $\p\nmid \ell$, $\rho_\ell(\Frob_\p)\subseteq C_\ell$.  Therefore,
\[
|\{\p \in \Sigma_k(x) : (a_\p(E_1),\dots,a_\p(E_n),N(\p))\in\Omega\}|
\leq \pi_{C_\ell}(x,L_\ell/k) + O(1);
\]
see \S\ref{SS:basics} for notation.  It thus suffices to bound $\pi_{C_\ell}(x,L_\ell/k)$, and this can be done using the effective versions of the Chebotarev density theorem given in Appendix~\ref{A:Character sums}.  We first calculate $M(L_\ell/k)$ (see Definition~\ref{D:M definition}).
\[
M(L_\ell/k) = [L_\ell:k] d_k^{1/[k:\QQ]} \prod_{p\in P(L/k)} p \leq \ell^{3n+1} d_k^{1/[k:\QQ]}\cdot \ell \prod_{\p\in S} N(\p)  \ll \ell^{3n+2}
\]
Assuming GRH (and assuming, say, $\ell\leq x$), by Proposition~\ref{P:conditional CDT}(i),
\begin{align*}
\pi_{C_\ell}(x,L_\ell/k) & \leq \frac{|C_\ell|}{|\GG(\ZZ/\ell\ZZ)|} \Li x + O(|C_\ell| x^{1/2} \log x)  \ll \frac{1}{\ell}\frac{x}{\log x} + \ell^{3n} x^{1/2}\log x.
\end{align*}
Choose $\ell \nmid B$ such that 
\[   \big(x^{1/2}/(\log x)^2\big)^{1/(3n+1)} \leq \ell \leq 2\big(x^{1/2}/(\log x)^2\big)^{1/(3n+1)}
\]
(this can be done assuming $x$ is sufficiently large).  With this choice of $\ell$, 
\[
\pi_{C_\ell}(x,L_\ell/k) \ll \frac{x^{1-1/(6n+2)}}{(\log x)^{1 - 2/(3n+1)}}.
\]
Now consider the unconditional case.  By Proposition~\ref{P:unconditional CDT},
\[
\pi_{C_\ell}(x,L_\ell/k) \ll \frac{1}{\ell} \frac{x}{\log x},
\]
assuming that 
\begin{equation}\label{E:lots of logs}
\log x \geq c_2 (\log d_{L_\ell})(\log\log d_{L_\ell})(\log\log\log 6 d_{L_\ell}), 
\end{equation}
where $c_2$ is some absolute constant.  By Lemma~\ref{L:discriminant bound} and the above bound for $M(L_\ell/k)$, 
\begin{equation*} 
\log d_{L_\ell} \leq [L_\ell:\QQ] \log M(L_\ell/k) \ll \ell^{3n+1} \log \ell,
\end{equation*}
and hence
\begin{equation}\label{E:discbound}
(\log d_{L_\ell})(\log\log d_{L_\ell})(\log\log\log 6 d_{L_\ell}) 
\ll \ell^{3n+1} (\log \ell)^2 (\log\log \ell).
\end{equation}
Let $c>0$ be a constant which will be chosen sufficiently small, and suppose we have $\ell \nmid B$ with
\[  c \Bigl(\frac{\log x}{(\log\log x)^2(\log\log\log x)}\Bigr)^{1/(3n+1)}  \leq \ell \leq 2c \Bigl(\frac{\log x}{(\log\log x)^2(\log\log\log x)}\Bigr)^{1/(3n+1)}.
\]
For $c>0$ sufficiently small, the bound (\ref{E:discbound}) shows that (\ref{E:lots of logs}) will hold.  With such an $\ell$,
\[
\pi_{C_\ell}(x,L_\ell/k) \ll \frac{1}{\ell} \frac{x}{\log x}
\ll \frac{x(\log\log x)^{2/(3n+1)}(\log\log\log x)^{1/(3n+1)}}{(\log x)^{1+1/(3n+1)}}.
\]
Such a prime $\ell$ will exist assuming $x$ is sufficiently large.

\begin{remark} \label{R:sieving by one prime with AHC}
If we assume GRH and AHC, then for $\ell\leq x$, Proposition~\ref{P:conditional CDT}(ii) gives
\[
\pi_{C_\ell}(x,L_\ell/k) \ll \frac{1}{\ell}\frac{x}{\log x}+ \ell^{3n/2} x^{1/2}\log x.
\]
Choosing $\ell \approx (x^{1/2}/(\log x)^2)^{2/(3n+2)}$ gives the bound
\[
|\{\p \in \Sigma_k(x) : (a_\p(E_1),\dots,a_\p(E_n),N(\p))\in\Omega\}|
\ll \frac{x^{1-1/(3n+2)}}{(\log x)^{1-4/(3n+2)}}.
\]
\end{remark}

\section{Explicit Chavdarov} \label{S:Chavdarov}
The purpose of this section is to prove Theorem~\ref{T:Effective Chavdarov}.  We keep the notation introduced in \S\ref{SS:Chavdarov intro}.
\subsection{Group theory of $\Sym_n$}
For a positive integer $n$, let $\Sym_{n}$ be the symmetric group on $\{1,2,\dots, n\}$.  A \defi{partition} of $n$ is a sequence $\sigma=(\sigma_1,\dots,\sigma_k)$ of integers such that $n=\sum_{i}\sigma_i$ and $\sigma_1\geq \dots\geq \sigma_k\geq 1$.  The \defi{cycle type} of a permutation $\tau \in \Sym_n$ is the partition $\sigma=(\sigma_1,\dots,\sigma_k)$ of $n$ for which $\tau$ can be written as a product of disjoint cycles of lengths $\sigma_1,\dots,\sigma_k$.  

Let $f(T)\in \ZZ[T]$ be a separable polynomial of degree $n$ with roots $\alpha_1,\dots,\alpha_n$ in $\Qbar$.  The numbering of the roots induces an injective homomorphism
$\Gal(f(T)) \hookrightarrow \Sym_{n}.$
This homomorphism, up to an inner automorphism of $\Sym_n$, is independent of the choice of numbering.
\begin{defn}
Let $f(T)\in\ZZ[T]$ be a polynomial of degree $n$ and let $\sigma=(\sigma_1,\dots,\sigma_k)$ be a partition of $n$.   We say that $\sigma$ is a \defi{cycle type} of $f$ if $f$ is separable and the image of $\Gal(f(T))\hookrightarrow \Sym_{n}$ contains a permutation with cycle type $\sigma$.  An equivalent condition (by the Chebotarev density theorem) is that there exists a prime $\ell$ such that $f(T) \bmod{\ell} \in \FF_\ell[T]$ factors into distinct irreducibles of degrees $\sigma_1,\dots,\sigma_k$.
\end{defn}

\begin{lemma} \label{L:full Galois}
Let $f(T)\in\ZZ[T]$ be a polynomial of degree $n$.  Suppose that $\sigma$ is a cycle type of $f(T)$ for each partition $\sigma$ of $n$.  Then $\Gal(f(T))\cong\Sym_n$.
\end{lemma}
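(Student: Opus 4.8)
The plan is to realize $G := \Gal(f(T))$ as a subgroup of $\Sym_n$ via a fixed numbering of the roots $\alpha_1,\dots,\alpha_n$ (note the hypothesis already forces $f$ to be separable, since it has \emph{some} cycle type, so this embedding makes sense), and then to deduce $G = \Sym_n$ using just three of the given partitions: $(n)$, $(n-1,1)$, and $(2,1,\dots,1)$. The cases $n \le 2$ are trivial — for $n=2$ the partition $(2)$ being a cycle type of $f$ says directly that $G$ contains a transposition, hence $G = \Sym_2$ — so I would assume $n \ge 3$ throughout.

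First I would use the partition $(n)$: by hypothesis $G$ contains an $n$-cycle, which acts transitively on $\{1,\dots,n\}$, so $G$ is a transitive subgroup of $\Sym_n$. Next I would upgrade this to $2$-transitivity using the partition $(n-1,1)$ (which is a partition of $n$ since $n \ge 3$): $G$ contains an $(n-1)$-cycle $c$, fixing a unique point $q$ and cycling the other $n-1$ points. Given any point $p$, transitivity of $G$ provides $g \in G$ with $g(q) = p$; then $g c g^{-1}$ fixes $p$ and, having the same cycle type as $c$, permutes $\{1,\dots,n\}\setminus\{p\}$ in a single $(n-1)$-cycle. Hence the stabilizer $G_p$ is transitive on the complement of $p$, and since $p$ was arbitrary and $G$ is transitive, $G$ is $2$-transitive. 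In particular $G$ acts transitively on the collection of $2$-element subsets of $\{1,\dots,n\}$.

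Finally I would invoke the partition $(2,1,\dots,1)$: $G$ contains a transposition $(a\ b)$. Let $\calE$ be the set of $2$-subsets $\{x,y\}$ with $(x\ y)\in G$; since $\sigma(x\ y)\sigma^{-1} = (\sigma x\ \sigma y)$ for $\sigma\in G$, the set $\calE$ is $G$-invariant, and it is nonempty because $\{a,b\}\in\calE$. By the $2$-transitivity just established, $G$ is transitive on all $2$-subsets, so $\calE$ must be \emph{all} $2$-subsets; that is, every transposition of $\{1,\dots,n\}$ lies in $G$. Since transpositions generate $\Sym_n$, this gives $G = \Sym_n$, i.e.\ $\Gal(f(T)) \cong \Sym_n$.

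I expect no step to be genuinely difficult, but the step worth flagging is the passage to $2$-transitivity: being transitive and containing a transposition is \emph{not} enough to force $\Sym_n$ (for instance $D_4 \subseteq \Sym_4$ is transitive and contains a transposition), so the $(n-1,1)$ cycle type is really needed to promote transitivity to $2$-transitivity (equivalently, to primitivity); once that is in hand the short invariant-set argument — essentially a hands-on form of Jordan's classical theorem on primitive groups containing a transposition — closes the proof.
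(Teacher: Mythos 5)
Your proof is correct but takes a genuinely different route from the paper's. The paper argues at the level of conjugacy classes: partitions of $n$ biject with conjugacy classes of $\Sym_n$, so the hypothesis says the image $G \subseteq \Sym_n$ of $\Gal(f(T))$ meets every conjugacy class of $\Sym_n$, and a classical lemma of Jordan --- every proper subgroup of a finite group misses some conjugacy class --- forces $G = \Sym_n$ in one stroke. That argument is shorter but genuinely uses all the cycle-type hypotheses. Your proof instead isolates three particular cycle types, $(n)$, $(n-1,1)$, and $(2,1,\dots,1)$, uses the first to get transitivity and the second to promote it to $2$-transitivity (the $D_4 \subseteq \Sym_4$ counterexample you flag correctly explains why transitivity plus a transposition would not suffice on its own), and then closes with a hands-on form of the \emph{other} Jordan theorem: a $2$-transitive (hence primitive) permutation group containing a transposition is the full symmetric group, which you prove directly via the $G$-invariance of the set of pairs carrying a transposition. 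The payoff of your version is that it yields a strictly sharper statement --- only those three cycle types need be realized, not all partitions of $n$ --- and it replaces the blanket conjugacy-class lemma with elementary, self-contained permutation-group reasoning; the cost is a somewhat longer argument and the need to treat $n \le 2$ separately, which you do.
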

\begin{proof}
The cycle type of a permutation in $\Sym_n$ induces a bijection between partitions of $n$ and conjugacy classes of $\Sym_n$.  Our assumption implies that the image of $\Gal(f(T))\hookrightarrow \Sym_n$ meets every conjugacy class of $\Sym_n$.  A classical lemma of Jordan, says that for each proper subgroup $H$ of a finite group $G$, there is a conjugacy class $C\in G^\sharp$ such that $H\cap C=\emptyset$.  Therefore, $\Gal(f(T))\cong \Sym_n$. 
\end{proof}
\subsection{Group theory of $W_{2g}$}
Fix an integer $g\geq 1$.  Recall that $W_{2g}$ is the subgroup of $\Sym_{2g}$ which induces an permutation on the set of pairs $\big\{\{1,2\},\{3,4\},\dots,\{2g-1,2g\} \big\}$.  The action of $W_{2g}$ on these $g$ pairs gives an exact sequence
\begin{equation} \label{E:Weyl group sequence}
1\to H \to W_{2g} \overset{\phi}{\to} \Sym_g \to 1.
\end{equation}
The group $H$ is generated by the transpositions $(1,2),\dots, (2g-1,2g)$, and hence is isomorphic to $(\ZZ/2\ZZ)^{g}$.  In particular, $|W_{2g}|=2^g g!$.

\begin{lemma} \label{L:W group theory}
Let $G$ be a subgroup of $W_{2g}$.  If $G$ contains a transposition and $\phi(G)=\Sym_g$, then $G=W_{2g}$.
\end{lemma}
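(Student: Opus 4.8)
The plan is to first pin down which elements of $W_{2g}$ are transpositions, and then to spread a single such transposition throughout $G$ by conjugating with suitable elements of $G$.

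The first step is to observe that the only transpositions lying in $W_{2g}\subseteq\Sym_{2g}$ are the generators $\tau_i:=(2i-1,\,2i)$ of $H$, for $1\le i\le g$. Indeed, a transposition $(a,b)\in\Sym_{2g}$ preserves the partition $\big\{\{1,2\},\dots,\{2g-1,2g\}\big\}$ of $\{1,\dots,2g\}$ if and only if $a$ and $b$ lie in the same block: if they lie in distinct blocks $\{a,a'\}$ and $\{b,b'\}$, then $(a,b)$ sends $\{a,a'\}$ to $\{b,a'\}$, which is not a block. Each $\tau_i$ fixes every block, so $\phi(\tau_i)=1$, i.e.\ $\tau_i\in H$. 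Hence the hypothesis that $G$ contains a transposition means exactly that $\tau_i\in G$ for some $i$.

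Next I would use the surjectivity $\phi(G)=\Sym_g$ to force all of $H$ into $G$. Since $\Sym_g$ acts transitively on the $g$ blocks, for every $j$ there is some $\sigma\in G$ with $\phi(\sigma)(i)=j$; equivalently, $\sigma$ maps the set $\{2i-1,2i\}$ onto $\{2j-1,2j\}$. Using the identity $\sigma\,(a,b)\,\sigma^{-1}=(\sigma(a),\sigma(b))$, this gives $\sigma\tau_i\sigma^{-1}=(\sigma(2i-1),\sigma(2i))=\tau_j\in G$. Thus $\tau_j\in G$ for all $j$, and since $H=\langle\tau_1,\dots,\tau_g\rangle$ we get $H\subseteq G$. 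Finally, $H\subseteq G$ combined with $\phi(G)=\Sym_g=W_{2g}/H$ yields $G=W_{2g}$: given $w\in W_{2g}$, choose $g_0\in G$ with $\phi(g_0)=\phi(w)$; then $wg_0^{-1}\in H\subseteq G$, so $w\in G$.

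There is no genuine obstacle in this argument. The only point that takes a moment of care is the classification of transpositions inside the wreath product $W_{2g}$ in the first step; once that is in hand, the rest is a routine conjugation-and-quotient computation.
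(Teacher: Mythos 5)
Your proof is correct and follows essentially the same route as the paper's: reduce to showing $H\subseteq G$, observe the given transposition must lie in $H$ (hence equals some $\tau_i$), and conjugate it around using $\phi(G)=\Sym_g$ to obtain all the $\tau_j$. The only difference is cosmetic — you spell out the classification of transpositions in $W_{2g}$ and use transitivity rather than a specific block-transposing element, but the argument is the same.
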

\begin{proof}
From the assumption $\phi(G)=\Sym_g$ and (\ref{E:Weyl group sequence}), it suffices to show that $H\subseteq G$.  In particular, it suffices to show that $G$ contains every transposition of the form $(2i-1,2i)$.

By assumption, $G$ contains a transposition.  This transposition must be an element of $H$, and we may assume that it is $(1,2)$.  Since $\phi(G)=\Sym_g$, there exists a $\tau\in G$ which switches the pairs $\{1,2\}$ and $\{2i-1,2i\}$, and leaves the other pairs fixed.  The permutation $\tau (1,2) \tau^{-1}$ is thus $(2i-1,2i)$.
\end{proof}

\subsection{Abelian varieties over finite fields}

\begin{defn}
Let $A$ be an abelian variety of dimension $g\geq 1$ over a finite field $\FF_q$.  Let $Q_A(T)\in\ZZ[T]$ be the unique polynomial such that $P_A(T)=T^g Q_A(T+q/T)$.  (The existence of $Q_A(T)$ is a direct consequence of the functional equation $P_A(q/T)/(q/T)^g = P_A(T)/T^g$).
\end{defn}

\begin{lemma} \label{L:W criterion}
Let $A$ be an abelian variety of dimension $g$ over $\FF_q$.  Suppose that $P_A(T)$ has cycle types $(2g)$ and $(2,1,1,\dots,1)$, and $Q_A(T)$ has cycle type $\sigma $ for each partition $\sigma$ of $g$.  Then $\Gal(P_A(T))\cong W_{2g}$.
\end{lemma}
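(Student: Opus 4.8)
The plan is to identify $\Gal(P_A(T))$ with a subgroup $G$ of $W_{2g}$ and then verify the two hypotheses of Lemma~\ref{L:W group theory}: that $G$ contains a transposition, and that $\phi(G)=\Sym_g$.

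First I would note that since $(2g)$ is a cycle type of $P_A(T)$, the polynomial $P_A(T)$ is separable and $\Gal(P_A(T))$ contains a $2g$-cycle, hence acts transitively on the $2g$ roots; thus $P_A(T)$ is irreducible, all of its roots are non-rational and distinct, and we are in the case $r=g$ of \S\ref{SSS:Abelian varieties over finite fields}, with an injection $\Gal(P_A(T))\hookrightarrow W_{2g}$ whose image I call $G$. For the first hypothesis: the assumption that $(2,1,1,\dots,1)$ is a cycle type of $P_A(T)$ says precisely that $G$ contains a transposition $\tau$. A transposition in $\Sym_{2g}$ preserves the partition $\big\{\{1,2\},\dots,\{2g-1,2g\}\big\}$ only when the two transposed elements already form one of these pairs, so $\tau=(2i-1,2i)$ for some $i$; in particular $\tau\in\ker\phi=H$.

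The crux is the second hypothesis, which I would obtain by relating $\phi$ to the Galois action on the roots of $Q_A(T)$. Since $P_A(0)=q^{g}\neq 0$, no root of $P_A(T)$ is $0$, so the identity $P_A(T)=T^gQ_A(T+q/T)$ gives $Q_A(\pi+q/\pi)=0$ for every root $\pi$ of $P_A(T)$; for a pair $\{\pi_{2i-1},\pi_{2i}\}$ with $\pi_{2i-1}\pi_{2i}=q$, both roots yield the common value $\pi_{2i-1}+\pi_{2i}$, so the assignment $i\mapsto\pi_{2i-1}+\pi_{2i}$ defines a surjection from the set of $g$ pairs onto the root set of $Q_A(T)$. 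Since $Q_A(T)$ is separable of degree $g$ (separability being built into the hypothesis that it has cycle types), this surjection is a bijection, and it is manifestly $\Gal(P_A(T))$-equivariant. Hence the splitting field of $Q_A(T)$ lies inside that of $P_A(T)$, and under this bijection the (surjective) restriction map becomes exactly $\phi\colon G\to\Sym_g$; therefore $\phi(G)$ equals the image of $\Gal(Q_A(T))$ in $\Sym_g$. As $Q_A(T)$ has degree $g$ and has cycle type $\sigma$ for every partition $\sigma$ of $g$, Lemma~\ref{L:full Galois} yields $\Gal(Q_A(T))\cong\Sym_g$, i.e., $\phi(G)=\Sym_g$.

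With both hypotheses in hand, Lemma~\ref{L:W group theory} forces $G=W_{2g}$, which is the claim $\Gal(P_A(T))\cong W_{2g}$. I expect the only real work to lie in the third paragraph — verifying that the restriction-to-the-splitting-field-of-$Q_A$ map, expressed on the roots of $Q_A(T)$, coincides with the permutation action $\phi$ on the $g$ pairs of roots of $P_A(T)$; everything else is a direct application of Lemmas~\ref{L:full Galois} and~\ref{L:W group theory} together with the elementary remark about transpositions in $W_{2g}$.
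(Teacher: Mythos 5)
Your argument is correct and follows essentially the same route as the paper's: irreducibility from the $2g$-cycle, a transposition from the $(2,1,\dots,1)$ cycle type, $\Gal(Q_A(T))\cong\Sym_g$ via Lemma~\ref{L:full Galois} and the pair-to-root correspondence, and then Lemma~\ref{L:W group theory} to finish. The one small point you pass over in your third paragraph is that the setup of \S\ref{SSS:Abelian varieties over finite fields} also allows a pair $\{\pi_{2i-1},\pi_{2i}\}$ to equal $\{\pm\sqrt{q}\}$ (with product $-q$, not $q$), in which case $T+q/T$ does \emph{not} take a common value on the two roots; the paper dispatches this by noting that $\pm\sqrt{q}$ can be roots of the irreducible degree-$2g$ polynomial $P_A(T)$ only when $g=1$, where the lemma is trivial, so all pairs really do have product $q$ once $g\geq 2$. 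Apart from that omission your bijection argument is the paper's, just spelled out in slightly more detail.
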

\begin{proof}
The polynomial $P_A(T)$ is irreducible since it has cycle type $(2g)$.  Let $\pi_1,\dots,\pi_{2g}$ be the roots of $P_A(T)$ in $\Qbar$, they are non-rational and distinct since $P_A(T)$ is irreducible.  We may assume that the $\pi_i$ are numbered such that the product of any of the pairs $\{\pi_{1},\pi_2\},\dots,$ $\{\pi_{2g-1},$ $\pi_{2g}\}$ is $q$ (since $P_A(T)$ is irreducible of degree $2g$, $\pm\sqrt{q}$ can be roots only when $g=1$, in which case the lemma is trivial).  The numbering of the $\pi_i$ induces an injective homomorphism $\Gal(P_A(T)) \hookrightarrow W_{2g}$; let $G$ be the image of this map.  Since $P_A(T)$ has cycle type $(2,1,1,\dots,1)$, the group $G\subseteq \Sym_{2g}$ contains a transposition.

The polynomial $Q_A(T)$ is monic of degree $g$.  Since the value of $T+q/T$ at any element of a pair $\{\pi_{2i-1},\pi_{2i}\}$ is the same, we find that roots of $Q_A(T)$ correspond with our $g$ pairs of roots of $P_A(T)$.  By Lemma~\ref{L:full Galois}, $\Gal(Q_A(T))\cong \Sym_g$.  Thus $\phi(G)=\Sym_g$, where $\phi$ is the map from (\ref{E:Weyl group sequence}).  By Lemma~\ref{L:W group theory}, $\Gal(P_A(T))\cong G=W_{2g}$ as desired.
\end{proof}

\begin{defn}  
Let $P(T)\in \QQ[T]$ be a monic polynomial of degree $n$, and let $\alpha_1,\dots,\alpha_n\in \Qbar$ be the roots of $P(T)$.  For each integer $m>0$, define 
$P^{(m)}(T):= \prod_i(T-\alpha_i^m)$ which is a well-defined element of $\QQ[T]$.
\end{defn}
Let $A$ be an abelian variety over the finite field $\FF_q$ with $q$ elements.  Then for all $m\geq 1$,
$P_A^{(m)}(T) = P_{A \times \FF_{q^m}}(T).$
The next lemma gives a useful criterion to test whether $\Gal(P_A^{(m)}(T)) \cong \Gal(P_A(T))$ for all $m\geq 1$. 
\begin{lemma} \label{L:geometric criterion}
Let $P(T)\in\QQ[T]$ be an irreducible polynomial of degree $n$ with Galois group $G$.  There is an integer $s=s(n)>0$, depending only on the degree of $P(T)$, such that if the polynomial $P^{(s)}(T)$ is separable, then the polynomial $P^{(m)}(T)\in\QQ[T]$ is irreducible and has Galois group $G$ for all integers $m\geq 1$.
\end{lemma}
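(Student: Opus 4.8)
The plan is to reduce the statement to a question purely about separability: once we know that $P^{(m)}(T)$ is separable, the assertions that it is irreducible and that $\Gal(P^{(m)}(T))\cong G$ follow almost formally, so the only real task is to exhibit an $s=s(n)$, depending only on $n$, such that separability of $P^{(s)}(T)$ forces separability of every $P^{(m)}(T)$.

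First I would set up notation. Let $L$ be the splitting field of $P(T)$ over $\QQ$, put $G=\Gal(L/\QQ)$, and let $\alpha_1,\dots,\alpha_n\in\Qbar$ be the (necessarily distinct, since we are in characteristic zero) roots of $P(T)$. Then $|G|=[L:\QQ]\leq n!$, and $G$ acts faithfully and transitively on $\{\alpha_1,\dots,\alpha_n\}$---faithfully because $L=\QQ(\alpha_1,\dots,\alpha_n)$, transitively because $P(T)$ is irreducible. For $m\geq 1$ the roots of $P^{(m)}(T)$ are the $\alpha_i^m\in L$; let $L_m=\QQ(\alpha_1^m,\dots,\alpha_n^m)\subseteq L$ be its splitting field. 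The $m$-th power map $\alpha_i\mapsto\alpha_i^m$ is $G$-equivariant because $g(\alpha_i^m)=(g\alpha_i)^m$. Now suppose $P^{(m)}(T)$ is separable; then this map is a bijection between two $n$-element sets, hence a $G$-set isomorphism $\{\alpha_i\}\overset{\sim}{\to}\{\alpha_i^m\}$. Consequently $G$ acts faithfully and transitively on $\{\alpha_i^m\}$ as well: transitivity forces $P^{(m)}(T)$ to be irreducible over $\QQ$, while faithfulness means that $\Gal(L/L_m)$---the kernel of the $G$-action on the roots of $P^{(m)}(T)$, which generate $L_m$---is trivial, so $L_m=L$ and $\Gal(P^{(m)}(T))=\Gal(L/\QQ)=G$.

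It remains to produce $s=s(n)$. The case $n=1$ is trivial (take $s(1)=1$), so assume $n\geq 2$; then no $\alpha_i$ vanishes (otherwise $T\mid P(T)$), and $P^{(m)}(T)$ fails to be separable precisely when $\alpha_i^m=\alpha_j^m$ for some $i\neq j$, i.e. precisely when the ratio $\alpha_i/\alpha_j\in L^\times$ is an $m$-th root of unity. The group of roots of unity in $L$ is finite and cyclic of some order $N(L)$, and $\QQ(\zeta_{N(L)})\subseteq L$ gives $\varphi(N(L))\leq[L:\QQ]\leq n!$. Since $\{N'\in\ZZ_{>0}:\varphi(N')\leq n!\}$ is finite, the integer
\[
s(n):=\lcm\{\,N'\in\ZZ_{>0}:\varphi(N')\leq n!\,\}
\]
is well defined, depends only on $n$, and is divisible by $N(L)$. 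Assuming $P^{(s(n))}(T)$ is separable, suppose some $P^{(m)}(T)$ were not; then $\alpha_i/\alpha_j$ would be a root of unity in $L$ for some $i\neq j$, its order would divide $N(L)$ and hence $s(n)$, so $\alpha_i^{s(n)}=\alpha_j^{s(n)}$---contradicting separability of $P^{(s(n))}(T)$. Thus every $P^{(m)}(T)$ is separable, and the previous paragraph finishes the proof.

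I do not expect a genuine obstacle here. The whole argument rests on the elementary inequality $\varphi(N(L))\leq[L:\QQ]\leq n!$, which bounds the possible orders of roots of unity in $L$ in terms of $n$ alone, together with the bookkeeping that the $m$-th power map on roots is $G$-equivariant and---under the separability hypothesis---bijective, so that it transports the faithful transitive $G$-action from the roots of $P(T)$ to those of $P^{(m)}(T)$. The only point to watch is the trivial case and the harmless fact that an irreducible $P(T)$ of degree $\geq 2$ has nonzero roots, so that the ratios $\alpha_i/\alpha_j$ make sense.
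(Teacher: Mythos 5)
Your proof is correct. Since the paper gives no argument here but simply cites Chavdarov's Lemma 5.3, you have supplied a self-contained verification of a fact the paper invokes as a black box. The two halves of your argument are both sound: (1) under the assumption that $P^{(m)}(T)$ is separable, the $m$-th power map $\alpha_i\mapsto\alpha_i^m$ on roots is a $G$-equivariant bijection, which transports the faithful transitive action of $G=\Gal(L/\QQ)$ on the roots of $P$ to an isomorphic action on the roots of $P^{(m)}$, giving irreducibility and $\Gal(P^{(m)})=G$ at once; (2) the reduction of ``separable for all $m$'' to ``separable for one $s(n)$'' correctly identifies inseparability of $P^{(m)}$ with the presence of a nontrivial $m$-th root of unity $\alpha_i/\alpha_j\in L^\times$, and since the roots of unity in $L$ form a cyclic group of order $N(L)$ with $\varphi(N(L))\leq[L:\QQ]\leq n!$, taking $s(n)=\lcm\{N':\varphi(N')\leq n!\}$ ensures $N(L)\mid s(n)$, so that any such $\alpha_i/\alpha_j$ already annihilates itself at exponent $s(n)$. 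The finiteness of $\{N':\varphi(N')\leq n!\}$ and the nonvanishing of the roots for $n\geq 2$ are the only small points to check, and you check them. This is in substance the same argument as Chavdarov's; the one thing your write-up adds over the paper is making the explicit $s(n)$ visible and showing the argument uses nothing beyond the bound $\varphi(N(L))\leq n!$.
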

\begin{proof}
This follows from \cite{Chavdarov}*{Lemma 5.3}.
\end{proof}

\subsection{Proof of Theorem \ref{T:Effective Chavdarov}}
Fix a positive function $Q=Q(x)$ with $Q\ll \sqrt{x}$ which will be specifically chosen later. Define the set $\Lambda=\{ \ell : \ell\nmid B \}$, where $B$ is the constant from Theorem~\ref{T:Abelian variety with full GSp}.  Thus the representations $\{\rho_{A,\ell}\}_{\ell\in\Lambda}$ are independent.  Let $\Lambda(Q)$ be the set of elements in $\Lambda$ which are at most $Q$.  
\begin{itemize}
\item 
Let $\calS_1(x)$ be the set of $\p\in \Sigma_k(x)-S_A$ such that $P_{A_\p}(T)$ does not have cycle type $(2g)$.
\item
Let $\calS_2(x)$ be the set of $\p\in \Sigma_k(x)-S_A$ such that $P_{A_\p}(T)$ does not have cycle type $(2,1,1\dots,1)$.
\item
Let $\calS_3(x)$ be the set of $\p\in \Sigma_k(x)-S_A$ such that $P_{A_\p}^{(s)}(T)$ is not separable, where $s=s(2g)$ is the integer from Lemma \ref{L:geometric criterion}.
\item 
For each partition $\sigma$ of $g$, let $\calS_\sigma(x)$ be the set of $\p\in \Sigma_k(x)-S_A$ such that $Q_{A_\p}(T)$ does not have cycle type $\sigma$.
\end{itemize}

\begin{lemma} \label{L:broken up bounds}
There is an inclusion $\Pi_A(x) \subseteq \calS_1(x) \cup \calS_2(x) \cup \calS_3(x) \cup \bigcup_{\sigma} \calS_\sigma(x)$, and hence
\[ 
|\Pi_A(x)| \leq |\calS_1(x)| + |\calS_2(x)| + |\calS_3(x)| + \sum_{\sigma} |\calS_\sigma(x)|.
\]
\end{lemma}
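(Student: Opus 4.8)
The plan is to establish the displayed set-theoretic inclusion; the stated inequality on cardinalities is then immediate from subadditivity over the finite union. I would argue by contrapositive: fix a prime $\p \in \Sigma_k(x) - S_A$ that belongs to \emph{none} of $\calS_1(x)$, $\calS_2(x)$, $\calS_3(x)$, or any $\calS_\sigma(x)$ (with $\sigma$ ranging over partitions of $g$), and show $\p \notin \Pi_A(x)$; since $\Pi_A(x) = \Pi_A \cap \Sigma_k(x)$, it suffices to show $\Gal\big(P_{A_\p \times \FF_\p^{(n)}}(T)\big) \cong W_{2g}$ for \emph{every} $n \geq 1$.

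First I would pin down the Galois group of $P_{A_\p}(T)$ itself. Because $\p \notin \calS_1(x) \cup \calS_2(x)$, the polynomial $P_{A_\p}(T)$ has cycle types $(2g)$ and $(2,1,1,\dots,1)$; because $\p \notin \calS_\sigma(x)$ for every partition $\sigma$ of $g$, the polynomial $Q_{A_\p}(T)$ has cycle type $\sigma$ for every such $\sigma$. These are exactly the hypotheses of Lemma~\ref{L:W criterion}, which therefore gives $\Gal(P_{A_\p}(T)) \cong W_{2g}$. In particular $P_{A_\p}(T)$ is irreducible of degree $2g$ (this is recorded in the proof of Lemma~\ref{L:W criterion}, and also follows from the cycle type $(2g)$).

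Next I would promote this from $A_\p$ to all its twists $A_\p \times \FF_\p^{(m)}$. Since $\p \notin \calS_3(x)$, the polynomial $P_{A_\p}^{(s)}(T)$ is separable, where $s = s(2g)$ is the integer supplied by Lemma~\ref{L:geometric criterion}. Applying that lemma to the irreducible polynomial $P(T) = P_{A_\p}(T)$ of degree $2g$, with $G = W_{2g}$, I conclude that $P_{A_\p}^{(m)}(T)$ is irreducible with Galois group $W_{2g}$ for every integer $m \geq 1$. Invoking the identity $P_{A_\p}^{(m)}(T) = P_{A_\p \times \FF_\p^{(m)}}(T)$ noted just before Lemma~\ref{L:geometric criterion}, this says precisely $\Gal\big(P_{A_\p \times \FF_\p^{(m)}}(T)\big) \cong W_{2g}$ for all $m \geq 1$, i.e.\ $\p \notin \Pi_A$. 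Taking the contrapositive yields the desired inclusion.

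As for the main obstacle: there is essentially none of substance, since the lemma is a packaging of Lemmas~\ref{L:W criterion} and~\ref{L:geometric criterion}. The single point that needs care is the universal quantifier ``for all $n \geq 1$'' built into the definition of $\Pi_A$: a direct Chebotarev/cycle-type analysis only controls the \emph{one} polynomial $P_{A_\p}(T)$, and it is exactly the role of the set $\calS_3(x)$ together with Lemma~\ref{L:geometric criterion} to upgrade the single-twist statement $\Gal(P_{A_\p}(T)) \cong W_{2g}$ to the all-twists statement. The rest is routine bookkeeping.
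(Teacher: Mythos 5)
Your proof is correct and takes essentially the same route as the paper: argue by contrapositive, invoke Lemma~\ref{L:W criterion} from the non-membership in $\calS_1$, $\calS_2$, and the $\calS_\sigma$ to get $\Gal(P_{A_\p}(T)) \cong W_{2g}$, then use $\p \notin \calS_3(x)$ with Lemma~\ref{L:geometric criterion} and the identity $P_{A_\p}^{(m)}(T) = P_{A_\p \times \FF_\p^{(m)}}(T)$ to upgrade to all $m \geq 1$.
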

\begin{proof}
Take any $\p\in \Sigma_k(x)-S_A$ with $\p\not \in \calS_1(x) \cup \calS_2(x) \cup \calS_3(x) \cup \bigcup_{\sigma} \calS_\sigma(x)$.  
Since $\p\not\in \calS_1(x) \cup \calS_2(x)$, $P_{A_\p}(T)$ has cycle types $(2g)$ and $(2,1,1,\dots,1)$.  For each partition $\sigma$ of $g$, $\p\not\in\calS_\sigma(x)$ imples that $Q_{A_\p}(T)$ has cycle type $\sigma$.  By Lemma~\ref{L:W criterion}, $\Gal(P_{A_\p}(T))\cong W_{2g}$.
Since $\p\not\in \calS_3(x)$, we find by Lemma~\ref{L:geometric criterion} that 
\[ 
\Gal(P_{A_\p}^{(m)}(T))\cong \Gal(P_{A_\p}(T))\cong W_{2g}
\]
for all $m\geq 1$.  Therefore, $\p\notin \Pi_A(x)$.
\end{proof}
For each prime $\ell$, define the following sets:
\begin{itemize}
\item
Let $C_\ell^1$ be the set of $B \in \GSp_{2g}(\FF_\ell)$ such that $\det(TI-B)\in \FF_\ell[T]$ is reducible.
\item
Let $C_\ell^2$ be the set $B \in \GSp_{2g}(\FF_\ell)$ such that $\det(TI-B)\in \FF_\ell[T]$ is not the product of an irreducible quadratic and $2g-2$ distinct linear terms.
\item
Let $s=s(2g)$ be the integer of Lemma \ref{L:geometric criterion}.  Let $C_\ell^3$ be the set of $B \in \GSp_{2g}(\FF_\ell)$ such that $P^{(s)}(T) \in \FF_\ell[T]$ is not separable, where $P(T)=\det(TI-B)$.
\item
For each partition $\sigma=(\sigma_1,\dots,\sigma_k)$ of $g$, let
$C_\ell^\sigma$ be the set of $B \in \GSp_{2g}(\FF_\ell)$ such that $Q(T)$ does not factor into distinct irreducible polynomials of degree $\sigma_1,\dots,\sigma_k$, where $Q(T)\in \FF_\ell[T]$ is the unique polynomial such that $\det(TI-B)=T^g Q(T+\mult(B)/T)$.
\end{itemize}

\begin{lemma} \label{L:Chavdarov bounds}
There are constants $B'>0$ and $0<\delta<1$, depending only on $g$,
such that for all primes $\ell\geq B'$,
\begin{equation} \label{E:max over C's}
\max_{i=1,2,3}\frac{|C_\ell^i|}{|\GSp_{2g}(\FF_\ell)|} \leq \delta \quad \text{ and }\quad
\max_{\sigma \text{ partition of }g} \frac{|C_\ell^\sigma|}{|\GSp_{2g}(\FF_\ell)| }\leq \delta.
\end{equation}
\end{lemma}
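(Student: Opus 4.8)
The plan is to reduce every assertion in (\ref{E:max over C's}) to a single equidistribution fact: for a uniformly random $B\in\GSp_{2g}(\FF_\ell)$, the factorization type of $\det(TI-B)$ is, up to an error $O_g(\ell^{-1})$, distributed like the cycle type of a uniformly random element of the Weyl group $W_{2g}$ of $\Sp(2g)$. Concretely, $B$ is regular semisimple iff $\det(TI-B)$ is separable; a regular semisimple $B$ lies in a unique maximal torus whose $\FF_\ell$-structure is recorded by a conjugacy class $[w]$ of $W_{2g}$, and the way $w$ acts on the $2g$ weights of the standard representation determines the factorization of $\det(TI-B)$ into distinct irreducibles (compatibly with the pairing $\lambda\leftrightarrow\mult(B)/\lambda$ of roots), while the induced action on the $g$ pairs of weights is the element $\phi(w)\in\Sym_g$ of (\ref{E:Weyl group sequence}) and its cycle type governs the factorization of $Q(T)$. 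The quantitative input is the classical count of regular semisimple elements in a connected reductive group over a finite field: the set of non-regular-semisimple $B$ has size $O_g(\ell^{-1})\,|\GSp_{2g}(\FF_\ell)|$, and for each conjugacy class $[w]\subseteq W_{2g}$ the regular semisimple $B$ of type $[w]$ number $\bigl(|[w]|/|W_{2g}|+O_g(\ell^{-1})\bigr)|\GSp_{2g}(\FF_\ell)|$ (this underlies the computations of \cite{Chavdarov}, and may also be assembled from the torus counts in Lemmas~\ref{L:misc bounds for GL} and~\ref{L:misc bounds for GSp}). Thus for any conjugation-stable $\mathcal C\subseteq W_{2g}$ the proportion of $B$ that are regular semisimple with type in $\mathcal C$ is $|\mathcal C|/|W_{2g}|+O_g(\ell^{-1})$.

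Using this, I would dispatch $C_\ell^1$, $C_\ell^2$ and the $C_\ell^\sigma$ by exhibiting for each a positive-density ``good'' class $\mathcal C$. Writing $W_{2g}=(\ZZ/2\ZZ)^g\rtimes\Sym_g$ via (\ref{E:Weyl group sequence}): the complement of $C_\ell^1$ is the event that $B$ is regular semisimple acting as a single $2g$-cycle on the weights, i.e. $w=(\varepsilon,\pi)$ with $\pi$ a $g$-cycle and $\prod_i\varepsilon_i=-1$, of density $1/(2g)$; the complement of $C_\ell^2$ is the event that $B$ is regular semisimple acting as a transposition on the weights, and the only transpositions in $W_{2g}$ are the $g$ elements $(2i-1,2i)$, of density $g/(2^g g!)$; the complement of $C_\ell^\sigma$ is the event that $B$ is regular semisimple with $\phi(w)$ of cycle type $\sigma$ (one checks that regular semisimplicity already forces $Q(T)$ separable, since $\pi_i+\mult(B)/\pi_i=\pi_j+\mult(B)/\pi_j$ would force $\pi_i=\pi_j$ or $\pi_i\pi_j=\mult(B)$, i.e. $i,j$ in a common pair), and since $\phi$ is onto this is a nonempty conjugation-stable subset of $W_{2g}$. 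Hence each of these three sets has proportion at most $1-c+O_g(\ell^{-1})$ for an explicit $c=c(g)>0$.

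The set $C_\ell^3$ is the one that does not reduce to a torus-type condition, because $P^{(s)}(T)$ (with $P=\det(TI-B)$) can be inseparable even for $B$ of irreducible characteristic polynomial; I expect this to be the main obstacle. Here I would restrict to the $B$ with $\det(TI-B)$ irreducible — of which there are $\bigl(1/(2g)+O_g(\ell^{-1})\bigr)|\GSp_{2g}(\FF_\ell)|$ — and show the sub-collection with $P^{(s)}$ inseparable is an $O_{g,s}(\ell^{-1})$ fraction of $\GSp_{2g}(\FF_\ell)$. For a fixed multiplier $c\in\FF_\ell^\times$, the $c$-symplectic monic polynomials of degree $2g$ form an affine space $\AA^g$ in their first $g$ coefficients, the rest being forced by the functional equation, and ``$P^{(s)}$ inseparable'' is the vanishing of a fixed nonzero polynomial in those coefficients (a generic such $P$ has all $s$-th powers of its roots distinct); so it holds for $O_{g,s}(\ell^{g-1})$ polynomials for each $c$, hence $O_{g,s}(\ell^{g})$ in all. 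Each such polynomial is the characteristic polynomial of a single conjugacy class, of size $|\GSp_{2g}(\FF_\ell)|/|Z|$ with $Z$ the centralizer of $B$ — a maximal torus, of order $\gg_g\ell^{g+1}$ — so the exceptional $B$ number $O_{g,s}(\ell^{g}\cdot\ell^{-(g+1)})|\GSp_{2g}(\FF_\ell)|=O_{g,s}(\ell^{-1})|\GSp_{2g}(\FF_\ell)|$, giving $|C_\ell^3|/|\GSp_{2g}(\FF_\ell)|\le 1-1/(2g)+O_{g,s}(\ell^{-1})$.

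Finally, since only finitely many conditions occur — $C_\ell^1,C_\ell^2,C_\ell^3$ and one $C_\ell^\sigma$ per partition $\sigma$ of $g$ — one fixes $\delta<1$ larger than the maximum of the finitely many constants $1-c(g)$ and then takes $B'$ large enough that all the $O_g(\ell^{-1})$ and $O_{g,s}(\ell^{-1})$ errors are absorbed for $\ell\ge B'$; this is (\ref{E:max over C's}). Apart from bookkeeping the equidistribution input with constants uniform in $\ell$ (depending only on $g$), the delicate points are the handling of $C_\ell^3$ and, throughout, checking that the various ``generic'' conditions on symplectic polynomials and on group elements remain satisfiable for all large $\ell$, not merely in characteristic zero.
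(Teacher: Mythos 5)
The paper does not supply a proof of this lemma beyond a pointer to Chavdarov's Corollary~3.6 and Lemmas~5.4, 5.7, 5.9, with the remark that those are stated for $\Sp_{2g}(\FF_\ell)$-cosets and must be combined.  Your proposal essentially reconstructs the content of those references rather than merely invoking them, and the reconstruction is sound.  You correctly identify that the engine behind four of the five estimates is equidistribution of Frobenius torus types in the Weyl group: the density of the $w\in W_{2g}$ acting as a single $2g$-cycle on the weights (giving the complement of $C_\ell^1$) is $1/(2g)$, the density of the $g$ transpositions $(2i-1,2i)$ (complement of $C_\ell^2$) is $g/(2^g g!)$, and each $C_\ell^\sigma$ has complement of density $\geq |\phi^{-1}(\sigma)|/|W_{2g}|>0$; your check that regular semisimplicity of $B$ forces $Q(T)$ separable is right, including, in odd characteristic, the degenerate case $\pi_i=\pm\sqrt{\mult(B)}$.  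You also rightly single out $C_\ell^3$ as the one condition not governed by torus type, and the dimension count — $c$-symplectic polynomials form $\AA^g$ for each $c$, inseparability of $P^{(s)}$ is a codimension-$\geq 1$ condition, and each irreducible such $P$ accounts for one conjugacy class of size $\ll |\GSp_{2g}(\FF_\ell)|/\ell^{g+1}$ by Lang's theorem applied to the centralizer torus — is the right shape of argument and does give $O_{g,s}(\ell^{-1})$.

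Two small points to tighten, both of which you flag: (a) the discriminant of $P^{(s)}$ should be viewed as a fixed polynomial over $\ZZ$ in the $g$ free coefficients \emph{and} in $c$, nonzero because the generic symplectic polynomial over $\Qbar$ has distinct $s$-th powers of roots; its zero locus in $\FF_\ell^{g+1}$ then has size $O_{g,s}(\ell^g)$ for all large $\ell$, without needing the bound to hold separately for every individual $c\in\FF_\ell^\times$; (b) the quantitative equidistribution input (density of regular semisimple elements of a given torus type is $|[w]|/|W_{2g}|+O_g(\ell^{-1})$, uniformly in $\ell$) must be cited or proved with constants depending only on $g$ — this is exactly Chavdarov's Lemma~3.4 and is where the paper's delegation to \cite{Chavdarov} ultimately lives.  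Neither is a gap.  The only substantive presentational difference from the source the paper cites is that you work directly in $\GSp_{2g}(\FF_\ell)$ with the full Weyl group $W_{2g}$, whereas Chavdarov fixes the multiplier and works coset-by-coset in $\Sp_{2g}(\FF_\ell)$; your version is cleaner for extracting the statement as given, while the coset version is what the paper means by ``combining these bounds.''
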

\begin{proof}
These bounds follow from the computations done in \cite{Chavdarov} (in particular, see Corollary 3.6, Lemma 5.7, Lemma 5.4, and Lemma 5.9).  Chavdarov's bounds are done for the $\Sp_{2g}(\FF_\ell)$ cosets of $\GSp_{2g}(\FF_\ell)$, our lemma follows by combining these bounds.  Also note that the formulation of some of these results looks slightly different in \cite{Chavdarov} because the characteristic polynomials there are the reverse of ours.
\end{proof}

We have reduced to the case of bounding the following cardinalities separately: $|\calS_1(x)|,|\calS_2(x)|,|\calS_3(x)|$, and $|\calS_\sigma(x)|$ for each partition $\sigma$ of $g$.  For purely notational reasons we only bound $|\calS_1(x)|$; the arguments in the other cases are identical.  For any $\ell\in\Lambda(Q)$ and $\p\in \calS_1(x)$, either $\p|\ell$ or $\rho_{A,\ell}(\Frob_\p) \subseteq C_\ell^1$.  So $\calS_1(x) \subseteq \Ss_1(x)$, where
\[
\Ss_1(x) := \big\{ \p \in \Sigma_k(x)-S_A :  \p |\ell \; \text{ or } \; \rho_{A,\ell}(\Frob_\p) \subseteq C_\ell \text{ for all $\ell\in \Lambda(Q)$}\big\}.
\]
We will now use the large sieve as in Theorem~\ref{T:LS for Frobenius} to bound $|\Ss_1(x)|$.

By possibly increasing $B$, we may assume that (\ref{E:max over C's}) holds for all $\ell\nmid B$.  By Lemma~\ref{L:misc bounds for GSp}, there is a constant $\kappa\geq 1$ such that $|\GSp_{2g}(\FF_\ell)^\sharp| \leq (\kappa\ell)^{g+1}$.
Define 
\[
\Zz(Q)=\{D: D\subseteq \Lambda(Q),\, {\prod}_{\ell\in D}\kappa\ell \leq Q\} \text{\quad and\quad}
L(Q)=\sum_{D \in \Zz(Q)} \prod_{\ell\in D} \frac{1-\delta}{\delta},
\]
where $\delta$ is the constant from Lemma~\ref{L:Chavdarov bounds}.  Note that
\[
L(Q) \geq  \sum_{\ell \in \Lambda, \; \ell \leq Q/\kappa}  \frac{1-\delta}{\delta} \gg \frac{Q}{\log Q}.
\]
For $D\in \Zz(Q)$, define $G_D=\prod_{\ell\in D} \GSp_{2g}(\FF_\ell)$.  We shall use Lemma~\ref{L:misc bounds for GSp} in the following bounds.
\[
|G_D|=\prod_{\ell\in D}|\GSp_{2g}(\FF_\ell)| \leq \Bigl(\prod_{\ell\in D} \ell\Bigr)^{2g^2+g+1} \leq (Q/\kappa^{|D|})^{2g^2+g+1} \leq Q^{2g^2+g+1}
\]
\[
|G_D^\sharp|=\prod_{\ell\in D}|\GSp_{2g}(\FF_\ell)^\sharp| \leq \Bigl(\prod_{\ell\in D} \kappa\ell \Bigr)^{g+1}  \leq Q^{g+1} 
\]
\[
\sum_{D\in\Zz(Q)} |G_D^\sharp| |G_D| \ll |\Zz(Q)| Q^{2g^2+2g+2} \ll Q^{2g^2+2g+3}
\]
Applying Theorem~\ref{T:LS for Frobenius}(i) with $r=2g^2+g+1$ gives 
\[ L(Q) \gg Q/\log Q \gg \frac{(\log x)^{1/(6r)}}{(\log\log x)^{1+1/(3r)}} \]
and hence
\[ 
|\calS_1(x)|\leq |\Ss_1(x)| \ll (x/\log x)/L(Q)\ll \frac{x(\log\log x)^{1+1/(6g^2+3g+3)}}{(\log x)^{1+1/(12g^2+6g+6)} }.
\]
Assuming GRH, by Theorem~\ref{T:LS for Frobenius}(ii)
\[ 
|\Ss_1(x)| \ll (x/\log x +  Q^{4g^2+3g+4} x^{1/2}\log x )/(Q/\log Q); \]
choosing $Q(x)= (x^{1/2}/(\log x)^2)^{1/(4g^2+3g+4)}$ gives
\[ 
|\calS_1(x)|\leq |\Ss_1(x)| \ll x^{1-1/(8g^2+6g+8)} (\log x)^{2/(4g^2+3g+4)}. 
\]

Identical bounds will hold for all the $|\calS_2(x)|,$ $|\calS_3(x)|$ and $|\calS_\sigma(x)|$.  So by Lemma~\ref{L:broken up bounds}, we have
\begin{align*}
|\Pi_A(x)| &  \ll
\begin{cases}
\displaystyle {x(\log\log x)^{1+1/(6g^2+3g+3)}}/{(\log x)^{1+1/(12g^2+6g+6)} } & \\[1.0em]
\displaystyle  x^{1-1/(8g^2+6g+8)} (\log x)^{2/(4g^2+3g+4)} &  \text{assuming GRH.} 
\end{cases}
\end{align*}

\section{The Lang-Trotter Conjecture} \label{S:Lang-Trotter}

The purpose of this section is to give an application of our large sieve to a problem for which there is a priori results that can be used as a benchmark to measure how effective our sieve is.

Fix an $E$ be an elliptic curve without complex multiplication that is defined over $\QQ$.  For an integer $t$, define
\[
\Pi_{E,t}(x):= |\{p\leq x:  a_p(E)=t \}|.
\]
With notation as above, we have the following well-known conjecture of Lang and Trotter \cite{Lang-Trotter}.
\begin{conj}[Lang-Trotter]
There is an explicit constant $C_{E,t}\geq 0$ such that as $x\to\infty$,
\[
\Pi_{E,t}(x) \sim C_{E,t} \frac{x^{1/2}}{\log x}.
\]
If $C_{E,t}=0$, then this defined to mean that there are only finitely many primes $p$ with $a_p(E)=t$.
\end{conj}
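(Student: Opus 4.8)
The Lang--Trotter conjecture is, as its label indicates, a conjecture and not a theorem: neither the large sieve nor any presently available method yields the asymptotic $\Pi_{E,t}(x)\sim C_{E,t}x^{1/2}/\log x$. The plan is therefore to motivate the predicted main term and the constant $C_{E,t}$ by the standard heuristic, and then to record the rigorous one-sided estimates that the sieve of \S\ref{S:Frobenius Sieve} does supply; these seem to be the strongest unconditional bounds one can attach to $\Pi_{E,t}(x)$ by our methods.

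\emph{Heuristic for the main term.} Fix a prime $\ell\nmid B$, where $B$ is the integer of Theorem~\ref{T:Abelian variety with full GSp} with $g=1$, so that $\rho_{E,\ell}(\calG_\QQ)=\GL_2(\ZZ/\ell\ZZ)$. For a prime $p\nmid \ell B$ of good reduction one has $a_p(E)\equiv\tr\rho_{E,\ell}(\Frob_p)\bmod\ell$, so by the Chebotarev density theorem the set of $p$ with $a_p(E)\equiv t\bmod\ell$ has density $w_\ell(t):=|\{A\in\GL_2(\ZZ/\ell\ZZ):\tr A\equiv t\bmod \ell\}|/|\GL_2(\ZZ/\ell\ZZ)|$, and by \eqref{E:SerreInv intro} the conditions attached to distinct $\ell$ are independent. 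On the archimedean side, the Sato--Tate law makes $a_p(E)/(2\sqrt p)$ equidistributed for the measure $(2/\pi)\sqrt{1-u^2}\,du$, so the ``chance'' that the integer $a_p(E)$ equals the fixed value $t$ is $\asymp p^{-1/2}$. Treating these local contributions as independent and summing over $p\le x$, using $\sum_{p\le x}p^{-1/2}\sim 2x^{1/2}/\log x$, one is led to $\Pi_{E,t}(x)\sim C_{E,t}\,x^{1/2}/\log x$, with $C_{E,t}$ an explicit product of the densities $w_\ell(t)$ normalised against $1/\ell$ together with an archimedean factor, i.e.\ a $\GL_2$-analogue of a singular series. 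When $w_\ell(t)=0$ for some $\ell$ one reads off $C_{E,t}=0$, and then expects $a_p(E)=t$ to have only finitely many solutions.

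\emph{Rigorous upper bounds.} Up to $O(1)$, the set $\{p\le x:a_p(E)=t\}$ equals $\{p\le x:(a_p(E),p)\in\Omega\}$ for $\Omega=\{(a,b)\in\ZZ^2:a=t\}$, a line in $\AA^2_\QQ$ and hence a thin set of Type~$1$ with $n=1$. We apply the Type~$1$ case of Theorem~\ref{T:LS for elliptic curves}: rather than sieving by many primes one sieves by a single well-chosen prime $\ell\nmid B$ through the conjugation-stable class $C_\ell=\{A\in\GL_2(\ZZ/\ell\ZZ):\tr A=t\}$, for which $|C_\ell|/|\GL_2(\ZZ/\ell\ZZ)|\ll 1/\ell$ and the effective Chebotarev estimates of Appendix~\ref{A:Character sums} apply. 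This gives
\[
\Pi_{E,t}(x)\ll \frac{x(\log\log x)^{1/2}(\log\log\log x)^{1/4}}{(\log x)^{5/4}}
\]
unconditionally, and $\Pi_{E,t}(x)\ll x^{7/8}/(\log x)^{1/2}$ under GRH. Better bounds specific to Lang--Trotter-type problems are available; see \cite{CojocaruDavid-LT2} and \cite{Zywina-LT2}.

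\emph{Main obstacle.} All of this is one-sided: the large sieve bounds $\Pi_{E,t}(x)$ only from above, and Proposition~\ref{P:LSI} carries no information that could yield a matching lower bound. Indeed, for $t=0$ the fact that $\Pi_{E,0}(x)\to\infty$ is already a nontrivial theorem of Elkies \cite{Elkies87}, while for every nonzero $t$ it is unknown whether $a_p(E)=t$ has infinitely many solutions for even one non-CM curve $E/\QQ$. Proving the conjectured asymptotic would require extracting genuine cancellation ``on average'' of a kind invisible to the naive sieve inequality, and remains far beyond current reach.
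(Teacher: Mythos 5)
The statement you were asked about is a conjecture, and the paper offers no proof of it; you are right to say so, and your response is the appropriate one. Your heuristic for the main term is the standard Lang--Trotter reasoning (independence of the mod-$\ell$ conditions via Theorem~\ref{T:Abelian variety with full GSp}, plus a Sato--Tate archimedean factor of size $\asymp p^{-1/2}$ and $\sum_{p\le x}p^{-1/2}\sim 2x^{1/2}/\log x$), and the upper bounds you record are exactly the $n=1$, Type~$1$ instances of Theorem~\ref{T:LS for elliptic curves}: unconditionally $x(\log\log x)^{1/2}(\log\log\log x)^{1/4}/(\log x)^{5/4}$ and, under GRH, $x^{7/8}/(\log x)^{1/2}$. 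The one point worth adding is that the paper's own discussion surrounding this conjecture (\S\ref{S:Lang-Trotter}) goes further: for $t\neq 0$, assuming both GRH and AHC, it runs the full large sieve of Theorem~\ref{T:LS for Frobenius}(iii) with the classes $C_\ell=\{A:\tr A\equiv t\bmod\ell\}$ and the lower bound $L(Q)\gg Q^2$ to recover $\Pi_{E,t}(x)\ll x^{4/5}/(\log x)^{1/5}$, matching the Murty--Murty--Saradha bound (which they prove under GRH alone); this is sharper than the $x^{7/8}$ exponent you quote from the single-prime Type~$1$ argument. Also note that for $t=0$ the relevant Elkies reference for the unconditional bound $\Pi_{E,0}(x)\ll x^{3/4}$ is \cite{Elkies92}, while \cite{Elkies87} gives only the infinitude.
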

Theorem~\ref{T:LS for elliptic curves} (with $n=1$, $\Omega=\{t\}\times \ZZ$) gives immediate upper bounds on $\Pi_{E,t}(x)$.  If we assume both GRH and AHC, then Remark~\ref{R:sieving by one prime with AHC} (which does not use the large sieve) gives the bound
\begin{equation} \label{E:MMS bound}
\Pi_{E, t}(x) \ll x^{4/5} / (\log x)^{1/5}.
\end{equation}
Murty, Murty, and Saradha \cite{MMS} have proven (\ref{E:MMS bound}) assuming GRH (but not AHC!).  
\begin{thm} \cite{MMS}
Let $E$ be a non-CM elliptic curve over $\QQ$ and let $t$ be an integer.  Assuming GRH, we have $\Pi_{E, t}(x) \ll x^{4/5} / (\log x)^{1/5}$.
\end{thm}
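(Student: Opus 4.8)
\emph{Sketch of the argument.} The plan is to sieve by a single, well-chosen prime $\ell$---exactly as in the treatment of thin sets of Type~$1$ in the proof of Theorem~\ref{T:LS for elliptic curves} and in Remark~\ref{R:sieving by one prime with AHC}---and to replace the appeal to the Artin Holomorphy Conjecture made there by the GRH-conditional effective Chebotarev theorem of \cite{MMS}, which for the extensions $\QQ(E[\ell])/\QQ$ supplies an error term of the same quality as Proposition~\ref{P:conditional CDT}(ii) but without assuming AHC.

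First I would set up the reduction. Let $B$ be the positive integer furnished by Theorem~\ref{T:Abelian variety with full GSp} (with $g=1$), so that $\rho_{E,\ell}\colon \calG_\QQ \twoheadrightarrow \GL_2(\FF_\ell)$ for every $\ell\nmid B$; fix such an $\ell$ (its size to be chosen below), put $L_\ell:=\QQ(E[\ell])$, and let $C_\ell:=\{A\in\GL_2(\FF_\ell):\tr(A)=t\bmod\ell\}$, a subset stable under conjugation in $\GL_2(\FF_\ell)$. If $p\nmid\ell B$ is a prime of good reduction with $a_p(E)=t$, then $\tr\rho_{E,\ell}(\Frob_p)\equiv t$ and hence $\rho_{E,\ell}(\Frob_p)\subseteq C_\ell$; consequently
\[
\Pi_{E,t}(x)\le \pi_{C_\ell}(x,L_\ell/\QQ)+O(1).
\]
A direct matrix count (as in the proof of Theorem~\ref{T:LS for elliptic curves}) gives $|C_\ell|\ll\ell^{3}$ and $|C_\ell|/|\GL_2(\FF_\ell)|\ll 1/\ell$, while the only rational primes ramifying in $L_\ell$ are $\ell$ together with the finitely many primes of bad reduction of $E$, the ramification at $\ell$ being tame; so $M(L_\ell/\QQ)\ll\ell^{5}$ and, by Lemma~\ref{L:discriminant bound}, $\log d_{L_\ell}\ll\ell^{4}\log\ell$, in particular $\log M(L_\ell/\QQ)\ll\log\ell$.

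Next I would invoke the key input. The effective Chebotarev estimate of \cite{MMS}, applied to $L_\ell/\QQ$ under GRH, reads
\[
\pi_{C_\ell}(x,L_\ell/\QQ)=\frac{|C_\ell|}{|\GL_2(\FF_\ell)|}\,\Li x+O\!\left(|C_\ell|^{1/2}\,x^{1/2}\log\bigl(M(L_\ell/\QQ)\,x\bigr)\right)\ll \frac{1}{\ell}\,\frac{x}{\log x}+\ell^{3/2}x^{1/2}\log x,
\]
the essential point being the exponent $\tfrac12$ on $|C_\ell|$---precisely the strength of Proposition~\ref{P:conditional CDT}(ii), now obtained without AHC. (One proves this by writing the class function $1_{C_\ell}$ on $\GL_2(\FF_\ell)$ as a combination of Artin $L$-functions and, instead of using AHC to know these are entire, reducing via Brauer's induction theorem to Hecke $L$-functions over intermediate fields of $L_\ell$, for which GRH supplies the zero-free region; the arithmetic of $\GL_2(\FF_\ell)$-extensions is what keeps the accumulated conductors polynomial in $\ell$, so that the contribution of the ramification stays negligible against $\log x$.) Finally I would choose, by Bertrand's postulate, a prime $\ell\nmid B$ with $\ell\asymp\bigl(x^{1/2}/(\log x)^{2}\bigr)^{2/5}=x^{1/5}/(\log x)^{4/5}$, which balances the two terms above and yields $\Pi_{E,t}(x)\ll x^{4/5}/(\log x)^{1/5}$.

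The hard part will be the effective Chebotarev estimate itself: extracting, under GRH alone, the square-root-of-class-size error term $|C_\ell|^{1/2}x^{1/2}\log x$ for $\QQ(E[\ell])/\QQ$. The single-prime sieve reduction and the final optimization are entirely routine---the former is already carried out for thin sets of Type~$1$ in the proof of Theorem~\ref{T:LS for elliptic curves}---but controlling the analytic conductors and the bookkeeping of the auxiliary $L$-functions produced by the induction argument, so that every relevant quantity remains polynomial in $\ell$, is exactly the substantive work of \cite{MMS}, and is what allows one to dispense with AHC.
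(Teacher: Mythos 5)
Your overall plan is the right one and matches Remark~\ref{R:sieving by one prime with AHC} with $n=1$: sieve by a single prime $\ell\nmid B$, bound $\Pi_{E,t}(x)$ by $\pi_{C_\ell}(x,\QQ(E[\ell])/\QQ)$, and balance the main term $\ell^{-1}x/\log x$ against an error of shape $|C_\ell|^{1/2}x^{1/2}\log x\asymp\ell^{3/2}x^{1/2}\log x$ by taking $\ell\asymp x^{1/5}/(\log x)^{4/5}$. The conductor estimates and the final optimization are all correct.

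The gap is in the claim that \cite{MMS} supply, \emph{under GRH alone}, the estimate
\[
\pi_{C_\ell}(x,L_\ell/\QQ)=\frac{|C_\ell|}{|\GL_2(\FF_\ell)|}\Li x+O\!\left(|C_\ell|^{1/2}x^{1/2}\log\bigl(M(L_\ell/\QQ)x\bigr)\right).
\]
That estimate is exactly Proposition~\ref{P:conditional CDT}(ii), which the paper states under \emph{both} GRH and AHC, citing \cite{MMS}*{Proposition 3.12} --- and that proposition in \cite{MMS} does assume AHC. There is no general GRH-only effective Chebotarev with a $|C|^{1/2}$-error term; under GRH alone the available bound (Proposition~\ref{P:conditional CDT}(i)) has error $O(|C_\ell|\,x^{1/2}\log(M x))$, and balancing $\ell^{-1}x/\log x$ against $\ell^{3}x^{1/2}\log x$ yields only $\ell\asymp x^{1/8}/(\log x)^{1/2}$ and hence the weaker bound $\Pi_{E,t}(x)\ll x^{7/8}/(\log x)^{1/2}$.

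The actual mechanism in \cite{MMS}, as the surrounding text of the paper indicates, is not a general GRH-only Chebotarev theorem for $L_\ell/\QQ$ but a reduction of the specific trace-detection problem to Chebotarev in \emph{abelian} extensions (where AHC is automatic), exploiting the structure of $\GL_2(\FF_\ell)$: one replaces the nonabelian count over $\QQ$ by an abelian count over a suitable intermediate field, trading degree for the ability to use Hecke $L$-functions directly. Your parenthetical appeal to Brauer induction does not by itself fill this in: Brauer's theorem expresses $L(s,\chi)$ as a \emph{ratio} of Hecke $L$-functions with integer exponents of uncontrolled size and sign, so the zero-and-pole count of $L(s,\chi)$ in the critical strip is not bounded by $\chi(1)$ without substantial additional input. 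Controlling that --- equivalently, constructing an explicit decomposition of $1_{C_\ell}$ in terms of characters induced from abelian subgroups with good coefficients --- is the substantive content of \cite{MMS}, and should be named as such rather than attributed to a general GRH-only effective Chebotarev.
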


Their proof reduces the bound to an application of an effective version of the Chebotarev density theorem to abelian extensions (where AHC is known to hold!).  The result in \cite{MMS} is actually stated for modular forms but the elliptic curve proof is identical.\\

The goal of \S\ref{S:Lang-Trotter} is simply to prove, assuming GRH and AHC, the bound (\ref{E:MMS bound}) by using the large sieve of Theorem~\ref{T:LS for Frobenius}.  \\

Before continuing, it is necessary to explain what this is meant to demonstrate.  Recall that the large sieve inequality used in Theorem~\ref{T:LS for Frobenius} comes from the easy bound of Proposition~\ref{P:LSI} and many character sum estimates from Appendix~\ref{A:Character sums}.   That we can recover known bounds, shows that these estimates for the large sieve inequality are not so bad.

One would hope that ``on average'' the error terms in these character sum estimates are small, and thus a stronger large sieve inequality should be true\footnote{This leads to other natural questions; for example, what is the elliptic curve analogue of the Bombieri-Vinogradov theorem?}.   This example shows that any interesting improvement in the large sieve inequality, over the somewhat naive approach used in this paper, would have important arithmetic consequences.  
\\

For simplicity, we will assume that $t\neq 0$.  One can prove stronger bounds in the $t=0$ case by using the corresponding Galois representations $\calG_\QQ \to \PGL_2(\ZZ/\ell\ZZ)$.  In fact, Elkies \cite{Elkies92} has shown \emph{unconditionally} that $\Pi_{E,0}(x)\ll x^{3/4}$ .  For $t\neq 0$, it is still unknown (unconditionally) whether $\Pi_{E,t}(x) \ll x^{1-\delta}$ for some $\delta>0$.

\subsection{Sieve setup} \label{S: LT sieve setup}

By Theorem~\ref{T:Abelian variety with full GSp} (with $g=1$), there is a positive integer $B$ such that 
\[
\rho_{E,m}(\calG_\QQ) = \GL_2(\ZZ/m\ZZ)
\]
for all integers $m$ relatively prime to $B$.  We may assume that $B$ is divisible by the prime factors of $2t$.   Fix a positive function $Q=Q(x)$, to be chosen later, such that $Q(x) \ll \sqrt{x}$.  
Define the sets 
\[
\Lambda(Q)=\{\ell: \ell < Q, \, \ell \nmid B\} \text{\quad  and \quad } \calS(x) = \{ p\in \Sigma_\QQ(x)  :  a_p(E)=t \}.
\]
For each $\ell\in\Lambda(Q)$ and $p \in \calS(x)$, either $p \in S_E \cup \{\ell\}$ or
\[
\rho_{E,\ell}(\Frob_p)\subseteq C_\ell := \{A\in\GL_2(\ZZ/\ell\ZZ) :  \tr(A) \equiv t \bmod{\ell}\}.
\]
With our setup matching that of Theorem~\ref{T:LS for Frobenius}, we define 
\[
\Ss(x) := \big\{ p \in \Sigma_\QQ(x)  :  p \in S_E \cup\{\ell\}  \; \text{ or }\; \rho_{E,\ell}(\Frob_p) \subseteq C_\ell  \text{ for all $\ell\in \Lambda(Q)$}\big\}.
\]
Note that $\calS(x) \subseteq \Ss(x)$, so it suffices to find upper bounds for $|\Ss(x)|$.

Define the set $\Zz(Q)=\{D :D \subseteq \Lambda(Q),\, \prod_{\ell\in D}(\ell+1) \leq Q\}$, 
and
\[
L(Q) = \sum_{D\in\Zz(Q)} \prod_{\ell\in D} \frac{1-|C_\ell|/|\GL_2(\ZZ/\ell\ZZ)|}{|C_\ell|/|\GL_2(\ZZ/\ell\ZZ)|}.
\]
For $D\in \Zz(Q)$, we define $G_D=\prod_{\ell\in D}\GL_2(\ZZ/\ell\ZZ)$.  Fix an element $\ell\in\Lambda(Q)$.  Using the Cauchy-Schwarz inequality and Lemma~\ref{L:misc bounds for GL}, we have
\begin{align*} 
 \sum_{ \chi \in \Irr(\GL(\ZZ/\ell\ZZ))} \chi(1)
& \leq \Bigl( \sum_{ \chi \in \Irr(\GL(\ZZ/\ell\ZZ))} \chi(1)^2 \Bigr)^{1/2} |\GL(\ZZ/\ell\ZZ)^\#|^{1/2}\\ 
&= {|\GL(\ZZ/\ell\ZZ)|}^{1/2} {|\GL(\ZZ/\ell\ZZ)^\#|}^{1/2} \leq \sqrt{ \ell^4 } \sqrt{ \ell^2} = \ell^3.
\end{align*}
Therefore,
\[
\sum_{D\in\Zz(Q)} \sum_{\chi\in\Irr(G_D)} \chi(1) \leq \sum_{D\in\Zz(Q)} \prod_{\ell\in D} \Bigl( \sum_{\chi \in \Irr(\GL_2(\ZZ/\ell\ZZ))} \chi(1) \Bigr) \leq \sum_{D\in\Zz(Q)}\Bigl(\prod_{\ell\in D} \ell\Bigr)^3 \leq |\Zz(Q)|Q^3 \leq Q^4.
\]
For each $\ell \nmid B$, from the description of the characters of $\GL_2(\ZZ/\ell\ZZ)$ in \cite{LangAlgebra}*{XVIII, \S12},  
\[
\max_{\chi\in \Irr(\GL_2(\ZZ/\ell\ZZ))} \chi(1) = \ell+1,
\]
and thus
\[
\max_{D \in \Zz(Q), \, \chi\in \Irr(G_D)} \chi(1) = \max_{D\in\Zz(Q)} {\prod}_{\ell\in D} (\ell+1) \leq Q.
\]
By Theorem~\ref{T:LS for Frobenius}, assuming AHC and GRH, we have
\begin{equation} \label{E:LS bound for LT1}
\Pi_{E,t}(x) = |\calS(x)| \leq  |\Ss(x)| \leq (\Li x + O(Q^5x^{1/2} \log x)) L(Q)^{-1}.
\end{equation}

\subsection{Asymptotics of $L(Q)$}
In this section, we will prove an asymptotic lower bound for $L(Q)$.  By Lemma~\ref{L:misc bounds for GL},  for any $\ell\in\Lambda(Q)$
\[
|C_\ell| = \ell(\ell^2-\ell-1) \text{\quad and \quad} \frac{|C_\ell|}{|\GL_2(\ZZ/\ell\ZZ)|} = \frac{\ell^2-\ell-1}{(\ell-1)^2(\ell+1)},
\]
so
\[
L(Q) = \sum_{D\in\Zz(Q)} \prod_{\ell\in D} \Bigl(\ell-1 + \frac{1}{\ell^2 - \ell - 1}\Bigr) \geq \sum_{D\in\Zz(Q)} \prod_{\ell\in D} (\ell-1).
\]
Let $\mu$ be the M\"obius function, and define the arithmetic functions 
\[
\varphi(n)=n\prod_{\ell|n}(1-1/\ell) \text{ \quad and \quad } \psi(n)=n\prod_{\ell|n}(1+1/\ell).
\]
Thus
\begin{equation}\label{E:LT lower bound on L}
L(Q) = \sum_{D\in \Zz(Q)}  \prod_{\ell \in D} (\ell-1)\geq \sum_{\substack{ \psi(d)\leq Q\\ \gcd(d,\prod_{\ell \leq B} \ell)=1}} \mu^2(d) \varphi(d) \gg \sum_{ \psi(d)\leq Q} \mu^2(d) \varphi(d).
\end{equation}
To find a lower bound for this expression, we will apply the following result on the distribution of the values $\varphi(n)/n$.
\begin{lemma}[\cite{Kac}*{Chapter~4, \S2}] \label{L:cdf}
Define a function $F:\RR \to [0,1]$ by  
\[
F(z):= \lim_{N\to\infty}  \frac{|\{ n \leq N: \varphi(n)/n < z \}|}{N}.
\]
The map $F$ is well-defined (i.e.,~the limits exist) and is continuous.
\end{lemma}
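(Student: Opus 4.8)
The statement is the classical theorem of Schoenberg on the distribution of the values of $\varphi(n)/n$; I will sketch the probabilistic proof, following Kac. It is convenient to pass to the nonnegative strongly additive function $h(n):=\log\bigl(n/\varphi(n)\bigr)=\sum_{p\mid n}\log\tfrac{p}{p-1}$, because for $0<z\le 1$ one has $\varphi(n)/n<z\iff h(n)>\log(1/z)$, whereas $F(z)=0$ for $z\le 0$ and $F(z)=1$ for $z\ge 1$. Thus the lemma reduces to showing that $\tfrac1N\bigl|\{n\le N:h(n)\le t\}\bigr|$ converges, as $N\to\infty$, to a continuous function $G(t)$; one then puts $F(z):=1-G(\log(1/z))$ on $(0,1)$ and checks the two endpoints separately.

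First I would build the probabilistic model. Let $\{X_p\}_p$ be independent random variables indexed by the primes, with $X_p=\log\tfrac{p}{p-1}$ with probability $1/p$ and $X_p=0$ otherwise, so that $X_p$ mimics the contribution of $p$ to $h$. Since $X_p\le\log 2<1$, $\EE[X_p]\ll p^{-2}$ and $\mathrm{Var}(X_p)\ll p^{-3}$, Kolmogorov's three-series theorem shows that $Y:=\sum_p X_p$ converges almost surely to a nonnegative random variable; set $G(t):=\Pr(Y\le t)$. Because $\sum_p\Pr(X_p\ne 0)=\sum_p 1/p$ diverges, the classical theory of almost surely convergent sums of independent random variables (the Jessen--Wintner pure-type theorem together with Lévy's continuity criterion; alternatively the Wiener criterion applied to the characteristic function $\widehat G(t)=\prod_p\bigl((1-\tfrac1p)+\tfrac1p\,e^{it\log(p/(p-1))}\bigr)$) shows that $G$ has no atoms, i.e.\ $G$ is continuous. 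Equivalently, one may quote the Erd\H{o}s--Wintner theorem together with Erd\H{o}s's criterion that the limiting distribution of a real additive function is continuous precisely when $\sum_{f(p)\ne 0}1/p=\infty$.

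Next I would match the arithmetic distribution of $h$ with $G$ by the method of moments. Fix $k\ge 0$, expand $h(n)^k=\sum_{(p_1,\dots,p_k)}\prod_i\log\tfrac{p_i}{p_i-1}$ over $k$-tuples of prime divisors of $n$, sum over $n\le N$ using $|\{n\le N:d\mid n\}|=N/d+O(1)$, and group the tuples according to the squarefree product of the distinct primes occurring in them. The main term then matches $\EE[Y^k]=\sum_{(p_1,\dots,p_k)}\EE[\prod_i X_{p_i}]$ term by term — the independence of the $X_p$ on the model side corresponds to the near-multiplicativity $|\{n\le N:d\mid n\}|\approx N/d$ on the arithmetic side — while the error terms are disposed of by truncating the prime factors at a slowly growing bound and using $\sum_{p>y}\log\tfrac{p}{p-1}\cdot p^{-1}\ll\sum_{p>y}p^{-2}\to 0$. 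This gives $\tfrac1N\sum_{n\le N}h(n)^k\to\EE[Y^k]$ for every $k$. Since $\EE[e^{sY}]=\prod_p\bigl((1-\tfrac1p)+\tfrac1p(\tfrac{p}{p-1})^s\bigr)$ converges for $0\le s<1$, the variable $Y$ has a finite exponential moment near the origin and is therefore determined by its moments, so convergence of moments forces $\tfrac1N|\{n\le N:h(n)\le t\}|\to G(t)$ at every continuity point of $G$, hence everywhere. Transferring back through $z\mapsto\log(1/z)$ gives the existence of $F$; continuity of $F$ on $(0,1)$ follows from that of $G$, and continuity at the endpoints follows from $G(+\infty)=1$ and $G(0)=\Pr(Y=0)=\prod_p(1-1/p)=0$.

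The crux is the moment-matching step: one must verify that the arithmetic moments of $h$ genuinely converge to $\EE[Y^k]$, which requires the truncation of large prime factors together with careful bookkeeping of the grouping of the $k$-tuples by their squarefree kernels. Everything else is either a classical probabilistic fact — the three-series theorem, the Jessen--Wintner/Lévy continuity criterion, the moment-determinacy of $Y$ — or the routine estimate $\log\tfrac{p}{p-1}\asymp 1/p$.
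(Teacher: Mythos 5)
The paper gives no proof of this lemma; it only cites Kac's book, Chapter~4, \S 2. Your sketch is a correct reconstruction of the classical probabilistic argument there (the independent model $\{X_p\}$, Kolmogorov's three-series theorem for almost-sure convergence of $Y$, L\'evy's criterion for the continuity of the limit law since $\sum_p 1/p$ diverges, and the method of moments to transfer from the model to the arithmetic count), so there is nothing in the paper itself to contrast it against.
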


\begin{lemma} \label{L:LT lower bound on L}
$\sum_{\psi(d) \leq Q} \mu^2(d) \varphi(d) \gg Q^2.$
\end{lemma}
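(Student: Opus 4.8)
The plan is to produce a set of $\gg Q$ squarefree integers $d$, each of size $\asymp Q$, each satisfying $\psi(d)\le Q$ and $\varphi(d)\gg Q$; summing $\varphi(d)$ over this set immediately gives $\sum_{\psi(d)\le Q}\mu^2(d)\varphi(d)\gg Q\cdot Q=Q^2$. The construction will be driven by the observation that $\varphi$ and $\psi$ are controlled by the single quantity $\varphi(d)/d=\prod_{\ell\mid d}(1-1/\ell)$, together with Lemma~\ref{L:cdf}, which guarantees that a large proportion of integers have this quantity bounded below.

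First I would record the elementary identity: for squarefree $d$ one has $\varphi(d)\psi(d)=\prod_{\ell\mid d}(\ell^2-1)=d^2\prod_{\ell\mid d}(1-\ell^{-2})\le d^2$, so $\psi(d)\le d^2/\varphi(d)=d\cdot(\varphi(d)/d)^{-1}$. Hence, if $\varphi(d)/d\ge\alpha$ and $d\le\alpha Q$ for some fixed $\alpha\in(0,1)$, then automatically $\psi(d)\le Q$ and $\varphi(d)\ge\alpha d$. The constant $\alpha$ will be chosen as follows: since the function $F$ of Lemma~\ref{L:cdf} is continuous and $F(z)=0$ for $z\le 0$, we have $F(z)\to 0$ as $z\to 0^+$, so we may fix $\alpha\in(0,1)$ small enough that $F(\alpha)<3/(2\pi^2)$.

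It remains to count. Put $M=\alpha Q$. By the defining property of $F$ in Lemma~\ref{L:cdf}, the number of integers $d\le M$ with $\varphi(d)/d<\alpha$ is $(F(\alpha)+o(1))M$, while the number of squarefree integers in the dyadic interval $(M/2,M]$ is $(3/\pi^2+o(1))M$ by the standard squarefree density. Subtracting, the number of squarefree $d\in(M/2,M]$ with $\varphi(d)/d\ge\alpha$ is at least $(3/\pi^2-F(\alpha)+o(1))M\gg M\gg Q$ by the choice of $\alpha$. For each such $d$ we have $d\le M=\alpha Q$, hence $\psi(d)\le Q$ by the previous paragraph, and $\varphi(d)\ge\alpha d>\alpha M/2=\alpha^2 Q/2$. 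Therefore
\[
\sum_{\psi(d)\le Q}\mu^2(d)\varphi(d)\ \ge\ \sum_{\substack{M/2<d\le M\\ d\text{ squarefree},\ \varphi(d)/d\ge\alpha}}\varphi(d)\ \gg\ Q\cdot Q\ =\ Q^2,
\]
as required. The only non-routine ingredient, and hence the main point, is the counting step: extracting a positive-density family of squarefree integers with $\varphi(d)/d$ bounded away from $0$, which is exactly what Lemma~\ref{L:cdf} provides (via continuity of $F$ at the origin), combined with the elementary sieve estimate $\psi(d)\le d/(\varphi(d)/d)$.
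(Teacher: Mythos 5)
Your proof is correct and takes essentially the same route as the paper: both exploit the inequality $\psi(d)\varphi(d)\leq d^2$ for squarefree $d$, both restrict to a dyadic-type window around $\alpha Q$ (the paper's $[cQ/2,cQ]$), and both extract a positive-density set of squarefree $d$ with $\varphi(d)/d$ bounded below via the continuity of $F$ at $0$ from Lemma~\ref{L:cdf}. The only cosmetic difference is that the paper phrases the count as inclusion--exclusion on the two sets $A_Q$ (squarefree in the interval) and $B_Q$ ($\varphi(d)/d\geq c$ in the interval), whereas you subtract the global count of $d\leq M$ with $\varphi(d)/d<\alpha$ directly from the squarefree count in the interval; the resulting threshold on $F$ is slightly less sharp ($3/(2\pi^2)$ versus the paper's $6/\pi^2$) but entirely sufficient.
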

\begin{proof}
For any positive integer $d$, 
\[
 d^2\geq \psi(d)\varphi(d) = d^2\prod_{\ell|d} (1-\frac{1}{\ell^2})\geq d^2 \zeta(2)^{-1}= \frac{6}{\pi^2} d^2,
 \]
 and so
\begin{equation} \label{E:psi1} 
\sum_{\psi(d) \leq Q} \mu^2(d) \varphi(d) \geq  \frac{6}{\pi^2} \sum_{\psi(d) \leq Q} \mu^2(d) \frac{d^2}{\psi(d)} \geq   \frac{6}{\pi^2} Q^{-1} \sum_{\psi(d) \leq Q} \mu^2(d) d^2.
\end{equation}
Fix a constant $c$ with $0<c<1$, we then have the following easy inequalities
\begin{align*}
\sum_{\psi(d) \leq Q} \mu^2(d) \varphi(d)  
&\geq \frac{6}{\pi^2}  Q^{-1} \sum_{\substack{cQ/2 \leq d \leq cQ\\ \psi(d)/d \leq  c^{-1}}} \mu^2(d) d^2 
\geq \frac{3c^2}{2\pi^2}  Q  \sum_{\substack{cQ/2 \leq d \leq cQ \\ \psi(d)/d \leq  c^{-1}}} \mu^2(d).
\end{align*}
Since $\varphi(d)/d \geq c $ implies $\psi(d)/d\leq c^{-1}$, we have
\begin{equation} \label{E:psi3} 
\sum_{\psi(d) \leq Q} \mu^2(d) \varphi(d) 
\geq \frac{3c^2}{2\pi^2}  Q  \sum_{cQ/2 \leq d \leq cQ, \, c\leq  \varphi(d)/d } \mu^2(d) \gg c^2 Q |A_Q \cap B_Q|,
\end{equation}
where 
\[
A_Q= \left\{ d \in [cQ/2,cQ] : d \text{ squarefree} \right\} \text{\quad and \quad}
B_Q= \left\{d \in [cQ/2,cQ]:  c \leq \varphi(d)/d  \right\}.
\]
By Lemma \ref{L:cdf}
\[
|B_Q|  = (1-F(c)) {cQ}/{2} +o(Q),
\]
and it is well known that $|A_Q| = (6/\pi^2) cQ/2 +o(Q)$.   
\begin{align*} 
|A_Q \cap B_Q| &= |A_Q| + |B_Q| - |A_Q\cup B_Q| \\
&\geq  |A_Q| + |B_Q| - (cQ/2+1) \\
&=  \frac{6}{\pi^2} \frac{cQ}{2} + (1-F(c)) \frac{cQ}{2} - \frac{cQ}{2} +o(Q)
=  \Big(\frac{6}{\pi^2}- F(c)\Big) \frac{cQ}{2} +o(Q)
\end{align*}
Now choose our constant $c$ such that $F(c) < 6/\pi^2$ (this can be done since $F$ is continuous, and $F(0)=0$, $F(1)=1$).  Equation (\ref{E:psi3}) becomes $\sum_{\psi(d) \leq Q} \mu^2(d) \varphi(d) \gg Q^2$.
\end{proof}
Combining (\ref{E:LT lower bound on L}) and Lemma \ref{L:LT lower bound on L} proves the following:
\begin{equation} \label{E:L estimate} 
L(Q) \gg Q^2. 
\end{equation}
\subsection{Final bound}
Using (\ref{E:LS bound for LT1}) and (\ref{E:L estimate}), we have
\[
\Pi_{E,t}(x) \ll (x/\log x + Q^5x^{1/2} \log x)Q^{-2}.
\]
Setting $Q(x)= x^{1/10} / (\log x)^{2/5}$, we deduce that
\[
\Pi_{E,t}(x) \ll {x^{4/5}}/{(\log x)^{1/5}}
\]
assuming GRH and AHC.

\appendix
\section{Character sums and the Chebotarev density theorem} \label{A:Character sums}
\subsection{Notation} \label{SS:basics}
Let $L/k$ be a Galois extension of number fields with Galois group $G$ and let $C$ be a subset of $G$ stable under conjugation.  Define
\[
\pi_C(x,L/k) := |\{ \p\in \Sigma_k(x) : \p \text{ unramified in $L$, } \Fr_\p \subseteq C \}|.
\] 
The \emph{Chebotarev density theorem} says that 
\[
\pi_C(x,L/k) \sim \frac{|C|}{|G|}\Li x
\]
as $x\to \infty$.  An effective version would give an explicit bound for $\pi_C(x,L/k) - |C|/|G| \Li x.$\\

The extension $L/k$ is said to satisfy \defi{Artin's Holomorphy Conjecture} (AHC) if for each $\chi\in\Irr(G)-\{1\}$, the Artin $L$-series $L(s,\chi)$ has analytic continuation to the whole complex plane.

The \defi{Generalized Riemann Hypothesis} (GRH) asserts that for any number field $L$, the Dedekind zeta function $\zeta_L(s)$ has no zeros with real part $>1/2$. 

\begin{defn} \label{D:M definition}
Let $L/k$ be an extension of number fields.  Define
\[ 
M(L/k) = [L:k] d_k^{1/[k:\QQ]} \prod_{p \in P(L/k)} p
\]
where $d_k$ is the absolute discriminant of $k$ and $P(L/k)$ is the set of rational primes $p$ for which there exist a prime $\p \in \Sigma_k$ such that $\p|p$ and $\p$ is ramified in $L$.
\end{defn}

\begin{lemma} \label{L:discriminant bound}
Let $L/k$ be a Galois extension of number fields.  Then $\log d_L \leq [L:\QQ] \log M(L/k)$.
\end{lemma}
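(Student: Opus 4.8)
The plan is to reduce $d_L$ to base-field data via the tower (conductor--discriminant) formula and then control the relative contribution with the classical estimate for the exponent of the different. First I would recall that $d_L = \pm\, d_k^{[L:k]}\cdot N_{k/\QQ}(\mathfrak{d}_{L/k})$, where $\mathfrak{d}_{L/k} = N_{L/k}(\mathfrak{D}_{L/k})$ is the relative discriminant ideal of $\OO_L/\OO_k$ and $\mathfrak{D}_{L/k}$ its different. Taking logarithms and using $[L:k]\log d_k = [L:\QQ]\log\bigl(d_k^{1/[k:\QQ]}\bigr)$, it suffices to prove the inequality $\log N_{k/\QQ}(\mathfrak{d}_{L/k}) \leq [L:\QQ]\log\bigl([L:k]\prod_{p\in P(L/k)}p\bigr)$, since multiplying the right-hand side through by $d_k^{1/[k:\QQ]}$ and by $[L:\QQ]$ reassembles exactly $[L:\QQ]\log M(L/k)$.

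Next I would unwind the norm ideal-theoretically. Writing $d_\P := v_\P(\mathfrak{D}_{L/k})$ for a prime $\P$ of $L$ lying over $\p$ of $k$ over a rational prime $p$, and grouping primes of $k$ by the rational prime beneath them, one gets $\log N_{k/\QQ}(\mathfrak{d}_{L/k}) = \sum_{p}(\log p)\sum_{\P\mid p} f(\P/p)\, d_\P$, where $f(\P/p)=f(\P/\p)f(\p/p)$ is the global residue degree. The key local input is the standard bound $d_\P \leq e(\P/\p) - 1 + v_\P\bigl(e(\P/\p)\bigr)$ for the different exponent (Serre, \emph{Corps locaux}, Ch.~III; the lower estimate $e(\P/\p)-1 \le d_\P$ with equality in the tame case is what makes the upper one sharp). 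Now I use the Galois hypothesis: since $L/k$ is Galois, $e(\P/\p)$ divides $[L:k]$, so $v_\P\bigl(e(\P/\p)\bigr)\le v_\P\bigl([L:k]\bigr)$; and when $\p$ ramifies in $L$ (the only case with $d_\P>0$) one has $e(\P/\p)-1 < e(\P/p) = v_\P(p)$ with $p\in P(L/k)$. Hence $d_\P \le v_\P(p) + v_\P\bigl([L:k]\bigr) = v_\P\bigl(p\cdot[L:k]\bigr) \le v_\P(m)$, where $m := [L:k]\prod_{q\in P(L/k)}q$.

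Finally I would assemble the global sum. Using $v_\P(m) = e(\P/p)\, v_p(m)$ and the degree identity $\sum_{\P\mid p} e(\P/p) f(\P/p) = [L:\QQ]$, the inner sum is bounded by $\sum_{\P\mid p} f(\P/p) d_\P \le v_p(m)\sum_{\P\mid p} e(\P/p)f(\P/p) = [L:\QQ]\, v_p(m)$; summing over $p$ gives $\log N_{k/\QQ}(\mathfrak{d}_{L/k}) \le [L:\QQ]\sum_p v_p(m)\log p = [L:\QQ]\log m$, which is the required inequality.

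I expect no genuine obstacle here: the only care needed is (a) citing the correct local bound on the different exponent, and (b) the bookkeeping between local ramification data for $L/k$ and global data over $\QQ$ --- in particular, invoking the Galois hypothesis precisely at the divisibility $e(\P/\p)\mid [L:k]$, and being careful to absorb the tame part $e(\P/\p)-1$ into the $\prod_{p\in P(L/k)} p$ factor rather than into the $[L:k]$ factor.
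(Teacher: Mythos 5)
Your proof is correct and amounts to unpacking the very citation the paper uses: the paper derives this lemma by combining Serre's equations (3) and (6) from \cite{SerreCheb}, which are precisely the transitivity formula $d_L = d_k^{[L:k]} N_{k/\QQ}(\mathfrak{d}_{L/k})$ and the bound on the different exponent $d_\P \le e(\P/\p) - 1 + v_\P(e(\P/\p))$ together with $e(\P/\p)\mid[L:k]$ in the Galois case. You have reassembled the same chain of inequalities in full detail, so the argument matches the paper's intended proof.
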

\begin{proof}
This follows by combining equations (3) and (6) of \cite{SerreCheb}.
\end{proof}

\subsection{Conditional versions}
\begin{prop}  \label{P:conditional CDT} 
Let $L/k$ be a Galois extension of number fields with Galois group $G$ and let $C$ be a subset of $G$ stable under conjugation.
\begin{romanenum}
\item Assume GRH.  Then 
\[
\pi_C(x,L/k) = \frac{|C|}{|G|} \Li x + O\Bigl( |C|x^{1/2}[k:\QQ] \log\big( M(L/k) x \big)  \Bigr).\]
\item Assume GRH and assume AHC for the extension $L/k$. Then 
\[
\pi_C(x,L/k)= \frac{|C|}{|G|} \Li x + O\Bigl( |C|^{1/2}x^{1/2}[k:\QQ] \log\big( M(L/k) x \big)  \Bigr).
\]
\end{romanenum}
In both cases, the implicit constants are absolute.
\end{prop}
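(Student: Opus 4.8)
The statement is an effective form of the Chebotarev density theorem under GRH, and the plan is to reduce it to the standard explicit estimates (Serre's \cite{SerreCheb}, and the work of Lagarias and Odlyzko) and then re-normalise the error term in terms of $M(L/k)$ via Lemma~\ref{L:discriminant bound}. The first step is to replace the prime-counting function $\pi_C(x,L/k)$ by the weighted sum
\[
\psi_C(x,L/k):=\sum_{\substack{\mathfrak p^m,\ N(\mathfrak p)^m\le x\\ \mathfrak p\text{ unramified in }L,\ \Frob_{\mathfrak p}^m\subseteq C}}\log N(\mathfrak p),
\]
the two being related by partial summation; the prime powers with $m\ge 2$ and the primes ramified in $L$ together contribute $O\bigl([k:\QQ]\,x^{1/2}\log(M(L/k)x)\bigr)$, which is absorbed into the stated error (one uses here that the number of rational primes ramified in $L$ is $\ll\log M(L/k)$ and that $\pi_k(y)\ll[k:\QQ]\,y/\log y$).

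Next I would expand the characteristic function of $C$ in the orthonormal basis $\Irr(G)$: writing $1_C=\sum_{\chi}a_\chi\chi$ with $a_\chi=(1_C,\chi)$, one gets
\[
\psi_C(x,L/k)=\sum_{\chi\in\Irr(G)}a_\chi\,\psi_\chi(x,L/k),\qquad \psi_\chi(x,L/k):=\sum_{N(\mathfrak p)^m\le x}\chi(\Frob_{\mathfrak p}^m)\log N(\mathfrak p).
\]
The term $\chi=\mathbf 1$ contributes $a_{\mathbf 1}x=\tfrac{|C|}{|G|}x$ up to the error in the prime number theorem for $k$ under GRH. For $\chi\ne\mathbf 1$ one applies the explicit formula for the Artin $L$-function $L(s,\chi,L/k)$, together with the zero-count $N_\chi(T+1)-N_\chi(T)\ll \log\mathfrak q(\chi)+\chi(1)[k:\QQ]\log(T+2)$ and GRH (all non-trivial zeros on $\Re(s)=\tfrac12$), to bound $\psi_\chi(x,L/k)$ by a multiple of $x^{1/2}$ times a logarithmic factor involving the Artin conductor $\mathfrak q(\chi)$. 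Under AHC the function $L(s,\chi,L/k)$ is entire, so this contour-integration argument is immediate; without AHC one first uses Brauer's induction theorem to express $L(s,\chi,L/k)$ as a ratio of Hecke $L$-functions attached to abelian extensions of intermediate fields, whose poles (only at $s=1$) and whose zeros are completely understood, and the poles occurring in the denominator --- being zeros of abelian $L$-functions --- lie on $\Re(s)=\tfrac12$ under GRH and are handled exactly like the zeros.

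It remains to sum over $\chi$ and re-normalise. Using $|a_\chi|\le\tfrac{|C|}{|G|}\chi(1)$ and $\sum_\chi\chi(1)^2=|G|$ one has $\sum_\chi|a_\chi|\chi(1)\le|C|$, which is the bound used for part (i); assuming AHC one instead applies Cauchy--Schwarz and Parseval, $\sum_\chi|a_\chi|\chi(1)\le\bigl(\sum_\chi|a_\chi|^2\bigr)^{1/2}\bigl(\sum_\chi\chi(1)^2\bigr)^{1/2}=\bigl(|C|/|G|\bigr)^{1/2}|G|^{1/2}=|C|^{1/2}$, the bound used for part (ii). Feeding the per-character estimates into these sums, controlling $\sum_\chi\chi(1)\log\mathfrak q(\chi)$ by $\log d_L$ via the conductor--discriminant formula, and then invoking Lemma~\ref{L:discriminant bound} in the form $\log d_L\le[L:\QQ]\log M(L/k)=|G|\,[k:\QQ]\log M(L/k)$ together with $[L:k]=|G|$, turns the classical GRH bound $\tfrac{|C|}{|G|}x^{1/2}\log\!\bigl(d_L x^{[L:\QQ]}\bigr)$ into $|C|\,[k:\QQ]\,x^{1/2}\log(M(L/k)x)$ in case (i), and its AHC-sharpened counterpart into $|C|^{1/2}[k:\QQ]\,x^{1/2}\log(M(L/k)x)$ in case (ii). A final partial summation converts the estimate for $\psi_C(x,L/k)$ back to the claimed one for $\pi_C(x,L/k)$, with absolute implied constants.

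I expect the main obstacle to be the case without AHC, where one must control the possible poles of $L(s,\chi,L/k)$: this is precisely where Brauer induction and the location of zeros of abelian $L$-functions under GRH enter, and it is also what forces the crude bound $\sum_\chi|a_\chi|\chi(1)\le|C|$ in place of the Cauchy--Schwarz bound available under AHC. Since these explicit estimates for $\pi_C$ (or $\psi_C$) in terms of $d_L$ and $[L:\QQ]$ are exactly those of \cite{SerreCheb} and of Lagarias and Odlyzko, the cleanest write-up is to quote those results and then carry out the re-normalisation of the previous paragraph.
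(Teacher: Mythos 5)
The paper's own proof is a one-line citation (Serre \cite{SerreCheb}, eq.\ $(20_R)$ for (i), and \cite{MMS}*{Proposition~3.12} for (ii)), whereas you are sketching the underlying argument from scratch; that is a fine thing to do, but your sketch of part (i) contains a genuine gap. You propose the same per-character estimate $\psi_\chi(x)\ll \chi(1)[k:\QQ]\,x^{1/2}\log(M(L/k)x)$ in both cases and then distinguish (i) from (ii) only by how you sum $\sum_\chi |a_\chi|\chi(1)$: trivially $\le |C|$ for (i), and $\le |C|^{1/2}$ by Cauchy--Schwarz for (ii). But Cauchy--Schwarz is unconditional --- if the $\chi(1)$-weighted per-character bound held under GRH alone, you could apply Cauchy--Schwarz there too and obtain $|C|^{1/2}$ without AHC, which would make part (ii) redundant. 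The resolution is that without AHC the per-character bound is strictly weaker (the paper's Proposition~\ref{P:conditional character sum}(i), i.e.\ Serre's $(33_R)$, has the error $\sum_{g\in G}|\chi(g)|$, not $\chi(1)$, and $\sum_g|\chi(g)|$ can be as large as $|G|$). Your Brauer-induction argument does not close the gap: writing $L(s,\chi)=\prod_j L(s,\psi_j)^{n_j}$, the explicit formula gives an error weighted by $\sum_j|n_j|$, and these Brauer coefficients are not controlled by $\chi(1)$ or even $|G|$, so the poles-on-the-critical-line, while indeed on $\Re(s)=\tfrac12$ under GRH, are \emph{not} ``handled exactly like the zeros'' in the sense of giving the same quantitative bound.

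The proof of part (i) does not proceed character by character at all. Lagarias--Odlyzko (hence Serre's $(20_R)$) reduce the estimate of $\pi_C$ for $L/k$ to an estimate for the abelian extension $L/E$ where $E=L^{\langle g\rangle}$ for $g\in C$ (the Deuring trick). For abelian extensions AHC is a theorem, so GRH alone suffices, and the $|C|/|G|$ factor in Serre's form (equivalently the $|C|$ factor after renormalising with Lemma~\ref{L:discriminant bound}) comes out of this reduction rather than from a decomposition of $1_C$ into irreducibles of $G$. Your sketch of part (ii) --- decompose $1_C$, use the entirety of $L(s,\chi)$ to get the $\chi(1)$-weighted per-character error, and apply Cauchy--Schwarz/Parseval to get $|C|^{1/2}$ --- is correct and is indeed the mechanism behind \cite{MMS}*{Proposition~3.12}. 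The renormalisation from $\log d_L$ to $[L:\QQ]\log M(L/k)$ via Lemma~\ref{L:discriminant bound} is correct in both cases.
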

\begin{proof}
Part (i) is equation $(20_R)$ of \cite{SerreCheb}.  Part (ii) is a consequence of \cite{MMS}*{Proposition 3.12}.
\end{proof}

\begin{prop}  \label{P:conditional character sum} 
Let $L/k$ be a Galois extension of number fields with Galois group $G$ and let $\chi$ be a character of $G$.  
\begin{romanenum}
\item Assume GRH.  Then
\[
\sum_{\substack{\p\in\Sigma_k(x) \\ \p \text{ unramified in }L}} \chi(\Frob_\p) = (\chi,1) \Li x + O\Bigl( \Bigl( {\sum}_{g\in G} |\chi(g)| \Bigr) [k:\QQ] x^{1/2} \log\big(M(L/k)x \big) \Bigr).
\]
\item  Assume GRH and assume AHC for the extension $L/k$.  Then
\[
\sum_{\substack{\p\in\Sigma_k(x) \\ \p \text{ unramified in }L}} \chi(\Frob_\p) = (\chi,1) \Li x + O\Bigl(  \chi(1) [k:\QQ] x^{1/2} \log\big(M(L/k)x \big) \Bigr).
\]
\end{romanenum}
In both cases, the implicit constants are absolute.
\end{prop}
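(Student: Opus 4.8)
The plan is to reduce both parts to the effective Chebotarev estimates of Proposition~\ref{P:conditional CDT}, but in two different ways: part~(i) is an elementary expansion over conjugacy classes, whereas part~(ii) genuinely needs the Artin $L$-function.

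For part~(i), I would write $\chi$ as a linear combination of characteristic functions of conjugacy classes. For each $C\in G^\sharp$, regard $C$ itself as a conjugation-stable subset of $G$, so that $\pi_C(x,L/k)$ counts the $\p\in\Sigma_k(x)$ unramified in $L$ with $\Fr_\p=C$; then
\[
\sum_{\substack{\p\in\Sigma_k(x)\\ \p\text{ unramified in }L}}\chi(\Frob_\p)=\sum_{C\in G^\sharp}\chi(C)\,\pi_C(x,L/k).
\]
Substituting the estimate of Proposition~\ref{P:conditional CDT}(i) into each term, the main term is $\tfrac{\Li x}{|G|}\sum_{C}\chi(C)|C|=\tfrac{\Li x}{|G|}\sum_{g\in G}\chi(g)=(\chi,1)\Li x$, and, the implied constants being absolute, the errors sum to $O\bigl(\bigl(\sum_{C}|\chi(C)|\,|C|\bigr)[k:\QQ]x^{1/2}\log(M(L/k)x)\bigr)$, where $\sum_{C}|\chi(C)|\,|C|=\sum_{g\in G}|\chi(g)|$. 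That is exactly the claimed bound. Note that running the same computation with Proposition~\ref{P:conditional CDT}(ii) would only give an error of order $\sum_{C}|\chi(C)|\,|C|^{1/2}$, which by Cauchy--Schwarz is $\ll(|G|\,|G^\sharp|)^{1/2}$ and hence in general far larger than $\chi(1)$; so part~(ii) cannot be obtained this way.

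For part~(ii), I would argue directly with $L(s,\chi)=\prod_{\psi\in\Irr(G)}L(s,\psi)^{(\chi,\psi)}$. Under AHC each factor with $\psi\neq1$ is entire, $L(s,1)=\zeta_k(s)$ contributes a pole of order $(\chi,1)$ at $s=1$, and GRH places all non-trivial zeros on $\Re(s)=1/2$. The explicit formula for $L(s,\chi)$ then gives
\[
\sum_{N(\p)^m\le x}\chi(\Frob_\p^m)\log N(\p)=(\chi,1)\,x+O\bigl(\chi(1)[k:\QQ]x^{1/2}\log(M(L/k)x)\bigr),
\]
the error absorbing $\sum_\rho x^\rho/\rho\ll x^{1/2}(\log x)\cdot(\text{number of low-lying zeros})$ together with a bound $\log\mathfrak q(L(s,\chi))\ll\chi(1)[k:\QQ]\log M(L/k)$ for the analytic conductor (from the conductor--discriminant formula and Lemma~\ref{L:discriminant bound}). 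One then discards the prime-power terms (bounded by $\ll\chi(1)x^{1/2}$ since $|\chi(g)|\le\chi(1)$) and passes by partial summation from the $\log N(\p)$-weighted sum to $\sum_{N(\p)\le x}\chi(\Frob_\p)$, turning $x$ into $\Li x$. This is precisely the content of \cite{MMS}*{Proposition~3.12}, which one can simply quote; equivalently one may decompose $\chi=\sum_\psi(\chi,\psi)\psi$, apply the effective prime ideal theorem to each $L(s,\psi)$ with $\psi\neq1$, and recombine using $\sum_{\psi}(\chi,\psi)\psi(1)=\chi(1)$ (legitimate since all $(\chi,\psi)\ge0$), the $\psi=1$ contribution being handled by GRH for $\zeta_k$.

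The main obstacle is the analytic bookkeeping in part~(ii): controlling the analytic conductor and the number of zeros of $L(s,\chi)$ near the real axis by $\chi(1)$, $[k:\QQ]$ and $M(L/k)$ (rather than by the much larger $[L:\QQ]$), so that the error term comes out exactly as stated. Since Proposition~\ref{P:conditional CDT}(ii) is itself imported from \cite{MMS}, the cleanest write-up is to quote the character-sum form of \cite{MMS} for part~(ii) and to record the short conjugacy-class reduction above for part~(i).
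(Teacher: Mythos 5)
Your proof of part~(i) is correct and self-contained: you expand $\chi$ over conjugacy classes, apply Proposition~\ref{P:conditional CDT}(i) to each $\pi_C(x,L/k)$, and sum; the main term collapses to $(\chi,1)\Li x$ and the error to $\sum_{g\in G}|\chi(g)|$ as claimed. The paper instead quotes Serre's character-sum estimate directly (equation $(33_R)$ of \cite{SerreCheb}), but your derivation from the conjugacy-class version $(20_R)$ is exactly what underlies that estimate, and in fact the paper itself uses this same expansion when proving the unconditional Proposition~\ref{P:unconditional character sum}. Your observation that substituting Proposition~\ref{P:conditional CDT}(ii) would only yield $\sum_C|\chi(C)|\,|C|^{1/2}$, which is too large, correctly explains why part~(ii) needs a separate argument.

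For part~(ii), your plan is substantively the same as the paper's: reduce to irreducible $\chi$, control the Artin conductor $A_\chi$ in terms of $\chi(1)$, $[k:\QQ]$, $M(L/k)$ via the conductor–discriminant formula, and apply the explicit formula under AHC and GRH. One small inaccuracy: you propose quoting \cite{MMS}*{Proposition~3.12}, but that is the Chebotarev-form estimate (it is what the paper cites for Proposition~\ref{P:conditional CDT}(ii)); the character-sum form you actually want is \cite{MMS}*{Proposition~3.5}, which the paper combines with \cite{MMS}*{Proposition~2.5} (the conductor bound) and Lemma~\ref{L:discriminant bound} to produce the stated error. Your alternative description --- decompose $\chi=\sum_\psi(\chi,\psi)\psi$, use that the multiplicities are nonnegative integers so that $\sum_\psi(\chi,\psi)\psi(1)=\chi(1)$ controls the recombined error, and handle $\psi=1$ via $\zeta_k$ --- is exactly the reduction-to-irreducibles step the paper performs, so the substance of your argument matches; only the citation needs correcting.

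Net: both parts are correct, and your route is essentially the paper's. Part~(i) is presented as an explicit one-line reduction rather than a direct citation (a legitimate alternative that parallels the paper's unconditional proof), and part~(ii) is the same MMS argument with a misattributed proposition number.
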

\begin{proof} Part (i) follows from equation ($33_R$) of \cite{SerreCheb} and Lemma~\ref{L:discriminant bound}.
We now consider part (ii).  By additivity it suffices to prove the proposition for an irreducible $\chi$.  Let $\F_\chi$ be the Artin conductor of $\chi$ and define $A_\chi=d_k^{\chi(1)} N_{k/\QQ}(\F_\chi).$  See \cite{MMS}*{Proposition 3.5} for a sketch that
\begin{align*} 
\sum_{\substack{\p\in\Sigma_k(x) \\ \p \text{ unramified in }L}} \chi(\Frob_\p) = (\chi,1)  \Li x &+ O\Bigl(x^{1/2}(\log A_\chi + \chi(1)[k:\QQ]\log x) \Bigr)\\ &+ O\Bigl( \chi(1)[k:\QQ]\log\Bigl([L:k] d_k^{1/[k:\QQ]}{\prod}_{p \in P(L/k)} p\Bigr)\Bigr) .
\end{align*}
By Proposition 2.5 of \cite{MMS},
$\log(N_{k/\QQ}(\F_\chi)) \leq 2\chi(1)[k:\QQ] \log\Bigl([L:k] {\prod}_{p \in P(L/k)} p\Bigr)$
and hence
\begin{align*}
\log A_\chi  
& \leq  2\chi(1)[k:\QQ] \log\Bigl([L:k] {\prod}_{p \in P(L/k)} p\Bigr) + \chi(1) \log d_k \\
&\leq  2\chi(1)[k:\QQ]\log\Bigl([L:k]d_k^{1/[k:\QQ]} {\prod}_{p \in P(L/k)} p\Bigr).
\end{align*}
Combining everything we obtain
\[
\sum_{\substack{\p\in\Sigma_k(x)\\ \p \text{ unramified in }L}}\chi(\Frob_\p) = (\chi,1)  \Li x + O\Bigl(x^{1/2}\chi(1)[k:\QQ] \log\Bigl([L:k]d_k^{1/[k:\QQ]} x {\prod}_{p \in P(L/k)} p \Bigr) \Bigr).   \qedhere
\]
\end{proof}

\subsection{Exceptional zeros}
\begin{prop} \label{P:exceptional zeros}
\begin{romanenum} 
\item
Let $L\neq \QQ$ be a number field.  Then $\zeta_L(s)$ has at most one real zero in the interval $1-(4\log d_L)^{-1} \leq \sigma <1$.  Such a zero of $\zeta_L(s)$, if it exists, is simple.  
\item 
Let $L/k$ be a Galois extension of number fields and suppose that $\beta\geq 1/2$ is a real simple zero of $\zeta_L(s)$.  Then there is a field $F$ with $k\subseteq F\subseteq L$ such that $[F:k]\leq 2$ and $\zeta_F(\beta)=0$.
\item 
Let $F\neq \QQ$ be a number field and suppose $\beta$ is a real zero of $\zeta_F(s)$. Then
\[
1-\beta \gg \min\big\{ ([F:\QQ]!\log d_F)^{-1}, d_F^{-1/[F:\QQ]} \big\},
\]
where the implicit constant is absolute.
\end{romanenum}
\end{prop}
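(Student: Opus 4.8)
Here is a plan for proving Proposition~\ref{P:exceptional zeros}; all three parts belong to the classical theory of exceptional (Siegel) zeros, and the strategy is to run the usual Hadamard-product and Brauer-induction machinery.

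\smallskip
\noindent\emph{Part (i).} The plan is to start from the Hadamard factorisation of the completed Dedekind zeta function $\xi_L(s)=d_L^{s/2}\gamma_L(s)\zeta_L(s)$, which gives, for real $s>1$,
\[
-\frac{\zeta_L'}{\zeta_L}(s)=\tfrac12\log d_L+\Re\frac{\gamma_L'}{\gamma_L}(s)-\sum_{\rho}\Re\frac{1}{s-\rho},
\]
the sum ranging over the non-trivial zeros $\rho$ of $\zeta_L$. On the left one uses the elementary bound $-\zeta_L'/\zeta_L(s)\le [L:\QQ]\,(s-1)^{-1}$ coming from comparison of Euler products with $\zeta_\QQ$, together with $\Re\,\gamma_L'/\gamma_L(s)\ll[L:\QQ]\ll\log d_L$ (Minkowski); on the right every term $\Re(s-\rho)^{-1}$ is non-negative, so one may retain only the contribution of a hypothetical real zero $\beta$ (counted with multiplicity). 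If $\zeta_L$ had two real zeros $\beta_1,\beta_2$, or a double real zero, inside $1-(4\log d_L)^{-1}\le\sigma<1$, then substituting $s=1+\eta$ with $\eta$ chosen of the optimal size $\asymp(\log d_L)^{-1}$ and discarding the pole term with the correct sign yields a contradiction with the numerical value of the constant $\tfrac14$. Simplicity follows from the same computation applied to a putative zero of multiplicity $\ge 2$. I would cite the precise constant-tracking from the standard references (Lagarias--Odlyzko, Stark) rather than redo it; the only real work is bookkeeping of the archimedean and pole terms.

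\smallskip
\noindent\emph{Part (ii).} Set $G=\Gal(L/k)$ and use $\zeta_L(s)=\prod_{\chi\in\Irr(G)}L(s,\chi,L/k)^{\chi(1)}$, an identity of meromorphic functions (Brauer induction gives the continuation; away from $s=1$ each $L(s,\chi)$ with $\chi$ one-dimensional is an entire Hecke $L$-function, while $\zeta_L/\zeta_k$ is entire by Aramata--Brauer). If $\zeta_k(\beta)=0$ take $F=k$ and stop. Otherwise $\prod_{\chi\ne 1}L(s,\chi)^{\chi(1)}$ has a zero of order exactly $1$ at $\beta$. Since $\beta$ is real, $L(\beta,\chi)=0$ forces $L(\beta,\overline{\chi})=0$; if $\chi\ne\overline{\chi}$ this already contributes order $\chi(1)+\overline{\chi}(1)\ge2$, contradicting simplicity, so the unique vanishing constituent is self-conjugate and, again by simplicity, one-dimensional, hence $\chi^2=1$. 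A non-trivial quadratic $\chi$ of $G$ has fixed field $F=L^{\ker\chi}$ with $[F:k]=2$, and $\zeta_F(s)=\zeta_k(s)L(s,\chi)$, so $\zeta_F(\beta)=0$. The delicate point here is legitimising the ``order of vanishing'' bookkeeping for the non-abelian constituents without Artin's Holomorphy Conjecture; the way around is to argue purely with $\zeta_L/\zeta_k$ and the entireness of the abelian factors, which suffices because the potential poles of the other factors sit only at $s=1$.

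\smallskip
\noindent\emph{Part (iii).} Pass to the Galois closure $\widetilde F/\QQ$; by Aramata--Brauer $\zeta_{\widetilde F}/\zeta_F$ is entire, so $\zeta_{\widetilde F}(\beta)=0$, and $[\widetilde F:\QQ]\le [F:\QQ]!$, while $\log d_{\widetilde F}\le[\widetilde F:\QQ]\log d_F$ up to the ramification correction controlled by Lemma~\ref{L:discriminant bound}. Now split into two cases. If $1-\beta\ge(4\log d_{\widetilde F})^{-1}$ we are done with the first term of the minimum, absorbing the factorial into the degree bound. If $1-\beta<(4\log d_{\widetilde F})^{-1}$, then by part~(i) the zero $\beta$ of $\zeta_{\widetilde F}$ is simple, and by part~(ii) (with $k=\QQ$) it is a zero of $\zeta_M$ for a quadratic field $M\subseteq\widetilde F$; hence it is a real zero of the quadratic Dirichlet $L$-function attached to $M$, and the classical effective lower bound $1-\beta\gg d_M^{-1/2}$ (equivalently the Hecke/class-number estimate $L(1,\chi_M)\gg d_M^{-1/2}$) together with $d_M\le d_{\widetilde F}\le d_F^{[F:\QQ]}\cdot(\cdots)$ produces the second term $d_F^{-1/[F:\QQ]}$. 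The main obstacle in this part is purely bookkeeping: making sure the degree, discriminant, and factorial factors that accumulate in passing $F\rightsquigarrow\widetilde F\rightsquigarrow M$ line up with the exact shape $\min\{([F:\QQ]!\log d_F)^{-1},\,d_F^{-1/[F:\QQ]}\}$, which is where the factorial in the first term comes from.
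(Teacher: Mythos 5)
The paper gives no proof of its own here; it simply cites Stark's 1974 paper (Lemma~3 for part~(i), Theorem~3 -- Heilbronn -- for part~(ii), and the proof of Theorem~$1'$ for part~(iii)). Your attempt to reconstruct the arguments is more ambitious than what the paper does, and part~(i) is in the right spirit (Hadamard product, positivity of $\Re(s-\rho)^{-1}$ for real $s>1$, the elementary comparison with $\zeta_\QQ$). But there are two genuine gaps.

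In part~(ii), the AHC workaround you propose does not work. Without AHC, Brauer's theorem only expresses each $L(s,\chi)$ as a product $\prod_j L(s,\psi_j)^{n_j}$ of Hecke $L$-functions with $n_j\in\ZZ$ possibly negative, so $L(s,\chi)$ can have poles precisely at zeros of the $L(s,\psi_j)$, which may lie anywhere in the critical strip --- not just at $s=1$. Your sentence \emph{``the potential poles of the other factors sit only at $s=1$''} is therefore false, and without knowing $\ord_\beta L(s,\chi)\ge 0$ for each irreducible $\chi$, the ``order of vanishing exactly $1$'' bookkeeping does not go through. Heilbronn's actual argument avoids this by using the non-negativity of $\ord_\beta\zeta_E$ for \emph{every} intermediate field $E$ (via the factorization $\zeta_E=\prod_\chi L(s,\chi)^{\ang{\Res_H\chi,\,1}}$ with $E=L^H$, together with Artin induction) to constrain the virtual class function $\theta=\sum_\chi (\ord_\beta L(s,\chi))\,\chi$; extracting a real one-dimensional $\chi$ with positive multiplicity then requires a nontrivial group-theoretic step, not merely the self-conjugacy observation.

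In part~(iii), the inequality chain you write, $d_M \le d_{\widetilde F}\le d_F^{[F:\QQ]}\cdot(\cdots)$, cannot produce the claimed $d_F^{-1/[F:\QQ]}$ term: plugging $d_M\le d_F^{[F:\QQ]}$ into $1-\beta\gg d_M^{-1/2}$ gives only $1-\beta\gg d_F^{-[F:\QQ]/2}$, which is far weaker (the exponent is off by a factor of roughly $[F:\QQ]^2/4$). To reach $d_F^{-1/[F:\QQ]}$ one needs the much tighter bound $d_M\ll d_F^{2/[F:\QQ]}$, which does hold when $M\subseteq F$ (since then $d_M^{[F:M]}\mid d_F$) but does not follow from the crude bound $d_M\le d_{\widetilde F}$, because the Heilbronn quadratic field $M$ produced in the second case lives inside the Galois closure $\widetilde F$ and need not lie in $F$. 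So the detour through $d_{\widetilde F}$ loses too much; Stark's proof of Theorem~$1'$ relates $d_M$ to $d_F$ more carefully, and that is the actual content of the estimate.
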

\begin{proof}
For part (i), see \cite{Stark}*{Lemma 3}.  Part (ii) is due to Heilbronn, see \cite{Stark}*{Theorem 3} for a generalized version.  The estimate in part (iii) can be found in the proof of \cite{Stark}*{Theorem $1'$}.
\end{proof}

\begin{defn}
Let $L\neq \QQ$ be a number field.  If the simple real zero of $\zeta_L(s)$ described in Proposition~\ref{P:exceptional zeros}(i) exists, then we call it the \defi{exceptional zero} of $L$ and denote it by $\beta_L$.   Note that $\zeta_\QQ(s)$ has no real zeros in the interval $0\leq \sigma \leq 1$.
\end{defn}

\subsection{Unconditional versions}
\begin{prop} \label{P:unconditional CDT} 
Let $L/k$ be a Galois extension of number fields with Galois group $G$.   Let $C$ be a subset of $G$ that is stable under conjugacy and let $\widetilde{C}$ be the set of conjugacy classes of $G$ which are subsets of $C$.   
\begin{romanenum}
\item  There is an absolute constant $c_1>0$ such that if $\log x \geq 10 [L:\QQ] (\log d_L)^2$, then
\[
\Bigl| \pi_C(x,L/k) - \frac{|C|}{|G|} \Li x \Bigr| \leq \frac{|C|}{|G|} \Li(x^{\beta_L}) + O\Bigl( |\widetilde{C}| x \exp( -c_1 [L:\QQ]^{-1/2}(\log x)^{1/2})\Bigr),
\]
where the $\frac{|C|}{|G|} \Li(x^{\beta_L})$ term is present only when the exceptional zero $\beta_L$ exists.
\item There is an absolute constant $c_2>0$ such that if $\log x \geq c_2 (\log d_L)(\log\log d_L)(\log\log\log 6d_L)$, then
\[
\pi_C(x,L/k) \ll \frac{|C|}{|G|} \frac{x}{\log x}.
\]
\end{romanenum}
\end{prop}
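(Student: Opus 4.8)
The plan is to prove both parts by the standard route for unconditional effective Chebotarev estimates: a Deuring reduction to abelian $L$-functions, followed by the explicit formula for $\zeta_L(s)$, the Stark zero-free region, and the Riemann--von Mangoldt zero count. First, reduce to characters. Write $\widetilde C=\{C_1,\dots,C_s\}$, choose $g_i\in C_i$, and let $K_i=L^{\langle g_i\rangle}$ be the fixed field of the cyclic group generated by $g_i$, so that $\Gal(L/K_i)$ is abelian. The Deuring argument expresses $\pi_C(x,L/k)$, up to terms negligible against the final error (ramified primes, higher prime powers), as an explicit $\QQ$-linear combination of prime-counting sums $\sum_{N\mathfrak P\le x}\psi(\mathfrak P)$ over the ray-class (Hecke) characters $\psi$ of $K_i$ cut out by $L/K_i$; the trivial characters contribute the main term $\tfrac{|C|}{|G|}\Li x$ after using $\sum_i|C_i|=|C|$. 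Since the relevant $L$-functions are abelian they are entire away from the trivial character and satisfy the usual functional equation and zero estimates, which is exactly why one passes to cyclic extensions rather than working with $\zeta_L$ directly.

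For part (i) I would apply the Riemann--von Mangoldt explicit formula to each $L(s,\psi)$ (equivalently, work with $-\zeta_L'/\zeta_L$ and extract the $C$-part). Each nontrivial zero $\rho=\beta+i\gamma$ of $\zeta_L(s)$ contributes a term of size $\ll x^{\beta}/|\rho|$. By Proposition~\ref{P:exceptional zeros}(i) there is at most one zero with $\sigma\ge 1-(4\log d_L)^{-1}$, it is real and simple, and it is the exceptional zero $\beta_L$; its contribution is the displayed term $\tfrac{|C|}{|G|}\Li(x^{\beta_L})$ (present only if $\beta_L$ exists). Every other nontrivial zero lies in the Stark zero-free region $\sigma>1-c/(\log d_L+[L:\QQ]\log(|\gamma|+2))$, and its contribution is controlled by splitting the sum over $\gamma$ dyadically and inserting the count $N_L(T)\ll\log d_L+[L:\QQ]\log(T+2)$. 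Truncating the explicit formula at height $T_0\asymp\exp(c[L:\QQ]^{-1/2}(\log x)^{1/2})$ and using the hypothesis $\log x\ge 10[L:\QQ](\log d_L)^2$ to make $\log d_L$ negligible against $[L:\QQ]\log T_0$, the non-exceptional zeros contribute $\ll x\exp(-c_1[L:\QQ]^{-1/2}(\log x)^{1/2})$ to each sum; summing over the $s=|\widetilde C|$ conjugacy classes gives the stated error term.

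For part (ii) only an upper bound is needed, which is more forgiving in two ways. First, no quantitative lower bound on $1-\beta_L$ is required: because $\Li(x^{\beta_L})\le\Li x$, the exceptional zero contributes at most another $\tfrac{|C|}{|G|}\Li x$, and $\tfrac{2|C|}{|G|}\Li x\ll\tfrac{|C|}{|G|}\tfrac{x}{\log x}$. Second, in place of the pointwise explicit formula one uses a smoothed (weighted) form, as in Lagarias--Montgomery--Odlyzko, which replaces the pointwise count of $\pi_C(x)$ by an average and sharpens the dependence of the zero sum on $d_L$ and $[L:\QQ]$. Combining this with Minkowski's bound $[L:\QQ]\ll\log d_L$ and the hypothesis $\log x\ge c_2(\log d_L)(\log\log d_L)(\log\log\log 6d_L)$ brings the non-exceptional zero contribution below $\tfrac{|C|}{|G|}\tfrac{x}{\log x}$, which together with the first point yields the claim.

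The hard part is precisely this last estimate in part (ii): the soft error term produced in part (i) is not below the main term throughout the much wider $x$-range allowed in (ii), so one genuinely needs the sharper smoothed explicit-formula estimates and careful bookkeeping of how the error depends jointly on $[L:\QQ]$, $\log d_L$, and the truncation parameter, the triple logarithm in the hypothesis being exactly what absorbs the accumulated powers of $\log d_L$. The remaining ingredients — the Deuring reduction, the dyadic decomposition of the zero sum, and the handling of the exceptional zero — are routine once these analytic inputs are granted.
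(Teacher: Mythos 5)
Your sketch is an accurate description of the Lagarias--Odlyzko and Lagarias--Montgomery--Odlyzko arguments, but the paper itself simply cites these: part (i) is quoted verbatim from Theorem~1.3 of \cite{LOChebotarev}, and part (ii) is taken as stated from Th\'eor\`eme~3 of \cite{SerreCheb} (attributed there to Lagarias--Montgomery--Odlyzko). So while you have correctly reconstructed the machinery inside those references — Deuring reduction, explicit formula, Stark zero-free region, zero counting, and (for (ii)) the smoothed explicit formula plus the trivial bound $\Li(x^{\beta_L}) \leq \Li x$ — the paper's proof is a two-line citation rather than an argument.
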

\begin{proof}
Part (i) is a consequence of \cite{LO Chebotarev}*{Theorem 1.3}.  Part (ii) is stated as in \cite{SerreCheb}*{Th\'eor\`eme~3} and is a result of Lagarias, Montgomery, and Odlyzko.
\end{proof}

\begin{prop} \label{P:unconditional character sum}
Let $L/k$ be a Galois extension of number fields with Galois group $G$ and let $\chi$ be a character of $G$.  If $\log x \geq 10 [L:\QQ] (\log d_L)^2$, then
\begin{align*}
\sum_{\substack{\p\in\Sigma_k(x) \\ \p \text{ unramified in }L}} \chi(\Frob_\p) = (\chi,1) \Li x &+ O(\chi(1)\Li(x^{\beta_L}) )\\ 
& + O\Bigl(\chi(1) |G^\sharp |x\exp( -c_1  [L:\QQ]^{-1/2}(\log x)^{1/2})\Bigr)
\end{align*}
where the $\chi(1)\Li(x^{\beta_L})$ term is present only when the exceptional zero $\beta_L$ exists, and the constant $c_1>0$ and the implicit constants are absolute.
\end{prop}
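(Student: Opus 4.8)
The plan is to reduce the claimed estimate for $\sum_{\p} \chi(\Frob_\p)$ to the unconditional Chebotarev bound of Proposition~\ref{P:unconditional CDT}(i) by decomposing $\chi$ into indicator functions of conjugacy classes. First I would write $\chi$ in the ``conjugacy basis'': since $\chi$ is a class function on $G$, we have $\chi = \sum_{C \in G^\sharp} \chi(C)\,\mathbf{1}_C$, where $\mathbf{1}_C$ is the characteristic function of the conjugacy class $C$. Summing over the unramified primes $\p \in \Sigma_k(x)$ gives
\[
\sum_{\substack{\p\in\Sigma_k(x) \\ \p \text{ unramified in }L}} \chi(\Frob_\p) = \sum_{C\in G^\sharp} \chi(C)\, \pi_C(x,L/k),
\]
where here $\pi_C(x,L/k)$ counts unramified primes with $\Fr_\p = C$ (a single conjugacy class, which is of course stable under conjugation, so Proposition~\ref{P:unconditional CDT}(i) applies to it with $|\widetilde{C}| = 1$).

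Next I would insert the estimate from Proposition~\ref{P:unconditional CDT}(i) for each single class $C$: under the hypothesis $\log x \geq 10[L:\QQ](\log d_L)^2$,
\[
\pi_C(x,L/k) = \frac{|C|}{|G|}\Li x + O\Bigl(\frac{|C|}{|G|}\Li(x^{\beta_L})\Bigr) + O\bigl(x\exp(-c_1 [L:\QQ]^{-1/2}(\log x)^{1/2})\bigr),
\]
the middle term present only when $\beta_L$ exists. Multiplying by $\chi(C)$ and summing over $C\in G^\sharp$: the main term becomes $\sum_C \chi(C)\frac{|C|}{|G|}\Li x = (\chi,1)\Li x$ by orthogonality of $\chi$ against the trivial character; the exceptional-zero term is bounded by $\bigl(\sum_C |\chi(C)|\tfrac{|C|}{|G|}\bigr)\Li(x^{\beta_L}) \leq \max_g|\chi(g)|\cdot\Li(x^{\beta_L}) \leq \chi(1)\Li(x^{\beta_L})$, using $|\chi(g)|\leq\chi(1)$; and the error term is bounded by $\bigl(\sum_C |\chi(C)|\bigr)x\exp(\cdots) \leq \chi(1)\,|G^\sharp|\, x\exp(\cdots)$, again via $|\chi(g)|\leq\chi(1)$. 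This yields exactly the asserted formula.

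I do not expect a serious obstacle here: the argument is a routine linear combination of the conjugacy-class case, and the only points requiring minor care are (a) checking that every conjugacy class $C$ separately satisfies the hypotheses of Proposition~\ref{P:unconditional CDT}(i) (it does, since the hypothesis $\log x \geq 10[L:\QQ](\log d_L)^2$ depends only on $L/k$, not on $C$, and the exceptional zero $\beta_L$ is likewise a property of $L$ alone, so the error terms can be added coherently across all classes), and (b) the elementary character-theoretic bounds $|\chi(g)|\leq\chi(1)$ and $\sum_{C}\tfrac{|C|}{|G|}|\chi(C)|\leq\chi(1)$. If anything, the mild subtlety is making sure the implied constant in the $\exp$-term stays absolute after summing $|G^\sharp|$ of the individual bounds — but since each individual bound has an absolute implied constant and we are simply summing $|G^\sharp|$ copies, the factor $|G^\sharp|$ is precisely what appears in the statement, so nothing is lost.
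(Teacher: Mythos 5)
Your proposal is correct and follows essentially the same route as the paper's proof: write $\sum_\p \chi(\Frob_\p) = \sum_{C\in G^\sharp}\chi(C)\,\pi_C(x,L/k)$, apply Proposition~\ref{P:unconditional CDT}(i) to each class, and use $\sum_C \chi(C)|C|/|G| = (\chi,1)$ together with $\max_C|\chi(C)| \leq \chi(1)$. The only cosmetic difference is that the paper begins by reducing to irreducible $\chi$, whereas your argument applies directly to any character since $|\chi(g)|\leq\chi(1)$ holds for arbitrary characters; this is immaterial.
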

\begin{proof}
It suffices to prove the proposition for an irreducible character $\chi$.  We first write the character sum in terms of the $\pi_C(x,L/k)$,
\[
\sum_{\substack{\p\in\Sigma_k(x) \\ \p \text{ unramified in }L}} \chi(\Frob_\p) = \sum_{C\in G^\sharp } \chi(C) \pi_C(x,L/k).
\]
Using $\sum_{C\in G^\sharp } \chi(C) {|C|}/{|G|} = (\chi,1)$ and $\max_{C\in G^\sharp }|\chi(C)| = \chi(1)$, the proposition follows directly from Proposition~\ref{P:unconditional CDT}(i).
\end{proof}

\section{Group theory for $\text{GSp}_{2g}$}
\subsection{Symplectic groups} \label{SS:symplectic group background}
Fix a field $k$, a finite dimensional vector space $V$ of dimension $2g$ over $k$, and a nondegenerate alternating bilinear form $\ang{ \; , \, }\colon V \times V \to k$.  We define $\GSp(V,\ang{\:, \,})$ to be the group of $A\in \Aut(V)$ such that for some $\mult(A) \in k^\times$, we have $\ang{Av,Aw} = \mult(A) \ang{v,w}$ for all $v,w \in V$.   Define $\Sp(V,\ang{\:, \,})$ to be the group of automorphisms of $V$ which preserve the pairing.   We call $\GSp(V,\ang{\:, \,})$ (resp.~$\Sp(V,\ang{\:, \,})$) the \defi{group of symplectic similitudes} (resp.~the \defi{symplectic group}).  The element $\mult(A)\in k^\times$ is called the \defi{multiplier} of $A$, and gives an exact sequence
\[
1\to \Sp(V,\ang{\:, \,}) \to \GSp(V,\ang{\:, \,}) \overset{\mult}{\to} k^\times \to 1.
\]
Up to isomorphism, $V$ has a unique non-degenerate alternating bilinear form; with this in mind, we may thus unambiguously use the notation $\GSp_{2g}(k)$ and $\Sp_{2g}(k)$.  Note that for $g=1$, $\GSp_{2}(k)=\GL_2(k)$ and $\Sp_{2}(k)=\SL_2(k)$.
For any $A\in\GSp_{2g}(k)$, we have the relation
\[
P(\mult(A)/T)/(\mult(A)/T)^g = P(T)/T^g,
\]
where $P(T)=\det(TI-A)\in k[T]$.

\subsection{Bounds on group orders and number of conjugacy classes}
\begin{lemma}[\cite{Gallagher}]  \label{L:Gallagher} If $G$ is a finite group and $N$ is a normal subgroup of $G$, then $|G^\sharp| \leq |(G/N)^\sharp||N^\sharp|.$
\qed
\end{lemma}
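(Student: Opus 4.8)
The plan is to count conjugacy classes of $G$ by organizing them according to how they sit over $G/N$ and how they meet $N$. First I would observe that the quotient map $q\colon G\to G/N$ carries each conjugacy class of $G$ into a single conjugacy class of $G/N$, so there is a well-defined map $q^\sharp\colon G^\sharp \to (G/N)^\sharp$. It therefore suffices to bound, for each class $\bar c \in (G/N)^\sharp$, the number of $G$-classes lying in the fibre $q^{-1}(\bar c)$, and to show this fibre size is at most $|N^\sharp|$. Fixing a representative $g\in G$ with $q(g)=\bar g \in \bar c$, the preimage $q^{-1}(\bar c)$ as a subset of $G$ is precisely the union of $G$-conjugates of the coset $gN$; so every $G$-class in the fibre has a representative in $gN$, and I want to bound the number of $\langle g\rangle$-orbits (indeed $G$-orbits, but counting $C_G(g)$-orbits or even just $\langle g\rangle$-conjugation orbits suffices as an upper bound) on $gN$.

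The key step is the following counting. Conjugation by $g$ acts on the coset $gN$ by $x\mapsto gxg^{-1}$; writing $x = gn$ with $n\in N$, this becomes $gn \mapsto g(gng^{-1})$, i.e.\ it corresponds under the bijection $gN \leftrightarrow N$, $gn\leftrightarrow n$, to the action $n\mapsto gng^{-1}$ of $g$ on $N$ by (outer) conjugation. Two elements of $gN$ that are $G$-conjugate are in particular related by conjugation within the subgroup $\langle g, N\rangle$, and one checks that $G$-conjugacy restricted to $gN$ is generated by (a) conjugation by elements of $N$ and (b) conjugation by $g$; both of these, transported to $N$, are covered by: $N$-conjugacy together with the automorphism $\varphi_g\colon n\mapsto gng^{-1}$ of $N$. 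Since $\varphi_g$ permutes the conjugacy classes of $N$, the number of equivalence classes on $N$ under the group generated by $N$-conjugation and $\varphi_g$ is at most the number of $N$-conjugacy classes, namely $|N^\sharp|$. Hence each fibre $q^{-1}(\bar c)$ contains at most $|N^\sharp|$ distinct $G$-classes, and summing over the $|(G/N)^\sharp|$ classes $\bar c$ gives $|G^\sharp| \leq |(G/N)^\sharp|\,|N^\sharp|$.

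The main obstacle I anticipate is making precise the claim that $G$-conjugacy on the coset $gN$ is "generated by" $N$-conjugacy and the single automorphism $\varphi_g$ — i.e.\ that conjugating $gn\in gN$ by an arbitrary element $h\in G$ can be rewritten, after landing back in $gN$, as a composite of an $N$-conjugation and powers of $\varphi_g$. The cleanest way to handle this is to note that for $h\in G$, $h(gn)h^{-1}\in gN$ forces $q(h)$ to centralize $\bar g$ in $G/N$, and then to decompose $h$ using a lift of $C_{G/N}(\bar g)$; alternatively, one can avoid this entirely by the slicker route of bounding the fibre of $q^\sharp$ over $\bar c$ directly by $|\{N\text{-classes}\}/{\sim}|$ where $\sim$ is generated by the conjugation action of $C_G(\bar g)$ (a lift), which manifestly has at most $|N^\sharp|$ equivalence classes. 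Either way the essential content is the orbit-counting inequality, and the rest is bookkeeping; since the paper cites \cite{Gallagher} for this, a one-paragraph version of the above suffices.
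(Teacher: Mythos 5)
Your overall strategy --- factor through $q^\sharp\colon G^\sharp\to(G/N)^\sharp$ and bound each fibre by $|N^\sharp|$ --- is sound (the paper itself gives no proof, just citing Gallagher), but there is a genuine error in the key transport step. Under the bijection $gN\leftrightarrow N$, $gn\leftrightarrow n$, conjugation by $m\in N$ sends $gn$ to $m(gn)m^{-1}=g\cdot\big((g^{-1}mg)\,n\,m^{-1}\big)$, so the transported map on $N$ is $n\mapsto(g^{-1}mg)\,n\,m^{-1}$. This is a \emph{twisted} conjugation, not ordinary $N$-conjugation: it is not even an automorphism of $N$ (it fails to fix the identity unless $g^{-1}mg=m$), and in particular it does not lie in the group generated by inner automorphisms of $N$ and $\varphi_g$. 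Your concluding justification --- ``since $\varphi_g$ permutes the conjugacy classes of $N$, the group generated by $N$-conjugation and $\varphi_g$ has at most $|N^\sharp|$ orbits'' --- is therefore applied to the wrong action and does not bound the quantity you need. Moreover, the worry you flag at the end (whether $G$-conjugacy on $gN$ is generated by $\langle g,N\rangle$) is a red herring: you never need the full $G$-action, since $N$-orbits refine the intersections of $G$-classes with $gN$ and hence already give an upper bound. The real gap is the mistaken identification of twisted conjugation with ordinary conjugation.

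The bound on the fibre is nonetheless true, and the cleanest repair bypasses the transport entirely via the Cauchy--Frobenius orbit count. Every $G$-class in $q^{-1}(\bar c)$ meets $gN$, and $N$-orbits on $gN$ under conjugation refine the resulting partition of $gN$, so the number of $G$-classes in the fibre is at most the number of $N$-orbits on $gN$. That number equals $\frac{1}{|N|}\sum_{m\in N}\lvert C_G(m)\cap gN\rvert$, and each $C_G(m)\cap gN$ is either empty or a coset of $C_N(m)=C_G(m)\cap N$, hence of size at most $|C_N(m)|$. Thus the number of $N$-orbits on $gN$ is at most $\frac{1}{|N|}\sum_{m\in N}|C_N(m)|=|N^\sharp|$, and summing over the $|(G/N)^\sharp|$ fibres gives the lemma. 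So your framework works, but the fibre bound requires this orbit-counting argument (or an equivalent character-theoretic one), not the ``$\varphi_g$ permutes classes'' reasoning.
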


\begin{lemma}  \label{L:misc bounds for GSp}
Fix a prime power $q$.
\begin{romanenum}
\item $|\Sp_{2g}(\FF_q)| =q^{g^2}\prod_{i=1}^{g}(q^{2i}-1)$.
\item $|\GSp_{2g}(\FF_q)| =(q-1)q^{g^2}\prod_{i=1}^{g}(q^{2i}-1) \leq q^{2g^2 + g +1 }.$  
\item  There is a constant $\kappa_g$, depending only on $g$, such that
\[
|\Sp_{2g}(\FF_q)^\sharp| \leq \kappa_g q^g \text{\quad and \quad}
|\GSp_{2g}(\FF_q)^\sharp| \leq \kappa_g q^{g+1}.
\]
\end{romanenum}
\end{lemma}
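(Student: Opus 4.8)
The plan is to prove Lemma~\ref{L:misc bounds for GSp} in three steps, handling the order formulas first and then the conjugacy-class bounds, which constitute the only nontrivial part.

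\textbf{Parts (i) and (ii): orders.} First I would recall the standard count of $|\Sp_{2g}(\FF_q)|$. One way is induction on $g$: fixing a nonzero vector $v_1$ (there are $q^{2g}-1$ choices), the stabilizer structure of symplectic transvections gives that the number of vectors $w$ with $\ang{v_1,w}=1$ is $q^{2g-1}$, and after splitting off the hyperbolic plane $\ang{v_1,w}$ the remaining form is symplectic of rank $2g-2$; multiplying the recursion out yields $|\Sp_{2g}(\FF_q)| = q^{g^2}\prod_{i=1}^g (q^{2i}-1)$. For part (ii), the multiplier exact sequence
\[
1\to \Sp_{2g}(\FF_q) \to \GSp_{2g}(\FF_q) \overset{\mult}{\to} \FF_q^\times \to 1
\]
is surjective (scalar matrices $cI$ have multiplier $c^2$, and one can realize any multiplier by a diagonal similitude $\mathrm{diag}(1,\dots,1,c,\dots,c)$ in a symplectic basis), so $|\GSp_{2g}(\FF_q)| = (q-1)\,q^{g^2}\prod_{i=1}^g(q^{2i}-1)$. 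The inequality $\leq q^{2g^2+g+1}$ is then immediate by bounding each factor $q^{2i}-1 \leq q^{2i}$ and $q-1\leq q$, since $g^2 + \sum_{i=1}^g 2i + 1 = g^2 + g(g+1) + 1 = 2g^2 + g + 1$.

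\textbf{Part (iii): conjugacy classes.} This is the substantive step. The bound $|\Sp_{2g}(\FF_q)^\sharp| \leq \kappa_g\, q^g$ should follow from the general principle that in a connected reductive group of rank $r$ over $\FF_q$, the number of conjugacy classes of $\FF_q$-points is $O(q^r)$ with the implied constant depending only on the group scheme (not on $q$); here $\Sp_{2g}$ has rank $g$. I would either cite this or give a direct argument: a semisimple class is determined by the characteristic polynomial (a monic degree-$2g$ polynomial satisfying the functional equation $P(T) = T^{2g}P(1/T)$, of which there are $O(q^g)$), and for each such polynomial the number of classes lying over it that contain unipotent-times-semisimple parts is bounded in terms of the possible Jordan block data, which is a finite number depending only on $g$. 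Then for the similitude group I would use the exact sequence above together with Lemma~\ref{L:Gallagher}: since $|\FF_q^\times{}^\sharp| = q-1$ (it is abelian) and $\Sp_{2g}(\FF_q)$ is normal in $\GSp_{2g}(\FF_q)$ with quotient $\FF_q^\times$, Lemma~\ref{L:Gallagher} gives $|\GSp_{2g}(\FF_q)^\sharp| \leq (q-1)\cdot |\Sp_{2g}(\FF_q)^\sharp| \leq (q-1)\kappa_g q^g \leq \kappa_g q^{g+1}$, absorbing the $q-1 \leq q$ and possibly enlarging $\kappa_g$ slightly.

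\textbf{Main obstacle.} The delicate point is the uniform-in-$q$ bound $|\Sp_{2g}(\FF_q)^\sharp| \leq \kappa_g q^g$ in part (iii); one must be careful that the constant genuinely depends only on $g$. The cleanest route is to invoke a known result (for instance the counting of conjugacy classes in finite groups of Lie type, or a direct bound via rational canonical forms for symplectic similitudes), rather than attempting an ad hoc enumeration of unipotent classes. I expect the rest to be routine.
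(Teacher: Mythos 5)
Your approach matches the paper's: for (i)--(ii) the paper simply cites Artin for the order of $\Sp_{2g}(\FF_q)$ (your transvection/hyperbolic-plane induction is the standard derivation of that formula), and for (iii) the paper does exactly what you propose---use Lemma~\ref{L:Gallagher} with the normal subgroup $\Sp_{2g}(\FF_q)$ and abelian quotient $\FF_q^\times$ to reduce to bounding $|\Sp_{2g}(\FF_q)^\sharp|$, and then cite the general class-number bound (the paper cites Liebeck--Pyber \cite{LP}). One small caution about the alternative direct argument you sketch: in $\Sp_{2g}(\FF_q)$ a semisimple class is \emph{not} determined by its characteristic polynomial (distinct symplectic isometry types can share a characteristic polynomial), so the map ``class $\mapsto$ char.\ poly.'' is finite-to-one rather than injective; the clean statement is Steinberg's theorem that a simply connected group of rank $g$ has exactly $q^g$ semisimple classes, with the unipotent part in each centralizer contributing a bounded factor---or, as in the paper, one just cites \cite{LP} and sidesteps this entirely.
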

\begin{proof}
Part (i) can be found in \cite{Artin}*{Chapter III \S6}, with part (ii) following immediately since $|\GSp_{2g}(\FF_q)| = (q-1)|\Sp_{2g}(\FF_q)|$.  We now consider part (iii).  By Lemma~\ref{L:Gallagher}, it suffices to prove the bound for $|\Sp_{2g}(\FF_q)^\sharp|$; this follows from \cite{LP}.
\end{proof}

\begin{lemma}  \label{L:misc bounds for GL}
Let $q$ be a prime power.
\begin{romanenum}
\item $|\GL_2(\FF_q)| = q(q-1)^2(q+1).$
\item $|\GL_2(\FF_q)^\sharp| = q^2-1.$
\item For $t\in\FF_q^\times$, $|\{A\in\GL_2(\FF_q): \tr(A)=t\}| =  q(q^2-q-1).$
\item Assume $q$ is odd.  For all $t\in\FF_q$ and $d\in \FF_q^\times$,
\[|\{A \in \GL_2(\FF_q) : \det(A)=d, \tr(A)=t \}| = q\left(q+ \left(\tfrac{t^2-4d}{q}\right)\right),\]
where $\left(\tfrac{\cdot}{q}\right)$ is the Legendre symbol.
\end{romanenum}
\end{lemma}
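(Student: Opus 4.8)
The plan is to prove the four assertions in turn, all of them being elementary counts over $\FF_q$.

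For (i) I would count ordered bases of $\FF_q^2$: a first basis vector may be any of the $q^2-1$ nonzero vectors and a second vector any of the $q^2-q$ vectors lying outside its line, so $|\GL_2(\FF_q)| = (q^2-1)(q^2-q) = q(q-1)^2(q+1)$.

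For (ii) I would enumerate the conjugacy classes of $\GL_2(\FF_q)$ via rational canonical form, sorting them by the characteristic polynomial $T^2 - \tr(A)T + \det(A)$. There are four families: (a) scalar matrices, giving $q-1$ classes; (b) non-scalar matrices with a repeated (necessarily nonzero) eigenvalue, i.e.\ a single Jordan block, giving $q-1$ classes; (c) matrices with two distinct eigenvalues in $\FF_q^\times$, indexed by unordered pairs, giving $\binom{q-1}{2}$ classes; and (d) matrices whose characteristic polynomial is irreducible over $\FF_q$, indexed by the monic irreducible quadratics over $\FF_q$ (every such polynomial automatically has nonzero constant term), giving $(q^2-q)/2$ classes. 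Summing yields $2(q-1) + \tfrac{(q-1)(q-2)}{2} + \tfrac{q(q-1)}{2} = (q-1)(q+1) = q^2-1$.

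For (iii) and (iv) I would count directly on the affine space of $2\times 2$ matrices. Writing the entries of $A$ as $a,b,c,e$, part (iv) reduces to: fix $a \in \FF_q$ (so $e = t - a$), and then the conditions $\tr(A)=t$, $\det(A)=d$ become $bc = a(t-a) - d =: w$. The number of $(b,c) \in \FF_q^2$ with $bc = w$ is $q-1$ if $w \neq 0$ and $2q-1$ if $w = 0$, and $w = 0$ precisely when $a$ is a root of $X^2 - tX + d$, which (here using $q$ odd) has $1 + \left(\tfrac{t^2-4d}{q}\right)$ roots. Setting $\epsilon := \left(\tfrac{t^2-4d}{q}\right)$, the total is $(1+\epsilon)(2q-1) + (q-1-\epsilon)(q-1) = q^2 + \epsilon q = q(q+\epsilon)$. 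For (iii), where $t \neq 0$ and $q$ is arbitrary, I would run the same bookkeeping without fixing $\det$: there are $q^3$ tuples $(a,b,c)$ with $e=t-a$, and one subtracts those with $ae = bc$; since $a(t-a)=0$ has exactly the two roots $a=0$ and $a=t$, the count of such tuples is $2(2q-1) + (q-2)(q-1) = q^2+q$, leaving $q^3 - q^2 - q = q(q^2-q-1)$. (One could instead deduce (iii) from (iv) by summing over $d\in\FF_q^\times$ and using $\sum_{u\neq t^2}\left(\tfrac{u}{q}\right) = -1$, but the direct count has the virtue of not needing $q$ odd.)

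The only genuinely fiddly point is the bookkeeping in (ii): one must verify that the four families of characteristic polynomials are exhaustive and, within the repeated-root case, that scalar and non-scalar matrices land in distinct conjugacy classes. The solution counts underlying (iii) and (iv) are routine once the degenerate case $w=0$ is isolated.
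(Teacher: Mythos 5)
Your proposal is correct in all four parts, and the arithmetic checks out: $(1+\epsilon)(2q-1) + (q-1-\epsilon)(q-1) = q^2 + \epsilon q$ in (iv), $2(2q-1) + (q-2)(q-1) = q^2+q$ in (iii), and the four families in (ii) sum to $2(q-1) + (q-1)^2 = q^2-1$.

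Your route is more self-contained than the paper's, which simply points to the conjugacy-class table in Lang's \emph{Algebra} (XVIII, Table~12.4) and calls the rest a ``direct computation.'' Reading off that table one would obtain (i) and (ii) immediately, and would derive (iii) and (iv) by summing class sizes over the classes with prescribed trace (and determinant). You instead reconstruct the class decomposition from scratch for (ii) -- sorting by characteristic polynomial and distinguishing the scalar from the non-semisimple repeated-eigenvalue case via the minimal polynomial, which is exactly the fiddly point you flag -- and you handle (iii) and (iv) by counting solutions $(a,b,c,e)$ directly, reducing to the standard count of $(b,c)$ with $bc=w$ ($q-1$ solutions if $w\neq 0$, $2q-1$ if $w=0$). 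The matrix-entry approach has the small advantages you note: it gives (iii) for all $q$ without a parity hypothesis, and it isolates precisely where $q$ odd is used in (iv), namely in counting roots of $a^2 - ta + d$ via the Legendre symbol of the discriminant. The table-lookup approach is shorter but opaque; yours is longer but verifiable on the spot. Both are legitimate, and neither has a gap.
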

\begin{proof}
One has an explicit description of the conjugacy classes of $\GL_2(\FF_q)$, see for example \cite{LangAlgebra}*{XVIII Table 12.4}.  The lemma is then a direct computation.
\end{proof}

\begin{bibdiv}
\begin{biblist}

\bib{Achter}{article}{
  author = {Achter, Jeff},
  title = {Split reductions of simple abelian varieties},
  note={With an appendix by Emmanuel Kowalski},
  note={arXiv:0806.4421v1 [math.NT]}
  year = {2008}
}

\bib{Artin}{book}{
   author={Artin, E.},
   title={Geometric algebra},
   series={Wiley Classics Library},
   note={Reprint of the 1957 original;
   A Wiley-Interscience Publication},
   publisher={John Wiley \& Sons Inc.},
   place={New York},
   date={1988},
   pages={x+214},
}

\bib{Bombieri}{book}{
   author={Bombieri, Enrico},
   title={Le grand crible dans la th\'eorie analytique des nombres},
   note={Avec une sommaire en anglais;
   Ast\'erisque, No. 18},
   publisher={Soci\'et\'e Math\'ematique de France},
   place={Paris},
   date={1974},
   pages={i+87},
}

\bib{Bombieri}{article}{
   author={Bombieri, Enrico},
   title={On exponential sums in finite fields. II},
   journal={Invent. Math.},
   volume={47},
   date={1978},
   number={1},
   pages={29--39},
}

\bib{Chavdarov}{article}{
   author={Chavdarov, Nick},
   title={The generic irreducibility of the numerator of the zeta function
   in a family of curves with large monodromy},
   journal={Duke Math. J.},
   volume={87},
   date={1997},
   number={1},
   pages={151--180},
   issn={0012-7094},
}

\bib{Cojocaru-Koblitz}{article}{
   author={Cojocaru, Alina Carmen},
   title={Reductions of an elliptic curve with almost prime orders},
   journal={Acta Arith.},
   volume={119},
   date={2005},
   number={3},
   pages={265--289},
}
    
\bib{CojocaruDavid-LT2}{article}{
  author={Cojocaru, Alina Carmen},  
  author={David, Chantal},
  title={Frobenius fields for elliptic curves},
  journal={Amer. J. Math. (to appear)},
}
   
\bib{Elkies87}{article}{
   author={Elkies, Noam D.},
   title={The existence of infinitely many supersingular primes for every
   elliptic curve over ${\bf Q}$},
   journal={Invent. Math.},
   volume={89},
   date={1987},
   number={3},
   pages={561--567},
}

\bib{Elkies92}{article}{
   author={Elkies, Noam D.},
   title={Distribution of supersingular primes},
   note={Journ\'ees Arithm\'etiques, 1989 (Luminy, 1989)},
   journal={Ast\'erisque},
   number={198-200},
   date={1991},
   pages={127--132 (1992)},
}

\bib{Gallagher}{article}{
   author={Gallagher, Patrick X.},
   title={The number of conjugacy classes in a finite group},
   journal={Math. Z.},
   volume={118},
   date={1970},
   pages={175--179},
   issn={0025-5874},
}

\bib{Hall}{article}{
  author = {Hall, Chris},
  title = {An open image theorem for a general class of abelian varieties},
  note={With an appendix by Emmanuel Kowalski},
  note={arXiv:0803.1682v1 [math.NT]}
  year = {2008}
}

\bib{IwaniecKowalski}{book}{
   author={Iwaniec, Henryk},
   author={Kowalski, Emmanuel},
   title={Analytic number theory},
   series={American Mathematical Society Colloquium Publications},
   volume={53},
   publisher={American Mathematical Society},
   place={Providence, RI},
   date={2004},
   pages={xii+615},
}

\bib{Kac}{book}{
   author={Kac, Mark},
   title={Statistical independence in probability, analysis and number
   theory. },
   series={The Carus Mathematical Monographs, No. 12},
   publisher={Published by the Mathematical Association of America.
   Distributed by John Wiley and Sons, Inc., New York},
   date={1959},
}

\bib{Koblitz}{article}{
   author={Koblitz, Neal},
   title={Primality of the number of points on an elliptic curve over a
   finite field},
   journal={Pacific J. Math.},
   volume={131},
   date={1988},
   number={1},
   pages={157--165},
}
  
\bib{Kowalski-Monodromy}{article}{
   author={Kowalski, E.},
   title={The large sieve, monodromy and zeta functions of curves},
   journal={J. Reine Angew. Math.},
   volume={601},
   date={2006},
   pages={29--69},
}

\bib{Kowalski-LS}{book}{
  author={Kowalski, E.},
  title={The large sieve and its applications: arithmetic geometry, random walks, discrete groups},
  publisher={Cambridge University Press},
  date={2008}
}

\bib{LOChebotarev}{article}{
   author={Lagarias, J. C.},
   author={Odlyzko, A. M.},
   title={Effective versions of the Chebotarev density theorem},
   conference={
      title={Algebraic number fields: $L$-functions and Galois properties
      (Proc. Sympos., Univ. Durham, Durham, 1975)},
   },
   book={
      publisher={Academic Press},
      place={London},
   },
   date={1977},
   pages={409--464},
}

\bib{Lang-ANT}{book}{
   author={Lang, Serge},
   title={Algebraic number theory},
   series={Graduate Texts in Mathematics},
   volume={110},
   edition={2},
   publisher={Springer-Verlag},
   place={New York},
   date={1994},
   pages={xiv+357},
}

\bib{LangAlgebra}{book}{
  author={Lang, Serge},
  title={Algebra},
  series={Graduate Texts in Mathematics},
  volume={211},
  edition={3},
  publisher={Springer-Verlag},
  place={New York},
  date={2002},
  pages={xvi+914},
}

\bib{Lang-Trotter}{book}{
   author={Lang, Serge},
   author={Trotter, Hale},
   title={Frobenius distributions in ${\rm GL}\sb{2}$-extensions},
   note={Distribution of Frobenius automorphisms in ${\rm
   GL}\sb{2}$-extensions of the rational numbers;
   Lecture Notes in Mathematics, Vol. 504},
   publisher={Springer-Verlag},
   place={Berlin},
   date={1976},
   pages={iii+274},
}

\bib{LangWeil}{article}{
   author={Lang, Serge},
   author={Weil, Andr{\'e}},
   title={Number of points of varieties in finite fields},
   journal={Amer. J. Math.},
   volume={76},
   date={1954},
   pages={819--827},
}

\bib{LP}{article}{
   author={Liebeck, Martin W.},
   author={Pyber, L{\'a}szl{\'o}},
   title={Upper bounds for the number of conjugacy classes of a finite
   group},
   journal={J. Algebra},
   volume={198},
   date={1997},
   number={2},
   pages={538--562},
}

\bib{MilneWaterhouse}{article}{
   author={Milne, J. S.},
   author={Waterhouse, W. C.},
   title={Abelian varieties over finite fields},
   conference={
      title={1969 Number Theory Institute (Proc. Sympos. Pure Math., Vol.
      XX, State Univ. New York, Stony Brook, N.Y., 1969)},
   },
   book={
      publisher={Amer. Math. Soc.},
      place={Providence, R.I.},
   },
   date={1971},
   pages={53--64},
}

\bib{Montgomery}{article}{
   author={Montgomery, Hugh L.},
   title={The analytic principle of the large sieve},
   journal={Bull. Amer. Math. Soc.},
   volume={84},
   date={1978},
   number={4},
   pages={547--567},
}

\bib{Murty-ANT}{book}{
   author={Murty, M. Ram},
   title={Problems in analytic number theory},
   series={Graduate Texts in Mathematics},
   volume={206},
   note={Readings in Mathematics},
   publisher={Springer-Verlag},
   place={New York},
   date={2001},
   pages={xvi+452},
}

\bib{MMS}{article}{
   author={Murty, M. Ram},
   author={Murty, V. Kumar},
   author={Saradha, N.},
   title={Modular forms and the Chebotarev density theorem},
   journal={Amer. J. Math.},
   volume={110},
   date={1988},
   number={2},
   pages={253--281},
}

\bib{Ribet-modular}{article}{
   author={Ribet, Kenneth A.},
   title={On $l$-adic representations attached to modular forms},
   journal={Invent. Math.},
   volume={28},
   date={1975},
   pages={245--275},
}

\bib{Serre-Inv72}{article}{
   author={Serre, Jean-Pierre},
   title={Propri\'et\'es galoisiennes des points d'ordre fini des courbes
   elliptiques},
   journal={Invent. Math.},
   volume={15},
   date={1972},
   number={4},
   pages={259--331},
}

\bib{Serre-rep}{book}{
   author={Serre, Jean-Pierre},
   title={Linear representations of finite groups},
   note={Translated from the second French edition by Leonard L. Scott;
   Graduate Texts in Mathematics, Vol. 42},
   publisher={Springer-Verlag},
   place={New York},
   date={1977},
   pages={x+170},
}

\bib{SerreCheb}{article}{
   author={Serre, Jean-Pierre},
   title={Quelques applications du th\'eor\`eme de densit\'e de Chebotarev},
   journal={Inst. Hautes \'Etudes Sci. Publ. Math.},
   number={54},
   date={1981},
   pages={323--401},
}

\bib{SerreTopics}{book}{
   author={Serre, Jean-Pierre},
   title={Topics in Galois theory},
   series={Research Notes in Mathematics},
   volume={1},
   note={Lecture notes prepared by Henri Darmon;
   With a foreword by Darmon and the author},
   publisher={Jones and Bartlett Publishers},
   place={Boston, MA},
   date={1992},
   pages={xvi+117},
}

\bib{SerreMotivicGalois}{article}{
   author={Serre, Jean-Pierre},
   title={Propri\'et\'es conjecturales des groupes de Galois motiviques et
   des repr\'esentations $l$-adiques},
   conference={
      title={Motives},
      address={Seattle, WA},
      date={1991},
   },
   book={
      series={Proc. Sympos. Pure Math.},
      volume={55},
      publisher={Amer. Math. Soc.},
      place={Providence, RI},
   },
   date={1994},
   pages={377--400},
}

\bib{SerreMordellWeil}{book}{
  author={Serre, Jean-Pierre},
  title={Lectures on the Mordell-Weil theorem},
  series={Aspects of Mathematics},
  edition={3},
  note={Translated from the French and edited by Martin Brown from notes by Michel Waldschmidt; With a foreword by Brown and Serre},
  publisher={Friedr. Vieweg \& Sohn},
  place={Braunschweig},
  date={1997},
  pages={x+218},
}

\bib{Serre-VolumeIV}{book}{
   author={Serre, Jean-Pierre},
   title={\OE uvres. Collected papers. IV},
   note={1985--1998},
   publisher={Springer-Verlag},
   place={Berlin},
   date={2000},
   pages={viii+657},
}

\bib{Stark}{article}{
   author={Stark, H. M.},
   title={Some effective cases of the Brauer-Siegel theorem},
   journal={Invent. Math.},
   volume={23},
   date={1974},
   pages={135--152},
}

\bib{Zywina-Koblitz}{article}{
  author={Zywina, David},
  title={A refinement of Koblitz's conjecture},
  journal={preprint},
  date={2008},
}

\bib{Zywina-LT2}{article}{
  author={Zywina, David},
  title={The Lang-Trotter conjecture and mixed representations},
  journal={preprint},
  date={2008},
}

\end{biblist}
\end{bibdiv}

\end{document}